\numberwithin{equation}{section}
\theoremstyle{plain}
\newtheorem{theorem}{Theorem}[section]%
\newtheorem{corollary}[theorem]{Corollary}%
\newtheorem{proposition}[theorem]{Proposition}%
\newtheorem{lemma}[theorem]{Lemma}%
\newtheorem{definition}[theorem]{Definition}%
\newtheorem{example}[theorem]{Example}%
\newtheorem{remark}[theorem]{Remark}%
\newcommand{\dC}{\mathbb{C}}
\newcommand{\dE}{\mathbb{E}}
\newcommand{\dH}{\mathbb{H}}
\newcommand{\dN}{\mathbb{N}}
\newcommand{\dP}{\mathbb{P}}
\newcommand{\dR}{\mathbb{R}}
\newcommand{\dS}{\mathbb{S}}
\newcommand{\dU}{\mathbb{U}}
\newcommand{\dZ}{\mathbb{Z}}
\newcommand{\cA}{\mathcal{A}}
\newcommand{\cC}{\mathcal{C}}\newcommand{\cD}{\mathcal{D}}
\newcommand{\cE}{\mathcal{E}}\newcommand{\cF}{\mathcal{F}}
\newcommand{\cG}{\mathcal{G}}
\newcommand{\cM}{\mathcal{M}}\newcommand{\cN}{\mathcal{N}}
\newcommand{\cO}{\mathcal{O}}\newcommand{\cP}{\mathcal{P}}
\newcommand{\cQ}{\mathcal{Q}}
\newcommand{\cS}{\mathcal{S}}\newcommand{\cT}{\mathcal{T}}
\newcommand{\cX}{\mathcal{X}}
\newcommand{\al}{\alpha}
\newcommand{\be}{\beta}
\newcommand{\De}{\Delta}
\newcommand{\de}{\delta}
\newcommand{\ga}{\gamma}
\newcommand{\Ga}{\Gamma}
\newcommand{\la}{\lambda}
\newcommand{\La}{\Lambda}
\newcommand{\Om}{\Omega}
\newcommand{\om}{\omega}
\newcommand{\Si}{\Sigma}
\newcommand{\si}{\sigma}
\newcommand{\veps}{\varepsilon}
\newcommand{\vphi}{\varphi}
\newcommand{\FLOOR}[1]{{{\lfloor#1\rfloor}}} %
\newcommand{\CEIL}[1]{{{\lceil#1\rceil}}} %
\newcommand{\ABS}[1]{{{\left| #1 \right|}}} 
\newcommand{\BRA}[1]{{{\left\{#1\right\}}}} 
\newcommand{\DOT}[1]{{{\left<#1\right>}}} 
\newcommand{\ANG}[1]{{{\langle#1\rangle}}} 
\newcommand{\NRM}[1]{{{\left\| #1\right\|}}} 
\newcommand{\PAR}[1]{{{\left(#1\right)}}} 
\newcommand{\pd}{{\partial}} 
\newcommand{\SBRA}[1]{{{\left[#1\right]}}} 
\newcommand{\RANK}{\mathrm{rank}}
\newcommand{\DIM}{\mathrm{dim}}
\newcommand{\SUPP}{\mathrm{supp}}
\newcommand{\DIST}{\mathrm{dist}}
\newcommand{\SPAN}{\mathrm{span}}
\newcommand{\COMP}{\mathrm{Comp}}
\newcommand{\INCOMP}{\mathrm{Incomp}}
\newcommand{\SPARSE}{\mathrm{Sparse}}
\newcommand{\TR}{\mathrm{Tr}}
\newcommand{\VAR}{\mathrm{Var}}
\newcommand{\IND}{\mathbf{1}}
\newcommand{\OL}[1]{\overline{#1}}
\newcommand{\Ol}{\overline}
\newcommand{\Wt}{\widetilde}
\newcommand{\CARD}{\mathrm{card}}
\newcommand{\SSK}[1]{\substack{#1}}
\newcommand{\so}{\text{\o}} 
\newcommand{\weak}{\rightsquigarrow}
\renewcommand{\Im}{\mathfrak{Im}}
\renewcommand{\Re}{\mathfrak{Re}}
\newcommand{\pwit}{\mathrm{PWIT}}
\title{Around the circular law} \date{Update of
  \href{http://dx.doi.org/10.1214/11-PS183}{Probability Surveys 9 (2012)
    1--89}. Added reference~\cite{KS}. Compiled \today} %
\author{Charles Bordenave} \email{charles.bordenave(at)math.univ-toulouse.fr}
\urladdr{http://www.math.univ-toulouse.fr/~bordenave/} \address{Universit\'e
  de Toulouse, Institut de Math\'ematiques de Toulouse, UMR CNRS 5219, France}
\author{Djalil Chafa\"\i}
\email{djalil(at)chafai.net}
\urladdr{http://djalil.chafai.net/}
\address{Universit\'e Paris-Est Marne-la-Vall\'ee, UMR CNRS 8050, France}
\keywords{Spectrum; Singular values; Random matrices; Random graphs; Circular law} 
\subjclass[2000]{15B52 (60B20; 60F15)}
\begin{document}
\begin{abstract}
  These expository notes are centered around the circular law theorem, which
  states that the empirical spectral distribution of a $n\times n$ random
  matrix with i.i.d.\ entries of variance $1/n$ tends to the uniform law on
  the unit disc of the complex plane as the dimension $n$ tends to infinity.
  This phenomenon is the non-Hermitian counterpart of the semi circular limit
  for Wigner random Hermitian matrices, and the quarter circular limit for
  Marchenko-Pastur random covariance matrices. We present a proof in a
  Gaussian case, due to Silverstein, based on a formula by Ginibre, and a
  proof of the universal case by revisiting the approach of Tao and Vu, based
  on the Hermitization of Girko, the logarithmic potential, and the control of
  the small singular values. Beyond the finite variance model, we also
  consider the case where the entries have heavy tails, by using the objective
  method of Aldous and Steele borrowed from randomized combinatorial
  optimization. The limiting law is then no longer the circular law and is
  related to the Poisson weighted infinite tree. We provide a weak control of
  the smallest singular value under weak assumptions, using asymptotic
  geometric analysis tools. We also develop a quaternionic Cauchy-Stieltjes
  transform borrowed from the Physics literature.
\end{abstract}
\maketitle
{\footnotesize\tableofcontents}

\newpage

These expository notes are split in seven sections and an appendix. Section
\ref{se:spectra} introduces the notion of eigenvalues and singular values and
discusses their relationships. Section \ref{se:circular} states the circular
law theorem. Section \ref{se:gaussian} is devoted to the Gaussian model known
as the Complex Ginibre Ensemble, for which the law of the spectrum is known
and leads to the circular law. Section \ref{se:universal} provides the proof
of the circular law theorem in the universal case, using the approach of Tao
and Vu based on the Hermitization of Girko and the logarithmic potential.
Section \ref{se:related} presents some models related to the circular law and
discusses an algebraic-analytic interpretation via free probability. Section
\ref{se:heavy} is devoted to the heavy tailed counterpart of the circular law
theorem, using the objective method of Aldous and Steele and the Poisson
Weighted Infinite Tree. Finally, section \ref{se:open} lists some open
problems. The notes end with appendix \ref{se:ap:inv} devoted to a novel
general weak control of the smallest singular value of random matrices with
i.i.d.\ entries, with weak assumptions, well suited for the proof of the
circular law theorem and its heavy tailed analogue.

\begin{table}
  \footnotesize
  \begin{center}
    \begin{tabular}[c]{|r|l|} \hline
     $\log$
      & natural Neperian logarithm function (we never use the notation $\ln$)\\
      $a:=\cdots$
      & the mathematical object $a$ is defined by the formula $\cdots$ \\
      $\varliminf$ and $\varlimsup$ 
      & inferior and superior limit \\
      $\dN$
      & set of non-negative integers $1,2,\ldots$ \\
      $\dR_+$ 
      & set of non-negative real numbers $[0,\infty)$ \\
      $\dC_+$
      & set of complex numbers with positive imaginary part \\
      $\dH_+$ 
      & 
      set of $2\times 2$ matrices of the form
      {\footnotesize $\begin{pmatrix} \eta & z \\ \bar z & \eta \end{pmatrix}$} with
      $z \in \dC$ and  $\eta \in \dC_+$ \\
      $i$ 
      & complex number $(0,1)$ or some natural integer (context dependent) \\
      $\cM_n(K)$
      & set of $n\times n$ matrices with entries in $K$ \\
      $\cM_{n,m}(K)$ 
      & set of $n\times m$ matrices with entries in $K$ \\
      $A^\top$
      & transpose matrix of matrix $A$ (we never use the notation
      $A'$ or $ ^\mathrm{t}\!A$) \\
      $\bar{A}$
      & conjugate matrix of matrix $A$ or closure of set $A$ \\
      $A^{-1}$ and $A^*$ 
      & inverse and conjugate-transpose of $A$ \\
      $\TR(A)$ and $\det(A)$
      & trace and determinant of $A$ \\
      $I$ (resp.\ $I_n$)
      & identity matrix (resp.\ of dimension $n$) \\
      $A-z$ 
      & the matrix $A-zI$ (here $z\in\dC$) \\
      $s_k(A)$ 
      & $k$-th singular value of $A$ (descending order) \\
      $\la_k(A)$ 
      & $k$-th eigenvalue of $A$ (decreasing module and growing
      phase order) \\
      $P_A(z)$
      & characteristic polynomial of $A$ at point $z$, namely $\det(A-zI)$ \\
      $\mu_A$
      & empirical measure built from the eigenvalues of $A$ \\
      $\nu_A$
      & empirical measure built from the singular values of $A$ \\
      $U_\mu(z)$
      & logarithmic potential of $\mu$ at point $z$ \\
      $m_\mu(z)$
      & Cauchy-Stieltjes transform of $\mu$ at point $z$ \\
      $M_\mu(q)$
      & quaternionic transform of $\mu$ at point $q\in\dH_+$ \\
      $\Ga_A(q)$
      & quaternionic transform of $\mu_A$ at point $q\in\dH_+$ (i.e.\ $M_{\mu_A}(q)$)\\
      $\SPAN(\cdots)$
      & vector space spanned by the arguments $\cdots$ \\
      $\DOT{\cdot,\cdot}$
      & Scalar product in $\dR^n$ or in $\dC^n$ \\
      $\DIST(v,V)$
      & $2$-norm distance of vector $v$ to vector space $V$ \\
      $V^\perp$
      & orthogonal vector space of the vector space $V$ \\
      $\SUPP$
      & support (for measures, functions, and vectors) \\
      $\ABS{z}$ and $\CARD(E)$
      & module of $z$ and cardinal of $E$ \\
      $\NRM{v}_2$
      & $2$-norm of vector $v$ in $\dR^n$ or in $\dC^n$\\
      $\NRM{A}_{2\to2}$
      & operator norm of matrix $A$ for the $2$-norm (i.e.\ spectral norm) \\
      $\NRM{A}_2$
      & Hilbert-Schmidt norm of matrix $A$ (i.e.\ Schur or Frobenius norm) \\
      $o(\cdot)$ and $O(\cdot)$ 
      & classical Landau notations for asymptotic behavior \\
      $D$ 
      & most of the time, diagonal matrix \\
      $U,V,W$
      & most of the time, unitary matrices \\
      $H$ 
      & most of the time, Hermitian matrix \\
      $X$
      & most of the time, random matrix with i.i.d.\ entries \\
      $G$ 
      & most of the time, complex Ginibre Gaussian random matrix \\
      $\IND_E$ 
      & indicator of set $E$ \\
      $\pd$, $\OL{\pd}$, $\Delta$ 
      & differential operators $\frac{1}{2}(\pd_x-i\pd_y)$, 
      $\frac{1}{2}(\pd_x+i\pd_y)$, $\pd_x^2+\pd_y^2$  on $\dR^2$ \\
      $\cP(\dC)$
      & set of probability measures on $\dC$ integrating $\log\ABS{\cdot}$ at
      infinity \\
      $\cD'(\dC)$
      & set of Schwartz-Sobolev distributions on $\dC=\dR^2$ \\
      $C,c,c_0,c_1$
      & most of the time, positive constants (sometimes absolute) \\
      $X\sim \mu$
      & the random variable $X$ follows the law $\mu$ \\
      $\mu_n\weak\mu$
      & the sequence ${(\mu_n)}_{n}$ tends weakly to $\mu$ for
      continuous bounded functions \\
      $\cN(m,\Si)$
      & Gaussian law with mean vector $m$ and covariance matrix $\Si$ \\
      $\cQ_{\kappa}$ and $\cC_\kappa$
      & quarter circular law on $[0,\kappa]$ and circular law on
      $\{z\in\dC:|z|\leq\kappa\}$ \\ \hline 
    \end{tabular}
    \smallskip
    \caption{Main frequently used notations.}
    \label{tb:notations}
  \end{center}
\end{table}

All random variables are defined on a unique common probability space
$(\Om,\cA,\dP)$. A typical element of $\Om$ is denoted $\om$. Table
\ref{tb:notations} gathers most frequently used notations.

\section{Two kinds of spectra}
\label{se:spectra}

The \emph{eigenvalues} of $A\in\cM_n(\dC)$ are the roots in $\dC$ of its
characteristic polynomial $P_A(z):=\det(A-zI)$. We label them
$\lambda_1(A),\ldots,\lambda_n(A)$ so that
$|\lambda_1(A)|\geq\cdots\geq|\lambda_n(A)|$ with growing phases. The
\emph{spectral radius} is $|\lambda_1(A)|$. The eigenvalues form the algebraic
spectrum of $A$. The \emph{singular values of $A$} are defined by
\[
s_k(A):=\lambda_k(\sqrt{AA^*}) 
\]
for all $1\leq k\leq n$, where $A^*=\bar{A}^\top$ is the conjugate-transpose.
We have 
\[
s_1(A)\geq\cdots\geq s_n(A)\geq0.
\]
The matrices $A,A^\top,A^*$ have the same singular values. The
$2n\times 2n$ Hermitian matrix 
\[
H_A:=\begin{pmatrix} 0 & A \\ A^* & 0\end{pmatrix}
\]
has eigenvalues $s_1(A),-s_1(A),\ldots,s_n(A),-s_n(A)$. This turns out to be
useful because the mapping $A\mapsto H_A$ is linear in $A$, in contrast with
the mapping $A\mapsto\sqrt{AA^*}$. Geometrically, the matrix $A$ maps the unit
sphere to an ellipsoid, the half-lengths of its principal axes being exactly
the singular values of $A$. The \emph{operator norm} or \emph{spectral norm}
of $A$ is
\[
\Vert A\Vert_{2\to2}:=\max_{\Vert x\Vert_2=1}\Vert Ax\Vert_2=s_1(A)
\quad\text{while}\quad
s_n(A)=\min_{\NRM{x}_2=1}\NRM{Ax}_2.
\]
The rank of $A$ is equal to the number of non-zero singular values. If $A$ is
non-singular then $s_i(A^{-1})=s_{n-i}(A)^{-1}$ for all $1\leq i\leq n$ and
$s_n(A)=s_1(A^{-1})^{-1}=\Vert A^{-1}\Vert_{2\to 2}^{-1}$.

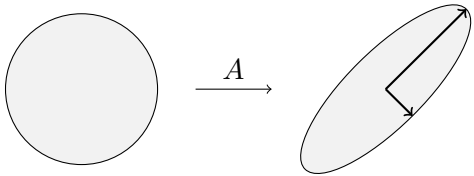
\begin{figure}[htbp]
  \begin{center}
    \begin{tikzpicture}
      \draw[fill=gray!10] (-2,0) circle (1cm);
      \draw[fill=gray!10,shift={(2,0)},rotate=-45] (0,0) ellipse (0.5cm and
      1.5cm);
      \draw[->] (-0.5,0) -- (0.5,0);
      \node[above] {$A$};
      \draw[style=thick,shift={(2,0)},rotate=-45,->] (0,0) -- (0.5,0);
      \draw[style=thick,shift={(2,0)},rotate=-45,->] (0,0) -- (0,1.5);
    \end{tikzpicture}
    \caption{\footnotesize Largest and smallest singular values of $A\in\cM_2(\dR)$. Taken
      from \cite{lamabook}.}
    \label{fi:sing}
  \end{center}
\end{figure}

Since the singular values are the eigenvalues of a Hermitian matrix, we have
variational formulas for all of them, often called the Courant-Fischer
variational formulas \cite[Th.\ 3.1.2]{MR1288752}. Namely, denoting
$\cG_{n,i}$ the Grassmannian of all $i$-dimensional subspaces, we have
\[
s_i(A)=\max_{E\in\cG_{n,i}}\min_{\SSK{x\in E\\\NRM{x}_2=1}}\NRM{Ax}_2
=\max_{E,F\in\cG_{n,i}}\min_{\SSK{(x,y)\in E\times F\\\NRM{x}_2=\NRM{y}_2=1}}\DOT{Ax,y}.
\]
Most useful properties of the singular values are consequences of their
Hermitian nature via these variational formulas, which are valid on $\dR^n$
and on $\dC^n$. In contrast, there are no such variational formulas for the
eigenvalues in great generality, beyond the case of normal matrices. If the
matrix $A$ is normal\footnote{In these notes, the word \emph{normal} is always
  used in this way, and never as a synonym for \emph{Gaussian}.} (i.e.\
$A^*A=A^*A$) then for every $1\leq i\leq n$,
\[
s_i(A)=|\lambda_i(A)|.
\]
Beyond normal matrices, the relationships between the eigenvalues and the
singular values are captured by a set of inequalities due to Weyl
\cite{MR0030693}\footnote{Horn \cite{MR0061573} showed a remarkable converse
  to Weyl's theorem: if a sequence $s_1\geq\cdots\geq s_n$ of non-negative
  real numbers and a sequence $\la_1,\ldots,\la_n$ of complex numbers of non
  increasing modulus satisfy to all Weyl's inequalities \eqref{eq:weyl0} then
  there exists $A\in\cM_{n}(\dC)$ with eigenvalues $\la_1,\ldots,\la_n$ and
  singular values $s_1,\ldots,s_n$.}, which can be obtained by using the Schur
unitary triangularization\footnote{If $A\in\cM_n(\dC)$ then there exists a
  unitary matrix $U$ such that $UAU^*$ is upper triangular.}, see for instance
\cite[Theorem 3.3.2 page 171]{MR1288752}.

\begin{theorem}[Weyl inequalities]\label{th:weyl}
  For every $n\times n$ complex matrix $A$ and $1\leq k\leq n$, 
  \begin{equation}\label{eq:weyl0}
    \prod_{i=1}^k|\lambda_i(A)|\leq \prod_{i=1}^ks_i(A).
  \end{equation}
\end{theorem}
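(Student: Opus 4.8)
The plan is to reduce the inequality, via Schur's unitary triangularization, to a comparison between the determinant of a compression of $A$ and the product of its top singular values. First I would invoke Schur's theorem in the form: there is an orthonormal basis $v_1,\dots,v_n$ of $\dC^n$ such that, with $W=[v_1\,|\cdots|\,v_n]$, the matrix $W^*AW$ is upper triangular with diagonal entries $\la_1(A),\dots,\la_n(A)$ in this (decreasing-modulus) order. One is free to prescribe any ordering of the eigenvalues along the diagonal of the triangular form, and we simply pick the labelling of the statement.

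Fix $1\le k\le n$ and set $W_k=[v_1\,|\cdots|\,v_k]\in\cM_{n,k}(\dC)$, so that $W_k^*W_k=I_k$, and let $M:=W_k^*AW_k\in\cM_k(\dC)$. This $M$ is the leading $k\times k$ block of $W^*AW$, hence itself upper triangular with diagonal $\la_1(A),\dots,\la_k(A)$, and therefore $\prod_{i=1}^k\ABS{\la_i(A)}=\ABS{\det M}$. Using $\ABS{\det M}^2=\det(MM^*)=\prod_{i=1}^ks_i(M)^2$, which holds for any square matrix since $MM^*$ has eigenvalues $s_i(M)^2$, this becomes $\prod_{i=1}^k\ABS{\la_i(A)}=\prod_{i=1}^ks_i(M)$. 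So it suffices to prove $s_i(M)\le s_i(A)$ for every $1\le i\le k$.

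For that I would write $M=W_k^*(AW_k)$ and use two elementary consequences of the Courant--Fischer variational formula for singular values recalled above. First, $s_i(BC)\le\NRM{B}_{2\to2}\,s_i(C)$ for conformable $B,C$, because $\NRM{BCx}_2\le\NRM{B}_{2\to2}\NRM{Cx}_2$ pointwise and one then takes the inner minimum and the outer maximum over $i$-dimensional subspaces. Second, since $W_k$ is an isometry it maps the unit sphere of each $i$-dimensional $E\subseteq\dC^k$ bijectively onto the unit sphere of the $i$-dimensional subspace $W_kE\subseteq\SPAN(v_1,\dots,v_k)$, so the max--min defining $s_i(AW_k)$ is a maximum over a subfamily of the $i$-dimensional subspaces of $\dC^n$ appearing for $s_i(A)$; hence $s_i(AW_k)\le s_i(A)$. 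Since $\NRM{W_k^*}_{2\to2}=\NRM{W_k}_{2\to2}=1$, combining the two gives $s_i(M)\le s_i(AW_k)\le s_i(A)$, and multiplying these inequalities over $i=1,\dots,k$ yields \eqref{eq:weyl0}.

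I do not expect a genuine obstacle here: the only delicate points are to apply Schur's theorem with the eigenvalues in the prescribed diagonal order, and to keep straight that passing to the compression $W_k^*AW_k$ preserves the leading product of eigenvalues on the diagonal while only shrinking the singular values. As an alternative I would mention the exterior-power proof: $\La^kA$ has $\la_1(A)\cdots\la_k(A)$ among its eigenvalues (it has largest modulus among the products $\la_{i_1}(A)\cdots\la_{i_k}(A)$), and its operator norm equals $s_1(A)\cdots s_k(A)$, so \eqref{eq:weyl0} is exactly the bound ``spectral radius $\le$ operator norm'' applied to $\La^kA$.
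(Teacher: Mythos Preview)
Your proof is correct and follows exactly the approach the paper indicates: the paper does not spell out a proof but states that \eqref{eq:weyl0} ``can be obtained by using the Schur unitary triangularization'' and refers to \cite[Theorem 3.3.2]{MR1288752}, which is precisely your compression argument (Schur form, leading $k\times k$ principal block, then $s_i(W_k^*AW_k)\le s_i(A)$). Your alternative via $\La^kA$ is also standard and equally valid.
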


The reversed form $\prod_{i=n-k+1}^ns_i(A) \leq
\prod_{i=n-k+1}^n|\lambda_i(A)|$ for every $1\leq k\leq n$ can be deduced
without much difficulty (exercise!). Equality is achieved for $k=n$ and we
have
\begin{equation}\label{eq:weyl1}
  \prod_{k=1}^n|\lambda_k(A)|
  =|\det(A)|
  =\sqrt{|\det(A)||\det(A^*)|}
  =|\det(\sqrt{AA^*})|
  =\prod_{k=1}^ns_k(A).
\end{equation}
By using \emph{majorization techniques} which can be found for instance in
\cite[Section 3.3]{MR1288752}, one may deduce from Weyl's inequalities that
for every real valued function $\varphi$ such that $t\mapsto\varphi(e^t)$ is
increasing and convex on $[s_n(A),s_1(A)]$, we have, for every $1\leq k\leq
n$,
\begin{equation}\label{eq:weyl2}
  \sum_{i=1}^k\varphi(|\lambda_i(A)|) \leq \sum_{i=1}^k\varphi(s_i(A)),
\end{equation}
see \cite[Theorem 3.3.13]{MR1288752}. In particular, taking $k=n$ and
$\varphi(t)=t^2$ gives
\begin{equation}\label{eq:weyl3}
  \sum_{i=1}^n|\lambda_i(A)|^2 \leq \sum_{i=1}^ns_i(A)^2
  =\TR(AA^*)=\sum_{i,j=1}^n|A_{i,j}|^2.
\end{equation}
Since $s_1(\cdot)=\NRM{\cdot}_{2\to2}$ we have for any $A,B\in\cM_{n}(\dC)$
that
\begin{equation}\label{eq:basic0}
  s_1(AB)\leq s_1(A)s_1(B)
  \quad\text{and}\quad
  s_1(A+B)\leq s_1(A)+s_1(B).
\end{equation}

We define the empirical eigenvalues and singular values measures by 
\[
\mu_A:=\frac{1}{n}\sum_{k=1}^n\delta_{\lambda_k(A)}
\quad\text{and}\quad
\nu_A:=\frac{1}{n}\sum_{k=1}^n\delta_{s_k(A)}.
\]
Note that $\mu_A$ and $\nu_A$ are supported respectively in $\dC$ and $\dR_+$.
There is a rigid determinantal relationship between $\mu_A$ and $\nu_A$,
namely from \eqref{eq:weyl1} we get
\begin{align*}
  \int\!\log|\la|\,d\mu_A(\la)
  &=\frac{1}{n}\sum_{i=1}^n\log|\la_i(A)| \\
  &=\frac{1}{n}\log|\det(A)| \\
  &=\frac{1}{n}\sum_{i=1}^n\log(s_i(A)) \\
  &=\int\!\log(s)\,d\nu_A(s).
\end{align*}
This identity is at the heart of the Hermitization technique used in sections
\ref{se:universal} and \ref{se:heavy}.

The singular values are quite regular functions of the matrix entries. For
instance, the Courant-Fischer formulas imply that the mapping $A\mapsto
(s_1(A),\ldots,s_n(A))$ is $1$-Lipschitz for the operator norm and the
$\ell^\infty$ norm in the sense that for any $A,B\in\cM_{n}(\dC)$,
\begin{equation}\label{eq:basic1}
  \max_{1\leq i\leq n}|s_i(A)-s_i(B)|\leq s_1(A-B).
\end{equation}
Recall that $\cM_n(\dC)$ or $\cM_n(\dR)$ are Hilbert spaces for the scalar
product $A\cdot B=\TR(AB^*)$. The norm $\NRM{\cdot}_2$ associated to this
scalar product, called the trace norm\footnote{Also known as the
  Hilbert-Schmidt norm, the Schur norm, or the Frobenius norm.}, satisfies to
\begin{equation}\label{eq:trace-norm}
  \NRM{A}_2^2=\TR(AA^*)=\sum_{i=1}^ns_i(A)^2=n\int\!s^2\,d\nu_A(s).
\end{equation}
The Hoffman-Wielandt inequality for the singular values states that for all
$A,B\in\cM_n(\dC)$,
\begin{equation}\label{eq:Hoffman-Wielandt}
  \sum_{i=1}^n(s_i(A)-s_i(B))^2 \leq \NRM{A-B}_2^2.
\end{equation}
In other words the mapping $A\mapsto (s_1(A),\ldots,s_n(A))$ is $1$-Lipschitz
for the trace norm and the Euclidean $2$-norm. See \cite[equation
(3.3.32)]{MR1288752} and \cite[Theorem 6.3.5]{MR1084815} for a proof.

In the sequel, we say that a sequence of (possibly signed) measures
${(\eta_n)}_{n\geq1}$ on $\dC$ (respectively on $\dR$) tends weakly to a
(possibly signed) measure $\eta$, and we denote
\[
\eta_n\weak\eta,
\]
when for all continuous and bounded function $f:\dC\to\dR$ (respectively
$f:\dR\to\dR$),
\[
\lim_{n\to\infty}\int\!f\,d\eta_n=\int\!f\,d\eta.
\]
This type of convergence does not capture the behavior of the support and of
the moments\footnote{Note that for empirical spectral distributions in random
  matrix theory, most of the time the limit is characterized by its moments,
  and this allows to deduce weak convergence from moments convergence.}.

\begin{example}[Spectra of non-normal matrices]\label{ex:nilpot}
  The eigenvalues depend continuously on the entries of the matrix. It turns
  out that for non-normal matrices, the eigenvalues are more sensitive to
  perturbations than the singular values. Among non-normal matrices, we find
  non-diagonalizable matrices, including nilpotent matrices. Let us recall a
  striking example taken from \cite{MR1929504} and \cite[Chapter
  10]{bai-silverstein-book}. Let us consider $A,B\in\cM_n(\dR)$ given by
  \[
  A=
  \begin{pmatrix}
    0 & 1 & 0 & \cdots & 0 \\
    0 & 0 & 1 & \cdots & 0 \\
    \vdots & \vdots & \vdots & & \vdots \\
    0 & 0 & 0 & \cdots & 1 \\
    0 & 0 & 0 & \cdots & 0
  \end{pmatrix}
  \quad\text{and}\quad
  B=
  \begin{pmatrix}
    0 & 1 & 0 & \cdots & 0 \\
    0 & 0 & 1 & \cdots & 0 \\
    \vdots  & \vdots & \vdots & & \vdots \\
    0 & 0 & 0 & \cdots & 1 \\
    \kappa_n & 0 & 0 & \cdots & 0
  \end{pmatrix}
  \]
  where ${(\kappa_n)}$ is a sequence of positive real numbers. The matrix $A$
  is nilpotent, and $B$ is a perturbation with small norm (and rank one!):
  \[
  \RANK(A-B)=1 \quad\text{and}\quad \NRM{A-B}_{2\to2}=\kappa_n.
  \]
  We have $\la_1(A)=\cdots=\la_{\kappa_n}(A)=0$ and thus 
  \[
  \mu_{A}=\de_0.
  \]
  In contrast, $B^n=\kappa_nI$ and thus $\la_k(B)=\kappa_n^{1/n}e^{2 k\pi
    i/n}$ for all $1\leq k\leq n$ which gives
  \[
  \mu_{B}\weak\mathrm{Uniform}\{z\in\dC:|z|=1\}
  \]
  as soon as $\kappa_n^{1/n}\to1$ (this allows $\kappa_n\to0$). On the other
  hand, from the identities
  \[
  AA^*=\mathrm{diag}(1,\ldots,1,0)
  \quad\text{and}\quad
  BB^*=\mathrm{diag}(1,\ldots,1,\kappa_n^2)
  \]
  we get $s_1(A)=\cdots=s_{n-1}(A)=1,s_n(A)=0$ and
  $s_1(B)=\cdots=s_{n-1}(B)=1,s_n(B)=\kappa_n$ for large enough $n$, and
  therefore, for any choice of $\kappa_n$, since the atom $\kappa_n$ has
  weight $1/n$,
  \[
  \nu_A\weak\de_1\quad\text{and}\quad\nu_B\weak\de_1.
  \]
  This example shows the stability of the limiting distribution of singular
  values under an additive perturbation of rank $1$ of arbitrary large norm,
  and the instability of the limiting eigenvalues distribution under an
  additive perturbation of rank $1$ of arbitrary small norm.
\end{example}

Beyond square matrices, one may define the singular values
$s_1(A),\ldots,s_{m}(A)$ of a rectangular matrix $A\in\cM_{m,n}(\dC)$ with
$m\leq n$ by $s_i(A):=\lambda_i(\sqrt{AA^*})$ for every $1\leq i\leq m$. The
famous Singular Value Decomposition (SVD, see \cite[Th.\ 3.3.1]{MR1288752})
states then that
\[
A=UDV^*
\]
where $U$ and $V$ are the unitary matrices of the eigenvectors of $AA^*$ and
$A^*A$ and where $D=\mathrm{diag}(s_1(A),\ldots,s_n(A))$ is a $m\times n$
diagonal matrix. The SVD is at the heart of many numerical techniques in
concrete applied mathematics (pseudo-inversion, regularization, low
dimensional approximation, principal component analysis, etc). Note that if
$A$ is square then the Hermitian matrix $H:=VDV^*$ and the unitary matrix
$W:=UV^*$ form the polar decomposition $A=WH$ of $A$. Note also that if $W_1$
and $W_2$ are unitary matrices then $W_1AW_2$ and $A$ have the same singular
values.

We refer to the books \cite{MR1288752} and \cite{MR1417720} for more details
on basic properties of the singular values and eigenvalues of deterministic
matrices. The sensitivity of the spectrum to perturbations of small norm is
captured by the notion of \emph{pseudo-spectrum}. Namely, for a matrix norm
$\NRM{\cdot}$ and a positive real $\veps$, the
$(\NRM{\cdot},\veps)$-pseudo-spectrum of $A$ is defined by
\[ \La_{\NRM{\cdot},\veps}(A):=\bigcup_{\NRM{A-B}\leq
  \veps}\BRA{\la_1(B),\ldots,\la_n(B)}.
\] 
If $A$ is normal then its pseudo-spectrum for the operator norm
$\NRM{\cdot}_{2\to2}$ coincides with the $\veps$-neighborhood of its spectrum.
The
pseudo-spectrum can be much larger for non-normal matrices. For instance, if
$A$ is the nilpotent matrix considered earlier, then the asymptotic (as
$n\to\infty$) pseudo-spectrum for the operator norm contains the unit disc if
$\kappa_n$ is well chosen. For more, see the book \cite{MR2155029}.

\section{Quarter circular and circular laws}
\label{se:circular}

The variance of a random variable $Z$ on $\dC$ is
$\VAR(Z)=\dE(|Z|^2)-|\dE(Z)|^2$. Let ${(X_{ij})}_{i,j\geq1}$ be an
infinite table of i.i.d.\ random variables on $\dC$ with variance $1$. We
consider the square random matrix $X:=(X_{ij})_{1\leq i,j\leq n}$ as a random
variable in $\cM_n(\dC)$. We write \emph{a.s.}, \emph{a.a.}, and \emph{a.e.}
for \emph{almost surely}, \emph{Lebesgue almost all}, and \emph{Lebesgue
  almost everywhere} respectively.

We start with a reformulation in terms of singular values of the classical
Marchenko-Pastur theorem for the ``empirical covariance matrix''
$\frac{1}{n}XX^*$. This theorem is universal in the sense that the limiting
distribution does not depend on the law of $X_{11}$.

\begin{theorem}[Marchenko-Pastur quarter circular
  law]\label{th:quarter-circular}
  a.s.\ $\nu_{n^{-1/2}X}\weak\cQ_2$ as $n\to\infty$, where $\cQ_2$ is the
  quarter circular law\footnote{Actually, it is a quarter ellipse rather than
    a quarter circle, due to the normalizing factor $1/\pi$. However, one may
    use different scales on the horizontal and vertical axes to see a true
    quarter circle, as in figure \ref{fi:qcl}.} on $[0,2]\subset\dR_+$ with
  density $x\mapsto\pi^{-1}\sqrt{4-x^2}\IND_{[0,2]}(x)$.
\end{theorem}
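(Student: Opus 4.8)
The plan is to establish the Marchenko–Pastur theorem in its quarter-circular form by the method of moments applied to the Hermitian matrix $S_n := n^{-1}XX^*$, and then to transfer the conclusion from eigenvalues of $S_n$ to singular values of $n^{-1/2}X$ via $s_k(n^{-1/2}X)^2 = \lambda_k(S_n)$. The symmetrized limit: if $\lambda_k(S_n)$ has empirical distribution converging to a law $\rho$ on $[0,4]$, then $s_k(n^{-1/2}X) = \sqrt{\lambda_k(S_n)}$ has empirical distribution converging to the pushforward of $\rho$ under $x\mapsto\sqrt{x}$, and a direct computation shows that the pushforward of the Marchenko–Pastur law with density $\frac{1}{2\pi x}\sqrt{x(4-x)}\IND_{[0,4]}(x)$ under the square root is precisely $\cQ_2$, i.e.\ has density $\pi^{-1}\sqrt{4-y^2}\IND_{[0,2]}(y)$. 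So it suffices to identify $\rho$ as the standard Marchenko–Pastur law (ratio parameter $1$).

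First I would reduce to bounded entries: by a standard truncation argument (truncate $X_{ij}$ at level $t_n\to\infty$ slowly, recenter and rescale to keep variance $1$), combined with the Hoffman–Wielandt inequality \eqref{eq:Hoffman-Wielandt} and the rank inequality for singular values, the limiting singular value distribution is unchanged; this is where one uses that weak convergence is insensitive to low-rank or small-Hilbert–Schmidt-norm perturbations. Having reduced to entries with all moments finite, I would compute, for each fixed $p\geq1$,
\[
\dE\int\! x^p\,d\mu_{S_n}(x) = \frac{1}{n}\,\dE\,\TR(S_n^p) = \frac{1}{n^{p+1}}\sum \dE\big(X_{i_1 j_1}\overline{X_{i_2 j_1}}X_{i_2 j_2}\overline{X_{i_3 j_2}}\cdots\big),
\]
the sum ranging over closed walks on the bipartite index structure. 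The combinatorial heart of the argument is that, in the limit $n\to\infty$, only the walks that traverse each edge exactly twice survive (each such contributes $1$ by the variance-$1$ normalization, all others are lower order), and these are counted by the Narayana numbers; their generating function gives $\int x^p\,d\rho(x) = \sum_{k=0}^{p-1}\frac{1}{k+1}\binom{p}{k}\binom{p-1}{k}$, which are exactly the moments of the Marchenko–Pastur law. A variance computation (again via walk counting, showing $\VAR\TR(S_n^p) = O(1)$, hence the normalized version is $O(n^{-2})$) plus Borel–Cantelli upgrades convergence in expectation to almost sure convergence; and since the limit law has compact support it is determined by its moments, yielding a.s.\ weak convergence $\mu_{S_n}\weak\rho$.

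The main obstacle is the combinatorial bookkeeping in the moment computation: organizing the closed walks on the bipartite graph, showing that walks with a repeated edge of multiplicity $\geq3$ or with more than $p+1$ distinct vertices are negligible after dividing by $n^{p+1}$, and correctly enumerating the dominant ``double-tree'' walks by Narayana numbers. A secondary technical point is making the truncation step fully rigorous while only assuming variance $1$ (no higher moments a priori) — here one must be careful that recentering does not destroy the i.i.d.\ structure in a way that matters, which is handled by noting the recentering shift is $o(1)$ and contributes a rank-controlled or Hilbert–Schmidt-controlled error. I would present the truncation and the moment/variance estimates as the two lemmas, then assemble them with Borel–Cantelli and the square-root pushforward at the end.
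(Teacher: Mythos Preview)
Your proof plan is correct and follows the classical textbook route, but it is not the argument the paper uses. The paper does not give a stand-alone proof of the quarter circular law; instead it obtains it as the special case $z=0$ of Corollary~\ref{cor:DS}, whose proof (in \S\ref{subsec:proofcongSV}) proceeds via the Cauchy--Stieltjes transform rather than moments. Concretely, the paper (i) reduces to $\dE\nu$ via a McDiarmid-type concentration for the empirical singular-value measure (Lemma~\ref{le:concspec}), (ii) truncates and centers as you do, (iii) linearizes by passing to the $2n\times 2n$ Hermitian bipartized matrix $H(z)$ and its resolvent, (iv) uses Schur's complement to derive an approximate fixed-point equation for the trace of the resolvent, and (v) shows that equation has a unique solution in $\dC_+$.

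What each approach buys: your moment method is self-contained and arguably more elementary for the single statement $z=0$, with the combinatorics (double-tree walks, Narayana numbers) giving the limit explicitly. The paper's resolvent/linearization route is less explicit about the limit density but is uniform in the additive shift $-zI$, which is exactly what the Hermitization strategy for the circular law requires; the quarter circular law then drops out for free. One small remark on your truncation step: the recentering perturbation is indeed rank one (it is a constant times the all-ones matrix), so the Lidskii/rank bound handles it cleanly, just as in the paper's Step Two.
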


The $n^{-1/2}$ normalization factor is easily understood from the law of large
numbers:
\begin{equation}\label{eq:varbound}
  \int\!s^2\,d\nu_{n^{-1/2}X}(s)
  =\frac{1}{n^2}\sum_{i=1}^ns_i(X)^2
  =\frac{1}{n^2}\TR(XX^*)
  =\frac{1}{n^2}\sum_{i,j=1}^n|X_{i,j}|^2\to\dE(|X_{1,1}|^2).
\end{equation}

The central subject of these notes is the following counterpart for the
eigenvalues.

\begin{theorem}[Girko circular law]\label{th:circular}
  a.s.\ $\mu_{n^{-1/2}X}\weak\cC_1$ as $n\to\infty$, where $\cC_1$ is the
  circular law\footnote{It is not customary to call it instead the ``disc
    law''. The terminology corresponds to what we actually draw: a circle for
    the circular law, a quarter circle (actually a quarter ellipse) for the
    quarter circular law, even if it is the boundary of the support in the
    first case, and the density in the second case. See figure \ref{fi:qcl}.}
  which is the uniform law on the unit disc of $\dC$ with density
  $z\mapsto\pi^{-1}\IND_{\{z\in\dC:|z|\leq 1\}}$.
\end{theorem}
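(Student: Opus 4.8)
The plan is to follow Girko's Hermitization method together with the logarithmic potential, in the form revisited by Tao and Vu and developed in Section~\ref{se:universal}. Recall that a probability measure $\mu\in\cP(\dC)$ is determined by its logarithmic potential $U_\mu(z)=-\int_\dC\log\ABS{z-\lambda}\,d\mu(\lambda)$ through the distributional identity $\Delta U_\mu=-2\pi\mu$ in $\cD'(\dC)$. Hence it suffices to show that, a.s., $U_{\mu_{n^{-1/2}X}}(z)\to U_{\cC_1}(z)$ for a.a.\ $z\in\dC$, with enough uniform integrability to upgrade this to convergence in $L^1_{\mathrm{loc}}(\dC)$, and then to conclude via a.s.\ tightness of $\mu_{n^{-1/2}X}$. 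The algebraic input is the identity already recorded above, applied to $A=n^{-1/2}X-zI$:
\[
U_{\mu_{n^{-1/2}X}}(z)=-\frac{1}{n}\log\ABS{\det(n^{-1/2}X-zI)}=-\int_0^\infty\!\log(s)\,d\nu_{n^{-1/2}X-zI}(s),
\]
which turns a question about the complex spectrum of $n^{-1/2}X$ into one about the singular values of the shifted matrices $n^{-1/2}X-zI$, equivalently about the $2n\times2n$ Hermitian matrices $H_{n^{-1/2}X-zI}$.

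\emph{Step 1 (limiting singular value distribution).} For each fixed $z\in\dC$ I would show that a.s.\ $\nu_{n^{-1/2}X-zI}\weak\nu_z$ for an explicit deterministic $\nu_z\in\cP(\dR_+)$ depending only on $\ABS z$ and not on the law of $X_{11}$ beyond its variance. This is a Marchenko--Pastur--type universality statement: one writes the Cauchy--Stieltjes transform $m_n$ of the symmetrized singular value distribution of $n^{-1/2}X-zI$ (i.e.\ of the spectral measure of $H_{n^{-1/2}X-zI}$), derives a self-consistent fixed-point equation for its limit by the resolvent/Schur-complement method, and checks uniqueness of the solution in $\dC_+$. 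The a.s.\ statement follows from Azuma--Hoeffding concentration along the martingale revealing the rows of $X$, while truncation and recentering of the entries (legitimate since only the variance survives in the limit) reduce the general case to bounded entries.

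\emph{Step 2 (uniform integrability of $\log$).} Step 1 controls the bulk of $-\int\log(s)\,d\nu_{n^{-1/2}X-zI}(s)$, but $\log$ is unbounded near $0$ and near $\infty$, so one must rule out escaping mass. Near infinity this is routine: with overwhelming probability $s_1(n^{-1/2}X-zI)\le\ABS z+s_1(n^{-1/2}X)\le C(z)$ by a Bai--Yin-type (or a crude moment/truncation) operator-norm bound, so $\int_{\{s>C\}}\log s\,d\nu_n\to0$. \emph{The hard part is the behaviour near $0$}: one needs that with probability $1-o(1)$ the least singular value satisfies $s_n(n^{-1/2}X-zI)\ge n^{-B}$ for some $B=B(z)$, and, more delicately, that the number of small singular values is controlled, so that $\int_0^\veps\ABS{\log s}\,d\nu_n(s)\to0$ uniformly in $n$ as $\veps\to0$. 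The lower bound on $s_n$ is an inverse-Littlewood--Offord / small-ball problem, treated via the compressible--incompressible decomposition of the sphere, and appendix~\ref{se:ap:inv} provides precisely such a weak control under minimal assumptions; the bound on the count of small singular values can be extracted from a bound on $\int(s^2+\eta^2)^{-1}\,d\nu_n(s)$, i.e.\ on $\Im m_n$ near the real axis, combined with an interlacing/rank argument. This is the main obstacle, and the reason the universal circular law was settled only relatively recently.

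\emph{Step 3 (identification of the limit and conclusion).} Steps 1--2 give, for a.a.\ $z$, that a.s.\ $U_{\mu_{n^{-1/2}X}}(z)\to-\int_0^\infty\log(s)\,d\nu_z(s)=:U_\infty(z)$. To see $U_\infty=U_{\cC_1}$, I would invoke the exact computation for the Complex Ginibre Ensemble in Section~\ref{se:gaussian}, which yields $\mu_{n^{-1/2}G}\weak\cC_1$ and hence, through the same Hermitization identity and the universality of $\nu_z$ from Step 1, forces $-\int\log(s)\,d\nu_z(s)=U_{\cC_1}(z)$; alternatively one computes this integral directly from the fixed-point equation of Step 1. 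Then the a.e.\ convergence $U_{\mu_{n^{-1/2}X}}\to U_{\cC_1}$, upgraded to $L^1_{\mathrm{loc}}(\dC)$ by the uniform integrability of Step 2, gives $\mu_{n^{-1/2}X}=-\tfrac{1}{2\pi}\Delta U_{\mu_{n^{-1/2}X}}\weak-\tfrac{1}{2\pi}\Delta U_{\cC_1}=\cC_1$ in $\cD'(\dC)$; combined with a.s.\ tightness of $\mu_{n^{-1/2}X}$ (the spectral radius being a.s.\ bounded, using $\ABS{\lambda_1}\le s_1$ and the operator-norm estimates of Step 2) this yields the asserted weak convergence of probability measures. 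Finally, the scheme above naturally produces convergence in probability first; the a.s.\ statement follows by a Borel--Cantelli argument along a subsequence, using the concentration already invoked, and interpolation between consecutive dimensions.
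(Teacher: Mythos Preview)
Your overall architecture is exactly that of the paper: Hermitization via the identity $U_{\mu_{n^{-1/2}X}}(z)=-\int_0^\infty\log(s)\,d\nu_{n^{-1/2}X-zI}(s)$, convergence of $\nu_{n^{-1/2}X-zI}$ to a universal $\nu_z$ (Corollary~\ref{cor:DS}), uniform integrability of $\log$ split into a smallest-singular-value bound and a count of small singular values, and identification of the limit via the Ginibre case. Two points, however, are genuine gaps.

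\textbf{Upper tail.} Under the hypothesis of Theorem~\ref{th:circular} you only have $\dE|X_{11}|^2<\infty$. In that generality neither $s_1(n^{-1/2}X)$ nor the spectral radius $|\lambda_1(n^{-1/2}X)|$ need be a.s.\ bounded (Bai--Yin requires a fourth moment), so your ``$s_1\le C(z)$'' and ``spectral radius a.s.\ bounded'' claims fail as stated. The paper avoids the operator norm entirely: for uniform integrability at infinity and for tightness of $\mu_{n^{-1/2}X}$ it uses only the $L^2$ moment $\int s^2\,d\nu_{n^{-1/2}X}=\frac{1}{n^2}\sum_{i,j}|X_{ij}|^2\to\dE|X_{11}|^2$ a.s.\ (equation~\eqref{eq:varbound}) together with Weyl's inequality \eqref{eq:weyl3}. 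You should replace your operator-norm argument by this trace/second-moment bound.

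\textbf{From ``in probability'' to ``almost surely''.} By invoking appendix~\ref{se:ap:inv} (lemma~\ref{le:snRV}) for the smallest singular value you inherit a non-summable probability estimate of order $\sqrt{\log n/n}$, and your scheme indeed only delivers convergence in probability. Your proposed upgrade (``Borel--Cantelli along a subsequence plus interpolation between consecutive dimensions'') does not work: the eigenvalues of $n^{-1/2}X$ and of $(n+1)^{-1/2}X$ are not related by any interlacing or perturbation inequality that would let you control the logarithmic potential in between, and the Azuma--Hoeffding concentration you used applies to bounded test functions or to the Stieltjes transform, not to the unbounded $\log$ near $0$. The paper closes this gap differently: it uses the full Tao--Vu bound (lemma~\ref{le:sn}), $\dP(s_n(X+M)\le n^{-b})\le n^{-a}$ for \emph{any} $a>0$, which is summable and yields $s_n\ge n^{-b}$ a.s.\ for $n\gg1$ via Borel--Cantelli. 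Combined with the a.s.\ count of small singular values (lemma~\ref{le:scount}, via distances of rows to subspaces and Talagrand concentration), this gives a.s.\ uniform integrability of $\log$ directly, and lemma~\ref{le:girko} then produces the a.s.\ weak convergence in one stroke. Your local-Wegner/Stieltjes-transform idea for the small-singular-value count is a legitimate alternative to lemma~\ref{le:scount} (the paper notes this explicitly), but for the \emph{smallest} singular value you really need the stronger lemma~\ref{le:sn}, not appendix~\ref{se:ap:inv}, to reach the almost-sure statement.
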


Note that if $Z$ is a complex random variable following the uniform law on the
unit disc $\{z\in\dC:|z|\leq1\}$ then the random variables $\Re(Z)$ and
$\Im(Z)$ follow the semi circular law on $[-1,1]$, but are not independent.
Additionally, the random variables $|\Re(Z)|$ and $|\Im(Z)|$ follow the
quarter circular law on $[0,1]$, and $|Z|$ follows the law with density
$\rho\mapsto\frac{1}{2}\rho\IND_{[0,1]}(\rho)$. We will see in section
\ref{se:related} that the notion of freeness developed in free probability is
the key to understand these relationships. An extension of theorem
\ref{th:quarter-circular} is the key to deduce theorem \ref{th:circular} via a
Hermitization technique, as we will see in section \ref{se:universal}.

The circular law theorem \ref{th:circular} has a long history. It was
established through a sequence of partial results during the period
1965--2009, the general case being finally obtained by Tao and Vu
\cite{tao-vu-cirlaw-bis}. Indeed Mehta \cite{MR0220494} was the first to
obtain a circular law theorem for the expected empirical spectral distribution
in the complex Gaussian case, by using the explicit formula for the spectrum
due to Ginibre \cite{MR0173726}. Edelman was able to prove the same kind of
result for the far more delicate real Gaussian case \cite{MR1437734}.
Silverstein provided an argument to pass from the expected to the almost sure
convergence in the complex Gaussian case \cite{MR841088}. Girko worked on the
universal version and came with very good ideas such as the Hermitization
technique \cite{MR773436,MR1310560,MR2046403,MR2085255,MR2130247}.
Unfortunately, his work was controversial due to a lack of clarity and
rigor\footnote{Girko's writing style is also quite original, see for instance
  the recent paper \cite{MR2747827}.}. In particular, his approach relies
implicitly on an unproved uniform integrability related to the behavior of the
smallest singular values of random matrices. Let us mention that the
Hermitization technique is also present in the work of Widom \cite{MR1300209}
on Toeplitz matrices and in the work of Goldsheid and Khoruzhenko
\cite{MR1800072}. Bai \cite{MR1428519} was the first to circumvent the problem
in the approach of Girko, at the price of bounded density assumptions and
moments assumptions\footnote{\emph{\ldots I worked for 13 years from 1984 to
    1997, which was eventually published in Annals of Probability. It was the
    hardest problem I have ever worked on.} Zhidong Bai, interview with Atanu
  Biswas in 2006 \cite{MR2407790}.}. Bai improved his approach in his book
written with Silverstein \cite{bai-silverstein-book}. His approach involves
the control of the speed of convergence of the singular values distribution.
\'Sniady considered a universal version beyond random matrices and the
circular law, using the notion of $*$-moments and Brown measure of operators
in free probability, and a regularization by adding an independent Gaussian
Ginibre noise \cite{MR1929504}. Goldsheid and Khoruzhenko \cite{MR2191234}
used successfully the logarithmic potential to derive the analogue of the
circular law theorem for random non-Hermitian tridiagonal matrices. The
smallest singular value of random matrices was the subject of an impressive
activity culminating with the works of Tao and Vu \cite{tao-vu-singular} and
of Rudelson and Vershynin \cite{MR2407948}, using tools from asymptotic
geometric analysis and additive combinatorics (Littlewood-Offord problems).
These achievements allowed G{\"o}tze and Tikhomirov
\cite{gotze-tikhomirov-new} to obtain the expected circular law theorem up to
a small loss in the moment assumption, by using the logarithmic potential.
Similar ingredients are present in the work of Pan and Zhou \cite{1687963}. At
the same time, Tao and Vu, using a refined bound on the smallest singular
value and the approach of Bai, deduced the circular law theorem up to a small
loss in the moment assumption \cite{MR2409368}. As in the works of Girko, Bai
and their followers, the loss was due to a sub-optimal usage of the
Hermitization approach. In \cite{tao-vu-cirlaw-bis}, Tao and Vu finally
obtained the full circular law theorem \ref{th:circular} by using the full
strength of the logarithmic potential, and a new control of the count of the
small singular values which replaces the speed of convergence estimates of
Bai. See also their synthetic paper \cite{MR2507275}. We will follow
essentially their approach in section \ref{se:universal} to prove theorem
\ref{th:circular}.

\begin{figure}[htbp]
  \begin{center}
    \includegraphics[width=31em,height=15em]{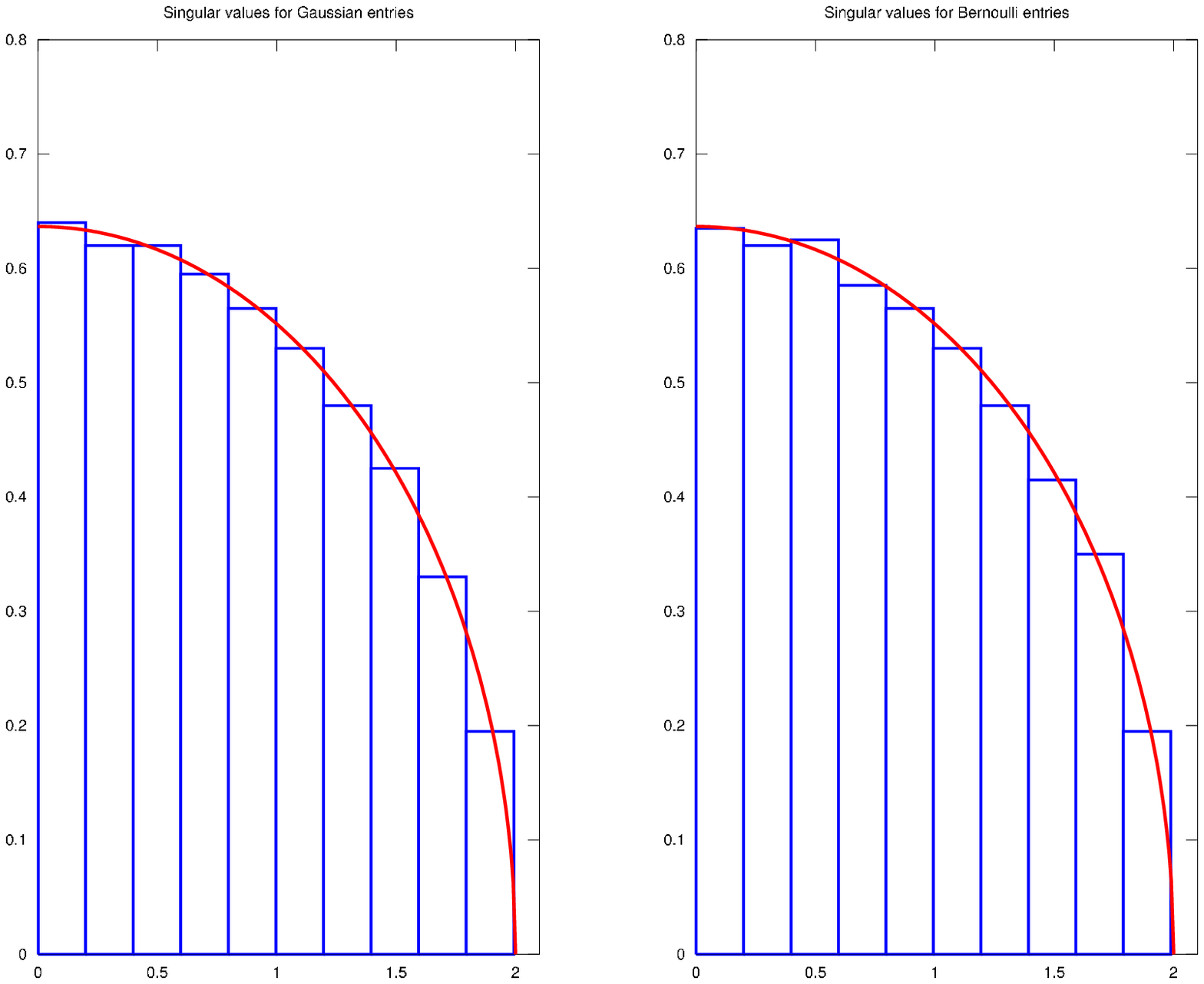}
    \includegraphics[scale=0.75]{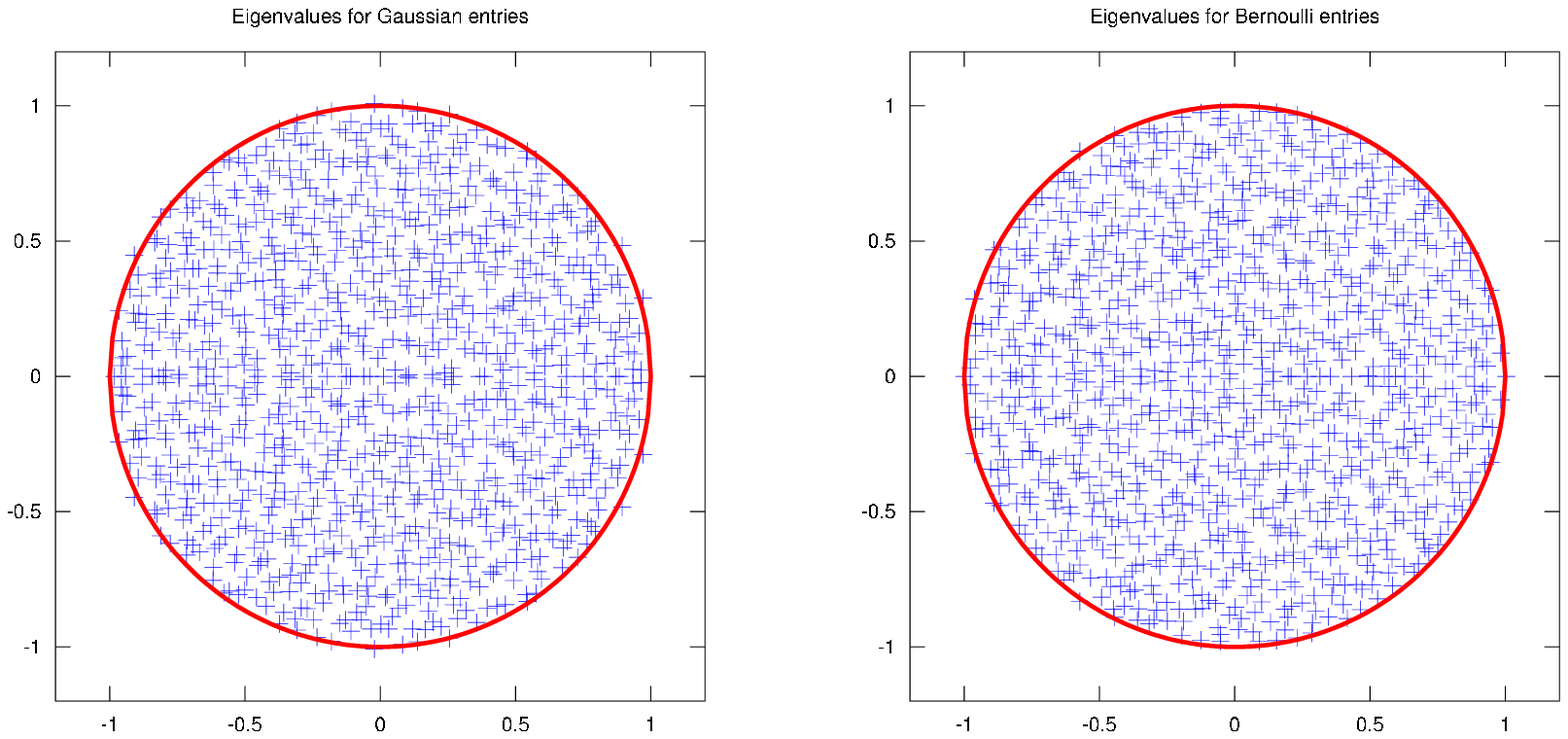}
    \caption{Illustration of universality in the quarter circular law and the
      circular law theorems \ref{th:quarter-circular} and \ref{th:circular}.
      The plots are made with the singular values (upper plots) and
      eigenvalues (lower plot) for a single random matrix $X$ of dimension
      $n=1000$. On the left hand side, $X_{11}$ follows a standard Gaussian
      law on $\dR$, while on the right hand side $X_{11}$ follows a symmetric
      Bernoulli law on $\{-1,1\}$. Since $X$ has real entries, the spectrum is
      symmetric with respect to the real axis. 
      A striking fact behind such simulations for the eigenvalues (lower
      plots) is the remarkable stability of the numerical algorithms for the
      eigenvalues despite the sensitivity of the spectrum of non-normal
      matrices. Is it the \'Sniady regularization of Brown measure theorem
      \cite{MR1929504} at work due to floating point approximate numerics?}
    \label{fi:qcl}
  \end{center}
\end{figure}

The a.s.\ tightness of $\mu_{n^{-1/2}X}$ is easily understood since by Weyl's
inequality we obtain
\[
\int\!|\la|^2\,d\mu_{n^{-1/2}X}(\la)
=\frac{1}{n^2}\sum_{i=1}^n|\la_i(X)|^2
\leq \frac{1}{n^2}\sum_{i=1}^ns_i(X)^2
= \int\!s^2\,d\nu_{n^{-1/2}X}(s).
\]

The convergence in the couple of theorems above is the weak convergence of
probability measures with respect to continuous bounded functions. We recall
that this mode of convergence does not capture the convergence of the support.
It implies only that a.s.\
\[
\varliminf_{n\to\infty}s_1(n^{-1/2}X)\geq 2
\quad\text{and}\quad 
\varliminf_{n\to\infty}|\la_1(n^{-1/2}X)|\geq 1.
\]
However, following \cite{MR1235416,MR963829,bai-silverstein-book} and
\cite{MR863545,1687963}, if $\dE(X_{1,1})=0$ and $\dE(|X_{1,1}|^4)<\infty$
then a.s.\footnote{The argument is based on Gelfand's formula: if
  $A\in\cM_n(\dC)$ then $|\la_1(A)|=\lim_{k\to\infty}\Vert A^k\Vert^{1/k}$ for
  any norm $\NRM{\cdot}$ on $\cM_n(\dC)$ (recall that all norms are equivalent in finite dimension).
  In the same spirit, the Yamamoto theorem states that
  $\lim_{k\to\infty} s_i(A^k)^{1/k}=|\la_i(A)|$ for every $1\leq i\leq n$, see
  \cite[Theorem 3.3.21]{MR1288752}.}
\[
\lim_{n\to\infty}s_1(n^{-1/2}X)= 2
\quad\text{and}\quad 
\lim_{n\to\infty}|\la_1(n^{-1/2}X)|=1.
\]
The asymptotic factor $2$ between the operator norm and the spectral radius
indicates in a sense that $X$ is a non-normal matrix asymptotically as
$n\to\infty$ (note that if $X_{11}$ is absolutely continuous then $X$ is
absolutely continuous and thus $XX^*\neq X^*X$ a.s.\ which means that $X$ is
non-normal a.s.). The law of the modulus under the circular law has density
$\rho\mapsto 2\rho\IND_{[0,1]}(\rho)$ which differs completely from the shape
of the quarter circular law $s\mapsto\pi^{-1}\sqrt{4-s^2}\,\IND_{[0,2]}(s)$,
see figure \ref{fi:nonorm}. The integral of ``$\log$'' for both laws is the
same.

\begin{figure}[htbp]
 \includegraphics[scale=0.5]{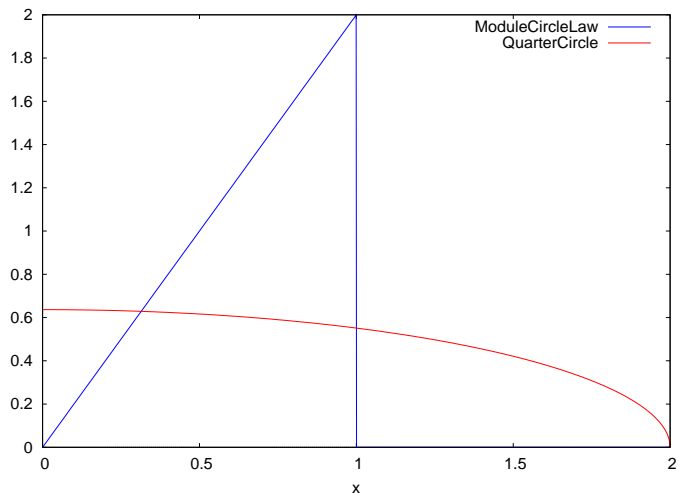}
 \caption{Comparison between the quarter circular distribution of theorem
   \ref{th:quarter-circular} for the singular values, and the modulus under
   the circular law of theorem \ref{th:circular} for the eigenvalues. The
   supports and the shapes are different. This difference indicates the
   asymptotic non-normality of these matrices. The integral of the function
   $t\mapsto\log(t)$ is the same for both distributions.}
 \label{fi:nonorm}
\end{figure}

\section{Gaussian case}
\label{se:gaussian}

This section is devoted to the case where $X_{11}\sim\cN(0,\frac{1}{2}I_2)$.
From now on, we denote $G$ instead of $X$ in order to distinguish the Gaussian
case from the general case. We say that $G$ belongs to the \emph{Complex
  Ginibre Ensemble}. The Lebesgue density of the $n\times n$ random matrix
$G=(G_{i,j})_{1\leq i,j\leq n}$ in $\mathcal{M}_n(\dC)\equiv\dC^{n\times n}$
is
\begin{equation}\label{eq:ginden}
  A\in\mathcal{M}_n(\dC)%
  \mapsto 
  \pi^{-n^2}e^{-\sum_{i,j=1}^n|A_{ij}|^2}
\end{equation}
where $A^*$ the conjugate-transpose of $A$. This law is a Boltzmann
distribution with energy 
\[
A\mapsto\sum_{i,j=1}^n|A_{ij}|^2 %
=\TR(AA^*) %
=\NRM{A}_2^2 %
=\sum_{i=1}^ns_i^2(A).
\]
This law is unitary invariant, in the sense that if $U$ and $V$ are $n\times
n$ unitary matrices then $UGV$ and $G$ are equally distributed. If $H_1$ and
$H_2$ are two independent copies of GUE\footnote{Up to scaling, a random
  $n\times n$ Hermitian matrix $H$ belongs to the Gaussian Unitary Ensemble
  (GUE) when its density is proportional to
  $H\mapsto\exp(-\frac{1}{2}\TR(H^2))=\exp(-\frac{1}{2}\sum_{i=1}^n|H_{ii}|^2-\sum_{1\leq
    i<j\leq n}|H_{ij}|^2)$. Equivalently $\{H_{ii},H_{ij}:1\leq i\leq
  n,i<j\leq n\}$ are indep.\ and $H_{ii}\sim\cN(0,1)$ and
  $H_{ij}\sim\cN(0,\frac{1}{2}I_2)$ for $i\neq j$.} then $(H_1+iH_2)/\sqrt{2}$
has the law of $G$. Conversely, the matrices $(G+G^*)/\sqrt{2}$ and
$(G-G^*)/\sqrt{2i}$ are independent and belong to the GUE.

The singular values of $G$ are the square root of the eigenvalues of the
positive semidefinite Hermitian matrix $GG^*$. The matrix $GG^*$ is a complex
Wishart matrix, and belongs to the complex Laguerre Ensemble ($\beta=2$). The
empirical distribution of the singular values of $n^{-1/2}G$ tends to the
Marchenko-Pastur quarter circular distribution (Gaussian case in theorem
\ref{th:quarter-circular}). This section is rather devoted to the study of the
eigenvalues of $G$, and in particular to the proof of the circular law theorem
\ref{th:circular} in this Gaussian settings.

\begin{lemma}[Diagonalizability]\label{le:diag}
  For every $n\geq1$, the set of elements of $\mathcal{M}_n(\dC)$ with
  multiple eigenvalues has zero Lebesgue measure in $\dC^{n\times n}$.
  In particular, the set of non-diagonalizable elements of
  $\mathcal{M}_n(\dC)$ has zero Lebesgue measure in
  $\dC^{n\times n}$.
\end{lemma}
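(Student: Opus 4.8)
The plan is to exhibit the set of matrices with a repeated eigenvalue as the zero set of a single nonzero polynomial in the $2n^2$ real coordinates (equivalently, a polynomial in the $n^2$ complex entries), and then invoke the standard fact that the zero set of a nonzero polynomial has Lebesgue measure zero in $\dR^{2n^2}$. The natural candidate is the discriminant of the characteristic polynomial: for $A\in\cM_n(\dC)$ write $P_A(z)=\det(zI-A)=\prod_{k=1}^n(z-\lambda_k(A))$ and set
\[
\mathrm{Disc}(A):=\prod_{1\leq i<j\leq n}(\lambda_i(A)-\lambda_j(A))^2.
\]
This is a symmetric polynomial in the roots, hence a polynomial in the coefficients of $P_A$; since those coefficients are themselves polynomials (up to sign, the elementary symmetric functions) in the entries $A_{ij}$, the map $A\mapsto\mathrm{Disc}(A)$ is a polynomial $\dC^{n\times n}\to\dC$. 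By construction $\mathrm{Disc}(A)=0$ if and only if $A$ has at least one repeated eigenvalue.

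First I would check that this polynomial is not identically zero, which is immediate: taking $A=\mathrm{diag}(1,2,\ldots,n)$ gives distinct eigenvalues, so $\mathrm{Disc}(A)\neq 0$. Writing $\mathrm{Disc}=\Re(\mathrm{Disc})+i\,\Im(\mathrm{Disc})$, at least one of the two real polynomials $\Re(\mathrm{Disc})$, $\Im(\mathrm{Disc})$ in the $2n^2$ real variables $(\Re A_{ij},\Im A_{ij})$ is not identically zero, and the set of matrices with multiple eigenvalues is contained in its zero locus. Then I would invoke the elementary lemma that the zero set of a nonidentically-zero polynomial $\dR^N\to\dR$ has Lebesgue measure zero (proof by induction on $N$: Fubini plus the fact that a nonzero one-variable polynomial has finitely many roots); this gives the first assertion.

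For the ``in particular'' clause, recall that if $A\in\cM_n(\dC)$ has $n$ distinct eigenvalues then it is diagonalizable (eigenvectors for distinct eigenvalues are linearly independent). Hence the set of non-diagonalizable matrices is contained in the set of matrices with a repeated eigenvalue, which we have just shown to be Lebesgue-null. I expect no real obstacle here; the only point requiring a line of care is the claim that the discriminant is genuinely a polynomial in the entries (rather than merely a continuous function), which follows from expressing it via the resultant $\mathrm{Res}(P_A,P_A')$ — a determinant of a Sylvester matrix whose entries are polynomial in the coefficients of $P_A$ — or equivalently from the fundamental theorem on symmetric functions applied to $\prod_{i<j}(\lambda_i-\lambda_j)^2$.
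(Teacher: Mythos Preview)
Your argument is correct and is essentially the same as the paper's: both identify the set of matrices with a repeated eigenvalue as the zero locus of the discriminant (equivalently, the resultant $R(P_A,P_A')$ realized as a Sylvester determinant), observe that this is a polynomial in the entries of $A$, and conclude that the set is a proper algebraic hypersurface and hence Lebesgue-null. If anything, your write-up is a bit more explicit than the paper's, since you verify nontriviality of the polynomial via $\mathrm{diag}(1,\ldots,n)$ and spell out the real/imaginary-part reduction and the Fubini induction for the measure-zero claim.
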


\begin{proof}
  If $A\in\cM_n(\dC)$ has characteristic polynomial
  \[
  P_A(z)=z^n+a_{n-1}z^{n-1}+\cdots+a_0,
  \]
  then $a_0,\ldots,a_{n-1}$ are polynomials of the entries of $A$. The
  \emph{resultant} $R(P_A,P'_A)$ of $P_A,P'_A$, called the \emph{discriminant}
  of $P_A$, is the determinant of the $(2n-1)\times(2n-1)$ Sylvester matrix of
  $P_A,P'_A$. It is a polynomial in $a_0,\ldots,a_{n-1}$. We have also the
  Vandermonde formula
  \[
  |R(P_A,P'_A)|=\prod_{i{<}j}|\lambda_i(A)-\lambda_j(A)|^2.
  \]
  Consequently, $A$ has all eigenvalues distinct if and only if $A$ lies
  outside the proper polynomial hyper-surface $\{A\in\dC^{n\times
    n}:R(P_A,P'_A)=0\}$.
\end{proof}

Since $G$ is absolutely continuous, we have a.s.\ $GG^*\neq G^*G$
(non-normality). Additionally, lemma \ref{le:diag} gives that a.s.\ $G$ is
diagonalizable with distinct eigenvalues. Following Ginibre \cite{MR0173726}
-- see also \cite[Chapter 15]{MR2129906,MR2641363} and \cite{KS} -- one may
then compute the joint density of the eigenvalues $\la_1(G),\ldots,\la_n(G)$
of $G$ by integrating \eqref{eq:ginden} over the non-eigenvalues variables.
The result is stated in theorem \ref{th:complex-ginibre-density} below. It is
worthwhile to mention that in contrast with Hermitian unitary invariant
ensembles, the computation of the spectrum law is problematic if one replaces
the square potential by a more general potential, see \cite{KS}. The law of
$G$ is invariant by the multiplication of the entries with a common phase, and
thus the law of the spectrum of $G$ has also the same property. In the sequel
we set
\[
\Delta_n:=\{(z_1,\ldots,z_n)\in\dC^n:|z_1|\geq\cdots\geq|z_n|\}.
\]

\begin{theorem}[Spectrum law]\label{th:complex-ginibre-density}
  $(\lambda_1(G),\ldots,\lambda_n(G))$ has density
  $n!\varphi_n\IND_{\Delta_n}$ where
  \[
  \varphi_n(z_1,\ldots,z_n)=\frac{\pi^{-n^2}}{1!2!\cdots n!}
  \exp\PAR{-\sum_{k=1}^n|z_k|^2}\prod_{1\leq i{<}j\leq n}|z_i-z_j|^2.
  \]
  In particular, for every symmetric Borel function $F:\dC^n\to\dR$,
  \[
  \dE[F(\lambda_1(G),\ldots,\lambda_n(G))]
  =\int_{\dC^n}\!F(z_1,\ldots,z_n)\varphi_n(z_1,\ldots,z_n)\,dz_1\cdots dz_n.
  \]
\end{theorem}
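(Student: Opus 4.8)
The plan is to start from the Ginibre density \eqref{eq:ginden} on $\cM_n(\dC)$ and perform a change of variables to a Schur-type decomposition $A=U(Z+T)U^*$, where $U$ is unitary, $Z=\mathrm{diag}(z_1,\ldots,z_n)$ carries the eigenvalues, and $T$ is strictly upper triangular. By Lemma \ref{le:diag} and the remark that $G$ is a.s.\ diagonalizable with distinct eigenvalues, this parametrization is valid off a Lebesgue-null set; one must fix the ambiguity in $U$ by, say, requiring the eigenvalues in the order of $\Delta_n$ and normalizing the phases of the first nonzero coordinate of each eigenvector, so that the fibre over a generic $A$ is discrete (the residual freedom being the diagonal torus of phases, which integrates to a harmless constant). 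First I would record the total number of real parameters: $2n$ for $Z$, $n(n-1)$ for $T$, and $n^2-n$ for the ``non-diagonal'' part of $U$ — a total of $2n^2$, matching $\dim_\dR\cM_n(\dC)$, which is the sanity check that the map is a local diffeomorphism.

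The key computation is the Jacobian of $(U,Z,T)\mapsto A$. Since the Ginibre density depends on $A$ only through $\NRM{A}_2^2=\TR(AA^*)$, and this equals $\sum_k|z_k|^2+\NRM{T}_2^2$ because conjugation by $U$ is an isometry for $\NRM{\cdot}_2$ and $Z+T$ is triangular with diagonal $Z$, the $T$-integral factors out as a Gaussian over $\dC^{n(n-1)/2}$ and produces a pure constant (a power of $\pi$). The unitary part, integrated over $U(n)$ modulo the diagonal torus, contributes the volume of the flag manifold $U(n)/U(1)^n$, again a constant. The only nonconstant piece of the Jacobian is the classical Vandermonde factor $\prod_{i<j}|z_i-z_j|^2$, which arises exactly as in the derivation of the GUE eigenvalue density: writing $A=U(Z+T)U^*$ and differentiating, the off-diagonal directions of the $U$-variation get multiplied by the differences $z_i-z_j$ (from the commutator with $Z$), and the modulus-squared accounts for the real and imaginary parts. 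I would cite the standard references already mentioned in the text (\cite{MR0173726}, \cite[Ch.\ 15]{MR2129906}, \cite{KS}) for the bookkeeping of these constants, and then fix the overall normalization by the requirement that the resulting density integrate to $1$ over $\Delta_n$ — this forces the prefactor to be $n!\,\pi^{-n^2}/(1!2!\cdots n!)$, since $\int_{\dC^n}\prod_{i<j}|z_i-z_j|^2 e^{-\sum|z_k|^2}\,dz = \pi^n\prod_{k=1}^n k!$ by the Andréief/Gram identity applied to the monomials $z\mapsto z^j$ against the Gaussian weight.

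The main obstacle, and the part deserving care, is the precise justification that the Schur map is a measure-preserving bijection up to the stated Jacobian: one must check that the excluded set (non-diagonalizable matrices, or matrices with a repeated eigenvalue) is Lebesgue-null — which is exactly Lemma \ref{le:diag} — and that the local diffeomorphism statement is uniform enough to apply the change-of-variables formula globally on the open dense set where it holds. A clean way around the explicit flag-manifold volume is to never compute any constant directly: establish that the joint eigenvalue density is proportional to $\exp(-\sum_k|z_k|^2)\prod_{i<j}|z_i-z_j|^2$ on $\dC^n$ (symmetric version), which only needs the \emph{shape} of the Jacobian, and then pin down the constant by normalization as above. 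The final statement about symmetric Borel functions $F$ is then immediate: the ordered density $n!\varphi_n\IND_{\Delta_n}$ symmetrizes to $\varphi_n$ on all of $\dC^n$, and $\int_{\dC^n}F\varphi_n = \int_{\Delta_n}F\cdot n!\varphi_n$ because $F$ and $\varphi_n$ are both symmetric while $\dC^n$ is tiled by the $n!$ images of $\Delta_n$ under coordinate permutations (overlaps having measure zero).
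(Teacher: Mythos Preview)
The paper does not give a self-contained proof of this theorem; it only remarks, just before the statement, that one computes the joint eigenvalue density by ``integrating \eqref{eq:ginden} over the non-eigenvalues variables'' and defers to \cite{MR0173726}, \cite[Ch.~15]{MR2129906,MR2641363}, and \cite{KS}. Your Schur-decomposition argument (Vandermonde Jacobian from the $U$-variation, Gaussian integration of the strictly upper-triangular part, constants fixed by normalization) is exactly that classical computation, so your approach coincides with what the paper points to.

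One small consistency check is worth flagging: your Andr\'eief/Gram evaluation
\[
\int_{\dC^n}\prod_{i<j}|z_i-z_j|^2\,e^{-\sum_k|z_k|^2}\,dz_1\cdots dz_n=\pi^{\,n}\prod_{k=1}^n k!
\]
is correct, and it forces the \emph{symmetric} prefactor of $\varphi_n$ to be $\pi^{-n}/\prod_{k=1}^n k!$, not $\pi^{-n^2}/\prod_{k=1}^n k!$ as written. The $\pi^{-n^2}$ (and the companion $\pi^{-k^2}$ in the $k$-point formula) appears to be a typo in the paper that you have transcribed into your conclusion rather than derived from your own integral; you should trust your normalization computation here.
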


We will use theorem \ref{th:complex-ginibre-density} with symmetric functions
of the form
\[
F(z_1,\ldots,z_n)
=\sum_{i_1,\ldots,i_k \text{ distinct}}f(z_{i_1})\cdots f(z_{i_k}).
\]
The Vandermonde determinant comes from the Jacobian of the diagonalization,
and can be interpreted as an electrostatic repulsion. The spectrum is a
Gaussian determinantal point process, see \cite[Chapter 4]{MR2552864}.

\begin{theorem}[$k$-points correlations]\label{th:complex-ginibre-k-points}
  Let $z\in\dC\mapsto\gamma(z)=\pi^{-1}e^{-|z|^2}$ be the density of
  the standard Gaussian $\mathcal{N}(0,\frac{1}{2}I_2)$ on $\dC$. Then
  for every $1\leq k\leq n$, the ``$k$-point correlation''
  \[
  \varphi_{n,k}(z_1,\ldots,z_k) :=
  \int_{\dC^{n-k}}\!\varphi_n(z_1,\ldots,z_k)\,dz_{k+1}\cdots dz_n
  \]
  satisfies 
  \[
  \varphi_{n,k}(z_1,\ldots,z_k) = 
  \frac{(n-k)!}{n!}\pi^{-k^2}\gamma(z_1)\cdots\gamma(z_k)
  \det\SBRA{K(z_i,z_j)}_{1\leq i,j\leq k}
  \]
  where
  \[
  K(z_i,z_j)
  :=\sum_{\ell=0}^{n-1}\frac{(z_iz_j^*)^\ell}{\ell!} %
  =\sum_{\ell=0}^{n-1}H_\ell(z_i)H_\ell(z_j)^* %
  \quad\text{with}\quad %
  H_\ell(z):=\frac{1}{\sqrt{\ell!}}z^\ell. 
  \]
  In particular, by taking $k=n$ we get
  \[
  \varphi_{n,n}(z_1,\ldots,z_n)
  =\varphi_n(z_1,\ldots,z_n)
  =\frac{1}{n!}\pi^{-n^2}\gamma(z_1)\cdots\gamma(z_n)\det\SBRA{K(z_i,z_j)}_{1\leq i,j\leq n}.
  \]
\end{theorem}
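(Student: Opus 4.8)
The plan is to start from the explicit joint density $\varphi_n$ of Theorem \ref{th:complex-ginibre-density} and convert the Vandermonde product into a determinant, since that is the structure that makes integration tractable. First I would write $\prod_{i<j}|z_i-z_j|^2 = \prod_{i<j}(z_i-z_j)\overline{\prod_{i<j}(z_i-z_j)} = \det[z_j^{i-1}]_{1\le i,j\le n}\,\overline{\det[z_j^{i-1}]}$. Dividing row $i$ by $\sqrt{(i-1)!}$ on each factor (which introduces the scalar $1/(0!1!\cdots(n-1)!) = 1/(1!2!\cdots n!)$ after accounting for the $n!$ normalization, matching the constant in $\varphi_n$) replaces the monomials $z^{i-1}$ by the normalized ones $H_{i-1}(z) = z^{i-1}/\sqrt{(i-1)!}$. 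Pairing the Gaussian weight $e^{-|z_k|^2} = \pi\,\gamma(z_k)$ symmetrically across the two determinants and using $\det A\cdot\overline{\det A} = \det(A\bar A^{\top})$ (for $A$ the $n\times n$ matrix with entries $H_{j-1}(z_i)$), I get
\[
\varphi_n(z_1,\ldots,z_n) = \frac{1}{n!}\,\pi^{-n^2}\gamma(z_1)\cdots\gamma(z_n)\,\det\SBRA{K(z_i,z_j)}_{1\le i,j\le n},
\qquad K(z_i,z_j) = \sum_{\ell=0}^{n-1}H_\ell(z_i)H_\ell(z_j)^*.
\]
This establishes the $k=n$ case and simultaneously verifies that $H_\ell$ are orthonormal in $L^2(\gamma\,dz)$: indeed $\int H_\ell(z)H_m(z)^*\,\gamma(z)\,dz = \delta_{\ell m}$ by a direct polar-coordinates computation on $\dC$.

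The second and main step is the descent from $k+1$ points to $k$ points, i.e. integrating out one variable. This is the standard reproducing-kernel argument for determinantal ensembles: the kernel $K$ satisfies the projection identity
\[
\int_{\dC} K(z,z)\,\gamma(z)\,dz = n
\qquad\text{and}\qquad
\int_{\dC} K(z_i,z)K(z,z_j)\,\gamma(z)\,dz = K(z_i,z_j),
\]
both immediate from the orthonormality of the $H_\ell$. The key lemma (attributable to Mehta, or Dyson) then states that if an $m\times m$ determinant $\det[K(z_i,z_j)]$ is integrated against $\gamma(z_m)\,dz_m$ in the last variable, one obtains $(n-m+1)\det[K(z_i,z_j)]_{1\le i,j\le m-1}$, the drop from $n$ to $n-m+1$ coming from the trace term and the reproducing property killing all the cross terms. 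I would prove this by expanding the determinant along the last row and column, or cite it; applied iteratively from $n$ down to $k$ it yields the factor $(n-k)!/n!$ when combined with the $1/n!$ already present, giving
\[
\varphi_{n,k}(z_1,\ldots,z_k) = \frac{(n-k)!}{n!}\,\pi^{-k^2}\gamma(z_1)\cdots\gamma(z_k)\,\det\SBRA{K(z_i,z_j)}_{1\le i,j\le k}.
\]
One must be careful that $\varphi_{n,k}$ as defined integrates over $\dC^{n-k}$ all of $dz_{k+1}\cdots dz_n$, with no symmetry/ordering constraint, which is exactly what makes the clean recursion valid; the $n!$ and the restriction to $\Delta_n$ from Theorem \ref{th:complex-ginibre-density} have already been absorbed into the symmetric density $\varphi_n$ on all of $\dC^n$.

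The step I expect to be the main obstacle is bookkeeping of constants: reconciling the $\pi^{-n^2}$ and $1/(1!2!\cdots n!)$ in $\varphi_n$ with the $\pi^{-k^2}$, the $\gamma$'s (each carrying a $\pi^{-1}$), and the $(n-k)!/n!$ in the target — the powers of $\pi$ in particular must be tracked through both the normalization of the $H_\ell$ and each integration. The conceptual content (Vandermonde $\to$ determinant, then reproducing kernel) is routine once the determinantal structure is in place; everything hinges on getting that structure cleanly in the $k=n$ case first.
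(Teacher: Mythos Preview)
Your approach is correct and is essentially the same as the paper's, which simply cites Mehta \cite[Chapter 15]{MR2129906} and records the two key identities you also use: factoring the squared Vandermonde into a Vandermonde determinant times its conjugate, and rewriting the product as $\det[K(z_i,z_j)]$; the descent from $n$ to $k$ variables via the reproducing-kernel (Dyson--Mehta) lemma is exactly the standard computation in that reference. Your caution about the constants is well placed: as printed, the factors $\pi^{-n^2}$ in Theorem~\ref{th:complex-ginibre-density} and $\pi^{-k^2}$ here do not reconcile (the correct powers are $\pi^{-n}$ and no extra power of $\pi$ beyond those in the $\gamma(z_i)$, respectively), so when you do the bookkeeping trust your computation rather than the displayed exponents.
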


\begin{proof}  
  Calculations made by \cite[Chapter 15 page 271 equation 15.1.29]{MR2129906}
  using
  \[
  \prod_{1\leq i{<}j\leq n}|z_i-z_j|^2
  =\prod_{1\leq i{<}j\leq n}(z_i-z_j)\prod_{1\leq i{<}j\leq n}(z_i-z_j)^* 
  \]
  and 
  \[
  \det\SBRA{z_j^{i-1}}_{1\leq i,j\leq n}\det\SBRA{(z_j^*)^{i-1}}_{1\leq
    i,j\leq n}
  =\frac{1}{n!}\det\SBRA{K(z_i,z_j)}_{1\leq i,j\leq n}.
  \]
\end{proof}

Recall that if $\mu$ is a random probability measure on $\dC$ then $\dE\mu$ is
the deterministic probability measure defined
for every bounded
measurable $f$ by
\[
\int\!f\,d\dE\mu:=\dE\int\!f\,d\mu.
\] 

\begin{theorem}[Mean circular Law]
  \label{th:complex-ginibre-circle-weak}
  $\dE\mu_{n^{-1/2}G}\weak\cC_1$ as $n\to\infty$, where $\cC_1$ is the
  circular law i.e.\ the uniform law on the unit disc of $\dC$ with density
  $z\mapsto\pi^{-1}\IND_{\{z\in\dC:|z|\leq 1\}}$.
\end{theorem}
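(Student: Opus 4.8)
The plan is to use the explicit $k$-point correlation formula from Theorem~\ref{th:complex-ginibre-k-points} with $k=1$, since the expected empirical spectral distribution $\dE\mu_{n^{-1/2}G}$ is, up to the scaling $z\mapsto z/\sqrt n$, nothing but the one-point intensity $\varphi_{n,1}$ normalized to a probability density. First I would record that for any bounded continuous test function $f$,
\[
\int f\,d\dE\mu_{n^{-1/2}G}
=\frac1n\int_{\dC} f\!\PAR{\tfrac{z}{\sqrt n}}\,
\pi^{-1}e^{-|z|^2}\sum_{\ell=0}^{n-1}\frac{|z|^{2\ell}}{\ell!}\,dz,
\]
which follows from Theorem~\ref{th:complex-ginibre-k-points} at $k=1$ (there the determinant collapses to the scalar $K(z,z)=\sum_{\ell=0}^{n-1}|z|^{2\ell}/\ell!$ and $\varphi_{n,1}(z)=\frac1n\pi^{-1}e^{-|z|^2}K(z,z)$). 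So the density of $\dE\mu_{n^{-1/2}G}$ at $w\in\dC$ is $\frac{n}{\pi}e^{-n|w|^2}\sum_{\ell=0}^{n-1}\frac{(n|w|^2)^\ell}{\ell!}$.

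Next I would change to polar/radial variables, setting $\rho=|w|^2$, so that the statement reduces to showing that the measure on $\dR_+$ with density $\rho\mapsto n\,e^{-n\rho}e_{n-1}(n\rho)$, where $e_{n-1}(x):=\sum_{\ell=0}^{n-1}x^\ell/\ell!$ is the truncated exponential, converges weakly to the uniform law on $[0,1]$ (the push-forward of $\cC_1$ under $w\mapsto|w|^2$). The key analytic fact is that $e^{-x}e_{n-1}(x)=\dP(\mathrm{Poisson}(x)\ge n)$ evaluated\dots more precisely $e^{-x}e_{n-1}(x)=\dP(N_x\le n-1)$ for $N_x\sim\mathrm{Poisson}(x)$, equivalently $=\dP(\gamma_n> x)$ where $\gamma_n$ is a Gamma$(n,1)$ random variable (sum of $n$ i.i.d.\ exponentials). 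Hence the density at $\rho$ equals $\dP(\gamma_n> n\rho)=\dP(\gamma_n/n>\rho)$, and by the law of large numbers $\gamma_n/n\to1$ a.s., so this probability tends to $\IND_{[0,1)}(\rho)$ pointwise for $\rho\ne1$. I would then invoke dominated convergence (the densities are bounded by $1$) to pass to the limit in $\int_0^\infty g(\rho)\,\dP(\gamma_n>n\rho)\,d\rho$ for bounded continuous $g$, obtaining $\int_0^1 g(\rho)\,d\rho$; translating back through $\rho=|w|^2$ gives $\int f\,d\dE\mu_{n^{-1/2}G}\to\frac1\pi\int_{|w|\le1}f(w)\,dw$.

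The main obstacle is not conceptual but bookkeeping: one must handle the change of variables $w=z/\sqrt n$ and the radial reduction cleanly (the Jacobian factors of $n$ and $\pi$ must cancel exactly so that the limiting object is a genuine probability measure), and one must justify the interchange of limit and integral despite the test function being supported on all of $\dC$ — here the uniform bound $e^{-x}e_{n-1}(x)\le1$ together with tightness (which in turn follows from $\int|w|^2\,d\dE\mu_{n^{-1/2}G}(w)=\frac1{n^2}\dE\TR(GG^*)=1$, cf.\ \eqref{eq:varbound}) does the job. An alternative to the probabilistic Gamma-function identity is a direct asymptotic analysis of the truncated exponential $e^{-n\rho}e_{n-1}(n\rho)$ via Laplace's method / the saddle point, showing it concentrates to a step function at $\rho=1$; this is the standard ``incomplete Gamma function transition'' estimate and I would cite it if a self-contained computation is not desired.
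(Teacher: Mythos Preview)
Your approach is correct and is a genuinely different (and arguably cleaner) route than the paper's. Both start from the one-point density $\varphi_{n,1}(z)=\frac{1}{n\pi}e^{-|z|^2}\sum_{\ell=0}^{n-1}|z|^{2\ell}/\ell!$ furnished by Theorem~\ref{th:complex-ginibre-k-points}. From there the paper proceeds by a direct elementary estimate on the truncated exponential series: it bounds $e^{r^2}-\sum_{\ell<n}r^{2\ell}/\ell!$ for $r^2<n$ and $\sum_{\ell<n}r^{2\ell}/\ell!$ for $r^2>n$ by geometric tails, yielding uniform convergence of $n\varphi_{n,1}(\sqrt n\,\cdot)$ to $\pi^{-1}\IND_{\{|\cdot|\le1\}}$ on compacts. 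You instead recognize $e^{-x}e_{n-1}(x)=\dP(\mathrm{Poisson}(x)\le n-1)=\dP(\gamma_n>x)$ with $\gamma_n\sim\Gamma(n,1)$, so that the radial density at $\rho=|w|^2$ is $\dP(\gamma_n/n>\rho)$, and the pointwise limit $\IND_{[0,1)}(\rho)$ follows immediately from the law of large numbers. Your argument is more conceptual and connects naturally to Kostlan's layer structure (Theorem~\ref{th:complex-ginibre-module}); the paper's argument is self-contained calculus and gives a slightly stronger local-uniform statement.

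One bookkeeping slip to fix: the density of $\dE\mu_{n^{-1/2}G}$ at $w$ is $\frac{1}{\pi}e^{-n|w|^2}e_{n-1}(n|w|^2)$, not $\frac{n}{\pi}\cdots$ (the $1/n$ in $\varphi_{n,1}$ cancels the Jacobian $n$ from $z=\sqrt n\,w$). Correspondingly the push-forward under $\rho=|w|^2$ has density $e^{-n\rho}e_{n-1}(n\rho)=\dP(\gamma_n/n>\rho)$, without the extra $n$; this is consistent with your later line ``the density at $\rho$ equals $\dP(\gamma_n>n\rho)$'' but not with the two displayed formulas just before it. You anticipated exactly this hazard in your final paragraph, so just clean it up.
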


\begin{proof}
  From theorem \ref{th:complex-ginibre-k-points}, with $k=1$, we get that the
  density of $\dE\mu_{G}$ is
  \[
  \varphi_{n,1}:
  z\mapsto
  \gamma(z)\PAR{\frac{1}{n}\sum_{\ell=0}^{n-1}|H_\ell|^2(z)}
  =\frac{1}{n\pi}e^{-|z|^2}\sum_{\ell=0}^{n-1}\frac{|z|^{2\ell}}{\ell!}.
  \]
  Following Mehta \cite[Chapter 15 page 272]{MR2129906}, some calculus gives
  for every compact $C\subset\dC$
  \[
  \lim_{n\to\infty}\sup_{z\in C}
  \ABS{n\varphi_{n,1}(\sqrt{n}z)-\pi^{-1}\IND_{[0,1]}(|z|)}=0.
  \]
  The $n$ in front of $\varphi_{n,1}$ is due to the fact that we are on the
  complex plane $\dC=\dR^2$ and thus
  $d\sqrt{n}xd\sqrt{n}y=ndxdy$. Here is the start of the elementary calculus:
  for $r^2<n$,
  \[
  e^{r^2}-\sum_{\ell=0}^{n-1}\frac{r^{2\ell}}{\ell!}
  =\sum_{\ell=n}^\infty\frac{r^{2\ell}}{\ell!}
  \leq \frac{r^{2n}}{n!}\sum_{\ell=0}^\infty\frac{r^{2\ell}}{(n+1)^\ell}
  =\frac{r^{2n}}{n!}\frac{n+1}{n+1-r^2}
  \]
  while for $r^2>n$,
  \[
  \sum_{\ell=0}^{n-1}\frac{r^{2\ell}}{\ell!}
  \leq\frac{r^{2(n-1)}}{(n-1)!}\sum_{\ell=0}^{n-1}\PAR{\frac{n-1}{r^2}}^\ell
  \leq \frac{r^{2(n-1)}}{(n-1)!}\frac{r^2}{r^2-n+1}.
  \]
  By taking $r^2=|\sqrt{n}z|^2$ we obtain the convergence of the density
  uniformly on compact subsets, which implies in particular the weak
  convergence.
\end{proof}

The sequence $(H_k)_{k\in\mathbb{N}}$ forms an orthonormal basis (orthogonal
polynomials) of square integrable analytic functions on $\dC$ for the standard
Gaussian on $\dC$. The uniform law on the unit disc (known as the circular
law) is the law of $\sqrt{V}e^{2i\pi W}$ where $V$ and $W$ are i.i.d.\ uniform
random variables on the interval $[0,1]$. This point of view can be used to
interpolate between complex Ginibre and GUE via Girko's elliptic laws, see
\cite{MR2446909,MR2288065,MR2594353}.

We are ready to prove the complex Gaussian version of the circular law theorem
\ref{th:circular}.

\begin{theorem}[Circular law]\label{th:complex-ginibre-circle-strong}
  a.s.\ $\mu_{n^{-1/2}G}\weak\cC_1$ as $n\to\infty$, where $\cC_1$ is the
  circular law i.e.\ the uniform law on the unit disc of $\dC$ with density
  $z\mapsto\pi^{-1}\IND_{\{z\in\dC:|z|\leq 1\}}$.
\end{theorem}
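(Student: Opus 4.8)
The plan is to upgrade the mean convergence of Theorem \ref{th:complex-ginibre-circle-weak} to an almost sure statement by establishing concentration of $\mu_{n^{-1/2}G}$ around its mean and then applying a Borel--Cantelli argument. Fix a test function; since $\cC_1$ is characterized by integration against continuous bounded functions, and since a countable convergence-determining family suffices, it is enough to show that for each such $f$ (say $f$ compactly supported and Lipschitz, or $f \in C^2_c(\dC)$), one has $\int f \, d\mu_{n^{-1/2}G} \to \int f \, d\cC_1$ a.s. Writing $L_n(f) := \int f \, d\mu_{n^{-1/2}G} = \frac{1}{n}\sum_{k=1}^n f(\la_k(n^{-1/2}G))$, Theorem \ref{th:complex-ginibre-circle-weak} already gives $\dE L_n(f) \to \int f \, d\cC_1$, so it suffices to prove $L_n(f) - \dE L_n(f) \to 0$ a.s.

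First I would control the variance $\VAR(L_n(f))$ and show it is $O(n^{-2})$, or at least summable in $n$. This is where the determinantal structure from Theorem \ref{th:complex-ginibre-k-points} does the work: for a determinantal point process with kernel $K$, the variance of a linear statistic $\sum_k f(\la_k)$ has the closed form
\[
\VAR\PAR{\sum_k f(\la_k)}
= \int |f(z)|^2 \rho_1(z)\,dz
- \iint f(z)f(w) |\rho_2^{(c)}(z,w)|\,dz\,dw,
\]
which for the Ginibre kernel collapses to $\frac{1}{2}\iint |f(z)-f(w)|^2 |K_n(z,w)|^2 e^{-|z|^2-|w|^2}\,\frac{dz\,dw}{\pi^2}$ (after the appropriate normalization). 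Using $|K_n(z,w)|^2 \le K_n(z,z)K_n(w,w)$ and the Lipschitz bound $|f(z)-f(w)| \le \LIP{f}|z-w|$, together with the Gaussian decay, one gets $\VAR(\sum_k f(\la_k)) = O(n)$ after rescaling by $\sqrt n$, hence $\VAR(L_n(f)) = O(1/n)$. To make Borel--Cantelli work directly one wants summability; I would either pass to the subsequence $n = m^2$ (where $\sum 1/m^2 < \infty$) and then interpolate between consecutive squares using that $L_n(f)$ varies slowly — e.g. controlling $|L_{n+1}(f) - L_n(f)|$ via the operator-norm bound $s_1(n^{-1/2}G) = O(1)$ a.s. and a rank/interlacing argument — or, more cleanly, invoke a stronger concentration inequality.

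Actually the cleanest route is to replace the variance bound by exponential concentration: the map $G \mapsto L_n(f)$ is a Lipschitz function of the $n^2$ complex Gaussian entries of $G$, since perturbing $G$ to $G'$ changes the eigenvalue empirical measure in a way controlled — via the Hoffman--Wielandt-type bound for \emph{normal} approximations is delicate, but for $C^2_c$ test functions one can instead use the bound coming from \eqref{eq:Hoffman-Wielandt} applied to the Hermitization, or simply the fact that $\mu_A \mapsto \int f\,d\mu_A$ composed with $A \mapsto$ (regularized log-potential) is Lipschitz off a small-probability bad event. Granting a Lipschitz constant of order $n^{-1/2}\cdot n^{-1/2} = n^{-1}$ in the Frobenius metric, Gaussian concentration yields $\dP(|L_n(f) - \dE L_n(f)| > \veps) \le 2\exp(-c n^2 \veps^2)$, which is summable, and Borel--Cantelli finishes the proof. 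The main obstacle is precisely this Lipschitz estimate: eigenvalues of non-normal matrices are \emph{not} stable in general (Example \ref{ex:nilpot}), so one cannot naively bound $|L_n(f)-L'_n(f)|$ by $\NRM{G-G'}_2$. The resolution — which is the technical heart — is to use the logarithmic potential representation $\int f\,d\mu_A = \frac{1}{2\pi}\int \Delta f(z) \, U_{\mu_A}(z)\,dz$ together with $U_{\mu_A}(z) = -\frac1n\log|\det(n^{-1/2}A - z)| = -\int \log s\, d\nu_{(n^{-1/2}A - z)}(s)$, and then the map $A \mapsto \nu_{(n^{-1/2}A-z)}$ \emph{is} Lipschitz for the Wasserstein/trace-norm metric by \eqref{eq:Hoffman-Wielandt}; the only danger is the singularity of $\log$ near $0$, i.e. the smallest singular value of $n^{-1/2}G - z$, but in the Gaussian case this is easy to control (it has an explicit density, being a Ginibre matrix shifted by a constant, so $s_n$ has polynomial lower tail uniformly), removing a negligible event. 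Assembling: variance/concentration bound $\Rightarrow$ a.s. convergence along all $n$ for each $f$ in a countable determining family $\Rightarrow$ a.s. $\mu_{n^{-1/2}G} \weak \cC_1$.
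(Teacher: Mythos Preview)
Your high-level strategy matches the paper's exactly: prove $L_n(f)-\dE L_n(f)\to0$ a.s.\ by a moment bound plus Borel--Cantelli, then invoke Theorem~\ref{th:complex-ginibre-circle-weak} and a countable determining class. The divergence is in which moment you bound and how. The paper (following Silverstein, as reported in Hwang~\cite{MR841088}) goes straight to the \emph{fourth} centered moment and shows $\dE\SBRA{(L_n(f)-\dE L_n(f))^4}=O(n^{-2})$ by expanding the sum and evaluating the cross terms with the explicit $3$- and $4$-point correlation functions $\varphi_{n,3},\varphi_{n,4}$ of Theorem~\ref{th:complex-ginibre-k-points}. That is summable, so Borel--Cantelli applies directly with no subsequence and no interpolation.

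Your route instead bounds the variance via the $2$-point function; the crude estimate $|K_n(z,w)|^2\le K_n(z,z)K_n(w,w)$ throws away the cancellation in the kernel and leaves you with $\VAR(L_n(f))=O(1/n)$, not summable. Both of your proposed rescues then have real problems. The subsequence-plus-interpolation step founders precisely on the instability you cite: going from $n$ to $n+1$ is not a rank-one perturbation of a fixed matrix but a change of dimension, and even if you embed $G_n$ as a corner of $G_{n+1}$, eigenvalues of non-normal matrices do not interlace and Example~\ref{ex:nilpot} shows a rank-one change can move the entire spectrum; there is no cheap bound on $|L_{n+1}(f)-L_n(f)|$. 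The Gaussian-concentration route via Hermitization is in principle sound but amounts to importing the whole machinery of Section~\ref{se:universal} (logarithmic potential, smallest singular value control) to treat the one case where you have explicit correlation functions; moreover your claimed Lipschitz constant $n^{-1}$ is not justified once you truncate $\log$ at $s_n\ge n^{-b}$, since the Lipschitz constant of $\log$ on $[n^{-b},\infty)$ is $n^{b}$. The fix is simply to use the fourth moment and the determinantal $k$-point formulas you already have available.
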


\begin{proof}
  We reproduce Silverstein's argument, published by Hwang \cite{MR841088}. The
  argument is similar to the quick proof of the strong law of large numbers
  for independent random variables with bounded fourth moment. It suffices to
  establish the result for compactly supported continuous bounded functions.
  Let us pick such a function $f$ and set
  \[
  S_n:=\int_{\dC}\!f\,d\mu_{n^{-1/2}G}
  \quad\text{and}\quad
  S_\infty:=\pi^{-1}\int_{|z|\leq 1}\!f(z)\,dxdy.
  \]
  Suppose for now that we have
  \begin{equation}\label{eq:mom4}
    \dE[\PAR{S_n-\dE S_n}^4]=O(n^{-2}).
  \end{equation}
  By monotone convergence (or by the Fubini-Tonelli theorem),
  \[
  \dE\sum_{n=1}^\infty\PAR{S_n-\dE S_n}^4%
  =\sum_{n=1}^\infty\dE[\PAR{S_n-\dE S_n}^4]<\infty
  \]
  and consequently $\sum_{n=1}^\infty\PAR{S_n-\dE S_n}^4<\infty$
  a.s.\ which implies $\lim_{n\to\infty}S_n-\dE S_n=0$ a.s.\ Since
  $\lim_{n\to\infty}\dE S_n=S_\infty$ by theorem
  \ref{th:complex-ginibre-circle-weak}, we get that a.s.\
  \[
  \lim_{n\to\infty}S_n=S_\infty.
  \]
  Finally, one can swap the universal quantifiers on $\omega$ and $f$ thanks
  to the separability of the set of compactly supported continuous bounded
  functions $\dC\to\dR$ equipped with the supremum norm. To establish
  \eqref{eq:mom4}, we set
  \[
  S_n-\dE S_n=\frac{1}{n}\sum_{i=1}^nZ_i %
  \quad\text{with}\quad %
  Z_i:=f\PAR{\lambda_i\PAR{n^{-1/2}G}}.
  \]
  Next, we obtain, with $\sum_{i_1,\ldots}$ running over distinct indices in
  $1,\ldots,n$,
  \begin{align*}
    \dE\SBRA{\PAR{S_n-\dE S_n}^4}
    &=\frac{1}{n^4}\sum_{i_1}\dE[Z_{i_1}^4] \\
    &\quad +\frac{4}{n^4}\sum_{i_1,i_2}\dE[Z_{i_1}Z_{i_2}^3] \\
    &\quad +\frac{3}{n^4}\sum_{i_1,i_2}\dE[Z_{i_1}^2Z_{i_2}^2] \\
    &\quad +\frac{6}{n^4}\sum_{i_1,i_2,i_3}\dE[Z_{i_1}Z_{i_2}Z_{i_3}^2] \\
    &\quad +\frac{1}{n^4}\sum_{i_1,i_2,i_3,i_3,i_4}\!\!\!\!\!\!\dE[Z_{i_1}Z_{i_3}Z_{i_3}Z_{i_4}].
  \end{align*}
  The first three terms of the right hand side are $O(n^{-2})$ since
  $\max_{1\leq i\leq n}|Z_i|\leq\Vert f\Vert_\infty$. Finally, some calculus
  using the expressions of $\varphi_{n,3}$ and $\varphi_{n,4}$ provided by
  theorem \ref{th:complex-ginibre-k-points} allows to show that the remaining
  two terms are also $O(n^{-2})$. See Hwang \cite[p. 151]{MR841088}.
\end{proof}

It is worthwhile to mention that one can deduce the circular law theorem
\ref{th:complex-ginibre-circle-strong} from a large deviations principle,
bypassing the mean circular law theorem \ref{th:complex-ginibre-circle-weak},
see section \ref{se:related}.

Following Kostlan \cite{MR1148410} (see also Rider \cite{MR1986426} and
\cite{MR2552864}) the integration of the phases in the joint density of the
spectrum given by theorem \ref{th:complex-ginibre-density} leads to theorem
\ref{th:complex-ginibre-module} below.

\begin{theorem}[Layers]\label{th:complex-ginibre-module}
  If $Z_{1},\ldots,Z_{n}$ are independent with\footnote{Here $\Gamma(a,\la)$
    stands for the probability measure on $\dR_+$ with Lebesgue density $x\mapsto
    \la^a\Gamma(a)^{-1} x^{a-1}e^{-\la x}$.} $Z_k^2\sim\Gamma(k,1)$ for all
  $k$, then
  \[
  (|\lambda_1(G)|,\ldots,|\lambda_n(G)|)
  \overset{d}{=}
  (Z_{(1)},\ldots,Z_{(n)})
  \]
  where $Z_{(1)},\ldots,Z_{(n)}$ is the non-increasing reordering of
  the sequence $Z_1,\ldots,Z_n$.
\end{theorem}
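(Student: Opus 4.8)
The plan is to start from the joint eigenvalue density $n!\,\varphi_n\IND_{\Delta_n}$ of Theorem~\ref{th:complex-ginibre-density}, and to pass to the unordered density $\varphi_n$ on $\dC^n$, which equals the determinantal expression of Theorem~\ref{th:complex-ginibre-k-points}. The key algebraic fact is the Vandermonde identity
\[
\prod_{1\leq i<j\leq n}|z_i-z_j|^2
=\ABS{\det\SBRA{z_j^{i-1}}_{1\leq i,j\leq n}}^2
=\sum_{\si,\ta\in\mathfrak{S}_n}\veps(\si)\veps(\ta)\prod_{k=1}^n z_k^{\si(k)-1}(z_k^*)^{\ta(k)-1}.
\]
Multiplying by $\exp(-\sum_k|z_k|^2)=\prod_k e^{-|z_k|^2}$ and integrating in polar coordinates, the cross terms vanish because $\int_\dC z^a (z^*)^b e^{-|z|^2}\,dxdy=\pi\,a!\,\IND_{a=b}$; hence only the diagonal $\si=\ta$ survives. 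After symmetrizing, the law of the \emph{unordered} multiset $\{|z_1|,\dots,|z_n|\}$ factorizes: the squared moduli are, up to reordering, independent with $|z_k|^2\sim\Ga(k,1)$ (density $x\mapsto \Ga(k)^{-1}x^{k-1}e^{-x}$, since $1!\,2!\cdots n!$ is exactly the normalizing product $\prod_k\Ga(k+1)$). This is precisely the content of Kostlan's observation.

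First I would make this rigorous as follows. Use Theorem~\ref{th:complex-ginibre-density} with a symmetric test function $F$ depending only on the moduli, say $F(z_1,\dots,z_n)=g(|z_1|,\dots,|z_n|)$ with $g$ symmetric bounded measurable; then
\[
\dE\,g(|\la_1(G)|,\dots,|\la_n(G)|)
=\int_{\dC^n}g(|z_1|,\dots,|z_n|)\,\varphi_n(z_1,\dots,z_n)\,dz_1\cdots dz_n.
\]
Expand the Vandermonde factor in $\varphi_n$ as the double sum over permutations above, write each $dz_k=\rho_k\,d\rho_k\,d\te_k$ with $z_k=\rho_k e^{i\te_k}$, and integrate out the angles $\te_1,\dots,\te_n$. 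The angular integral $\int_0^{2\pi}e^{i(\si(k)-\ta(k))\te_k}\,d\te_k=2\pi\IND_{\si(k)=\ta(k)}$ forces $\si=\ta$ on every coordinate, i.e.\ $\si=\ta$. The $n!$ surviving terms are all equal after relabelling, and collecting constants ($\pi^{-n^2}$, the $(2\pi)^n$ from the angles, the $\rho_k$ Jacobians, and $1/(1!\cdots n!)$) one finds
\[
\dE\,g(|\la_1(G)|,\dots,|\la_n(G)|)
=\int_{\dR_+^n}g(\rho_1,\dots,\rho_n)\prod_{k=1}^n\frac{2\,\rho_k^{2k-1}e^{-\rho_k^2}}{\Ga(k)}\,d\rho_1\cdots d\rho_n,
\]
where I used that $F$, hence the reduction, is insensitive to the ordering constraint $\Delta_n$ (the symmetrization over $\mathfrak{S}_n$ absorbs the $n!$ exactly). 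The density $\rho\mapsto 2\rho^{2k-1}e^{-\rho^2}/\Ga(k)$ is precisely the law of $Z_k$ with $Z_k^2\sim\Ga(k,1)$ (change of variables $x=\rho^2$). Thus the unordered family $(|\la_1(G)|,\dots,|\la_n(G)|)$ has the same law as an unordered i.i.d.-type family $(Z_1,\dots,Z_n)$ with the stated marginals; taking $g$ to be an arbitrary symmetric function and then specializing to $g$ that reads off the non-increasing reordering yields
\[
(|\la_1(G)|,\dots,|\la_n(G)|)\overset{d}{=}(Z_{(1)},\dots,Z_{(n)}).
\]

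The only genuine obstacle is bookkeeping: tracking the constants through the polar change of variables and confirming that the product $1!\,2!\cdots n!$ in $\varphi_n$ is exactly what normalizes the $\Ga(k,1)$ marginals (it is, since $\int_0^\infty 2\rho^{2k-1}e^{-\rho^2}\,d\rho=\Ga(k)=(k-1)!$ and $\prod_{k=1}^n(k-1)!=1!\cdots(n-1)!$, which matches after the shift induced by the $\rho_k$ Jacobian). A secondary point worth stating carefully is that the left-hand side is an \emph{ordered} vector (eigenvalues labelled by decreasing modulus), so the identity in law is between the ordered moduli and the order statistics $Z_{(1)}\geq\cdots\geq Z_{(n)}$ of the $Z_k$; this follows immediately once the unordered laws are shown equal, since reordering is a deterministic measurable map applied to both sides. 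Everything else is the standard orthogonality of characters on the circle together with the Gaussian moment identity $\int_\dC z^a(z^*)^b e^{-|z|^2}\,dxdy=\pi a!\,\IND_{a=b}$.
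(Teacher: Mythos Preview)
Your argument is correct and is precisely the approach the paper indicates (``the integration of the phases in the joint density of the spectrum given by theorem~\ref{th:complex-ginibre-density}''), carried out in full where the paper merely cites Kostlan. The mechanism---expand the squared Vandermonde as a double sum over permutations, kill the off-diagonal $\sigma\neq\tau$ terms via $\int_0^{2\pi}e^{i m\theta}\,d\theta=2\pi\IND_{m=0}$, and recognize the surviving permanent $\frac{1}{n!}\sum_\sigma\prod_k f_{\sigma(k)}(\rho_k)$ as the symmetrized density of independent $Z_k$ with $Z_k^2\sim\Gamma(k,1)$---is exactly Kostlan's computation.

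One cosmetic remark: your final paragraph on the constants is slightly garbled. The clean bookkeeping is that after integrating the angles the radial density becomes $\frac{2^n}{1!\cdots n!}\sum_{\sigma}\prod_k\rho_k^{2\sigma(k)-1}e^{-\rho_k^2}$, and since $1!\cdots n!=n!\cdot 0!1!\cdots(n-1)!$ this equals $\frac{1}{n!}\sum_\sigma\prod_k\frac{2\rho_k^{2\sigma(k)-1}e^{-\rho_k^2}}{(\sigma(k)-1)!}$, which is the symmetrization of $\prod_k f_k(\rho_k)$ with $f_j(\rho)=\frac{2\rho^{2j-1}e^{-\rho^2}}{\Gamma(j)}$. (Note also that the paper's $\pi^{-n^2}$ in $\varphi_n$ appears to be a typo for $\pi^{-n}$; your computation implicitly uses the correct constant, as the answer you obtain does normalize.) None of this affects the validity of your argument.
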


Note that\footnote{Here $\chi^2(n)$ stands for the law of $\NRM{V}_2^2$ where
  $V\sim\cN(0,I_n)$.} $(\sqrt{2}Z_k)^2\sim\chi^2(2k)$ which is useful for
$\sqrt{2}G$. Since $Z_k^2\overset{d}{=}E_1+\cdots+E_k$ where $E_1,\ldots,E_k$
are i.i.d.\ exponential random variables of unit mean, we get, for every
$r>0$,
\[
\dP\PAR{\ABS{\la_1(G)}\leq \sqrt{n}r} %
=\prod_{1\leq k\leq n}\dP\PAR{\frac{E_1+\cdots+E_k}{n}\leq r^2}
\]
The law of large numbers suggests that $r=1$ is a critical value. The central
limit theorem suggests that $\ABS{\lambda_1(n^{-1/2}G)}$ behaves when $n\gg1$
as the maximum of i.i.d.\ Gaussians, for which the fluctuations follow the
Gumbel law. A quantitative central limit theorem and the Borel-Cantelli lemma
provides the follow result. The full proof is in Rider \cite{MR1986426}.

\begin{theorem}[Convergence and fluctuation of the spectral radius]
  \label{th:complex-ginibre-spectral-radius}
  \[
  \dP\PAR{\lim_{n\to\infty}|\lambda_1(n^{-1/2}G)|=1}=1.
  \]
  Moreover, if $\gamma_n:=\log(n/2\pi)-2\log(\log(n))$ then
  \[
  \sqrt{4n\gamma_n}
  \PAR{|\lambda_1(n^{-1/2}G)|-1-\sqrt{\frac{\gamma_n}{4n}}}
  \overset{d}{\underset{n\to\infty}{\longrightarrow}} \cG
  \]
  where $\mathcal{G}$ is the Gumbel law with cumulative distribution function
  $x\mapsto e^{-e^{-x}}$ on $\dR$.
\end{theorem}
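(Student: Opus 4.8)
The plan rests entirely on Theorem \ref{th:complex-ginibre-module}, which identifies $|\la_1(G)|$ in distribution with $\max_{1\le k\le n} Z_k$, where $Z_k^2 = E_1 + \cdots + E_k$ with $E_1, E_2, \ldots$ i.i.d.\ unit-mean exponentials; here one builds all the $Z_k$ on a common probability space using a single sequence $(E_j)_{j\ge1}$ of i.i.d.\ exponentials by setting $Z_k^2 = \sum_{j=1}^k E_j$. First, for the almost sure convergence, I would show that a.s.\ $\max_{1\le k\le n} Z_k^2 / n \to 1$. The lower bound is immediate: $Z_n^2/n = \frac1n\sum_{j=1}^n E_j \to 1$ a.s.\ by the strong law of large numbers, so $\varliminf_n \max_{k\le n} Z_k^2/n \ge 1$. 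For the upper bound, fix $\veps>0$ and estimate $\dP(\max_{k\le n} Z_k^2 > (1+\veps)n) \le \sum_{k=1}^n \dP(\sum_{j=1}^k E_j > (1+\veps)n)$. Using the exponential Chernoff bound for sums of exponentials, $\dP(\sum_{j=1}^k E_j > t) \le e^{-ct}$ for $t \ge 2k$ and a suitable $c>0$ (after optimizing the moment generating function $\dE e^{\theta E_1} = (1-\theta)^{-1}$ for $\theta<1$), the sum over $k \le n/2$ is bounded by $n e^{-c(1+\veps)n}$, while for $n/2 < k \le n$ one uses a large-deviation bound $\dP(\frac1k\sum_{j=1}^k E_j > (1+\veps')) \le e^{-k I(1+\veps')}$ with rate function $I(x) = x - 1 - \log x > 0$; summing gives a bound that is summable in $n$. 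Borel--Cantelli then yields $\varlimsup_n \max_{k\le n} Z_k^2/n \le 1+\veps$ a.s., and letting $\veps\downarrow0$ along a countable sequence finishes the first assertion, since $|\la_1(n^{-1/2}G)| = n^{-1/2}|\la_1(G)| \overset{d}{=} n^{-1/2}\max_{k\le n} Z_k$.

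For the Gumbel fluctuation, I would first argue that the maximum of $Z_k$ over $k \le n$ is attained, with overwhelming probability, at indices $k$ very close to $n$: indeed for $k \le n - n^{3/4}$ (say), $Z_k^2$ concentrates around $k \le n - n^{3/4}$ with Gaussian fluctuations of order $\sqrt{k} \ll n^{3/4}$, so such $Z_k$ cannot compete with $Z_n^2 \approx n$. Hence asymptotically $\max_{k\le n} Z_k = \max_{n - n^{3/4} < k \le n} Z_k$, and on that short range the increments $Z_k^2 - Z_{k-1}^2 = E_k$ are $O(\log n)$ with high probability, so all the $Z_k^2$ in this window are within $o(\sqrt{n\gamma_n}/\gamma_n)$ of $Z_n^2$; a careful version of this comparison reduces the problem to the single variable $Z_n^2 = \sum_{j=1}^n E_j$. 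It then remains a classical one-dimensional computation: $Z_n^2$ is a $\Gamma(n,1)$ variable, so $Z_n = \sqrt{Z_n^2}$, and by a quantitative central limit theorem (Berry--Esseen for the sum $\sum E_j$, then the delta method for $x\mapsto\sqrt{x}$) one gets that $\sqrt{4n\gamma_n}\,(Z_n/\sqrt n - 1 - \sqrt{\gamma_n/(4n)})$ is, up to negligible corrections, a shifted and rescaled standard Gaussian; but the shift by $\sqrt{\gamma_n/(4n)}$ and the scaling by $\sqrt{4n\gamma_n}$ are exactly the centering and scaling constants for the maximum of $n$ i.i.d.\ standard Gaussians, and with the window-reduction step above one upgrades the single-variable CLT to the extreme-value statement, producing the Gumbel limit with c.d.f.\ $e^{-e^{-x}}$.

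The main obstacle is the second part --- controlling the fluctuation rather than the a.s.\ limit. The delicate point is that the $Z_k$ for $k$ near $n$ are strongly positively correlated (they share almost all of the same partial sum), so one is genuinely \emph{not} taking the max of $n$ independent Gaussians; the correlation structure is what collapses the effective number of ``competing'' layers down to a narrow window near $k=n$ and ultimately makes the fluctuation governed by a single Gaussian whose fluctuations, after the precise centering/scaling, reproduce the Gumbel law. Making the window-reduction quantitative --- i.e.\ showing that the contribution of indices $k \le n - n^{3/4}$ is negligible at the scale $1/\sqrt{n\gamma_n}$, while the spread of the surviving layers near $n$ is also negligible at that scale --- requires uniform tail bounds for the Gaussian-scale fluctuations of $Z_k^2 - k$ that are summable over the relevant range, together with the Borel--Cantelli-type argument for the a.s.\ part. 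All the remaining pieces (the moment generating function of an exponential, the rate function $I(x) = x - 1 - \log x$, the Berry--Esseen bound, the delta method, and the standard extreme-value normalization for Gaussians) are routine, and I would cite Rider \cite{MR1986426} for the full details, as the excerpt does.
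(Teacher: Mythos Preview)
Your argument contains a fundamental misreading of Theorem \ref{th:complex-ginibre-module}. In that theorem the variables $Z_1,\ldots,Z_n$ are \emph{independent}, with $Z_k^2\sim\Gamma(k,1)$ for each $k$; the identity $Z_k^2\overset{d}{=}E_1+\cdots+E_k$ is a distributional representation for each $k$ separately, and the paper uses it only to write the product formula $\dP(|\la_1(G)|\le\sqrt{n}r)=\prod_k\dP(E_1+\cdots+E_k\le nr^2)$. You instead build all the $Z_k$ from a \emph{single} sequence $(E_j)$ via $Z_k^2=\sum_{j\le k}E_j$. That coupling makes $Z_1\le Z_2\le\cdots\le Z_n$ deterministically, so $\max_{k\le n}Z_k=Z_n$ always, and hence $|\la_1(G)|^2$ would simply be $\Gamma(n,1)$. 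But then $|\la_1(n^{-1/2}G)|-1$ would have \emph{Gaussian} fluctuations of order $n^{-1/2}$ by the CLT, which is both the wrong law and the wrong scale (the true scale is $\sqrt{\gamma_n/n}\sim\sqrt{\log n/n}$). All of your second paragraph---the window reduction to $k$ near $n$, the claim that the problem ``reduces to the single variable $Z_n^2$'', and the remark that the $Z_k$ are ``strongly positively correlated''---is built on this wrong coupling and collapses once you restore independence.

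With the correct independent $Z_k$, the picture is the opposite of what you describe: after the (qualitatively correct) window reduction to $k$ close to $n$, you still have many \emph{independent} competitors $Z_k\approx\sqrt{k}+\tfrac12 N_k$ with independent approximate standard Gaussians $N_k$, and it is the maximum of these many independent nearly-Gaussian variables that produces the Gumbel law via classical extreme-value theory; a quantitative CLT (Berry--Esseen) is needed to make the Gaussian approximation uniform enough to pass to the limit. This is exactly the mechanism the paper alludes to (``behaves\ldots\ as the maximum of i.i.d.\ Gaussians'') and that Rider \cite{MR1986426} carries out. For the a.s.\ part, your upper bound via the union bound and Chernoff/large-deviation estimates is fine since it only uses marginal probabilities, but your lower bound via the SLLN on the nested partial sums is not valid; replace it by the marginal estimate $\dP(\max_k Z_k<(1-\veps)\sqrt{n})\le\dP(Z_n^2<(1-\veps)^2 n)$, which decays exponentially by large deviations for $\Gamma(n,1)$, and apply Borel--Cantelli.
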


The convergence of the spectral radius was obtained by Mehta \cite[chapter 15
page 271 equation 15.1.27]{MR2129906} by integrating the joint density of the
spectrum of theorem \ref{th:complex-ginibre-density} over the set
$\bigcap_{1\leq i\leq n}\{|\lambda_i|>r\}$. The same argument is reproduced by
Hwang \cite[pages 149--150]{MR841088}. Let us give now an alternative
derivation of theorem \ref{th:complex-ginibre-circle-weak}. From theorem
\ref{th:complex-ginibre-spectral-radius}, the sequence
$(\dE\mu_{n^{-1/2}G})_{n\geq1}$ is tight and every accumulation point $\mu$ is
supported in the unit disc. From theorem \ref{th:complex-ginibre-density},
such a $\mu$ is rotationally invariant, and from theorem
\ref{th:complex-ginibre-module}, the image of $\mu$ by $z\in\dC\mapsto|z|$ has
density $r\mapsto 2r\IND_{[0,1]}(r)$ (use moments!). Theorem
\ref{th:complex-ginibre-circle-weak} follows immediately.

It is remarkable that the large eigenvalues in modulus of the complex Ginibre
ensemble are asymptotically independent, which gives rise to a Gumbel
fluctuation, in contrast with the GUE and its delicate Tracy-Widom
fluctuation, see \cite{MR2288065} for an interpolation.


\begin{remark}[Real Ginibre Ensemble]\label{rk:real-ginibre}
  Ginibre considered also in his paper \cite{MR0173726} the case where $\dC$
  is replaced by $\dR$ or by the quaternions. These cases are less understood
  than the complex case due to their peculiarities. Let us focus on the Real
  Ginibre Ensemble, studied by Edelman and his collaborators. The expected
  number of real eigenvalues is equivalent to $\sqrt{2n/\pi}$ as $n\to\infty$,
  see \cite{MR1231689}, while the probability that all the eigenvalues are
  real is exactly $2^{-n(n-1)/4}$, see \cite[Corollary 7.1]{MR1437734}. The
  expected counting measure of the real eigenvalues, scaled by $\sqrt{n}$,
  tends to the uniform law on the interval $[-1,1]$, see \cite[Theorem
  4.1]{MR1231689} and figures \ref{fi:real-eig-unif}. The eigenvalues do not
  have a density in $\dC^n$, except if we condition on the real eigenvalues,
  see \cite{MR1437734}. The analogue of the weak circular law theorem
  \ref{th:complex-ginibre-circle-weak} was proved by Edelman \cite[Theorem
  6.3]{MR1231689}. More information on the structure of the Real Ginibre
  Ensemble can be found in \cite{MR2363393}, \cite{MR2530159}, and in
  \cite{KS} and \cite[Chapter 15]{MR2641363}.
\end{remark}

On overall, one can remember that the Complex Ginibre Ensemble is in a way
``simpler'' than the GUE while the Real Ginibre Ensemble is ``harder'' than
the GOE.

\begin{center}
  \fbox{Real Ginibre $\geq$ GOE $\geq$ GUE $\geq$ Complex Ginibre} 
\end{center}

\begin{figure}[htbp]
  \begin{center}
  \includegraphics[scale=.9]{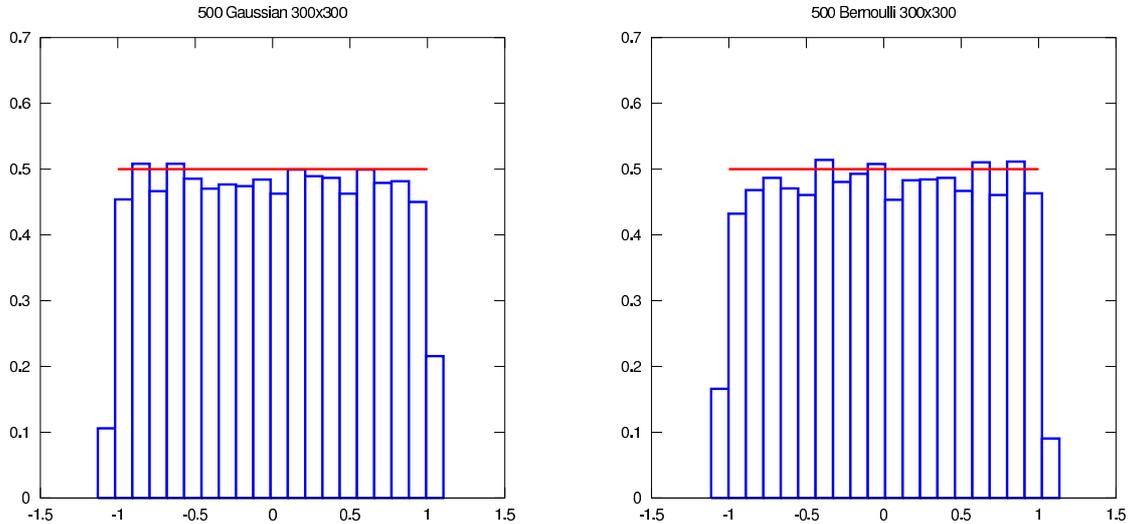}
  \caption{Histograms of real eigenvalues of $500$ i.i.d.\ copies of
    $n^{-1/2}X$ with $n=300$. The left hand side graphic corresponds to the
    standard real Gaussian case $X_{11}\sim\cN(0,1)$, while the right hand
    side graphic corresponds to the symmetric Bernoulli case
    $X_{11}\sim\frac{1}{2}(\de_{-1}+\de_1)$. See remark
    \ref{rk:real-ginibre}.}
    \label{fi:real-eig-unif}
  \end{center}
\end{figure}

\begin{remark}[Quaternionic Ginibre Ensemble]
  The quaternionic Ginibre Ensemble was considered at the origin by Ginibre
  \cite{MR0173726}. It has been recently shown \cite{benaych-georges-chapon}
  by using the logarithmic potential that there exists an analogue of the
  circular law theorem for this ensemble, in which the limiting law is
  supported in the unit ball of the quaternions field.
\end{remark}

\section{Universal case}
\label{se:universal}

This section is devoted to the proof of the circular law theorem
\ref{th:circular} following \cite{tao-vu-cirlaw-bis}. The universal
Marchenko-Pastur theorem \ref{th:quarter-circular} can be proved by using
powerful Hermitian techniques such as truncation, centralization, the method
of moments, or the Cauchy-Stieltjes trace-resolvent transform. It turns out
that all these techniques fail for the eigenvalues of non-normal random
matrices. Indeed, the key to prove the circular law theorem \ref{th:circular}
is to use a bridge pulling back the problem to the Hermitian world. This is
called \emph{Hermitization}.

Actually, and as we will see in sections \ref{se:related} and \ref{se:heavy},
there is a non-Hermitian analogue of the method of moments called the
$*$-moments, and there is an analogue of the Cauchy-Stieltjes trace-resolvent
in which the complex variable is replaced by a quaternionic type variable.

\subsection{Logarithmic potential and Hermitization}\label{se:logpot}

Let $\mathcal{P}(\dC)$ be the set of probability measures on
$\dC$ which integrate $\log\ABS{\cdot}$ in a neighborhood of infinity.
The \emph{logarithmic potential} $U_\mu$ of $\mu\in\mathcal{P}(\dC)$ is
the function $U_\mu:\dC\to(-\infty,+\infty]$ defined for all
$z\in\dC$ by
\begin{equation}\label{eq:logpot}
  U_\mu(z)=-\int_{\dC}\!\log|z-\la|\,d\mu(\la)
  =-(\log\ABS{\cdot}*\mu)(z).
\end{equation}
For instance, for the circular law $\cC_1$ of density
$\pi^{-1}\IND_{\{z\in\dC:|z|\leq1\}}$, we have, for every $z\in\dC$,
\begin{equation}\label{eq:logpotu}
  U_{\cC_1}(z)=
\begin{cases}
  -\log|z| & \text{if $|z|>1$}, \\
  \frac{1}{2}(1-|z|^2) & \text{if $|z|\leq1$},
\end{cases}
\end{equation}
see e.g.\ \cite{MR1485778}. Let $\cD'(\dC)$ be the set of
Schwartz-Sobolev distributions. We have $\cP(\dC)\subset\cD'(\dC)$. Since
$\log\ABS{\cdot}$ is Lebesgue locally integrable on $\dC$, the Fubini-Tonelli
theorem implies that $U_\mu$ is Lebesgue locally integrable on $\dC$. In
particular, $U_\mu<\infty$ a.e.\ and $U_\mu\in\cD'(\dC)$.

Let us define the first order linear differential operators in $\cD'(\dC)$
\begin{equation}\label{eq:ddbar}
  \pd := \frac{1}{2}(\pd_x-i\pd_y)
  \quad\text{and}\quad
  \Ol \pd := \frac{1}{2}(\pd_x+i\pd_y)
\end{equation}
and the Laplace operator
$\De=4\pd\Ol\pd=4\Ol\pd\pd=\pd_x^2+\pd_y^2$.
Each of these operators coincide on smooth functions with the usual
differential operator acting on smooth functions. By using Green's or Stockes'
theorems, one may show, for instance via the Cauchy-Pompeiu formula, that for
any smooth and compactly supported function $\varphi:\dC\to\dR$,
\begin{equation}\label{eq:fonsol}
-\int_{\dC}\!\Delta\vphi(z)\log\ABS{z}\,dxdy=2\pi\vphi(0)
\end{equation}
where $z=x+iy$. Now \eqref{eq:fonsol} can be written, in $\cD'(\dC)$,
\[
\Delta\log\ABS{\cdot}=2\pi\delta_0
\]
In other words, $\frac{1}{2\pi}\log\ABS{\cdot}$ is the fundamental solution of
the Laplace equation on $\dR^2$. Note that $\log\ABS{\cdot}$ is harmonic on
$\dC\setminus\{0\}$. It follows that in $\cD'(\dC)$,
\begin{equation}\label{eq:lap}
  \Delta U_\mu=-2\pi\mu.
\end{equation}
This means that for every smooth and compactly supported ``test function''
$\varphi:\dC\to\dR$,
\begin{equation}\label{eq:lapphi}
\DOT{\Delta U_\mu,\vphi}_{\cD'(\dC)}
=-\int_{\dC}\!\Delta\varphi(z)U_\mu(z)\,dxdy
=-2\pi\int_{\dC}\!\varphi(z)\,d\mu(z)
=-\DOT{2\pi\mu,\vphi}_{\cD'(\dC)}
\end{equation}
where $z=x+iy$. This means that $-\frac{1}{2\pi}U_\cdot$ is the Green operator
on $\dR^2$ (Laplacian inverse).

\begin{lemma}[Unicity]\label{le:unicity}
  For every $\mu,\nu\in\mathcal{P}(\dC)$, if $U_\mu=U_\nu$ a.e.\ then
  $\mu=\nu$.
\end{lemma}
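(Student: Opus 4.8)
The plan is to exploit the identity $\Delta U_\mu = -2\pi\mu$ in the sense of Schwartz-Sobolev distributions, established in \eqref{eq:lap}, which reduces the uniqueness statement to the fact that the only distributional solution of $\Delta T = 0$ that is itself a (signed) measure of the right integrability class is a harmonic function, combined with a growth control forcing that harmonic function to be constant. Concretely, suppose $U_\mu = U_\nu$ a.e. Then $U_\mu - U_\nu = 0$ as an element of $\cD'(\dC)$, since two locally integrable functions agreeing a.e. define the same distribution. Applying $\Delta$ to both sides and using \eqref{eq:lap} for each of $\mu$ and $\nu$, we get $-2\pi\mu = \Delta U_\mu = \Delta U_\nu = -2\pi\nu$ in $\cD'(\dC)$, hence $\mu = \nu$ as distributions. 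Since $\mu$ and $\nu$ are probability measures and a probability measure is determined by its action against smooth compactly supported test functions (these are dense in $C_0(\dC)$ and one can approximate indicators of balls), we conclude $\mu = \nu$ as measures.

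First I would make precise the passage from ``$U_\mu = U_\nu$ a.e.'' to ``$U_\mu = U_\nu$ in $\cD'(\dC)$'': both functions are Lebesgue locally integrable on $\dC$ (as noted in the excerpt, this follows from local integrability of $\log\ABS{\cdot}$ and Fubini-Tonelli), so each defines a distribution via integration against test functions, and two locally integrable functions that are equal a.e. clearly induce the same distribution. Next I would invoke \eqref{eq:lapphi}: for every smooth compactly supported $\vphi$,
\[
\int_{\dC}\!\vphi\,d\mu
=-\frac{1}{2\pi}\int_{\dC}\!\Delta\vphi(z)\,U_\mu(z)\,dxdy
=-\frac{1}{2\pi}\int_{\dC}\!\Delta\vphi(z)\,U_\nu(z)\,dxdy
=\int_{\dC}\!\vphi\,d\nu.
\]
Finally I would upgrade equality of the integrals against all $\vphi\in C_c^\infty(\dC)$ to equality of the measures: given any open ball $B$, approximate $\IND_B$ from inside and outside by smooth compactly supported functions and use dominated convergence (both measures being finite), so $\mu(B) = \nu(B)$ for all balls, whence $\mu = \nu$ by the $\pi$-$\lambda$ theorem.

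I do not expect a genuine obstacle here: the real content — that $\frac{1}{2\pi}\log\ABS{\cdot}$ is the fundamental solution of the Laplacian and hence $\Delta U_\mu = -2\pi\mu$ — has already been done in the preceding paragraphs \eqref{eq:fonsol}--\eqref{eq:lapphi}. The only point deserving a line of care is that one is differentiating twice in the distributional sense and that this operation is well-defined and injective modulo nothing (unlike on functions, where $\Delta T = \Delta S$ only gives $T - S$ harmonic); the injectivity of $\mu \mapsto \Delta U_\mu$ on measures is immediate precisely because $\Delta U_\mu$ recovers $-2\pi\mu$ exactly, with no harmonic ambiguity, thanks to $U_\mu$ being pinned down (not just up to a harmonic function) by the convolution formula \eqref{eq:logpot}. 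So the mild subtlety is bookkeeping about the distributional framework rather than a substantive difficulty.
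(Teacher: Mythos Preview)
Your proof is correct and follows essentially the same route as the paper's: pass from $U_\mu=U_\nu$ a.e.\ to equality in $\cD'(\dC)$, apply $\Delta$, invoke \eqref{eq:lap} to get $\mu=\nu$ in $\cD'(\dC)$, and then identify the measures. The paper dispatches the last step in one line (``since $\mu$ and $\nu$ are Radon measures''), and also remarks that the conclusion persists if $U_\mu=U_\nu+h$ with $h$ harmonic, which is the point you touch on in your final paragraph.
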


\begin{proof}
  Since $U_\mu=U_\nu$ in $\cD'(\dC)$, we get $\Delta U_\mu=\Delta
  U_\nu$ in $\cD'(\dC)$. Now \eqref{eq:lap} gives $\mu=\nu$ in
  $\cD'(\dC)$, and thus $\mu=\nu$ as measures since $\mu$ and $\nu$
  are Radon measures. Note more generally that the lemma remains valid if
  $U_\mu=U_\nu+h$ for some harmonic $h\in\cD'(\dC)$.
\end{proof}

If $A$ is a $n\times n$ complex matrix and $P_A(z):=\det(A-zI)$ is its
characteristic polynomial,
\[
U_{\mu_A}(z)
=-\int_{\dC}\!\log\ABS{\la-z}\,d\mu_A(\la)
=-\frac{1}{n}\log\ABS{\det(A-zI)}
=-\frac{1}{n}\log\ABS{P_A(z)}
\]
for every $z\in\dC\setminus\{\lambda_1(A),\ldots,\lambda_n(A)\}$. We have also
the alternative expression\footnote{Girko uses the name ``$V$-transform of
  $\mu_A$'', where $V$ stands for ``Victory''.}
\begin{equation}\label{eq:UESD}
  U_{\mu_A}(z)
  =-\frac{1}{n}\log\det(\sqrt{(A-zI)(A-zI)^*})
  =-\int_0^\infty\!\log(t)\,d\nu_{A-zI}(t).
\end{equation}
One may retain from this determinantal Hermitization that for any
$A\in\cM_n(\dC)$,
\[
\framebox{knowledge of $\nu_{A-zI}$ for
  a.a.\ $z\in\dC$ \quad$\Rightarrow$\quad knowledge of $\mu_A$}
\]
Note that from \eqref{eq:lap}, for every smooth compactly supported function
$\vphi:\dC\to\dR$,
\[
  2\pi\int\!\varphi\,d\mu_A
  =\int_{\dC}\!(\Delta \varphi)\log|P_A|\,dxdy.
\]
The identity \eqref{eq:UESD} bridges the eigenvalues with the singular values,
and is at the heart of the next lemma, which allows to deduce the
convergence of $\mu_A$ from the one of $\nu_{A-zI}$. The strength of this
Hermitization lies in the fact that contrary to the eigenvalues, one can
control the singular values with the entries of the matrix using powerful
methods such as the method of moments or the trace-resolvent Cauchy-Stieltjes
transform. The price paid here is the introduction of the auxiliary variable
$z$. Moreover, we cannot simply deduce the convergence of the integral from
the weak convergence of $\nu_{A-zI}$ since the logarithm is unbounded on
$\dR_+$. We circumvent this problem by requiring uniform integrability. We
recall that on a Borel measurable space $(E,\mathcal{E})$, a Borel function
$f:E\to\dR$ is \emph{uniformly integrable} for a sequence of probability
measures $(\eta_n)_{n\geq1}$ on $E$ when
\begin{equation*}
  \lim_{t\to\infty}\sup_{n\geq 1}\int_{\{|f|>t\}}\!|f|\,d\eta_n=0.
\end{equation*}
We will use this property as follows: if $\eta_n\weak\eta$ as $n\to\infty$ for
some probability measure $\eta$ and if $f$ is continuous and uniformly
integrable for $(\eta_n)_{n\geq1}$ then $f$ is $\eta$-integrable and
\[
\lim_{n\to\infty}\int\!f\,d\eta_n=\int\!f\,d\eta.
\] 

\begin{remark}[Weak convergence and uniform integrability in probability]
  \label{rk:inprob}
  Let $T$ be a topological space such as $\dR$ or $\dC$, and its Borel
  $\si$-field $\cT$. Let ${(\eta_n)}_{n\geq1}$ be a sequence of random
  probability measures on $(T,\cT)$ and $\eta$ be a probability measure on
  $(T,\cT)$. We say that $\eta_n\weak\eta$ \emph{in probability} if for all
  bounded continuous $f:T\to\dR$ and any $\veps >0$,
  \[
  \lim_{n\to\infty}\dP\PAR{\ABS{\int\!f\,d\eta_n-\int\!f\,d\eta}>\veps}=0.
  \]
  This is implied by the a.s.\ weak convergence. We say that a measurable
  function $f:T\to\dR$ is \emph{uniformly integrable in probability} for
  ${(\eta_n)}_{n\geq1}$ when for any $\veps>0$,
  \[
  \lim_{t\to\infty}\sup_{n\geq 1}\dP\PAR{\int_{|f|>t}\!|f|\,d\eta_n>\veps}=0.
  \]
  We will use this property as follows: if $\eta_n\weak\eta$ in probability
  and if $f$ is uniformly integrable for ${(\eta_n)}_{n\geq1}$ in probability
  then $f$ is $\eta$-integrable and $\int\!f\,d\eta_n$ converges in
  probability to $\int\!f\,d\eta$. This will be helpful in section
  \ref{se:heavy} together with lemma \ref{le:girko} in order to circumvent the
  lack of almost sure bounds on small singular values for heavy tailed random
  matrices.
\end{remark}

The idea of using Hermitization goes back at least to Girko \cite{MR773436}.
However, the proofs of lemmas \ref{le:girko} and \ref{le:unimaj} below are
inspired from the approach of Tao and Vu \cite{tao-vu-cirlaw-bis}.

\begin{lemma}[Hermitization]\label{le:girko}
  Let $(A_n)_{n\geq1}$ be a sequence of complex random matrices where $A_n$ is
  $n\times n$ for every $n\geq1$. Suppose that there exists a family of (non-random) 
  probability measures $(\nu_z)_{z\in\dC}$ on $\dR_+$ such that, for a.a.\ $z\in\dC$, a.s.
  \begin{itemize}
  \item[(i)]  $\nu_{A_n-zI}\weak\nu_z$ as $n\to\infty$ 
  \item[(ii)]  $\log$ is uniformly integrable for
    $\PAR{\nu_{A_n-zI}}_{n\geq1}$.
    \end{itemize}
    Then there exists a probability measure $\mu\in\mathcal{P}(\dC)$ such that
  \begin{itemize}
  \item[(j)] a.s.\ $\mu_{A_n}\weak\mu$ as $n\to\infty$ 
  \item[(jj)] for a.a.\ $z\in\dC$,
    \[
    U_\mu(z)=-\int_0^\infty\!\log(s)\,d\nu_z(s).
    \]
  \end{itemize}
  Moreover, if the convergence (i) and the uniform integrability (ii) both
  hold \emph{in probability} for a.a.\ $z\in\dC$ (instead of for a.a.\ $z\in\dC$, a.s.), then (j-jj) hold with the a.s.\ weak convergence in (j) replaced
  by the \emph{weak convergence in probability}.
\end{lemma}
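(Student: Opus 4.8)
The plan is to run the logarithmic‑potential scheme: \emph{(1)} convert (i)--(ii) into pointwise a.e.\ convergence of the potentials $U_{\mu_{A_n}}$; \emph{(2)} upgrade this to convergence in $\cD'(\dC)$ and rule out escape of spectral mass to infinity; \emph{(3)} apply the Laplacian to produce $\mu$ and read off (j)--(jj); \emph{(4)} transfer everything to the ``in probability'' setting by a subsequence argument. \emph{Step 1.} Put $U(z):=-\int_0^\infty\log(s)\,d\nu_z(s)$. Fix $z$ for which (i)--(ii) hold. By (ii) the function $\log$ is uniformly integrable for $\PAR{\nu_{A_n-zI}}_{n\geq1}$, so by (i) and the criterion recalled just before Remark~\ref{rk:inprob} it is $\nu_z$‑integrable (hence $U(z)\in\dR$) and, a.s., $\int_0^\infty\log(s)\,d\nu_{A_n-zI}(s)\to\int_0^\infty\log(s)\,d\nu_z(s)$; by the Hermitization identity~\eqref{eq:UESD} this reads $U_{\mu_{A_n}}(z)\to U(z)$ a.s. Thus for a.a.\ $z$, a.s.\ $U_{\mu_{A_n}}(z)\to U(z)\in\dR$. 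Applying the Fubini--Tonelli theorem on $\Om\times\dC$ (with $\dP$ times a finite measure equivalent to Lebesgue measure) I exchange the quantifiers: a.s., for a.a.\ $z$, $U_{\mu_{A_n}}(z)\to U(z)$.

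\emph{Step 2 (the crux).} Work on the almost‑sure event of Step 1. Since $-U_{\mu_{A_n}}=\log\ABS{\cdot}*\mu_{A_n}$ is subharmonic and $\De U_{\mu_{A_n}}=-2\pi\mu_{A_n}$ in $\cD'(\dC)$ by~\eqref{eq:lap}, the goal is $U_{\mu_{A_n}}\to U$ in $L^1_{\mathrm{loc}}(\dC)$. This part is soft: writing $g_n(z):=\int_{\ABS{z-\la}<1}\log\tfrac1{\ABS{z-\la}}\,d\mu_{A_n}(\la)\geq0$, one has $U_{\mu_{A_n}}^+\leq g_n$ and $\int_{\dC}g_n\,dz=\int_{\ABS{w}<1}\log\tfrac1{\ABS{w}}\,dw<\infty$ (Fubini--Tonelli), so $(U_{\mu_{A_n}}^+)_n$ is bounded in $L^1_{\mathrm{loc}}(\dC)$; and $U_{\mu_{A_n}}^-=(\log\ABS{\cdot}*\mu_{A_n})^+$ is a sequence of nonnegative subharmonic functions converging a.e.\ to the a.e.-finite function $U^-$, hence bounded in $L^1_{\mathrm{loc}}(\dC)$ by the standard compactness of subharmonic functions; the same compactness, together with Step 1, then gives $U_{\mu_{A_n}}\to U$ in $L^1_{\mathrm{loc}}(\dC)$, hence in $\cD'(\dC)$. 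The genuinely delicate point is that the resulting vague convergence $\mu_{A_n}\to-\tfrac1{2\pi}\De U$ loses no mass, i.e.\ that $(\mu_{A_n})_n$ is \emph{tight}: I would argue this by contradiction. If tightness failed, then along a subsequence a fraction $\geq\de>0$ of the eigenvalues of $A_{n_k}$ would escape beyond a radius $R_k\to\infty$, and then, for $k$ large, $U_{\mu_{A_{n_k}}}(z)\leq g_{n_k}(z)-\de\log(R_k-\ABS{z})$, so that (using Step 1 to keep $U_{\mu_{A_{n_k}}}(z)$ bounded) $g_{n_k}(z)\to+\infty$ for a.a.\ $z$ --- contradicting $\int_{\dC}g_{n_k}\,dz<\infty$ via Fatou's lemma.

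\emph{Step 3.} Applying $\De$ in $\cD'(\dC)$ and using~\eqref{eq:lap}, $\mu_{A_n}=-\tfrac1{2\pi}\De U_{\mu_{A_n}}\to-\tfrac1{2\pi}\De U=:\mu$ in $\cD'(\dC)$, so $\mu\geq0$; by the tightness of Step 2 this vague convergence is in fact the weak convergence $\mu_{A_n}\weak\mu$ and $\mu$ is a probability measure. Moreover~\eqref{eq:lap} and Lemma~\ref{le:unicity} (with its harmonic‑ambiguity remark, the ambiguity being pinned down by the behaviour at infinity) give $U_\mu=U$ a.e., so $U_\mu$ is finite a.e., whence $\mu\in\cP(\dC)$; this proves (j), and (jj) is exactly $U_\mu(z)=U(z)=-\int_0^\infty\log(s)\,d\nu_z(s)$ a.e. Lemma~\ref{le:unicity} also shows $\mu$ depends only on $\PAR{\nu_z}_{z\in\dC}$. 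For the ``in probability'' statement, recall that $\eta_n\weak\eta$ in probability iff every subsequence has a further subsequence along which $\eta_n\weak\eta$ a.s.; assuming (i)--(ii) hold in probability for a.a.\ $z$, along any subsequence a Fubini‑plus‑diagonal extraction produces a further subsequence realising (i) and the uniform integrability of $\log$ as almost sure statements for a.a.\ $z$, to which the already proved a.s.\ case applies, yielding $\mu_{A_n}\weak\mu$ a.s.\ along it with the \emph{same} $\mu$ by the uniqueness just noted; hence $\mu_{A_n}\weak\mu$ in probability.

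\emph{Where the difficulty lies.} Everything except the tightness assertion in Step 2 is routine measure‑ and distribution theory. The real obstacle is the passage from pointwise a.e.\ convergence of the logarithmic potentials to honest weak convergence of the spectral measures --- i.e.\ preventing loss of mass at infinity, the perennial sticking point of the logarithmic‑potential approach to the circular law --- which here is exactly what the Fatou argument on the truncated potentials $g_n$ is designed to overcome.
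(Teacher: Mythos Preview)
Your approach is genuinely different from the paper's and contains a real gap. The paper does \emph{not} work purely at the potential level: after the Fubini swap it applies Lemma~\ref{le:unimaj}, which uses the Weyl majorization \eqref{eq:weyl0} between $|\lambda_k(A_n-z)|$ and $s_k(A_n-z)$ to transfer the uniform integrability of $\log$ from $(\nu_{A_n-z})_n$ to the eigenvalue--modulus measures. That single transfer delivers simultaneously the tightness of $(\mu_{A_n})_n$ \emph{and} the uniform integrability of $\log|z-\cdot|$ for $(\mu_{A_n})_n$; the latter is what allows one to pass from $\mu_{A_{n_k}}\weak\mu_\omega$ to $U_{\mu_\omega}(z)=\lim_k U_{\mu_{A_{n_k}}}(z)=U(z)$ for every weak accumulation point $\mu_\omega$, and hence to conclude by Lemma~\ref{le:unicity} and Prohorov.

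Your Step~1 and the Fatou argument for tightness in Step~2 are correct, but the identification $U_\mu=U$ in Step~3 is not justified: you only get $\Delta U_\mu=\Delta U$, so $U_\mu-U$ is harmonic, and your appeal to ``behaviour at infinity'' is empty since nothing in your argument controls $U(z)$ as $|z|\to\infty$. Concretely, for $A_n=\mathrm{diag}(1,\ldots,1,e^n)$ one has $U_{\mu_{A_n}}(z)\to -\log|z-1|-1$ pointwise while $\mu_{A_n}\weak\delta_1$ and $U_{\delta_1}(z)=-\log|z-1|$; hypothesis~(ii) excludes this example, but only through its full strength ($\log$ is \emph{not} uniformly integrable for $(\nu_{A_n-z})_n$ here), not through the mere a.e.\ convergence of potentials that you extract in Step~1. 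The same missing ingredient surfaces earlier: your ``standard subharmonic compactness'' for $U^-_{\mu_{A_n}}$ presupposes uniform upper bounds on compacts, i.e.\ $\sup_n\int\!\log(1+|\lambda|)\,d\mu_{A_n}(\lambda)<\infty$, which is again precisely the Weyl input (Lemma~\ref{le:unimaj}) that you have bypassed.
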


\begin{proof}[Proof of lemma \ref{le:girko}]
  Let us give the proof of the a.s.\ part. We first observe that one can swap
  the quantifiers ``a.a.'' on $z$ and ``a.s.'' on $\om$ in front of (i-ii).
  Namely, let us call $P(z,\omega)$ the property ``the function $\log$ is
  uniformly integrable for $\PAR{\nu_{A_n(\omega)-zI}}_{n\geq1}$ and
  $\nu_{A_n(\omega)-zI}\weak\nu_z$". The assumptions of the lemma provide a
  measurable Lebesgue negligible set $C$ in $\dC$ such that for all $z \not\in
  C$ there exists a probability one event $E_z$ such that for all $\omega \in
  E_z$, the property $P(z,\omega)$ is true. From the Fubini-Tonelli theorem,
  this is \emph{equivalent} to the existence of a probability one event $E$
  such that for all $\omega \in E$, there exists a Lebesgue negligible
  measurable set $C_\omega$ in $\dC$ such that for all $z \not\in C_\omega$,
  the property $P(z,\omega)$ is true.

  From now on, we fix an arbitrary $\om\in E$. For every $z \not\in C_\omega$,
  let us define the probability measure $\nu:=\nu_z$ and the triangular arrays
  ${(a_{n,k})}_{1\leq k\leq n}$ and ${(b_{n,k})}_{1\leq k\leq n}$ by
  \[
  a_{n,k}:=|\lambda_k(A_n(\om)-zI)| \quad\text{and}\quad
  b_{n,k}:=s_k(A_n(\om)-zI).
  \]
  Note that $\mu_{A_n(\om)-zI}=\mu_{A_n(\om)}*\delta_{-z}$. Thanks to the Weyl
  inequalities \eqref{eq:weyl0} and to the assumptions (i-ii), one can use
  lemma \ref{le:unimaj} below, which gives that $(\mu_{A_n(\om)})_{n\geq1}$ is
  tight, that for a.a.\ $z \in \dC$, $\log\ABS{z-\cdot}$ is uniformly
  integrable for $(\mu_{A_n(\om)})_{n\geq1}$, and that
  \begin{equation*}
    \lim_{n\to\infty}U_{\mu_{A_n(\om)}}(z)=-\int_0^\infty\!\log(s)\,d\nu_z(s)=U(z).
  \end{equation*}
  Consequently, if the sequence ${(\mu_{A_n(\om)})}_{n\geq1}$ admits two
  probability measures $\mu_\om$ and $\mu_\om'$ as accumulation points for the
  weak convergence, then both $\mu_\om$ and $\mu_\om'$ belong to $\cP(\dC)$
  and $U_{\mu_\om}=U=U_{\mu_\om'}$ a.e., which gives $\mu_\om=\mu_\om'$ thanks
  to lemma \ref{le:unicity}. Therefore, the sequence
  ${(\mu_{A_n(\om)})}_{n\geq1}$ admits at most one accumulation point for the
  weak convergence. Since the sequence ${(\mu_{A_n(\om)})}_{n\geq1}$ is tight,
  the Prohorov theorem implies that ${(\mu_{A_n(\om)})}_{n\geq1}$ converges
  weakly to some probability measure $\mu_\om\in\cP(\dC)$ such that
  $U_{\mu_\om}=U$ a.e. Since $U$ is deterministic, it follows that
  $\om\mapsto\mu_\om$ is deterministic by lemma \ref{le:unicity} again. This
  achieves the proof of the a.s.\ part of the lemma. The proof of the ``in
  probability'' part of the lemma follows the same lines, using this time the
  ``in probability'' part of lemma \ref{le:unimaj}.
\end{proof}

\begin{remark}[Weakening uniform integrability in lemma \ref{le:girko}]
  \label{rk:weakui}
  The set of $z$ in $\dC$ such that $z$ is an atom of $\dE\mu_{A_n}$ for some
  $n\geq1$ is at most countable, and has thus zero Lebesgue measure. Hence,
  for a.a.\ $z \in \dC$, a.s.\ for all $n \geq 1$, $z$ is not an eigenvalue of
  $A_n$. Thus for a.a.\ $z \in \dC$, a.s.\ for all $n \geq 1$, 
  \[
  \int\!\log(s)\,d\nu_{A_n-zI}(s)<\infty.
  \]
  Hence, assumption (ii) in the a.s.\ part of lemma \ref{le:girko} holds if
  for a.a.\ $z \in \dC$, a.s.
  \[
  \lim_{t\to\infty}\varlimsup_{n \to \infty}\int_{\{|f|>t\}}\!|f| \,d\nu_{A_n-zI} (s)=0
  \]
  where $f=\log$. Similarly, regarding ``in probability'' part of lemma
  \ref{le:girko}, one can replace the $\sup$ by $\varlimsup$ in the definition
  of uniform integrability in probability.
\end{remark}

The following lemma is in a way the skeleton of proof of lemma \ref{le:girko}
(no matrices). It states essentially a propagation of a uniform
logarithmic integrability for a couple of triangular arrays, provided that a
logarithmic majorization holds between the arrays. 

\begin{lemma}[Logarithmic majorization and uniform integrability]
  \label{le:unimaj}
  Let $(a_{n,k})_{1\leq k\leq n}$ and $(b_{n,k})_{1\leq k\leq n}$ be two
  triangular arrays in $\dR_+$. Define the discrete probability measures
  \[
  \mu_n:=\frac{1}{n}\sum_{k=1}^n\delta_{a_{n,k}}
  \quad\text{and}\quad
  \nu_n:=\frac{1}{n}\sum_{k=1}^n\delta_{b_{n,k}}.
  \]
  If the following properties hold 
  \begin{itemize}
  \item[(i)] $a_{n,1}\geq\cdots\geq a_{n,n}$ and $b_{n,1}\geq\cdots\geq
    b_{n,n}$ for $n\gg1$,
  \item[(ii)] $\prod_{i=1}^k a_{n,i} \leq \prod_{i=1}^k b_{n,i}$ for every
    $1\leq k\leq n$ for $n\gg1$,
  \item[(iii)] $\prod_{i=k}^n b_{n,i} \leq \prod_{i=k}^n a_{n,i}$ for every
    $1\leq k\leq n$ for $n\gg1$,
  \item[(iv)] $\nu_n\weak\nu$ as $n\to\infty$ for some probability measure
    $\nu$,
  \item[(v)] $\log$ is uniformly integrable for $(\nu_n)_{n\geq1}$,
  \end{itemize}
  then 
  \begin{itemize}
  \item[(j)] $\log$ is uniformly integrable for $(\mu_n)_{n\geq1}$ (in
    particular, ${(\mu_n)}_{n\geq1}$ is tight),
  \item[(jj)] we have, as $n\to\infty$,
    \[
    \int_0^\infty\!\log(t)\,d\mu_n(t)
    =\int_0^\infty\!\log(t)\,d\nu_n(t)\to\int_0^\infty\!\log(t)\,d\nu(t),
    \]
  \end{itemize}
  and in particular, for every accumulation point $\mu$ of $(\mu_n)_{n\geq1}$,
  \[
  \int_0^\infty\!\log(t)\,d\mu(t)=\int_0^\infty\!\log(t)\,d\nu(t).
  \]
  Moreover, assume that $(a_{n,k})_{1\leq k\leq n}$ and $(b_{n,k})_{1\leq
    k\leq n}$ are random triangular arrays in $\dR_+$ defined on a common
  probability space such that (i-ii-iii) hold a.s.\ and (iv-v) hold in
  probability. Then (j-jj) hold in probability.
\end{lemma}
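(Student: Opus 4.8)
The plan is to run the same skeleton as the forthcoming proof of Lemma~\ref{le:girko}, but now in the elementary ``no matrices'' setting. First I would handle the uniform integrability statement (j) by splitting $\log$ into its positive and negative parts. For the positive part $\log_+=\max(\log,0)$, the key observation is that hypothesis (ii) with $k=n$ gives $\prod_{i=1}^n a_{n,i}\leq \prod_{i=1}^n b_{n,i}$, and more generally the majorization $\prod_{i=1}^k a_{n,i}\leq \prod_{i=1}^k b_{n,i}$ says exactly that the non-increasing array $(\log a_{n,i})$ is majorized (in the Hardy--Littlewood--P\'olya sense) by $(\log b_{n,i})$ on initial segments. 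Since $t\mapsto \max(t,0)$ is convex and non-decreasing, summing gives $\sum_{i=1}^k \log_+ a_{n,i}\leq \sum_{i=1}^k \log_+ b_{n,i}$ for every $k$; applied to the top $k$ values exceeding a threshold $e^t$ this yields
\[
\int_{\{|\log|>t\}\cap\{{\cdot}\geq 1\}}\!\log\,d\mu_n
\leq \frac{1}{n}\sum_{i:\,b_{n,i}>e^t}\log b_{n,i}
\leq \int_{\{\log>t'\}}\!\log\,d\nu_n
\]
for a suitable $t'\le t$, which is small uniformly in $n$ by (v). Symmetrically, the reversed majorization (iii), i.e.\ $\prod_{i=k}^n b_{n,i}\leq \prod_{i=k}^n a_{n,i}$, controls the negative part $\log_-$: the small values of $\mu_n$ are dominated by the small values of $\nu_n$, so $\int_{\{\log<-t\}}|\log|\,d\mu_n\leq \int_{\{\log<-t'\}}|\log|\,d\nu_n\to 0$ uniformly. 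Combining the two parts gives uniform integrability of $\log$ for $(\mu_n)_{n\geq1}$, and tightness is then immediate since $\int\log_+ d\mu_n$ bounded forces no escape of mass to $+\infty$ and $\int\log_- d\mu_n$ bounded forces no escape to $0$.

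Next, for (jj), I would note that the chain of equalities in \eqref{eq:weyl1}-type form is here just the $k=n$ case of (ii) \emph{and} (iii) together, which force $\prod_{i=1}^n a_{n,i}=\prod_{i=1}^n b_{n,i}$, hence
\[
\int_0^\infty\!\log(t)\,d\mu_n(t)=\frac{1}{n}\sum_{i=1}^n\log a_{n,i}
=\frac{1}{n}\sum_{i=1}^n\log b_{n,i}=\int_0^\infty\!\log(t)\,d\nu_n(t).
\]
(Strictly, one should first discard the degenerate case where some $a_{n,i}$ or $b_{n,i}$ vanishes, where the integral is $-\infty$; uniform integrability of $\log$ for $(\nu_n)$ together with (v) rules this out for $n\gg1$, or one truncates and passes to the limit.) Then the convergence $\int\log\,d\nu_n\to\int\log\,d\nu$ follows from (iv) and (v) by the standard fact recalled just before Lemma~\ref{le:girko}: weak convergence plus uniform integrability of a continuous function gives convergence of the integral. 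Chaining these gives $\int\log\,d\mu_n\to\int\log\,d\nu$. Finally, for any weak accumulation point $\mu$ of $(\mu_n)$, uniform integrability of $\log$ for $(\mu_n)$ (part (j)) lets us pass to the limit in $\int\log\,d\mu_n$ along the subsequence, yielding $\int\log\,d\mu=\int\log\,d\nu$.

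For the ``in probability'' addendum, I would use the standard subsequence principle: a sequence of random variables converges in probability to a constant iff every subsequence has a further subsequence converging almost surely. Given that (i)-(iii) hold a.s.\ and (iv)-(v) hold in probability, along any subsequence I extract a further subsequence along which (iv) and (v) hold a.s., apply the deterministic part just proved to get (j) and (jj) a.s.\ along that sub-subsequence, and conclude that the full sequences of real numbers $\int\log d\mu_n$ and the uniform-integrability quantities converge in probability. The main obstacle I anticipate is purely bookkeeping rather than conceptual: carefully matching the truncation thresholds on the $\mu_n$ side with thresholds on the $\nu_n$ side when transferring uniform integrability through the majorization (the index set $\{i: a_{n,i}>e^t\}$ need not coincide with $\{i: b_{n,i}>e^t\}$, so one argues via the number of large terms and monotonicity of the rearranged sequences), and handling the boundary/degenerate cases where singular values or the $a_{n,i}$ hit $0$. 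Everything else is a routine application of majorization inequalities and the weak-convergence-plus-uniform-integrability lemma.
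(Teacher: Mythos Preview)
Your deterministic argument is correct and runs on the same engine as the paper's---the Weyl-type majorization coming from (ii)--(iii)---but packaged differently. The paper avoids the explicit split into $\log_+$ and $\log_-$ by invoking the de la Vall\'ee Poussin criterion: (v) gives a convex nondecreasing $J$ with $J(t)/t\to\infty$ and $\sup_n\int J(|\log s|)\,d\nu_n<\infty$; then one majorization inequality (applied with a $\varphi$ built from $J$) yields $\sup_n\int J(|\log s|)\,d\mu_n<\infty$, which is (j) again by de la Vall\'ee Poussin. Your route is more hands-on (direct tail bounds, two applications of majorization for the two tails) and, as you note, requires a small threshold shift like $t\mapsto t/2$ and a constant; the paper's route is a one-liner once the criterion is quoted. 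Both are fine.

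There is, however, a genuine gap in your ``in probability'' addendum. The subsequence principle applies to statements of the form ``$X_n\to X$ in probability''; it does \emph{not} apply to uniform integrability in probability, which is a $\sup_n$ statement, not a limit. Concretely, from $\lim_{t\to\infty}\sup_n\dP\big(\int_{|\log|>t}|\log|\,d\nu_n>\veps\big)=0$ you cannot in general extract a subsequence along which a.s.\ $\lim_{t\to\infty}\sup_k\int_{|\log|>t}|\log|\,d\nu_{n_k}=0$. So ``extract a further subsequence along which (v) holds a.s.'' is not justified, and without it you cannot feed the deterministic part to conclude (j). The fix is not to use subsequences at all for (j): your own tail-bound inequality (e.g.\ $\int_{\{\log>t\}}\log\,d\mu_n\le C\int_{\{\log>t/2\}}\log\,d\nu_n$, and symmetrically for $\log_-$) holds \emph{almost surely} because (i)--(iii) hold a.s., so applying $\dP(\cdot>\veps)$ to both sides transfers (v) in probability to (j) in probability directly. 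This is exactly what the paper does via an in-probability version of the de la Vall\'ee Poussin criterion. Statement (jj) then follows from Remark~\ref{rk:inprob} applied to $(\nu_n)$ (no subsequences needed there either, since $\int\log\,d\mu_n=\int\log\,d\nu_n$ a.s.).
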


\begin{proof}
  An elementary proof can be found in \cite[Lemma C2]{cirmar}. Let us give an
  alternative argument. Let us start with the deterministic part. From the de
  la Vall\'ee Poussin criterion (see e.g.\ \cite[Theorem
  22]{dellacheriemeyer}), assumption (v) is equivalent to the existence of a
  non-decreasing convex function $J:\dR_+\to\dR_+$ such that
  $\lim_{t\to\infty}J(t)/t=\infty$, and
  \[
  \sup_n \int\!J(|\log(t)|)\,d\nu_{n}(t)< \infty. 
  \]
  On the other hand, assumption (i-ii-iii) implies that for every real valued
  function $\varphi$ such that $t\mapsto\varphi(e^t)$ is non-decreasing and
  convex, we have, for every $1\leq k\leq n$,
  \[
  \sum_{i=1}^k\varphi(a_{n,k} ) \leq \sum_{i=1}^k\varphi(b_{n,k} ),
  \]
  see \cite[Theorem 3.3.13]{MR1288752}. Hence, applying this for $k = n$ and $\varphi = J$, 
  \[
  \sup_n\int\!J(|\log(t)|)\,d\mu_{n}(t)< \infty. 
  \]
  We obtain by this way (j). Statement (jj) follows trivially.
  
  We now turn to the proof of the ``in probability'' part of the lemma.
  Arguing as in \cite[Theorem 22]{dellacheriemeyer}, the statement (v) of
  uniform convergence in probability is \emph{equivalent} to the existence for
  all $\delta >0$ of a non-decreasing convex function $J_\delta:\dR_+\to\dR_+$
  such that $\lim_{t \to \infty}J_\delta(t)/t=\infty $, and
  \[
  \sup_n \dP\PAR{\int\!J_\delta(|\log(t)|)\,d\nu_{n}(t)\leq 1} < \delta. 
  \]
  Since $J_\delta $ is non-decreasing  and convex we deduce as above 
  \[
  \int\!J_\delta(|\log(t)|)\,d\mu_{n}(t)\leq \int\!J_\delta(|\log(t)|)\,d\nu_{n}(t).
  \] 
  This proves (j). Statement (jj) is then a consequence of remark \ref{rk:inprob}. 
\end{proof}

\begin{remark}[Logarithmic potential and Cauchy-Stieltjes transform]
  \label{rk:causti}
  We may define the Cauchy-Stieltjes transform $m_\mu:\dC\to\dC\cup\{\infty\}$
  of a probability measure $\mu$ on $\dC$ by
  \[
  m_\mu(z) :=\int_{\dC}\!\frac{1}{\la-z}\,d\mu(\la).
  \]
  Since $1/\ABS{\cdot}$ is Lebesgue locally integrable on $\dC$, the
  Fubini-Tonelli theorem implies that $m_\mu(z)$ is finite for a.a.\
  $z\in\dC$, and moreover $m_\mu$ is locally Lebesgue integrable on $\dC$ and
  thus belongs to $\cD'(\dC)$. Suppose now that $\mu\in\cP(\dC)$. The
  logarithmic potential is related to the Cauchy-Stieltjes transform via the
  identity
  \[
  m_\mu =2\pd U_\mu
  \]
  in $\cD'(\dC)$. In particular, since
  $4\pd\Ol\pd=4\Ol\pd\pd=\Delta$ as operators
  on $\cD'(\dC)$, we obtain, in $\cD'(\dC)$,
  \[
  2\Ol\pd m_\mu=-\Delta U_\mu=-2\pi\mu.
  \]
  Thus we can recover $\mu$ from $m_\mu$. Note that for any $\veps>0$, $m_\mu$
  is bounded on 
  \[
  D_\veps=\{z\in\dC:\DIST(z,\SUPP(\mu))>\veps\}.
  \]
  If $\SUPP(\mu)$ is one-dimensional then one may completely recover $\mu$
  from the knowledge of $m_\mu$ on $D_\veps$ as $\veps\to0$. Note also that
  $m_\mu$ is analytic outside $\SUPP(\mu)$, and is thus characterized by its
  real part or its imaginary part on arbitrary small balls in the connected
  components of $\SUPP(\mu)^c$. If $\SUPP(\mu)$ is not one-dimensional then
  one needs the knowledge of $m_\mu$ inside the support to recover $\mu$. If
  $A\in\cM_n(\dC)$ then $m_{\mu_A}$ is the trace of the resolvent
  \[
  m_{\mu_A}(z)=\TR((A-zI)^{-1})
  \]
  for every $z\in\dC\setminus\{\la_1(A),\ldots,\la_n(A)\}$. For non-Hermitian
  matrices, the lack of a Hermitization identity expressing $m_{\mu_A}$ in
  terms of singular values explains the advantage of the logarithmic potential
  $U_{\mu_A}$ over the Cauchy-Stieltjes transform $m_{\mu_A}$ for spectral
  analysis.
\end{remark}

\begin{remark}[Logarithmic potential and logarithmic energy]\label{rk:logene}
  The term ``logarithmic potential'' comes from the fact that $U_\mu$ is the
  electrostatic potential of $\mu$ viewed as a distribution of charged
  particles in the plane $\dC=\dR^2$ \cite{MR1485778}. The so called
  logarithmic energy of this distribution of charged particles is
  \begin{equation}\label{eq:logene}
    \mathcal{E}(\mu)
    :=\int_{\dC}\!U_\mu(z)\,d\mu(z)
    =-\int_{\dC}\int_{\dC}\!\log\ABS{z-\la}\,d\mu(z)d\mu(\la).
  \end{equation}
  The circular law minimizes $\mathcal{E}(\cdot)$ under a second moment
  constraint \cite{MR1485778}. If $\SUPP(\mu)\subset\dR$ then $\cE(\mu)$
  matches up to a sign and an additive constant the Voiculescu free entropy
  for one variable in free probability theory \cite[Proposition
  4.5]{MR1296352} (see also the formula \ref{eq:ldp-rate}).
\end{remark}  

\begin{remark}[From converging potentials to weak convergence]\label{re:convpot}
  As for the Fourier transform, the pointwise convergence of logarithmic
  potentials along a sequence of probability measures implies the weak
  convergence of the sequence to a probability measure. We need however some
  strong tightness. More precisely, if ${(\mu_n)}_{n\geq1}$ is a sequence in
  $\cP(\dC)$ and if $U:\dC\to(-\infty,+\infty]$ is such that
  \begin{itemize}
  \item [(i)] for a.a.\ $z\in\dC$, $ \lim_{n\to\infty}U_{\mu_n}(z)=U(z)$,
  \item[(ii)] $\log(1+\ABS{\cdot})$ is uniformly integrable for $(\mu_n)_{n
      \geq 1}$,
  \end{itemize} then there exists $\mu \in \cP(\dC)$ such that $U_\mu=U$ a.e.\
  and $\mu = -\frac{1}{2\pi}\Delta U$ in $\cD'(\dC)$ and
  \[
  \mu_n\weak\mu.
  \]
  Let us give a proof inspired from \cite[Proposition 1.3 and Appendix
  A]{MR2191234}. From the de la Vall\'ee Poussin criterion (see e.g.\
  \cite[Theorem 22]{dellacheriemeyer}), assumption (ii) implies that for every
  real number $r\geq1$, there exists a non-decreasing convex function $J
  :\dR_+\to\dR_+$, which may depend on $r$, such that $\lim_{t \to \infty}
  J(t)/t=\infty$, and $J(t)\leq 1+t^2$, and
  \[
  \sup_n \int\!J(\log(r+|\la|))\,d\mu_{n}(\la)<\infty. 
  \]
  Let $K\subset \dC$ be an arbitrary compact set. We choose $r = r(K) \geq 1$
  large enough so that the ball of radius $r-1 $ contains $K$, and therefore
  for every $z\in K$ and $\la\in\dC$,
  \[
  J(|\log\ABS{z-\la}|) %
  \leq (1 + |\log\ABS{z-\la}|^2)\IND_{\{|\la|\leq r\}} %
  +J(\log(r+|\la|))\IND_{\{|\la|>r\}}.
  \]
  The couple of inequalities above, together with the fact that the function
  $(\log\ABS{\cdot})^2$ is locally Lebesgue integrable on $\dC$, imply, by
  using Jensen and Fubini-Tonelli theorems,
  \[
  \sup_n\int_K\!J(|U_n(z)|)\,dxdy
  \leq \sup_n\iint\!\IND_K(z)J(|\log\ABS{z-\la}|)\,d\mu_n(\la)\, dxdy<\infty,
  \]
  where $z=x+iy$ as usual. Since the de la Vall\'ee Poussin criterion is
  necessary and sufficient for uniform integrability, this means that the
  sequence ${(U_{\mu_n})}_{n\geq1}$ is locally uniformly Lebesgue integrable.
  Consequently, from (i) it follows that $U$ is locally Lebesgue integrable
  and that $U_{\mu_n}\to U$ in $\cD'(\dC)$. Since the differential operator
  $\De$ is continuous in $\cD'(\dC)$, we find that $\De U_{\mu_n}\to\De U$ in
  $\cD'(\dC)$. Since $\De U\leq0$, it follows that $\mu:=-\frac{1}{2\pi}\De U$
  is a measure (see e.g.\ \cite{MR717035}). Since for a sequence of measures,
  convergence in $\cD'(\dC)$ implies weak convergence, we get
  $\mu_n=-\frac{1}{2\pi}\De U_{\mu_n}\weak\mu=-\frac{1}{2\pi}\De U$. Moreover,
  by assumptions (ii) we get additionally that $\mu\in\cP(\dC)$. It remains to
  show that $U_\mu=U$ a.e.\ Indeed, for any smooth and compactly supported
  $\vphi:\dC\to\dR$, since the function $\log\ABS{\cdot}$ is locally Lebesgue
  integrable, the Fubini-Tonelli theorem gives
  \[
  \int\!\vphi(z)U_{\mu_n}(z)\,dz%
  =-\int\!\PAR{\int\!\vphi(z)\log|z-w|\,dz}\,d\mu_n(w).
  \]
  Now the function
  $\vphi*\log\ABS{\cdot}:w\in\dC\mapsto\int\!\vphi(z)\log|z-w|\,dz$ is
  continuous and is $\cO(\log\ABS{1+\cdot})$. Therefore, by (i-ii) we get
  $U_{\mu_n}\to U_\mu$ in $\cD'(\dC)$ and thus $U_\mu=U$ in $\cD'(\dC)$ and
  then a.e.
\end{remark}

\subsection{Proof of the circular law}

The proof of theorem \ref{th:circular} is based on the Hermitization lemma
\ref{le:girko}. The part (i) of lemma \ref{le:girko} is obtained from
corollary \ref{cor:DS} below.

\begin{theorem}[Convergence of singular values with additive perturbation]
  \label{th:DS}
  Let $(M_n)_{n \geq 1}$ be a deterministic sequence such that $M_n \in \cM_n
  (\dC)$ for every $n$. If $\nu_{M_n}\weak\rho$ as $n\to\infty$ for some
  probability measure $\rho$ on $\dR_+$ then there exists a probability
  measure $\nu_\rho$ on $\dR_+$ which depends only on $\rho$ and such that
  a.s.\ $\nu_{n^{-1/2}X + M_n}\weak\nu_\rho$ as $n\to\infty$.
\end{theorem}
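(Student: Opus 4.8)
The statement is an additive-perturbation / deformation result for the empirical singular value distribution of $n^{-1/2}X$, where $X$ has i.i.d.\ entries of variance $1$. The natural route is the one standard in this circle of ideas: pass to the Hermitized matrix and analyze its resolvent (the Cauchy--Stieltjes transform), rather than working with $\nu$ directly. Concretely, for $\eta \in \dC_+$ set $\dH$-valued or $2\times 2$ block resolvents, but the cleanest bookkeeping is: introduce the $2n\times 2n$ Hermitian matrix built from $B_n := n^{-1/2}X + M_n$, namely the usual chiral block matrix, and study $s$-transform / trace of resolvent $g_n(\eta) = \frac1{2n}\TR\big( (\mathcal H_{B_n} - \eta)^{-1}\big)$ (or rather the $2\times 2$ matrix-valued version, tracking the diagonal and the off-diagonal blocks separately, since $M_n$ is not Hermitian). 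Because $\nu_{M_n}\weak\rho$ controls the ``deterministic part'' only through its singular values, the limiting object will be described by a matrix Dyson / fixed-point equation whose input is $\rho$; this is exactly why $\nu_\rho$ depends only on $\rho$.

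\textbf{Key steps, in order.}
\begin{itemize}
\item[(1)] \emph{Reduction and truncation.} First reduce to bounded entries: truncate $X_{ij}$ at level $n^{\veps}$ (or a constant, using a standard two-step truncation) and recenter; by \eqref{eq:Hoffman-Wielandt} the Hoffman--Wielandt / rank inequalities show the empirical singular value measure is negligibly perturbed, so it suffices to prove the theorem assuming $\dE X_{11}=0$, $\dE|X_{11}|^2=1$ and $|X_{11}|\le C$ (or $\le K_n$ slowly growing). This step is routine.
\item[(2)] \emph{Concentration.} Show that $g_n(\eta)$ concentrates around its expectation for each fixed $\eta\in\dC_+$: McDiarmid / Azuma on the martingale obtained by revealing rows of $X$, using that $\eta\mapsto g_n(\eta)$ is Lipschitz in the entries with the right scaling and that changing one row changes $\mathcal H_{B_n}$ by bounded rank, hence moves the Stieltjes transform by $O(1/n)$. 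Combined with a union bound over a net in $\eta$ and Borel--Cantelli this upgrades to a.s.\ convergence, and reduces the problem to identifying $\lim_n \dE g_n(\eta)$.
\item[(3)] \emph{Fixed-point equation for the mean.} Derive a self-consistent equation for $\dE g_n(\eta)$, e.g.\ by the resolvent/Schur-complement expansion (Sherman--Morrison for removing a row/column) or by a Gaussian-interpolation / Lindeberg swap to reduce to the Gaussian case and then an integration-by-parts (Stein) computation. The outcome is an approximate matrix equation of the form $\mathsf G_n(\eta) \approx \big(-\eta + \mathcal S(\mathsf G_n(\eta)) + (\text{deterministic term involving } M_n)\big)^{-1}$, where $\mathsf G_n$ is the $2\times2$ averaged block-resolvent and $\mathcal S$ is the variance (covariance) operator of $X$. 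The $M_n$-dependence enters only through $(M_n - w)(M_n-w)^*$-type quantities, i.e.\ through $\nu_{M_n}$, after diagonalizing the Hermitian blocks.
\item[(4)] \emph{Passage to the limit and uniqueness.} Use $\nu_{M_n}\weak\rho$ to pass to the limit in the deterministic term, obtaining a limiting fixed-point system depending only on $\rho$ and $\eta$; prove this system has a unique solution in the relevant analyticity class (Stieltjes transforms of probability measures / matrices with positive imaginary part), e.g.\ by a contraction argument in the hyperbolic metric on $\dC_+$ or by the standard ``holomorphic + correct boundary behavior'' uniqueness. Define $\nu_\rho$ as the measure whose Stieltjes transform is the limiting $g(\eta)$; then $\dE g_n\to g$, hence a.s.\ $g_n\to g$, hence a.s.\ $\nu_{n^{-1/2}X+M_n}\weak\nu_\rho$ by the standard Stieltjes-transform continuity theorem (noting the measures are symmetric, or working on $[0,\infty)$ with $\nu_{BB^*}$).
\end{itemize}

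\textbf{Main obstacle.} The delicate point is step (3)--(4): establishing the self-consistent equation \emph{with} a non-Hermitian, non-trivially-structured deterministic perturbation $M_n$, and then proving \emph{uniqueness} of the limiting fixed point. Without the perturbation this is the textbook Marchenko--Pastur argument; the perturbation forces a genuinely matrix-valued (or operator-valued, à la free probability: $n^{-1/2}X$ becomes free from $M_n$ in the limit and $\nu_\rho$ is the distribution of $|c + m|$ with $c$ circular and $m$ having distribution $\rho$, by the Hermitization dictionary) fixed-point problem, and controlling its stability — so that the $O(n^{-1/2})$ and truncation errors are absorbed and the limit is identified — is where the real work lies. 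The companion corollary \ref{cor:DS} (the case $M_n = -zI$) is then immediate by specializing $\rho = \nu_{|c-z|}$, i.e.\ taking $\rho$ to be the deterministic pushforward of the circular law's geometry; I would expect the authors to state \ref{th:DS} precisely so that this specialization, feeding part (i) of Lemma \ref{le:girko}, is transparent.
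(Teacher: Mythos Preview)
Your proposal is correct and matches the approach the paper itself describes: the paper does not prove Theorem~\ref{th:DS} in full generality but cites Dozier and Silverstein \cite{MR2322123}, summarizing their proof as ``truncation, centralization, trace-resolvent recursion via Schur block inversion, leading to a fixed point equation for the Cauchy-Stieltjes transform of $\nu_\rho$'' --- precisely your steps (1)--(4). For the special case $M_n=-zI$ (Corollary~\ref{cor:DS}) the paper does give a self-contained proof in Subsection~\ref{subsec:proofcongSV} via the same five-step scheme (concentration, truncation/centralization, linearization through the $2n\times 2n$ bipartized Hermitian matrix, Schur-complement approximate fixed-point equation, and uniqueness), so your sketch is aligned with both the cited work and the paper's own argument.
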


Theorem \ref{th:DS} appears as a special case of the work of Dozier and
Silverstein for information plus noise random matrices \cite{MR2322123}. Their
proof relies on powerful Hermitian techniques such as truncation,
centralization, trace-resolvent recursion via Schur block inversion, leading
to a fixed point equation for the Cauchy-Stieltjes transform of $\nu_\rho$. It
is important to stress that $\nu_\rho$ does not depend on the law of $X_{11}$
(recall that $X_{11}$ has unit variance). One may possibly produce an
alternative proof of theorem \ref{th:DS} using free probability theory.

\begin{corollary}[Convergence of singular values]
 \label{cor:DS}
 For all $z \in \dC$, there exists a probability measure $\nu_z$ depending
 only on $z$ such that a.s.\ $\nu_{n^{-1/2}X - zI }\weak\nu_z$ as
 $n\to\infty$.
\end{corollary}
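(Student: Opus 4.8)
The plan is to apply Theorem \ref{th:DS} with the deterministic sequence $M_n := -zI_n$. First I would identify the singular values of $-zI_n$: since $(-zI_n)(-zI_n)^* = |z|^2 I_n$, every singular value equals $|z|$, so $\nu_{-zI_n} = \delta_{|z|}$ for all $n$. This sequence is constant in $n$, hence trivially $\nu_{M_n}\weak\rho$ with $\rho := \delta_{|z|}$, a bona fide probability measure on $\dR_+$.

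Then Theorem \ref{th:DS} furnishes a probability measure $\nu_\rho$ on $\dR_+$ depending only on $\rho$, and therefore only on $|z|$ (in particular only on $z$), such that a.s.\ $\nu_{n^{-1/2}X + M_n}\weak\nu_\rho$ as $n\to\infty$. Since $n^{-1/2}X + M_n = n^{-1/2}X - zI$, setting $\nu_z := \nu_\rho$ gives exactly the claimed statement.

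The only point worth a remark — and it is minor — is the order of quantifiers: Theorem \ref{th:DS} yields, for each fixed $z$, convergence on a probability-one event that is allowed to depend on $z$. This is precisely what the corollary asserts ("for all $z\in\dC$, \ldots\ a.s."), so no Fubini-type exchange of the "a.a.\ $z$" and "a.s." quantifiers is required at this stage; that exchange is carried out later, inside the proof of Lemma \ref{le:girko}. I do not anticipate any genuine obstacle here — the content is entirely in Theorem \ref{th:DS}, and the corollary is just the specialization $M_n = -zI$.
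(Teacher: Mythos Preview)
Your argument is correct: Theorem \ref{th:DS} applied with $M_n=-zI_n$ (so that $\nu_{M_n}=\delta_{|z|}=:\rho$) immediately yields the corollary, and your remark on the order of quantifiers is accurate.

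However, this is not the route the paper takes. The paper states Theorem \ref{th:DS} as a citation of Dozier--Silverstein but does not prove it; instead, ``for completeness'' it gives in sub-section \ref{subsec:proofcongSV} a self-contained proof of the special case $M_n=-zI$ via concentration (lemma \ref{le:concspec}), truncation/centralization, linearization through the bipartized matrix $H(z)$, Schur's formula to derive an approximate fixed point equation for the Cauchy--Stieltjes transform, and a uniqueness argument for that fixed point. Your approach is the economical one if one is willing to quote Dozier--Silverstein as a black box; the paper's approach buys self-containment and, more importantly, introduces the quaternionic resolvent machinery (the matrices $B(z)$, $R(q)$, and the fixed point equation \eqref{eq:FPCircular}) that is reused later in sub-section \ref{subsec:convres} to compute the circular law explicitly and in section \ref{se:heavy} for the heavy-tailed case.
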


For completeness, we will give in sub-section \ref{subsec:proofcongSV} a proof of
corollary \ref{cor:DS}. Note also that for $z=0$, we recover the quarter
circular Marchenko-Pastur theorem \ref{th:quarter-circular}.

It remains to check the uniform integrability assumption (ii) of lemma
\ref{le:girko}. From Markov's inequality, it suffices to show that for all
$z\in\dC$, there exists $p>0$ such that a.s.\
\begin{equation}\label{eq:uibeta}
  \varlimsup_{n\to\infty}\int\!s^{-p}\,d\nu_{n^{-1/2}X-zI}(s) < \infty
  \quad\text{and}\quad
  \varlimsup_{n\to\infty} \int\!s^{p}\,d\nu_{n^{-1/2}X-zI}(s)<\infty.
\end{equation}

The second statement in \eqref{eq:uibeta} with $p\leq 2$ follows from the
strong law of large numbers \eqref{eq:varbound} together with
\eqref{eq:basic1}, which gives $s_i(n^{-1/2}X-zI)\leq s_i(n^{-1/2}X)+|z|$ for
all $1\leq i\leq n$.

The first statement in \eqref{eq:uibeta} concentrates most of the difficulty
behind theorem \ref{th:circular}. In the next two sub-sections, we will prove
and comment the following couple of key lemmas taken from
\cite{tao-vu-cirlaw-bis} and \cite{MR2409368} respectively.

\begin{lemma}[Count of small singular values]\label{le:scount}
  There exist $c_0>0$ and $0<\ga<1$ such a.s.\ for $n\gg1$ and $n^{1-\ga}
  \leq i \leq n-1$ and all $M\in\cM_n(\dC)$,
  \[
  s_{n-i}(n^{-1/2}X+M) \geq c_0 \frac{i}{n}.  
  \]
\end{lemma}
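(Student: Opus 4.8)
The statement is a uniform (over all deterministic shifts $M$) lower bound on the intermediate small singular values $s_{n-i}$ of $n^{-1/2}X+M$. The natural route is the negative second moment identity for the singular values of a matrix combined with a distance-to-a-hyperplane bound for the rows. Concretely, write $B:=n^{-1/2}X+M$ and let $R_1,\dots,R_n$ be the rows of $B$, with $H_j:=\SPAN\{R_k:k\neq j\}$. The classical identity gives
\[
\sum_{k=1}^n s_k(B)^{-2}=\sum_{j=1}^n \DIST(R_j,H_j)^{-2},
\]
so it suffices to produce, with probability $1$ for $n\gg 1$ and uniformly in $M$, a lower bound of the form $\DIST(R_j,H_j)\geq c$ for all but a controlled number of indices $j$ — more precisely, one wants that the number of $j$ with $\DIST(R_j,H_j)\leq t$ is $O(t\,n)$ up to the relevant scale. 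Indeed, if $\#\{j:\DIST(R_j,H_j)\le t\}\le C t n$ for all $t$, then a layer-cake/rearrangement argument on $\sum_j \DIST(R_j,H_j)^{-2}$ forces $\sum_{k\ge n-i}s_k(B)^{-2}\lesssim (n/i)\cdot n$, i.e.\ $s_{n-i}(B)\gtrsim i/n$, which is exactly the claimed bound.

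**Key steps, in order.** First I would record the negative second moment identity above (it is elementary linear algebra: $s_k(B)^{-1}$ are the singular values of $B^{-1}$, whose rows, up to the relevant duality, have norms $\DIST(R_j,H_j)^{-1}$). Second, I would reduce the singular-value bound to a counting statement: letting $N(t):=\#\{j\le n:\DIST(R_j,H_j)\le t/\sqrt n\}$, show that if $N(t)\le C_1 t n$ for all $t\ge t_0$ (a.s., $n\gg1$, uniformly in $M$), then sorting the $\DIST(R_j,H_j)$ in increasing order and bounding the sum $\sum_k s_k(B)^{-2}$ restricted to the smallest $i$ terms gives $s_{n-i}(B)^2\ge c_0^2 (i/n)^2$. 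This is a clean deterministic implication once the counting bound is in hand. Third — the substantive probabilistic input — I would prove the counting bound: for a fixed $j$, $\DIST(R_j,H_j)$ is the distance of the random vector $n^{-1/2}X_j+M_j$ (with $X_j$ having i.i.d.\ entries of variance $1$) to a fixed subspace $H_j$ of dimension $\le n-1$ that is \emph{independent of $X_j$}; conditioning on $H_j$ and projecting onto $H_j^\perp$ (dimension $\ge 1$), a small-ball / tensorization estimate — in the i.i.d.\ entries setting this is a Paley–Zygmund or a Littlewood–Offord-type bound, cf.\ the smallest-singular-value literature — gives $\dP(\DIST(R_j,H_j)\le t/\sqrt n)\le C t$ for a single $j$, with the constant uniform in $M_j$ and in the subspace. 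Fourth, one needs this \emph{simultaneously} for many $j$ and uniformly over $M$; a union bound over $j$ plus a Bernstein/Hoeffding concentration for the binomial-type count $N(t)$ handles the ``many $j$'' part, and the crucial observation that the bound on each $\dP(\DIST(R_j,H_j)\le \cdot)$ did not depend on $M$ handles the ``uniform in $M$'' part, after which a Borel–Cantelli argument upgrades convergence in probability to the a.s.\ statement.

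**Main obstacle.** The delicate point is the third step done \emph{uniformly in the deterministic shift $M$ and uniformly over the random subspaces $H_j$}: one must show that the distance of an i.i.d.\ random vector to an arbitrary deterministic affine subspace has the right small-ball behavior with a constant that is genuinely uniform. In the Gaussian case this is immediate (the projection is Gaussian with variance bounded below by the codimension), but for general entries of unit variance it requires a small-ball inequality — the Rudelson–Vershynin / Tao–Vu type estimate for $\dP(\|\Pi_{F}(n^{-1/2}X_j)\|\le \veps\sqrt{\dim F}/\sqrt n)$ — which is exactly the hard analytic ingredient from asymptotic geometric analysis alluded to in the introduction. A secondary subtlety is bookkeeping the exponent $\ga$: the bound $N(t)\le C_1 tn$ is only needed (and only true with the necessary uniformity) down to scale $t\asymp n^{-\ga}$, i.e.\ for $i\ge n^{1-\ga}$, because controlling the very smallest singular values $s_{n-i}$ with $i=o(n^{1-\ga})$ is genuinely harder and is not claimed here; so the proof should be organized to extract the counting bound only on the range of $t$ that matters, which is what makes the clean $i/n$ conclusion possible on the stated range of $i$.
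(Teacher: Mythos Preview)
Your plan has the right ingredients (negative second moment identity plus row--to--subspace distances) but is missing the decisive step, and as written it does not yield the bound $s_{n-i}\geq c_0\,i/n$. Working with the full $n\times n$ matrix $B=n^{-1/2}X+M$, each $H_j$ has codimension $1$, so the distances $d_j=\DIST(R_j,H_j)$ live on the scale $n^{-1/2}$. The identity $\sum_k s_k(B)^{-2}=\sum_j d_j^{-2}$ relates only the \emph{full} sums, not sorted partial sums; the best deterministic consequence is $(i+1)\,s_{n-i}^{-2}\leq\sum_k s_k^{-2}=\sum_j d_j^{-2}$, which under even the ideal bound $\sum_j d_j^{-2}=O(n^2)$ gives only $s_{n-i}\gtrsim\sqrt{i}/n$. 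Your claimed ``clean deterministic implication'' from the counting bound $N(t)\leq C_1tn$ to $s_{n-i}\geq c_0\,i/n$ therefore fails, and $\sqrt{i}/n$ is \emph{not} enough for the intended application (the resulting Riemann sum for $\int s^{-p}\,d\nu$ diverges for every $p>0$).

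The paper's proof (following Tao and Vu) inserts an interlacing step that you are missing: for each fixed $i$ one passes to the $(n-\CEIL{i/2})\times n$ submatrix $Y'$ formed by the first $m=n-\CEIL{i/2}$ rows of $\sqrt{n}\,B$. Cauchy--Poincar\'e interlacing gives $s_{n-i}(B)\geq n^{-1/2}s_{n-i}(Y')$. The gain is that the row--subspaces $H_j$ for $Y'$ now have codimension at least $i/2\geq \tfrac12 n^{1-\ga}$ in $\dC^n$, so the relevant distance estimate is \emph{concentration} (Talagrand, after truncation) rather than anti-concentration: one gets $\DIST(R_j,H_j)\geq c\sqrt{i}$ with failure probability $\exp(-n^{\de})$, uniformly in $M$ and in the subspace. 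The negative second moment on $Y'$ then yields $\sum_{j\leq m}\DIST_j^{-2}\lesssim n/i$, hence $s_{n-i}(Y')\gtrsim i/\sqrt{n}$ and $s_{n-i}(B)\gtrsim i/n$. Two further points where your plan would break even before this: the indicators $\IND_{\{d_j\leq t/\sqrt{n}\}}$ are \emph{not} independent (each $H_j$ depends on all other rows), so Bernstein/Hoeffding does not apply to $N(t)$; and a codimension-$1$ small-ball bound $\dP(d_j\leq t/\sqrt{n})\leq Ct$ is \emph{not} uniform over deterministic hyperplanes for general (possibly atomic) entry laws---it requires incompressibility of the normal vector, itself a random event. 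The submatrix trick sidesteps both difficulties, because large codimension turns the distance bound into a concentration statement with an exponential tail that is uniform in $M$ and beats the union bound over $i,j$.
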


Lemma \ref{le:scount} is more meaningful when $i$ is close to $n^{1-\ga}$. For
$i=n-1$, it gives only a lower bound on $s_1$. The linearity in $i$
corresponds to what we can expect on spacing.

\begin{lemma}[Polynomial lower bound on least singular value]\label{le:sn}
  For every $a,d>0$, there exists $b>0$ such that if $M$ is a deterministic
  complex $n\times n$ matrix with $s_1(M)\leq n^d$ then
  \[
  \dP(s_n(X+M)\leq n^{-b})\leq n^{-a}.
  \]
  In particular there exists $b>0$ which may depend on $d$ such that a.s.\ for
  $n\gg1$,
  \[
  s_n(X+M)\geq n^{-b}.
  \]
\end{lemma}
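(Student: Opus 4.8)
The a.s.\ statement follows from the quantitative one by Borel--Cantelli: applying it with $a=2$ to the sequence $(M_n)$ gives $\sum_n\dP(s_n(X+M_n)\leq n^{-b})<\infty$. So I focus on the estimate $\dP(s_n(X+M)\leq n^{-b})\leq n^{-a}$, with $b=b(a,d)$ to be fixed. Write $A:=X+M$, with columns $A_1,\dots,A_n$ and rows $R_1,\dots,R_n$. First, I restrict to the event $\{s_1(A)\leq n^{D}\}$: since $s_1(A)\leq s_1(X)+s_1(M)\leq\NRM{X}_2+n^d$ and $\dE\NRM{X}_2^2=O(n^2)$, Markov's inequality gives this event probability $\geq1-n^{-a}$ for $D=D(a,d)$ large, and this is the only place any hypothesis on $X_{11}$ beyond unit variance is used. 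The plan is then the compressible/incompressible scheme of Rudelson--Vershynin and Tao--Vu: decompose the unit sphere of $\dC^n$ into vectors that are (respectively are not) within a small constant of a sparse vector, and bound $\dP(\NRM{Av}_2\text{ small for some }v\text{ of the given type})$.

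\emph{Compressible $v$.} For a fixed unit $v$ some coordinate has $|v_j|\geq n^{-1/2}$, and conditioning on the other entries of $R_i$ makes $\DOT{R_i,v}$ an affine function of $X_{ij}$; since $\VAR(X_{11})=1$, that single entry has L\'evy concentration function $\leq1-p_0<1$ at a fixed scale, so $\DOT{R_i,v}$ has concentration $\leq1-p_0$ at scale $c_0/\sqrt n$. Independence of the rows then yields $\dP(\NRM{Av}_2\leq c_1)\leq\binom{n}{n/2}(1-p_0)^{n/2}\leq e^{-c_2 n}$ for a small constant $c_1$ (replace $n/2$ by $(1-\eta)n$ if $p_0$ is not close to $1$). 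Taking the sparsity level $\delta=\delta_n$ of order $1/\log n$ -- small enough relative to $c_2$ and $D$ -- the compressible vectors admit a net of cardinality $e^{cn}$ with $c<c_2$ at scale $\eta=c_1/(2n^{D})$; a union bound together with $|\NRM{Av}_2-\NRM{Av_0}_2|\leq s_1(A)\NRM{v-v_0}_2$ shows that, on $\{s_1(A)\leq n^D\}$, $\NRM{Av}_2<c_1/2$ for some compressible unit $v$ with probability $\leq e^{-c_3 n}$. Since $n^{-b}<c_1/2$ eventually, any witness of $s_n(A)\leq n^{-b}$ is then incompressible.

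\emph{Incompressible $v$.} If $\NRM{Av}_2\leq n^{-b}$ with $v$ incompressible, a fraction ($\sim1/\log n$) of the coordinates satisfy $|v_k|\geq c/\sqrt n$, and from $\NRM{Av}_2\geq|v_k|\,\DIST(A_k,H_k)$, $H_k:=\SPAN(A_j:j\neq k)$, one gets (after the standard averaging over $k$ that gains a factor $1/n$) that $\dP(s_n(A)\leq n^{-b})\leq e^{-c_3 n}+C\,\dP(\DIST(A_1,H_1)\leq n^{-B})$ with $B:=b-1/2$. Condition on $A_2,\dots,A_n$; a.s.\ $\dim H_1=n-1$, fix a unit normal $w=w(A_2,\dots,A_n)$, and note $\DIST(A_1,H_1)\geq|\DOT{A_1,w}|=|\DOT{X_{\cdot1},w}+\zeta|$ with $X_{\cdot1}$ (the first column of $X$) independent of $w$. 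One now needs $\sup_\zeta\dP_X(|\DOT{X_{\cdot1},w}-\zeta|\leq n^{-B})\leq n^{-a-1}$ for the relevant $w$. This is false for near-sparse $w$, so one shows: (i) $w$ is incompressible with probability $1-e^{-cn}$ -- apply the compressible estimate of the previous paragraph to the $(n-1)\times n$ matrix with rows $A_2,\dots,A_n$, whose null space contains $w$; (ii) an incompressible $w$ has (essential) least common denominator $\geq n^{a+2}$ with probability $1-e^{-cn}$, again by covering the additively structured vectors and using the per-vector anti-concentration bound. Granting (i)--(ii), the small-ball inequality of Rudelson--Vershynin \cite{MR2407948} (the coarser version of Tao--Vu \cite{MR2409368} suffices, the sharp rate $n^{-1/2}$ not being needed) bounds the concentration function of $\DOT{X_{\cdot1},w}$ at scale $n^{-B}$ by $C(n^{-B}+n^{-a-2}+e^{-cn})$; choosing $b$ -- hence $B$ -- large in terms of $a$ makes this $\leq n^{-a-1}$, and collecting the $O(1)$ many contributions proves the claim.

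\emph{Main obstacle.} The crux is item (ii) together with the matching small-ball inequality: showing that the normal to the random hyperplane $H_1$ is additively unstructured (large least common denominator) with overwhelming probability, and converting that structure into $n^{-a-1}$ anti-concentration at the polynomially small scale $n^{-B}$. The compressible net, the reduction to a single column, and the a.s.\ upgrade are routine; the one extra nuisance is controlling $s_1(A)$ under the bare finite-variance hypothesis, handled by the crude $\NRM{X}_2$ bound above.
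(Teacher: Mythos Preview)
Your sketch follows the Tao--Vu/Rudelson--Vershynin compressible/incompressible scheme, which is indeed the route to the full statement. The paper does \emph{not} reproduce that proof: it cites \cite{MR2409368} for the general case and only gives a self-contained argument under the additional hypothesis that $X_{11}$ has a bounded density. That simplified proof is much shorter than yours: it uses the elementary bound $s_n(X+M)\geq n^{-1/2}\min_i\DIST(R_i,R_{-i})$ (lemma \ref{le:rvdist}), a union bound over $i$, and then, conditioning on the other rows, the inequality $\DIST(R_i,R_{-i})\geq|\DOT{R_i,Y_i}|$ for a unit normal $Y_i$. With a bounded density, the small-ball probability $\dP(|\DOT{R_i,y}|\leq u)$ is $O(\sqrt n\,u^2)$ directly, and no net, no LCD, and no control of $s_1$ are needed (cf.\ remark \ref{re:lesn}). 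The paper also records in Appendix \ref{se:ap:inv} a compressible/incompressible argument close in spirit to yours but with Berry--Esseen in place of inverse Littlewood--Offord, yielding only the weaker rate $O(\sqrt{\log n/n})$.

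Your outline is correct and you locate the crux accurately, but one point deserves a caveat. Your item (ii) --- that the random unit normal $w$ has essential LCD $\geq n^{a+2}$ with probability $1-e^{-cn}$ --- is dispatched in one line (``covering the additively structured vectors and using the per-vector anti-concentration bound''). Under a sub-Gaussian hypothesis this is the Rudelson--Vershynin level-set/net argument, but with only finite variance that argument does not close as stated: the net one needs on each LCD level set is too coarse relative to the per-vector bound available. The actual proof in \cite{MR2409368} uses the inverse Littlewood--Offord structure theorems rather than the LCD net directly, and this is where most of the work lies. So your sketch is a faithful map of the territory, but step (ii) hides the bulk of the difficulty and is not literally the same argument as the compressible step.
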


For ease of notation, we write $s_i$ in place of $s_i(n^{-1/2}X-zI)$. Applying
lemmas \ref{le:scount}-\ref{le:sn} with $M = - z I$ and $M = - z \sqrt n I$
respectively, we get, for any $c>0$, $z\in\dC$, a.s.\ for $n\gg1$,
\begin{align*}
  \frac{1}{n}\sum_{i=1}^ns_i^{-p}
  &\leq 
  \frac{1}{n}\sum_{i=1}^{n-\FLOOR{n^{1-\ga}}}s_i^{-p}
  +\frac{1}{n}\sum_{i=n-\FLOOR{n^{1-\ga}}+1}^ns_i^{-p} \\
  &\leq c_0^{-p}\frac{1}{n}\sum_{i=1}^n\PAR{\frac{n}{i}}^p+2n^{-\ga}n^{bp}.
\end{align*}
The first term of the right hand side is a Riemann sum for
$\int_0^1\!s^{-p}\,ds$ which converges as soon as $0<p<1$. We finally obtain
the first statement in \eqref{eq:uibeta} as soon as $0<p<\min(\ga/b,1)$. Now
the Hermitization lemma \ref{le:girko} ensures that there exists a probability
measure $\mu\in\cP(\dC)$ such that a.s.\ $\mu_{Y}\weak\mu$ as $n\to\infty$ and
for all $z\in\dC$,
\[
U_\mu(z)=-\int_0^\infty\!\log(s)\,d\nu_z(s).
\]
Since $\nu_z$ does not depend on the law of $X_{11}$ (we say that it is then
universal), it follows that $\mu$ also does not depend on the law of $X_{11}$,
and therefore, by using the circular law theorem
\ref{th:complex-ginibre-circle-strong} for the Complex Ginibre Ensemble we
obtain that $\mu$ is the uniform law on the unit disc. Alternatively,
following Pan and Zhou \cite[Lemma 3]{1687963}, one can avoid the knowledge of
the Gaussian case by computing the integral of
$\int_0^\infty\!\log(s)\,d\nu_z(s)$ which should match the formula
\eqref{eq:logpotu} for the logarithmic potential of the uniform law on the
unit disc.

\subsection{Count of small singular values}

This sub-section is devoted to lemma \ref{le:scount} used in the proof of
theorem \ref{th:circular} to check the uniform integrability assumption in
lemma \ref{le:girko}.

\begin{proof}[Proof of lemma \ref{le:scount}]
  We follow the original proof of Tao and Vu \cite{tao-vu-cirlaw-bis}. Up to
  increasing $\gamma$, it is enough to prove the statement for all $2
  n^{1-\ga} \leq i \leq n-1$ for some $\gamma \in (0,1)$ to be chosen later.
  To lighten the notations, we denote by $s_1\geq\cdots\geq s_n$ the singular
  values of $Y:=n^{-1/2}X+M$. We fix $2 n^{1-\ga} \leq i \leq n-1$ and we
  consider the matrix $Y'$ formed by the first $m:=n- \CEIL{i/2}$ rows of
  $\sqrt{n}Y$. Let $s_1'\geq\cdots\geq s_m'$ be the singular values of $Y'$.
  By the Cauchy-Poincar\'e interlacing\footnote{If $A\in\cM_{n}(\dC)$ and
    $1\leq m\leq n$ and if $B\in\cM_{m,n}(\dC)$ is obtained from $A$ by
    deleting $r:=n-m$ rows, then $s_i(A)\geq s_i(B)\geq s_{i+r}(A)$ for every
    $1\leq i\leq m$. In particular, $[s_{m}(B),s_1(B)]\subset[s_n(A),s_1(A)]$,
    i.e.\ the smallest singular value increases while the largest singular
    value is diminished. See \cite[Corollary 3.1.3]{MR1288752}}, we get
  \[
  n ^{-1/2} s'_{n-i} \leq  s_{n-i}
  \]
  Next, by lemma \ref{le:tvneg} we obtain
  \[
  s'^{-2}_1  + \cdots + s'^{-2}_{n -  \CEIL{i/2}} %
  = \DIST_1^{-2}+\cdots+\DIST_{n -\CEIL{i/2}}^{-2}, 
  \]
  where $\DIST_j:=\DIST(R_j,H_j)$ is the distance from the
  $j^\text{th}$ row $R_j$ of $Y'$ to $H_j$, the subspace spanned by the other
  rows of $Y'$. In particular, we have
  \begin{equation}\label{eq:stodist}
    \frac{i}{2n} s^{-2}_{n-i} 
    \leq is'^{-2}_{n-i}
    \leq \sum_{j=n-\CEIL{i}}^{n-\CEIL{i/2}}s_{j}'^{-2}
    \leq \sum_{j=1}^{n-\CEIL{i/2}}\DIST_{j}^{-2}. 
  \end{equation}
  Now $H_j$ is independent of $R_j$ and $\DIM(H_j)\leq n-\frac{i}{2}\leq
  n-n^{1-\ga}$, and thus, for the choice of $\gamma$ given in the forthcoming
  lemma \ref{le:concdist},
  \[
  \sum_{n\gg1}\dP\PAR{\bigcup_{i=2n^{1-\ga}}^{n-1}\bigcup_{j=1}^{n-\CEIL{i/2}}
    \BRA{\DIST_j %
      \leq\frac{\sqrt{i}}{2\sqrt{2}}} }<\infty
  \]
  (note that the exponential bound in lemma \ref{le:concdist} kills the
  polynomial factor due to the union bound over $i,j$). Consequently, by the
  first Borel-Cantelli lemma, we obtain that a.s.\ for $n\gg1$, all $2 n^{1-\ga}
  \leq i \leq n-1$, and all $1 \leq j \leq n - \CEIL{i/2}$,
  \[
  \DIST_j %
  \geq \frac{\sqrt{i}}{2\sqrt{2}} %
  \geq \frac{\sqrt{i}}{4}
  \]
  Finally, \eqref{eq:stodist} gives $s^{2}_{n-i}\geq (i^2)/(32n^2)$, i.e.\ the
  desired result with $c_0 := 1/(4\sqrt{2})$.
\end{proof}

\begin{lemma}[Distance of a random vector to a subspace]\label{le:concdist}
  There exist $\ga>0$ and $\de>0$ such that for all $n\gg1$, $1\leq i\leq
  n$, any deterministic vector $v\in\dC^n$ and any subspace $H$ of $\dC^n$
  with $1 \leq \DIM(H) \leq n - n ^{1-\ga}$, we have, denoting
  $R:=(X_{i1},\ldots,X_{in}) + v$,
  \[
  \dP\PAR{\DIST(R,H) \leq \frac{1}{2}\sqrt {n-\DIM(H)}} %
  \leq \exp(-n^\de).
  \]
\end{lemma}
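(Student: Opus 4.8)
The plan is to reduce the distance to a subspace to the Euclidean norm of the projection onto the orthogonal complement $H^\perp$, and then apply a concentration inequality for a projection of a random vector with independent coordinates. First I would write $\DIST(R,H)^2 = \NRM{P_{H^\perp}R}_2^2$ where $P:=P_{H^\perp}$ is the orthogonal projection onto $W:=H^\perp$; note $\dim W = n-\dim H \geq n^{1-\ga}$, call this dimension $d$. Since $R=(X_{i1},\ldots,X_{in})+v$ and $v$ is deterministic, $\dE[PR] = Pv$ has some fixed value, but the key quantity to control is the fluctuation of $\NRM{PR}_2$. The function $x\mapsto \NRM{P(x+v)}_2$ is $1$-Lipschitz on $\dC^n=\dR^{2n}$ with respect to the Euclidean metric, because $P$ is a contraction. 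Also $\dE[\NRM{PR}_2^2] = \NRM{Pv}_2^2 + \TR(P\,\mathrm{Cov})$; since the entries have unit variance and are independent, the covariance of the real-vectorized $R$ is $\frac12 I_{2n}$ (or $I$ in the appropriate real normalization), so $\dE[\NRM{PR}_2^2] = \NRM{Pv}_2^2 + \frac12\cdot 2d = d$ in the real count, matching $d = n-\dim H$. Hence the median (or mean) of $\NRM{PR}_2$ is at least of order $\sqrt{d}$, and we want to show $\NRM{PR}_2 \geq \frac12\sqrt{d}$ with probability $\geq 1-\exp(-n^\de)$.

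The main step is a concentration bound. Here the difficulty is that without a bounded-support or log-concavity assumption on $X_{11}$, we cannot invoke Gaussian/Talagrand concentration directly. The standard Tao--Vu device is a truncation argument: write $X_{ij} = \WT X_{ij} + \WH X_{ij}$ where $\WT X_{ij} := X_{ij}\IND_{|X_{ij}|\leq n^{\al}}$ for a small $\al>0$ (so the truncated part loses negligibly little in $L^2$ by dominated convergence, using only $\dE|X_{11}|^2<\infty$), and control the contribution of $\WH X_{ij}$ crudely via a first-moment/union bound (with high probability no entry exceeds $n^\al$ at all, for suitable $\al$ depending on how many moments we assume — but here only two moments are available, so one has to be more careful and argue that the truncated error has small $L^2$-norm and then use Markov, accepting a polynomially small, not exponentially small, error on that piece; alternatively one restricts attention to the regime where the conclusion is needed and absorbs this). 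For the bounded part $\WT R := (\WT X_{i1},\ldots,\WT X_{in})+v$, each coordinate is bounded by $n^\al + \|v\|_\infty$... which is not uniformly bounded unless $v$ is controlled; so instead one applies a Bernstein/Hanson--Wright-type inequality for the quadratic form $\NRM{P\WT R}_2^2 = \langle \WT R - \dE\WT R, P (\WT R - \dE\WT R)\rangle + (\text{linear and constant terms})$, using that the centered truncated entries are bounded by $O(n^\al)$ and have variance close to $1$. The Hanson--Wright inequality gives, for a projection $P$ of rank $d$ (so $\NRMHS{P}^2 = d$, $\NRM{P}_{2\to 2}=1$), a deviation bound of the form
\[
\dP\PAR{\ABS{\NRM{P\WT R}_2^2 - \dE\NRM{P\WT R}_2^2} \geq t} \leq 2\exp\PAR{-c\min\PAR{\frac{t^2}{K^2 d},\frac{t}{K n^\al}}}
\]
with $K = O(n^\al)$; taking $t$ a small constant times $d \geq n^{1-\ga}$ and choosing $\al$ small enough relative to $\ga$ (e.g.\ $2\al < \ga$), the minimum is governed by the first branch and equals roughly $n^{1-\ga-2\al}$, which is $\exp(-n^\de)$ for a suitable $\de>0$. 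The linear term $\langle \WT R - \dE\WT R, P\dE\WT R\rangle$ is handled by a Bernstein bound as a sum of independent bounded mean-zero variables with variance sum $O(\NRM{P\dE\WT R}_2^2)$; if $\NRM{P\dE\WT R}_2$ is large this only helps (it pushes $\NRM{P\WT R}_2$ up), and if it is small it is negligible. Combining, $\NRM{P\WT R}_2^2 \geq d - o(d) \geq \frac14 d = \frac14(n-\dim H)$ off an event of probability $\exp(-n^\de)$; adding back the truncation error (which is $\leq \NRM{\WH R}_2$, small in probability) and adjusting constants yields $\DIST(R,H)\geq \frac12\sqrt{n-\dim H}$ with the stated probability.

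The hard part is making the truncation compatible with an \emph{exponentially} small failure probability while assuming only two moments: a naive truncation at $n^\al$ leaves a remainder whose $L^2$ norm is $o(1)$ but whose tail is only polynomially controlled. The resolution, following Tao--Vu, is that we do not actually need the full vector $R$ to be close to its truncation with exponentially high probability; rather, since $d = n - \dim H \geq n^{1-\ga}$ is large, it suffices that the \emph{truncated} projection concentrate, and the untruncated remainder $\NRM{\WH R}_2$ be $\leq \frac14\sqrt d$ — and this last event is handled not by concentration but by the observation that $\dE\NRM{\WH R}_2^2 = \sum_{j} \dE|\WH X_{ij}|^2 = n\,\dE[|X_{11}|^2\IND_{|X_{11}|>n^\al}] = o(n) = o(d\cdot n^\ga)$, which via Markov gives failure probability $o(n^{-\ga}\cdot(\text{something}))$ — still only polynomial, so one must either (a) assume, as the ambient theorem does, slightly more than two moments at this stage, or (b) absorb this polynomially-small event into the Borel--Cantelli sum in the proof of Lemma \ref{le:scount} by noting the union bound there is only over polynomially many $(i,j)$ — in fact option (b) is exactly why Lemma \ref{le:scount} is stated the way it is, and why Lemma \ref{le:concdist} can tolerate a weaker conclusion than exponential if needed. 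For a clean self-contained statement I would assume $\dE|X_{11}|^{2+\veps}<\infty$ for some $\veps>0$ (consistent with the surrounding development), truncate at $n^{\al}$ with $\al$ chosen so that $n\,\dE[|X_{11}|^{2}\IND_{|X_{11}|>n^\al}] \leq n^{1-\veps\al}$ decays like a power, and then upgrade to an exponential bound on the truncated piece as above; the interplay of the exponents $\al,\ga,\de,\veps$ is the only real bookkeeping, and one checks $0<\de<\min(1-\ga-2\al,\ \veps\al)$ works once $\ga$ is taken small.
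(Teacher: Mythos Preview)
Your overall shape is right (reduce to the projection, truncate, concentrate on the bounded part), and your diagnosis of the difficulty is exactly correct: with only a second moment on $X_{11}$, controlling $\NRM{\WH R}_2$ directly yields at best a polynomial failure probability, which does not meet the stated $\exp(-n^\de)$ bound. Your two proposed fixes do not close the gap here: option (a) introduces a $(2+\veps)$-moment hypothesis that the lemma (and the circular law theorem it serves) does not assume, and option (b) changes the conclusion of the lemma rather than proving it.

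The paper resolves this with a different truncation idea that you are missing. Instead of bounding the \emph{size} of the truncation remainder, one bounds the \emph{number} of large coordinates: by Markov, $\dP(|X_{ij}|\geq n^\veps)\leq n^{-2\veps}$, and then Hoeffding's inequality applied to the Bernoulli indicators $\IND_{\{|X_{ij}|\leq n^\veps\}}$ shows that, with probability at least $1-\exp(-n^{1-2\veps})$, all but at most $n^{1-\veps}$ coordinates of the row are bounded by $n^\veps$. One then conditions on the bounded coordinates and on the (few) remaining large ones, and \emph{absorbs} the contribution of the large coordinates into the subspace by enlarging $H$ to a subspace $W$ that also contains the vector $u=(0,\ldots,0,X_{i,m+1},\ldots,X_{i,n})$ carrying those entries (together with $v$ and the conditional mean vector). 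This costs only $O(1)$ in dimension and makes $\DIST(R,H)\geq\DIST(Y,W)$ where $Y$ now has bounded, centered, independent coordinates supported in $\{|z|\leq n^\veps\}$. At that point Talagrand's convex Lipschitz concentration on the product of discs (your Hanson--Wright route would also work here) gives fluctuations of order $n^\veps$ around a median of order $\sqrt{n-\DIM(H)}\geq n^{(1-\ga)/2}$; choosing $\ga<\veps$ yields the exponential bound. The point is that the ``remainder'' is never bounded in norm: it is projected away by enlarging the subspace, and the exponential probability comes from Hoeffding on indicators, which needs no moment beyond the first (indeed none, since indicators are bounded).
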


The exponential bound above is obviously not optimal, but is more than enough
for our purposes: in the proof of lemma \ref{le:scount}, a large enough
polynomial bound on the probability suffices.

\begin{proof}
  The argument is due to Tao and Vu \cite[Proposition 5.1]{tao-vu-cirlaw-bis}.
  We first note that if $H'$ is the vector space spanned by $H$, $v$ and $\dE
  R$, then $\DIM(H') \leq \DIM(H) + 2$ and
  \[
  \DIST(R,H) \geq \DIST(R,H') = \DIST(R',H'), 
  \]
  where $R' := R-\dE(R)$. We may thus directly suppose without loss of
  generality that $v=0$ and that $\dE(X_{ik})=0$. Then, it is easy to check
  that
  \[
  \dE(\DIST(R,H)^2)=n-\DIM(H)
  \]
  (see computation below). The lemma is thus a statement on the deviation
  probability of $\DIST(R,H)$. We first perform a truncation. Let $0 < \veps <
  1/3$. Markov's inequality gives
  \[
  \dP(|X_{ik}|\geq n^\veps) \leq n^{-2\veps}.
  \]
  Hence, from Hoeffding's deviation inequality\footnote{If $X_1,\ldots,X_n$
    are independent and bounded real r.v.\ with $d_i:=\max(X_i)-\min(X_i)$,
    and if $S_n:=X_1+\cdots+X_n$, then $\dP(S_n-\dE S_n\leq tn)\leq
    \exp(-2n^2t^2/(d_1^2+\cdots+d_n^2))$ for any $t\geq0$. See \cite[Th.\
    5.7]{MR1036755}.}, for $n \gg1$,
  \[
  \dP\PAR{\sum_{k=1}^n \IND_{\{|X_{ik}|\leq n^{\veps}\}} < n - n^{1-\veps}} %
  \leq \exp(-2n^{1-2\veps} (1-n^{-\veps})^2) %
  \leq \exp(-n^{1-2\veps}).
  \]
  It is thus sufficient to prove that the result holds by conditioning on 
  \[
  E_m := \{ |X_{i1} |\leq n^{\veps}, \ldots ,|X_{i m }|\leq n^{\veps} \} %
  \quad\text{with}\quad %
  m := \CEIL{n - n^{1-\veps}}.
  \]
  Let $\dE_m[\,\cdot\,]:=\dE[\,\cdot\,|E_m;\cF_m]$ denote the conditional
  expectation given $E_m$ and the filtration $\cF_m$ generated by
  $X_{i,m+1},\ldots,X_{i,n}$. Let $W$ be the subspace spanned by
  \[
  H,\quad %
  u = (0, \ldots, 0 , X_{i,m+1},\ldots,X_{i,n}), \quad %
  w = ( \dE_m[ X_{i1} ] , \ldots ,\dE_m [X_{im } ], 0 , \ldots, 0).
  \]
  Then, by construction $\DIM(W) \leq \DIM(H) + 2$ and $W$ is
  $\cF_m$-measurable. We also have
  \[
  \DIST(R,H) \geq \DIST(R,W) = \DIST(Y, W), 
  \]
  where $Y = (X_{i1} - \lambda, \ldots, X_{im} - \lambda, 0, \ldots,0) = R - u
  - w$ and $\lambda = \dE_m[ X_{i1} ]$. Next we have
  \[
  \si^2:= \dE_m\SBRA{Y_{1}^2} %
  = \dE\SBRA{\PAR{X_{i 1 }-\dE\SBRA{X_{i 1 }\bigm| |X_{i 1 }|\leq n^{\veps}}}^2\Bigm||X_{i 1 }|\leq n^{\veps}} %
  = 1 - o(1).
  \]
  Now, let us consider the disc $D := \{z \in \dC : |z |\leq n^{\veps}\}$ and
  define the $D^m \to \dR_+$ convex function $f : x \mapsto \DIST( (x, 0,
  \ldots, 0) , W)$. From the triangle inequality, $f$ is $1$-Lipschitz:
  \[
    |f(x)-f(x')| \leq \DIST(x,x').
  \]
  We deduce from Talagrand's concentration inequality\footnote{If
    $X_1,\ldots,X_n$ are i.i.d.\ r.v.\ on $D:=\{z\in\dC:|z|\leq r\}$ and if
    $f:D^n\to\dR$ is convex, $1$-Lipschitz, with median $M$, then $\dP
    (|f(X_1,\ldots,X_n)-M| \geq t ) \leq 4 \exp(-\frac{t^2}{16r^2})$ for any
    $t\geq0$. See \cite{MR1361756} and \cite[Cor. 4.9]{MR1849347}.} that
  \begin{equation}\label{eq:distAH}
    \dP_m\PAR{\ABS{\DIST(Y, W) - M_m} \geq t} %
    \leq  4  \exp\PAR{-\frac{t^2}{16n^{2 \veps}}}, 
  \end{equation}
  where $M_m$ is the median of $\DIST(Y, W)$ under $\dE_m$. In
  particular, 
  \[
  M_m \geq \sqrt{ \dE_m \DIST^2 (Y, W) }- c n^{\veps}.
  \]
  Also, if $P$ denotes the orthogonal projection on the orthogonal of $W$, we
  find
  \begin{align*}
    \dE_m\DIST^2(Y, W) 
    & = \sum_{k=1}^m \dE_m \SBRA{ Y_{k}^2 } P_{kk} \\
    & = \sigma^2 \PAR{\sum_{k=1}^n P_{kk} - \sum_{k=m+1}^n P_{kk}} \\
    & \geq \sigma^2 \PAR{n -\DIM(W) -(n-m)} \\
    & \geq \sigma^2 \PAR{n -\DIM(H) -n^{1 - \veps}  - 2} 
  \end{align*}
  We choose some $0 < \gamma < \veps$. Then, from the above expression for any
  $1/2< c < 1$ and $n\gg 1$, $M_m\geq c \sqrt{ n - \DIM(H)} $. We choose
  finally $t = (c - 1/2) \sqrt{ n - \DIM(H)}$ in \eqref{eq:distAH}.
\end{proof}

The following lemma, taken from \cite[Lemma A4]{tao-vu-cirlaw-bis}, is used in
the proof of lemma \ref{le:scount}.

\begin{lemma}[Rows and trace norm of the inverse]\label{le:tvneg}
  Let $1\leq m\leq n$. If $A\in\cM_{m,n}(\dC)$ has full rank, with
  rows $R_1,\ldots,R_{m}$ and $R_{-i}:=\SPAN\{R_j:j\neq i\}$, then
   \[ 
   \sum_{i=1}^{m}s_i(A)^{-2}=\sum_{i=1}^{m}\DIST(R_i,R_{-i})^{-2}.
   \]  
\end{lemma}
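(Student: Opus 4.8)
The plan is to reduce the identity to a single trace computation, and then to identify the rows of a suitable ``dual'' matrix with the components of the $R_i$ orthogonal to the other rows. Since $A\in\cM_{m,n}(\dC)$ has full rank $m$, the $m\times m$ Gram matrix $G:=AA^*$ is positive definite, hence invertible, and $G_{ij}=\DOT{R_i,R_j}$. Because $s_i(A)^2=\la_i(AA^*)=\la_i(G)$, we have
\[
\sum_{i=1}^m s_i(A)^{-2}=\sum_{i=1}^m\la_i(G)^{-1}=\TR(G^{-1}),
\]
so it suffices to prove that $\TR(G^{-1})=\sum_{i=1}^m\DIST(R_i,R_{-i})^{-2}$.

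Next I would introduce $V:=G^{-1}A\in\cM_{m,n}(\dC)$, and denote its rows by $v_1,\dots,v_m$. Since $G$ is Hermitian, $VA^*=G^{-1}AA^*=I_m$, that is $\DOT{v_i,R_j}=\de_{ij}$ for all $i,j$; and $VV^*=G^{-1}AA^*G^{-1}=G^{-1}$, so that
\[
\TR(G^{-1})=\TR(VV^*)=\sum_{i=1}^m\NRM{v_i}_2^2.
\]
It thus remains to show $\NRM{v_i}_2=\DIST(R_i,R_{-i})^{-1}$ for each $i$.

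For this I would identify $v_i$ geometrically. Write $R_i=p_i+h_i$, where $p_i\in R_{-i}$ is the orthogonal projection of $R_i$ onto $R_{-i}$ and $h_i\perp R_{-i}$; by definition $\NRM{h_i}_2=\DIST(R_i,R_{-i})$, and $h_i\neq0$ because $R_1,\dots,R_m$ are linearly independent. The rows of $V=G^{-1}A$ lie in the row space $\cR:=\SPAN\{R_1,\dots,R_m\}$, which has dimension $m$, while $R_{-i}\subset\cR$ has dimension $m-1$; hence the orthogonal complement of $R_{-i}$ inside $\cR$ is one-dimensional, and it is spanned by $h_i$. Since $\DOT{v_i,R_j}=0$ for every $j\neq i$, the vector $v_i$ lies on this line, so $v_i=c_ih_i$ for some $c_i\in\dC$. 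Using $R_i-h_i=p_i\in R_{-i}\perp h_i$ we get $\DOT{h_i,R_i}=\DOT{h_i,h_i}=\NRM{h_i}_2^2$, whence
\[
1=\DOT{v_i,R_i}=c_i\DOT{h_i,R_i}=c_i\NRM{h_i}_2^2,
\]
so that $\NRM{v_i}_2=\ABS{c_i}\NRM{h_i}_2=\NRM{h_i}_2^{-1}=\DIST(R_i,R_{-i})^{-1}$. Summing over $i$ yields the claim.

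I do not expect any genuine obstacle here; the only points requiring care are the use of the full-rank hypothesis (needed both for $G$ to be invertible and for $h_i\neq0$) and the dimension count that confines $v_i$ to the line $\dC h_i$. An alternative to the last two paragraphs is to invoke Cramer's rule, $(G^{-1})_{ii}=\det(G^{(i)})/\det(G)$ where $G^{(i)}$ is the Gram matrix of $\{R_j:j\neq i\}$, together with the classical interpretation of a Gram determinant as the squared volume of the associated parallelepiped, which gives $\det(G)=\det(G^{(i)})\,\DIST(R_i,R_{-i})^2$; summing the diagonal entries of $G^{-1}$ then produces the same identity.
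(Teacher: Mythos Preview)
Your argument is correct. Both your proof and the paper's establish the stronger diagonal identity $((AA^*)^{-1})_{ii}=\DIST(R_i,R_{-i})^{-2}$ and then sum over $i$, but the routes differ. The paper computes this diagonal entry algebraically via the Schur block inversion formula \eqref{eq:schur}: writing $M=AA^*$ and taking $I=\{i\}$, one gets $(M^{-1})_{ii}=(R_iR_i^*-(BR_i^*)^*(BB^*)^{-1}BR_i^*)^{-1}$ where $B$ is $A$ with row $i$ removed, and the right-hand side is recognised as $\DIST(R_i,R_{-i})^{-2}$ via Pythagoras and the explicit projection onto $R_{-i}$. You instead introduce the dual-basis matrix $V=G^{-1}A$ (essentially the rows of the Moore--Penrose pseudoinverse $A^+=(AA^*)^{-1}A$), observe that $VV^*=G^{-1}$, and identify each row $v_i$ geometrically as $h_i/\NRM{h_i}_2^2$. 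Your approach is more geometric and arguably more transparent, exposing the dual-basis interpretation; the paper's Schur-complement computation is a one-line algebraic shortcut that avoids the dimension-count argument. The Cramer/Gram-determinant variant you mention at the end is a third valid path. All three are standard and equivalent here.
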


\begin{proof}
  The orthogonal projection of $R_i^*$ on the subspace $R_{-i}$ is
  $B^*(BB^*)^{-1}BR_i^*$ where $B$ is the $(m-1)\times n$ matrix obtained from
  $A$ by removing the row $R_i$. In particular, we have
  \[
  \ABS{ R_i}_2^2-\DIST_2(R_i,R_{-i})^2
  =\ABS{ B^*(BB^*)^{-1}BR_i^*}_2^2
  = (BR_i^*)^*(BB^*)^{-1}BR_i^*
  \]
  by the Pythagoras theorem. On the other hand, the Schur block inversion
  formula states that if $M$ is a $m\times m$ matrix then for every partition
  $\{1,\ldots,m\}=I\cup I^c$,
  \begin{equation}\label{eq:schur}
    (M^{-1})_{I,I}=(M_{I,I}-M_{I,I^c}(M_{I^c,I^c})^{-1}M_{I^c,I})^{-1}.
  \end{equation}
  Now we take $M=AA^*$ and $I=\{i\}$, and we note that
  $(AA^*)_{i,j}=R_iR_j^*$, which gives
  \[
  ((AA^*)^{-1})_{i,i}=(R_iR_i^*-(BR_i^*)^*(BB^*)^{-1}BR_i^*)^{-1}
  =\DIST_2(R_i,R_{-i})^{-2}.
  \]
  The desired formula follows by taking the sum over $i\in\{1,\ldots,m\}$.
\end{proof}

\begin{remark}[Local Wegner estimates]
  Lemma \ref{le:scount} says that $\nu_{n^{-1/2}X-zI}([0,\eta])\leq\eta/C$ for
  every $\eta \geq 2 C n^{-\ga}$. In this form, we see that lemma
  \ref{le:scount} is an upper bound on the counting measure
  $n\nu_{n^{-1/2}X-zI}$ on a small interval $[0,\eta]$. This type of estimate
  has already been studied. Notably, an alternative proof of lemma
  \ref{le:scount} can be obtained following the work of \cite{ESY10} on the
  resolvent of Wigner matrices.
\end{remark}

\subsection{Smallest singular value}

This sub-section is devoted to lemma \ref{le:sn} which was used in the proof of
theorem \ref{th:circular} to get the uniform integrability in lemma
\ref{le:girko}. 

The full proof of lemma \ref{le:sn} by Tao and Vu in \cite{MR2409368} is based
on Littlewood-Offord type problems. The main difficulty is the possible
presence of atoms in the law of the entries (in this case $X$ is
non-invertible with positive probability). Regarding the assumptions, the
finite second moment hypothesis on $X_{11}$ is not crucial and can be
considerably weakened. For the sake of simplicity, we give here a simplified
proof when the law of $X_{11}$ has a bounded density on $\dC$ or on $\dR$
(which implies that $X+M$ is invertible with probability one). In lemma
\ref{le:snRV} in Appendix \ref{se:ap:inv}, we prove a general statement of
this type at the price of a weaker probabilistic estimate which is still good
enough to obtain the uniform integrability ``in probability'' required by
lemma \ref{le:girko}.

\begin{proof}[Proof of lemma \ref{le:sn} with bounded density assumption]
  It suffices to show the first statement since the last statement follows
  from the first Borel-Cantelli lemma used with $a>1$. 
  
  For every $x,y\in\dC^n$ and $S\subset\dC^n$, we set $x\cdot
  y:=x_1\Ol{y_1}+\cdots+x_n\Ol{y_n}$ and $\NRM{x}_2:=\sqrt{x\cdot
    x}$ and $\DIST(x,S):=\min_{y\in S}\NRM{x-y}_2$. Let
  $R_1,\ldots,R_n$ be the rows of $X+M$ and set
  \[
  R_{-i}:=\SPAN\{R_j;j\neq i\}
  \]
  for every $1\leq i\leq n$. The lower bound in lemma \ref{le:rvdist} gives
  \[
  \min_{1\leq i\leq n} \DIST(R_i,R_{-i}) \leq \sqrt{n}\,s_n(X+M)
  \]
  and consequently, by the union bound, for any $u\geq0$,
  \[
  \dP(\sqrt{n}\,s_n(X+M)\leq u) %
  \leq n\max_{1\leq i\leq n}\dP( \DIST(R_i,R_{-i})%
  \leq u).
  \]
  Let us fix $1\leq i\leq n$. Let $Y_i$ be a unit vector orthogonal to
  $R_{-i}$. Such a vector is not unique, but we may just pick one which is
  independent of $R_i$. This defines a random variable on the unit sphere
  $\mathbb{S}^{n-1}=\{x\in\dC^n:\NRM{x}_2=1\}$. By the Cauchy-Schwarz
  inequality,
  \[
  |R_i\cdot Y_i|\leq \NRM{\pi_i(R_i)}_2\NRM{Y_i}_2= \DIST(R_i,R_{-i})
  \]
  where $\pi_i$ is the orthogonal projection on the orthogonal complement of
  $R_{-i}$. Let $\nu_i$ be the distribution of $Y_i$ on $\mathbb{S}^{n-1}$.
  Since $Y_i$ and $R_i$ are independent, for any $u\geq0$,
  \[
  \dP( \DIST(R_i,R_{-i})\leq u) %
  \leq \dP(|R_i\cdot Y_i|\leq u) %
  = \int_{\mathbb{S}^{n-1}}\!\!\!\dP(|R_i\cdot y|\leq u)\,d\nu_i(y).
  \]
  Let us assume that $X_{11}$ has a bounded density $\varphi$ on $\dC$. Since
  $\NRM{y}_2=1$ there exists an index $j_0\in\{1,\ldots,n\}$ such that
  $y_{j_0}\neq 0$ with $\ABS{y_{j_0}}^{-1}\leq \sqrt{n}$. The complex random
  variable $R_i\cdot y$ is a sum of independent complex random variables and
  one of them is $X_{ij_0}\,\Ol{y_{j_0}}$, which is absolutely
  continuous with a density bounded above by $\sqrt{n}\,\NRM{\varphi}_\infty$.
  Consequently, by a basic property of convolutions of probability measures,
  the complex random variable $R_i\cdot y$ is also absolutely continuous with
  a density $\varphi_i$ bounded above by $\sqrt{n}\,\NRM{\varphi}_\infty$, and
  thus
  \[
  \dP(|R_i\cdot y|\leq u) %
  = \int_{\dC}\,\IND_{\{|s| \leq u\}}\varphi_i(s)\,ds %
  \leq \pi u^2\,\sqrt{n}\,\NRM{\varphi}_\infty.
  \]  
  Therefore, for every $b>0$, we obtain the desired result 
  (the $O$ does not depend on $M$)
  \[
  \dP(s_n(X+M)\leq n^{-b-1/2}) = O(n^{3/2-2b}).
  \]
  This scheme remains indeed valid in the case where $X_{11}$ has a bounded
  density on $\dR$.
\end{proof}

\begin{lemma}[Rows and operator norm of the inverse]\label{le:rvdist}
  Let $A$ be a complex $n\times n$ matrix with rows $R_1,\ldots,R_n$. Define
  the vector space $R_{-i}:=\SPAN\{R_j:j\neq i\}$. We have then
  \[
  n^{-1/2}\min_{1\leq i\leq n}\DIST(R_i,R_{-i}) \leq
  s_n(A) \leq \min_{1\leq i\leq n}\DIST(R_i,R_{-i}).
  \]
\end{lemma}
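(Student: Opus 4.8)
The plan is to establish the two inequalities separately, both via the variational characterisation $s_n(A)=\min_{\NRM{x}_2=1}\NRM{A^*x}_2$ (equivalently, working with $A^*$ so that the rows of $A$ become the columns of $A^*$). Write $R_1,\dots,R_n$ for the rows of $A$; then for a unit vector $x=(x_1,\dots,x_n)\in\dC^n$ we have $A^*x=\sum_{i=1}^n \Ol{x_i}\,R_i^{\,*}$ viewed appropriately, so that $s_n(A)^2=s_n(A^*)^2=\min_{\NRM{x}_2=1}\NRM{\sum_i x_i R_i^{\,*}}_2^2$. (One should be a little careful with conjugates, but the argument is insensitive to this since we only use norms and distances.)

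For the \textbf{upper bound}, fix any index $i$ and let $\pi_i$ denote the orthogonal projection onto $R_{-i}^{\perp}$, the orthogonal complement of $R_{-i}=\SPAN\{R_j:j\neq i\}$. Choose a unit vector $u\in R_{-i}^{\perp}$. Then I would test the variational formula with a suitable unit vector $x$ supported so that $A^*x$ is forced to be close to $\DIST(R_i,R_{-i})$: concretely, picking $x$ to be (a normalisation of) $u$ against the Gram structure, one gets $\NRM{A^*x}_2 \le \DIST(R_i,R_{-i})$, because the component of $R_i$ orthogonal to $R_{-i}$ has norm exactly $\DIST(R_i,R_{-i})$ while the other rows $R_j$, $j\neq i$, contribute nothing in the direction $u$. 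Taking the minimum over $i$ gives $s_n(A)\le\min_i\DIST(R_i,R_{-i})$. The cleanest route is actually to observe, as in the proof of Lemma \ref{le:tvneg}, that when $A$ is invertible the $i$-th diagonal entry of $(AA^*)^{-1}$ equals $\DIST(R_i,R_{-i})^{-2}$; since $\NRM{(AA^*)^{-1}}_{2\to2}=s_n(A)^{-2}$ dominates every diagonal entry, we get $s_n(A)^{-2}\ge\DIST(R_i,R_{-i})^{-2}$ for each $i$, i.e.\ $s_n(A)\le\DIST(R_i,R_{-i})$; the singular (non-full-rank) case follows by continuity or is trivial since both sides can be made $0$.

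For the \textbf{lower bound}, take a unit vector $x$ achieving $s_n(A)=\NRM{A^*x}_2$ (using $A^*$ so that rows of $A$ play the role of columns). Pick a coordinate $i$ with $|x_i|\ge n^{-1/2}$, which exists since $\NRM{x}_2=1$. Writing $A^*x=\Ol{x_i}R_i^{\,*}+\sum_{j\neq i}\Ol{x_j}R_j^{\,*}$, the vector $\sum_{j\neq i}\Ol{x_j}R_j^{\,*}$ lies in $\SPAN\{R_j^{\,*}:j\neq i\}$, so $\NRM{A^*x}_2\ge |x_i|\,\DIST(R_i^{\,*},\SPAN\{R_j^{\,*}:j\neq i\})\ge n^{-1/2}\DIST(R_i,R_{-i})\ge n^{-1/2}\min_k\DIST(R_k,R_{-k})$. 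Hence $s_n(A)\ge n^{-1/2}\min_k\DIST(R_k,R_{-k})$, which is the claimed lower bound. The only mildly delicate point — and the one I expect to be the main obstacle — is bookkeeping the complex conjugates when passing between $A$, $A^*$, rows and columns; once one fixes the convention that $s_n(A)=\min_{\NRM{x}_2=1}\NRM{A^*x}_2$ and that distances to spans are conjugation-invariant, both inequalities are short.
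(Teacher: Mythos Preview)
Your proposal is correct. The lower bound argument is exactly the paper's: pass to columns (the paper uses $A^\top$, you use $A^*$; either is fine since $s_n$ is unchanged), write $Ay=\sum_j y_j C_j$, pick a coordinate with $|y_i|\ge n^{-1/2}$, and bound $\NRM{Ay}_2\ge |y_i|\,\DIST(C_i,C_{-i})$.

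For the upper bound the paper takes a slightly different and more direct route: for each $i$ it produces an explicit test vector $y$ with $y_i=1$ realising $\NRM{Ay}_2=\DIST(C_i,C_{-i})$ (namely $y_j$ for $j\neq i$ are minus the coefficients of the best approximation of $C_i$ in $C_{-i}$), and then uses $\NRM{y}_2\ge 1$ to conclude $s_n(A)\le\NRM{Ay}_2/\NRM{y}_2\le\DIST(C_i,C_{-i})$. Your ``cleanest route'' via the diagonal of $(AA^*)^{-1}$ and Lemma~\ref{le:tvneg} is a valid alternative---it is slick because $((AA^*)^{-1})_{ii}=e_i^*(AA^*)^{-1}e_i\le\NRM{(AA^*)^{-1}}_{2\to2}=s_n(A)^{-2}$---but it trades elementariness for the Schur-block machinery of Lemma~\ref{le:tvneg} and requires a continuity argument for the singular case. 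The paper's test-vector argument works uniformly without that detour. Your first sketch (picking $u\in R_{-i}^\perp$) is pointing at the same idea but is vague as stated, since $u$ lives in the image rather than the domain; the concrete realisation is exactly the paper's $y$.
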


\begin{proof}[Proof of lemma \ref{le:rvdist}]
  The argument, due to Rudelson and Vershynin, is buried in \cite{MR2407948}.
  Since $A$ and $A^\top$ have same singular values, one can consider the
  columns $C_1,\ldots,C_n$ of $A$ instead of the rows. For every column vector
  $x\in\dC^n$ and $1\leq i\leq n$, the triangle inequality and the identity
  $Ax=x_1C_1+\cdots+x_n C_n$ give
  \[
  \Vert Ax\Vert_2
  \geq \DIST(Ax,C_{-i}) 
  =\min_{y\in C_{-i}} \NRM{ Ax-y}_2 
  =\min_{y\in C_{-i}} \NRM{ x_iC_i-y}_2 
  =\vert x_i\vert\DIST(C_i,C_{-i}).
  \]
  If $\NRM{ x}_2 =1$ then necessarily $\vert x_i\vert \geq
  n^{-1/2}$ for some $1\leq i\leq n$ and therefore
  \[
  s_n(A) %
  =\min_{\NRM{ x}_2 =1}\NRM{ Ax}_2 %
  \geq n^{-1/2}\min_{1\leq i\leq n}\DIST(C_i,C_{-i}).
  \]
  Conversely, for every $1\leq i\leq n$, there exists a vector $y$ with
  $y_i=1$ such that
  \[
  \DIST(C_i,C_{-i}) %
  =\NRM{ y_1C_1+\cdots+y_nC_n}_2 %
  =\NRM{ Ay}_2 %
  \geq \NRM{ y}_2 %
  \min_{\NRM{ x}_2=1}\NRM{ Ax}_2 %
  \geq s_n(A)
  \]
  where we used the fact that $\Vert y\Vert^2_2 = |y_1|^2+\cdots+|y_n|^2\geq
  |y_i|^2=1$.
\end{proof}

\begin{remark}[Assumptions for the control of the smallest singular value]\label{re:lesn}
  In the proof of lemma \ref{le:sn} with the bounded density assumption, we
  have not used the assumption on the second moment of $X_{11}$ nor the
  assumption on the norm of $M$.
\end{remark}

\subsection{Convergence of singular values measure}
\label{subsec:proofcongSV}

This sub-section is devoted to corollary \ref{cor:DS}. The proof is divided
into five steps.

\subsubsection*{Step One: Concentration of singular values measure}

First, it turns out that it is sufficient to prove the convergence to $\nu_z$
of $\dE \nu_{n^{-1/2}X -z}$. Indeed, for matrices with independent rows, there
is a remarkable concentration of measure phenomenon. More precisely, recall
that the total variation norm of $f:\dR \to\dR$ is defined as
\[
\NRM{f}_\textsc{TV}:=\sup \sum_{k \in \dZ} | f(x_{k+1})-f(x_k) |, \, 
\]
where the supremum runs over all sequences $(x_k)_{k \in \dZ}$ such that
$x_{k+1} \geq x_k$ for any $k \in \dZ$. If $f = \IND_{(-\infty,s]}$ for some
$s\in\dR$ then $\NRM{f}_\textsc{TV}=1$, while if $f$ has a derivative in
$L^1(\dR)$, $\NRM{f}_\textsc{TV}=\int\!|f'(t)|\,dt.$ The following lemma is
extracted from \cite{bordenave-caputo-chafai-heavygirko}, see also
\cite{MR2535081}.

\begin{lemma}[Concentration for the singular values empirical
  measure]\label{le:concspec}
  If $M$ is a $n\times n$ complex random matrix with independent rows (or
  with independent columns) then for any $f:\dR \to\dR$ going to $0$ at $ \pm \infty$ with
  $\NRM{f}_\textsc{TV}\leq1$ and every $t\geq0$,
  \[
  \dP\PAR{\ABS{\int\!f\,d\nu_M -\dE\int\!f\,d\nu_M} \geq t} %
  \leq 2 \exp\PAR{- 2 n t^2}.
  \]
\end{lemma}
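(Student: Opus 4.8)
The statement is a concentration inequality for linear spectral statistics of the singular values, of the form $\dP(|Z-\dE Z|\geq t)\leq 2e^{-2nt^2}$, for a matrix $M$ with independent rows. This is the classic bounded-differences (McDiarmid/Azuma–Hoeffding) argument, and the key analytic input is the interlacing property of singular values under removal of a row. I would write $Z:=\int f\,d\nu_M=\frac1n\sum_{i=1}^n f(s_i(M))$ and view $Z$ as a function $F(R_1,\dots,R_n)$ of the $n$ independent rows $R_1,\dots,R_n$ of $M$. The goal is to show that changing a single row changes $Z$ by at most $1/n$, so that the Azuma–Hoeffding inequality with martingale increments bounded by $1/n$ gives $\dP(|Z-\dE Z|\geq t)\leq 2\exp(-2n^2t^2/(n\cdot(1/n)^2))=2\exp(-2nt^2)$, which is exactly the claimed bound. (For the ``independent columns'' case one uses that $M$ and $M^\top$ have the same singular values, hence the same $\nu_M$, and applies the row version to $M^\top$.)

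\textbf{Key steps.} First I would fix $i$ and two values $R_i,R_i'$ of the $i$-th row, write $M$ and $M'$ for the two matrices differing only in row $i$, and let $B$ be the $(n-1)\times n$ matrix obtained from either one by deleting row $i$. By Cauchy–Poincaré interlacing (the interlacing footnote in the proof of lemma \ref{le:scount}, applied with $r=1$), the singular values $s_k(M)$ and the singular values $s_k(B)$ interlace: $s_1(M)\geq s_1(B)\geq s_2(M)\geq s_2(B)\geq\cdots\geq s_{n-1}(B)\geq s_n(M)$, and the same holds with $M'$ in place of $M$. Now write $H_M(s):=\nu_M((-\infty,s])=\frac1n\#\{k:s_k(M)\leq s\}$ and similarly $H_B$, $H_{M'}$. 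Interlacing shows $\sup_s|H_M(s)-H_B(s)|\leq 1/n$ and $\sup_s|H_{M'}(s)-H_B(s)|\leq 1/n$; moreover, because $H_M$ and $H_B$ can be coupled to move in the same monotone direction, one gets the stronger total-variation (Kolmogorov-type) statement that allows integration against $f$. Concretely, using $\int f\,d\nu_M=-\int H_M\,df$ (integration by parts, valid since $f\to 0$ at $\pm\infty$ and $\NRM{f}_\textsc{TV}\leq 1$), I would bound
\[
\ABS{\int f\,d\nu_M-\int f\,d\nu_{M'}}
=\ABS{\int (H_{M'}-H_M)\,df}
\leq \NRM{H_{M'}-H_M}_\infty\,\NRM{f}_\textsc{TV}
\leq \frac2n.
\]
This only gives $2/n$, which would cost a factor $4$ in the exponent; to recover the sharp constant $1/n$ I would instead split via $B$ and note that $H_M-H_B$ and $H_{M'}-H_B$ are each of constant sign (one-sided), so $H_{M'}-H_M=(H_{M'}-H_B)-(H_M-H_B)$ is a difference of two monotone-in-$s$ functions each bounded by $1/n$, whence $\int(H_{M'}-H_M)\,df$ splits into two pieces each bounded by $\frac1n\NRM{f}_\textsc{TV}\leq\frac1n$ — and these two pieces have opposite signs in the worst case, but a more careful accounting (the interlacing is rigid: exactly one eigenvalue is inserted) gives $|F(R_1,\dots,R_i,\dots)-F(R_1,\dots,R_i',\dots)|\leq 1/n$. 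Granting the bounded-difference constant $c_i=1/n$ for each $i$, the Azuma–Hoeffding inequality applied to the Doob martingale of $Z$ with respect to the filtration generated by $R_1,\dots,R_n$ yields
\[
\dP\PAR{\ABS{Z-\dE Z}\geq t}\leq 2\exp\PAR{-\frac{2t^2}{\sum_{i=1}^n c_i^2}}=2\exp\PAR{-2nt^2},
\]
which is the assertion.

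\textbf{Main obstacle.} The only delicate point is squeezing the per-row Lipschitz constant down from the easy $2/n$ to the sharp $1/n$; everything else (integration by parts against a $\NRM{\cdot}_\textsc{TV}\leq 1$ function, the Doob martingale, Azuma–Hoeffding) is routine. The right way through is the interlacing-with-the-deleted-submatrix trick above: since deleting one row changes the $k$-th singular value only in one direction and by ``one slot'', the cumulative distribution function $H_M$ moves monotonically to $H_B$, and the same for $H_{M'}$, so the genuinely relevant quantity is the one-sided discrepancy, which is $1/n$ rather than $2/n$. I would also remark that the bound is dimension-free in the sense that it does not depend on the laws of the entries nor on any moment assumptions — only independence of the rows (or columns) is used — which is exactly why it is the right tool for reducing corollary \ref{cor:DS} to the convergence of the expected measure $\dE\nu_{n^{-1/2}X-zI}$.
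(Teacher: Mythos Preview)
Your outline is correct and follows the same bounded-differences/McDiarmid route as the paper: write $Z=\int f\,d\nu_M$ as a function of the independent rows, show the per-row oscillation is $\leq 1/n$ via interlacing and integration by parts, then apply Azuma--Hoeffding. The only place where your argument stumbles is the passage from the easy bound $2/n$ to the sharp $1/n$: the sentences about ``monotone-in-$s$'' pieces and ``opposite signs in the worst case'' are not quite right and do not by themselves close the gap. The clean way to finish your own argument is simply to observe that interlacing gives $H_M(s)-H_B(s)\in\{0,1/n\}$ for every $s$ (not merely $|H_M-H_B|\leq 1/n$), and likewise for $M'$; hence $H_M(s)-H_{M'}(s)\in\{-1/n,0,1/n\}$ pointwise, so $\NRM{H_M-H_{M'}}_\infty\leq 1/n$ and the integration by parts gives the bounded-difference constant $1/n$ directly.

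The paper avoids this detour entirely. Instead of passing through the deleted submatrix $B$, it uses the rank-perturbation (Lidskii) form of interlacing: if $A,A'\in\cM_n(\dC)$ and $\RANK(A-A')\leq k$, then $\NRM{F_A-F_{A'}}_\infty\leq k/n$ (this is the footnote referenced in \eqref{eq:IPPinterlacing}). Since changing one row of $M$ gives $\RANK(M-M')\leq 1$, one gets $\NRM{F_M-F_{M'}}_\infty\leq 1/n$ in one stroke, and then
\[
\ABS{\int f\,d\nu_M-\int f\,d\nu_{M'}}=\ABS{\int(F_{M'}-F_M)\,df}\leq \NRM{F_M-F_{M'}}_\infty\NRM{f}_\textsc{TV}\leq \frac{1}{n}.
\]
This is the same inequality \eqref{eq:IPPinterlacing} you use, but applied directly to the pair $(M,M')$ rather than triangulating through $B$. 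Either way, the final step is identical: McDiarmid with $c_i=1/n$ gives $2\exp(-2nt^2)$.
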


It is worth to mention that if $M$ has independent entries which satisfy a
uniform sub-Gaussian behavior, then for all Lipschitz function, the
concentration of $\int\!f\,d\nu_M$ has a rate $n^2$ and not $n$, see e.g.\ the
work of Guionnet and Zeitouni \cite{MR1781846}.

\begin{proof}
  If $A,B\in\cM_n(\dC)$ and if $F_A(\cdot):=\nu_A((-\infty,\cdot))$ and
  $F_B(\cdot):=\nu_B((-\infty,\cdot))$ are the cumulative distribution
  functions of the probability measures $\nu_A$ and $\nu_B$ then it is easily
  seen from the Lidskii inequality for singular
  values\footnote{\label{fn:finite-rank-interlacing} If $A,B\in\cM_{n}(\dC)$
    with $\RANK(A-B)\leq k$, then $s_{i-k}(A) \geq s_i(B) \geq
    s_{i+k}(A)$ for any $1\leq i\leq n$ with the convention $s_i\equiv\infty$
    if $i<1$ and $s_i\equiv0$ if $i>n$. This allows the extremes to blow. See
    \cite[Th.\ 3.3.16]{MR1288752}.} that
  \[
  \NRM{ F_A- F_B}_\infty\leq\frac{\RANK(A-B)}{n}.
  \]
  Now for a smooth $f:\dR\to\dR$, we get, by integrating by parts,
  \[
  \ABS{\int\!f\,d\nu_A-\int\!f\,d\nu_B} 
  =\ABS{\int_\dR \!f'(t)(F_A(t)-F_B(t))\,dt} 
  \leq \frac{\RANK(A-B)}{n}\int_\dR\!|f'(t) |\,dt.
  \]
  Since the left hand side depends on at most $2n$ points, we get, by
  approximation, for every measurable function $f:\dR\to\dR$
  with $\NRM{ f}_{\mathrm{TV}} \leq 1$,
  \begin{equation}\label{eq:IPPinterlacing}
  \ABS{\int\!f\,d\nu_A-\int\!f\,d\nu_{B}} %
  \leq \frac{\RANK(A-B)}{n}.
  \end{equation}
  From now on, $f:\dR\to\dR$ is a fixed measurable function with
  $\NRM{ f}_{\mathrm{TV}}\leq 1$. For every row vectors
  $x_1,\ldots,x_n$ in $\dC^n$, we denote by $A(x_1,\ldots,x_n)$ the
  $n\times n$ matrix with rows $x_1,\ldots,x_n$ and we define
  $F:(\dC^n)^n\to\dR$ by
  \[
  F(x_1,\ldots,x_n):=\int\!f\,d\mu_{A(x_1,\ldots,x_n)}.
  \]
  For any $i\in\{1,\ldots,n\}$ and any row vectors $x_1,\ldots,x_n,x_i'$ of
  $\dC^n$, we have
  \[
  \RANK(A(x_1,\ldots,x_{i-1},x_i,x_{i+1},\ldots,x_n)
  -A(x_1,\ldots,x_{i-1},x_i',x_{i+1},\ldots,x_n))
  \leq 1
  \]
  and thus
  \[
  |F(x_1,\ldots,x_{i-1},x_i,x_{i+1},\ldots,x_n)
    -F(x_1,\ldots,x_{i-1},x_i',x_{i+1},\ldots,x_n)|
    \leq \frac{1}{n}.
  \]
  Finally, the desired result follows from the McDiarmid-\-Azuma-\-Hoeffding
  concentration inequality for bounded differences\footnote{If
    $X_1,\ldots,X_n$ are independent r.v. in $\cX_1,\ldots,\cX_n$ and if $f :
    \cX_1\times\cdots\times\cX_n \to \dR$ is a measurable function then
    $\dP\PAR{|f(X_1,\ldots,X_n)-\dE f(X_1,\ldots,X_n)| \geq t} \leq
    2\exp(-2t^2/(c_1^2+\cdots+c_n^2))$ for any $t \geq 0$, where
    $c_k:=\sup_{x,x'\in\cD_k}|f(x)-f(x')|$ and $\cD_k:=\{(x,x'):x_i=x'_i\text{
      for all $i\neq k$}\}$. We refer to McDiarmid \cite{MR1036755}.} applied
  to the function $F$ and to the random variables $R_1,\ldots,R_n$ (the rows
  of $M$).
\end{proof}

\subsubsection*{Step Two: Truncation and centralization}

In the second step, we prove that it is sufficient to prove the convergence
for entries with bounded support. More precisely, we define
\[
Y_{ij} = X_{ij} \IND_{\{| X_{ij} |\leq \kappa\}},
\]
where $\kappa = \kappa_n$ is a sequence growing to infinity. Then if $Y =
(Y_{ij} )_{ 1 \leq i , j \leq n}$, we have from Hoffman-Wielandt inequality
\eqref{eq:Hoffman-Wielandt},
\[
\frac{1}{n}\sum_{k=1}^n\ABS{s_k(n^{-1/2}Y-zI)-s_k(n^{-1/2}X-zI)}^2%
\leq \frac{1}{n^2}\sum_{1 \leq i,j \leq n}
|X_{ij}|^2\IND_{\{|X_{ij}|>\kappa\}}.
\]
By assumption $\dE | X_{ij} |^2 \IND_{\{|X_{ij}|>\kappa\}}$ goes to $0$ as
$\kappa$ goes to infinity. Hence, by the law of large numbers, the right hand
side of the above inequality converges a.s.\ to $0$. On the left hand side we
recognize the square of the Wasserstein $W_2$ coupling distance\footnote{The
  $W_2$ distance between two probability measures $\eta_1,\eta_2$ on $\dR$ is
  $W_2(\eta_1,\eta_2):=\inf\dE(|X_1-X_2|^2)^{1/2}$ where the inf runs over the
  set of r.v.\ $(X_1,X_2)$ on $\dR\times\dR$ with $X_1\sim\eta_1$ and
  $X_2\sim\eta_2$. In the case where $\eta_1=\frac{1}{n}\sum_{i=1}^n\de_{a_i}$
  with $0\leq a_i\nearrow$ and $\eta_2=\frac{1}{n}\sum_{i=1}^n\de_{b_i}$ with
  $0\leq b_i\nearrow$ then
  $W_2(\eta_1,\eta_2)^2=\frac{1}{n}\sum_{i=1}^n(a_i-b_i)^2$.} between
$\nu_{n^{-1/2} Y -zI}$ and $\nu_{n^{-1/2} X -zI }$. Since the convergence in
$W_2$ distance implies weak convergence, we deduce that it is sufficient to
prove the convergence of $\dE \nu_{n^{-1/2} Y - z I }$ to $\nu_z$.

Next, we turn to the centralization by setting
\[
Z_{ij} = Y_{ij} - \dE Y_{ij} %
= Y_{ij} - \dE X_{11}\IND_{\{|X_{11}|\leq\kappa\}}.
\]
Then if $Z = (Z_{ij} )_{ 1 \leq i , j \leq n}$, we have from the Lidskii
inequality for singular values,
\[
\max_{t > 0} \ABS{\nu_{n^{-1/2} Y - z I }([0,t])-\nu_{n^{-1/2} Z - z I }([0,t])}%
\leq \frac{\RANK(Y - Z)}{n} %
\leq \frac{1}{n}.
\]
In particular, it is sufficient to prove the convergence of
$\dE\nu_{n^{-1/2}Z-zI}$ to $\nu_z$.

In summary, in the remainder of this sub-section, we will allow the law of
$X_{11}$ to depend on $n$ but we will assume that
\begin{equation}\label{eq:propX11}
  \dE X_{11} = 0  \, , %
  \quad \dP ( |X_{11} |\geq \kappa_n )  = 0 %
  \quad \text{and} \quad \dE |X_{11} |^2   = \sigma_n^2,
\end{equation}
where $\kappa = \kappa_n = o ( \sqrt n )$ is a sequence growing to infinity and
$\sigma=\sigma_n$ goes to $1$ as $n$ goes to infinity.

\subsubsection*{Step Three: Linearization}

We use a popular linearization technique: we remark the identity of the
Cauchy-Stieltjes transform, for $\eta \in \dC_+$,
\begin{equation}\label{eq:resstieljes}
  m_{\check \nu_{n^{-1/2}  X  - zI}} (\eta) =  \frac 1 {2n} \TR ( H (z)  - \eta I)^{-1}, 
\end{equation}
where $\check \nu ( \cdot) = (\nu ( \cdot) + \nu ( - \cdot) ) /2$ is the
symmetrized version of a measure $\nu$, and
\[
H(z) :=
\begin{pmatrix} 
  0 & n^{-1/2}  X - z \\ 
  ( n^{-1/2}  X- z)^* &  0  
\end{pmatrix}.
\]
Through a permutation of the entries, this matrix $H(z)$ is equivalent to the
matrix 
\[
B(z) = B - q (z,0) \otimes I_n
\]
where
\[
q(z,\eta) := 
\begin{pmatrix} \eta & z \\ \bar z  & \eta \end{pmatrix}
\] 
and for every $1\leq i,j\leq n$,
\[
B_{ij} :=  \frac{1}{\sqrt n}   
\begin{pmatrix} 0 & X_{ij} \\ \bar X_{ji}  & 0 \end{pmatrix}.
\]
Note that $B (z) \in \cM_n ( \cM_{2} (\dC)) \simeq \cM_{2n} ( \dC)$ is
Hermitian and its resolvent is denoted by
\[
R(q) = (B(z) -  \eta I_{2n} )^{-1}  = (B - q (z,\eta)  \otimes I_n )^{-1} .
\]
Then $R(q)\in\cM_n(\cM_{2}(\dC))$ and, by \eqref{eq:resstieljes}, we deduce
that
\[
m_{\check \nu_{n^{-1/2}  X-zI}} (\eta)  =   \frac 1 {2n} \TR R (q).
\]
We set 
\[
R(q)_{kk} = 
\begin{pmatrix} 
  a_{k} (q) & b _{k} (q) \\ 
  c _{k} (q)& d _{k} (q)
\end{pmatrix}
\in \cM_{2} (\dC).
\]
It is easy to check that
\begin{equation}\label{eq:a=d}
  a(q) := \frac 1 n \sum_{k= 1}^n  a _{k} (q) %
  = \frac{1}{n}\sum_{k= 1}^nd_{k} (q)%
  \quad \text{and  } \quad %
  b (q) := \frac{1}{n}\sum_{k= 1}^nb _{k} (q) %
  = \frac 1 n \sum_{k= 1}^n  \bar c_{k}(q),
\end{equation}
(see the forthcoming lemma \ref{le:propRes}). So finally,
\begin{equation} \label{eq:resstieljes2}
  m_{\check \nu_{n^{-1/2}  X  - zI}} (\eta)  =  a (q).
\end{equation}
Hence, in order to prove that $\dE \nu_{n^{-1/2} X - zI}$ converges, it is
sufficient to prove that $\dE a(q)$ converges to, say, $\alpha(q)$ which, by
tightness, will necessarily be the Cauchy-Stieltjes transform of a symmetric
measure.

\subsubsection*{Step Four: Approximate fixed point equation}

We use a resolvent method to deduce an approximate fixed point equation
satisfied by $a(q)$. Schur's block inversion \eqref{eq:schur} gives
\[
R_{nn} = \PAR{ \frac { 1}{\sqrt n} 
  \begin{pmatrix} 
    0 & X_{nn} \\ 
    \bar X_{nn} & 0 
  \end{pmatrix} - q - Q^* \Wt R Q } ^{-1},
\]
where $Q \in  \cM_{n-1, 1} ( \cM_{2} (\dC))$, 
\[
Q_ i = \frac{1}{\sqrt n}
\begin{pmatrix}  0 & X_{n i } \\ \bar X_{i n} & 0 \end{pmatrix}
\]
and, with $\Wt B = (B_{ij})_{1 \leq i,j \leq n-1}$, $\Wt B (z) =
\Wt B - q (z, 0) \otimes I_{n-1}$,
\[
\Wt R %
= ( \Wt B - q \otimes I_{n-1}) ^{-1} %
= ( \Wt B (z) - \eta I_{2(n-1)}) ^{-1}
\]
is the resolvent of a minor. We denote by $\cF_{n-1}$ the smallest
$\sigma$-algebra spanned by the variables $(X_{ij})_{1 \leq i ,j \leq n-1}$.
We notice that $\Wt R$ is $\cF_{n-1}$-measurable and is independent of $Q$. If
$\dE_n [\,\cdot\,] := \dE [\,\cdot\,|\cF_{n-1}]$, we get, using
\eqref{eq:propX11} and \eqref{eq:a=d}
\begin{align*}
  \dE_n \SBRA{Q^* \Wt R  Q }  
  & = \sum_{1 \leq k,\ell\leq n-1}\dE_n\SBRA{Q_k^*\Wt R_{k \ell}Q_\ell}\\
  & = \frac {\sigma^2}{ n} \sum_{k = 1}^{n-1} %
  \begin{pmatrix} \Wt a_{k} & 0\\ 0 & \Wt d_{k} \end{pmatrix}\\
  & = \frac {\sigma^2}{ n} \sum_{k = 1}^{n-1} %
  \begin{pmatrix} \Wt a_{k} & 0\\ 0 & \Wt a_{k} \end{pmatrix},
\end{align*}
where 
\[
\Wt R_{kk} = 
\begin{pmatrix} 
  \Wt a_{k} & \Wt b_{k} \\ 
  \Wt c_{k} & \Wt d_{k} %
\end{pmatrix}.
\]
Recall that $\Wt B(z)$ is a minor of $B(z)$. We may thus use
interlacing as in \eqref{eq:IPPinterlacing} for the function $f = (\cdot -
\eta)^{-1}$, and we find
\[
\ABS{\sum_{k = 1}^{n-1} \Wt a_{k} - \sum_{k = 1}^{n} a_{k}} %
\leq 2 \int_{\dR} \frac{1}{|x - \eta |^2} dx %
= O \PAR{\frac{1}{\Im(\eta)}}.
\]
Hence, we have checked that
\[
\dE_n \SBRA{Q^* \Wt R  Q }  %
=   \begin{pmatrix}   a  &  0    \\  0  &   a  \end{pmatrix} + \veps_1, 
\]
with $\NRM{\veps_1}_2=o(1)$ (note here that $q(z , \eta) $ is fixed). Moreover,
we define
\[
\veps_2:=
\dE_n \SBRA{\PAR{Q^* \Wt R  Q-  \dE_n \SBRA{Q^* \Wt R  Q}}^*  %
  \PAR{Q^* \Wt R Q- \dE_n \SBRA{Q^* \Wt R Q}}}. %
\]
Since $\NRM{\Wt R}_2 \leq \Im (\eta) ^{-1}$, 
\[
\NRM{\Wt R^*_{ii} \Wt R_{ii} }_2 \leq  \Im ( \eta)^{-2} \quad \text{and } \quad \TR \left( \sum_{i,j} \Wt R^*_{ij} \Wt R_{ji} \right)  = \TR  \left( \Wt R^* \Wt R  \right)   \leq  2 n \Im ( \eta)^{-2}.
\] 
Also, by \eqref{eq:propX11} 
\[
\dE  | X^2_{ij} - {\sigma}^2 |^2 \leq 2 \kappa^2 {\sigma}^2.   
\]
Then, an elementary
computation gives 
\[
\NRM{\veps_2}_2= O \PAR{ \frac {\kappa^2}{n \Im(\eta)^2 }} = o(1).
\]
Also, we note by lemma \ref{le:concspec} that $a(q)$ is close to its
expectation: 
\[
\dE|a(q)-\dE a(q)|^2 = O \PAR{ \frac {1}{n \Im(\eta)^2 }}= o(1).
\]
Thus, the matrix
\[
D =   \frac{1}{\sqrt n} 
\begin{pmatrix} 0 & X_{nn} \\ \bar X_{nn} & 0 \end{pmatrix} %
- Q^* \Wt R Q %
+ \dE \begin{pmatrix} a & 0 \\ 0 & a \end{pmatrix}
\]
has a norm which converges to $0$ in expectation as $n\to\infty$. Now, we use
the identity
\[
R_{nn} + \PAR{q+\dE\begin{pmatrix} a & 0\\ 0 & a\end{pmatrix}}^{-1} %
= R_{nn} \, D \, \PAR{q+\dE\begin{pmatrix} a & 0 \\ 0 & a \end{pmatrix}}^{-1}.
\]
Hence, since the norms of $\PAR{ q + \dE \begin{pmatrix} a & 0 \\ 0 &
    a \end{pmatrix} } ^{-1}$ and $R_{nn}$ are at most $\Im (\eta)^{-1}$,
we get
\[
\dE R_{nn} %
= -\PAR{q+\dE\begin{pmatrix} a & 0 \\ 0 & a \end{pmatrix}}^{-1}+\veps
\]
with $\NRM{\veps}_2= o(1)$. In other words, using exchangeability,
\[
\dE \begin{pmatrix} a & b \\ \bar b & a \end{pmatrix}   %
= -\PAR{q + \dE \begin{pmatrix} a  &  0 \\ 0 & a \end{pmatrix}}^{-1} + \veps. 
\]

\subsubsection*{Step Five: Unicity of the fixed point equation}

From what precedes, any accumulation point of $\dE \begin{pmatrix} a & b \\
  \bar b & a \end{pmatrix}$ is solution of the fixed point equation
\begin{equation}\label{eq:FPCircular}
  \begin{pmatrix} \alpha & \beta\\ \bar \beta & \alpha \end{pmatrix} %
  = -   \PAR{q+\begin{pmatrix} \alpha & 0\\ 0 & \alpha\end{pmatrix}}^{-1},
\end{equation}
with $\alpha = \alpha (q) \in \dC_+$. We find
\[
\alpha =\frac{ \alpha+ \eta}{| z|^2 - ( \alpha + \eta)^2 }.
\]
Hence, $\alpha$ is a root of a polynomial of degree $3$. Hence, to conclude the proof of corollary \ref{cor:DS}, it is sufficient to prove that there is unique symmetric measure whose Cauchy-Stieltjes transform is solution of this fixed point equation. For any $\eta \in
\dC_+$, it is simple to check that this equation has a unique solution in
$\dC_+$ which can be explicitly computed. Alternatively, we know from
\eqref{eq:resstieljes2} and Montel's theorem that $\eta\in\dC_+\mapsto \alpha
(q(z, \eta))\in\dC_+$ is analytic. In particular, it is sufficient to check
that there is a unique solution in $\dC_+$ for $\eta = it$, with $t >0$. To
this end, we also notice from \eqref{eq:resstieljes2} that $\alpha (q) \in i
\dR_+$ for $q = q(z,it)$. Hence, if $h(z,t) = \Im (\alpha (q ))$, we find
\[
h = \frac{ h + t }{| z|^2 + ( h  + t)^2 }.
\]
Thus, $h  \ne 0$ and 
\[
1 = \frac{ 1 + t  h ^{-1} }{| z|^2 + ( h  + t)^2 }.
\]
The right hand side in a decreasing function in $h$ on $(0, \infty)$ with
limits equal to $+ \infty$ and $0$ at $h \to 0$ and $h \to \infty$. Thus,
there is a unique solution of the above equation.

We have thus proved that $\dE \begin{pmatrix} a & b \\ \bar b &
  a \end{pmatrix}$ converges. The proof of corollary \ref{cor:DS} is over.

\subsection{The quaternionic resolvent: an alternative look at the circular law}
\label{subsec:convres}
\subsubsection*{Motivation}

The aim of this sub-section is to develop an efficient machinery to analyze the
spectral measures of a non-Hermitian matrix which avoids a direct use of the
logarithmic potential and the singular values. This approach is built upon
methods in the physics literature, e.g.\
\cite{FZ97,Gudowska-Nowak,rogerscastillo,rogers2010}. As we will see, it is a
refinement of the linearization procedure used in the proof of corollary
\ref{cor:DS}. Recall that the Cauchy-Stieltjes transform of a measure $\nu$ on
$\dR$ is defined, for $\eta \in \dC_+$, as
\[
m_{\nu} ( \eta) = \int_{\dR}\!\frac{1}{x-\eta}\,d\nu(x).
\]
The Cauchy-Stieltjes transform characterizes every probability measure on
$\dR$, and actually, following Remark \ref{rk:causti}, every probability
measure on $\dC$. However, if the support of the measure is not
one-dimensional, then one needs the knowledge of the Cauchy-Stieltjes
transform inside the support, which is not convenient. For a probability
measure on $\dC$, it is tempting to define a quaternionic Cauchy-Stieltjes
transform. For $q\in\dH_+$, where
\[
\dH_+ := \BRA{ 
  \begin{pmatrix} 
    \eta & z \\ 
    \bar z & \eta
  \end{pmatrix}, 
  z \in \dC, \eta \in \dC_+},
\]
we would define
\[
M_\mu(a) = \int_{\dC}\!%
\PAR{ %
  \begin{pmatrix} 
    0 & \la \\ 
    \bar \la & 0 
  \end{pmatrix} 
  - q}^{-1}\,d\mu(\lambda) \; \in \; \dH_+.
\]
This transform characterizes the measure: in $\cD'(\dC)$, 
\[
\lim_{t \downarrow 0} (\pd M_\mu ( q ( z, it) )_{12} = - \pi \mu,
\]
where $\pd$ is as in \eqref{eq:ddbar} and
\[
q(z,\eta)  :=  %
\begin{pmatrix} 
  \eta & z \\ 
  \bar z & \eta
\end{pmatrix}.
\]
If $A \in \cM_n (\dC)$ is normal then $M_{\mu_A}$ can be recovered from the
trace of a properly defined quaternionic resolvent. If $A$ is not normal, the
situation is however more delicate and needs a more careful treatment.

\subsubsection*{Definition of quaternionic resolvent} 

For further needs, we will define this quaternionic resolvent in any Hilbert
space. Let $H$ be an Hilbert space with inner product $\ANG{\cdot,\cdot}$. We
define the Hilbert space $H_2 = H \times \dZ/2\dZ$. For $x = (y, \veps) \in
H_2$, we set $\hat x = ( y, \veps +1 )$. In particular, this transform is an
involution $\hat {\hat x} = x$. There is the direct sum decomposition $H_2 =
H_0 \oplus H_1$ with $H_ \veps = \{ x = (y, \veps) : y \in H \}$.

Let $A$ be a linear operator defined on a dense domain $D(A) \subset H$. This
operator can be extended to an operator on $D(A) \otimes \dZ/2\dZ$ by setting
$A x= (a y, \veps)$, for all $x = (y,\veps) \in D(A) \otimes \dZ/2\dZ$ (in
other word we extend $A$ by $A \otimes I_2$). We define the operator $B$ in
$D(B) = D(A) \otimes \dZ/2\dZ$ by
\[
B x = 
\begin{cases}
  A^* \hat x & \text{if $x \in H_0$} \\
  A \hat{ x} & \text{if $x \in H_1$}.
\end{cases}
\]
For $x \in H$, if $\Pi_x :  H_2  \to \dC^2$ denotes the orthogonal
projection on $((x,0), ( x,1))$, for $x, y\in D(A)$, we find
\[ 
B_{xy} := \Pi_x B \Pi^*_y %
=   
\begin{pmatrix} 0 & \ANG{x,Ay} \\ \ANG{x,A^*y} & 0 \end{pmatrix} \in\cM_{2}(\dC).
\]
The operator $B$ will be called the \emph{bipartized operator} of $A$, it is
an Hermitian operator (i.e.\ for all $x, y \in D(B)$, $\ANG{Bx,y}=
\ANG{x,By}$). If $B$ is essentially self-adjoint (i.e.\ it has a unique
self-adjoint extension), we may define the quaternionic resolvent of $A$ for
all $q \in \dH_+$ as
\[
R_A (q) = (B -  I_{H} \otimes q )^{-1} 
\]
Indeed, if $q= q(z, \eta)$, we note that $R_A$ is the usual resolvent at
$\eta$ of the essentially self-adjoint operator $B (z) = B - I_{H} \otimes
q(z,0)$. Hence $R_A$ inherits the usual properties of resolvent operators
(analyticity in $\eta$, bounded norm). We define
\[
R_A (q)_{x y} := \Pi_x R_A(q)
\Pi^*_y. 
\]
If $H$ is separable and $(e_i)_{i \geq 1}$ is a canonical orthonormal basis of
$H$, we simply write $R_{i j}$ instead of ${R_A(q)}_{e_i e_j}$, $i, j \in V$.
Finally, if $A \in \cM_{n}( \dC)$, we set
\[
\Gamma_A (q)  = \frac 1 n  \sum_{k=1}^n  R_A(q)_{kk}.  
\]
If $A$ is normal then it can be checked that $R(q)_{kk} \in \dH_+$ and 
$
\Gamma_A (q) = M_{\mu_A}(q). 
$
However, if $A$ is not normal, this formula fails to hold. However, the next
lemma explains how to recover anyway $\mu_A$ from the resolvent.

\begin{lemma}[From quaternionic transform to spectral measures]
  \label{le:propRes}
  Let $A \in \cM_n(\dC)$ and $q = q(z,\eta) \in \dH_+$. Then,
  \[\Gamma_A (q)  = \begin{pmatrix}
   a (q) & b (q)   \\  \bar b (q)   &  a (q) 
  \end{pmatrix} \in \dH_+. \] 
  Moreover, \[
 m_{\check \nu_{ A - z } } (\eta)  =  a(q) 
  \]
  and, in $\cD' (\dC)$,  
  \[
  \mu_A = - \frac 1 \pi  \lim_{q ( z, it ) : t \downarrow 0} \pd  b  (q). 
  \]
\end{lemma}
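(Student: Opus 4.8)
Throughout set $G:=A-zI$ and $q=q(z,\eta)$. The plan is to recognise $R_A(q)$ as a permuted copy of the resolvent of the Hermitization $H_G=\left(\begin{smallmatrix}0&G\\ G^*&0\end{smallmatrix}\right)$, to read off $a(q)$ and the palindromic shape of $\Gamma_A(q)$ from a Schur‑complement computation, and then to obtain $\mu_A$ by differentiating a logarithmic potential. For Step~1 I would first note that the permutation of the $2n$ basis vectors of $H\otimes\dZ/2\dZ$ that lists the ``component $0$'' vectors before the ``component $1$'' ones sends the Hermitian matrix $B-I_H\otimes q(z,0)$ to $H_G$; since $\eta\in\dC_+$ and $H_G$ is Hermitian, $H_G-\eta I_{2n}$ is invertible and, under this permutation, $R_A(q)=(H_G-\eta I_{2n})^{-1}$. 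The block $R_A(q)_{kk}$ is then precisely the submatrix of $(H_G-\eta I_{2n})^{-1}$ on the index pair $\{k,\,n+k\}$, so $\Gamma_A(q)$ is the $2\times2$ matrix whose entries are $\tfrac1n\TR$ of the four $n\times n$ blocks of $(H_G-\eta I_{2n})^{-1}$.

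For Step~2, applying the Schur block inversion \eqref{eq:schur} to $\left(\begin{smallmatrix}-\eta I&G\\ G^*&-\eta I\end{smallmatrix}\right)$ makes these blocks explicit: the diagonal blocks are $\eta(GG^*-\eta^2I)^{-1}$ and $\eta(G^*G-\eta^2I)^{-1}$, and the off‑diagonal blocks are $(GG^*-\eta^2I)^{-1}G$ and $G^*(GG^*-\eta^2I)^{-1}$. Since $GG^*$ and $G^*G$ have the same eigenvalues, the two diagonal traces coincide, i.e.\ $d(q)=a(q)$; and summing $\tfrac1{2n}\TR(H_G-\eta I)^{-1}$ over the eigenvalues $\pm s_k(G)$ of $H_G$ recovers $a(q)=m_{\check\nu_{A-z}}(\eta)$, which is \eqref{eq:resstieljes}. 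Being the Cauchy--Stieltjes transform of a probability measure on $\dR$ evaluated at $\eta\in\dC_+$, $a(q)$ lies in $\dC_+$, so $\Gamma_A(q)$ has the asserted form.

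Step~3, recovering $\mu_A$, is the substantive step. Specialising to $\eta=it$ (so $\eta^2=-t^2$), the block formulas give $b(q(z,it))=\tfrac1n\TR\bigl((GG^*+t^2I)^{-1}G\bigr)$ and $c(q(z,it))=\tfrac1n\TR\bigl((GG^*+t^2I)^{-1}G^*\bigr)$. Using that $A-zI$ is holomorphic in $z$, one has $\Ol\pd(GG^*)=-G$ and $\pd(GG^*)=-G^*$ (with $\pd,\Ol\pd$ as in \eqref{eq:ddbar}), so from $\Ol\pd\log\det M=\TR(M^{-1}\Ol\pd M)$ I get
\[
b(q(z,it))=-\tfrac1n\,\Ol\pd\log\det\!\bigl((A-zI)(A-zI)^*+t^2I\bigr),\qquad c(q(z,it))=-\tfrac1n\,\pd\log\det\!\bigl((A-zI)(A-zI)^*+t^2I\bigr).
\]
Since the argument of $\log\det$ is positive definite Hermitian, this function of $z$ is real, whence $c(q(z,it))=\overline{b(q(z,it))}$, completing the palindromic description of $\Gamma_A(q(z,it))$ (which is the case needed in the sequel). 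Next, $\log\det\!\bigl((A-zI)(A-zI)^*+t^2I\bigr)=\sum_k\log\bigl(s_k(A-zI)^2+t^2\bigr)$ decreases, as $t\downarrow0$, monotonically to $\sum_k\log s_k(A-zI)^2=2\log\ABS{\det(A-zI)}=2\log\ABS{P_A(z)}$, which is locally integrable on $\dC$; by monotone/dominated convergence the limit holds in $L^1_{\mathrm{loc}}(\dC)$, hence in $\cD'(\dC)$. Applying $\pd$, which is continuous on $\cD'(\dC)$, and $4\pd\Ol\pd=\De$,
\[
\lim_{t\downarrow0}\pd\,b(q(z,it))=-\tfrac1n\,\pd\Ol\pd\bigl(2\log\ABS{P_A}\bigr)=-\tfrac1{2n}\,\De\log\ABS{P_A}.
\]
Finally $U_{\mu_A}=-\tfrac1n\log\ABS{P_A}$, so by \eqref{eq:lap} one has $\De\log\ABS{P_A}=-n\,\De U_{\mu_A}=2\pi n\,\mu_A$ in $\cD'(\dC)$, and the right‑hand side above equals $-\pi\mu_A$, which is the claimed formula.

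The one delicate point is entirely within Step~3: at the finitely many $z$ that are eigenvalues of $A$ the summands $\log(s_k(A-zI)^2+t^2)$ tend to $-\infty$, so the limit $t\downarrow0$ cannot be taken pointwise and must be justified in $L^1_{\mathrm{loc}}(\dC)$. Monotonicity in $t$ together with the local integrability of the limiting function $2\log\ABS{P_A}$ is precisely what makes this legitimate, after which continuity of $\pd$ and $\Ol\pd$ on $\cD'(\dC)$ allows the differential operators to be moved through the limit. Everything else---the permutation equivalence, the Schur computation, and Jacobi's formula for $\Ol\pd\log\det$---is routine bookkeeping.
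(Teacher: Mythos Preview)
Your proof is correct and follows essentially the same route as the paper's: permute the bipartized operator to the Hermitization $H_G$, use Schur block inversion to identify the four blocks of $\Gamma_A$, read off $a(q)=m_{\check\nu_{A-z}}(\eta)$, and for the $\mu_A$ formula rewrite $b(q(z,it))$ via Jacobi's identity as $-\tfrac1n\,\Ol\pd\log\det(GG^*+t^2I)$ and pass to the monotone limit in $\cD'(\dC)$. Your caveat that $c=\bar b$ is established only for $\eta=it$ is in fact more accurate than the paper's general claim---the paper's one-line verification of $c=\bar b$ is tautological, and a $1\times1$ check (e.g.\ $G=i$, $\eta\in\dC_+\setminus i\dR$) shows $\Gamma_A(q)\in\dH_+$ can fail for non-imaginary $\eta$; only the $\eta=it$ case is used downstream.
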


\begin{proof}
  For ease of notation, assume that $z = 0$ and set $\tau (\cdot) = \frac 1 n
  \TR (\cdot)$. If $P$ is the permutation matrix associated to the permutation
  $\sigma (2 k - 1) = k$, $\sigma (2 k) = n +k$, we get
  \[
  (B - I_{H} \otimes q )^{-1} %
  = P^* \begin{pmatrix}
    -\eta  & A  \\
    A^* & - \eta
  \end{pmatrix}^{-1}  P 
  =  
  - 
 P   \begin{pmatrix} 
    \eta ( \eta ^2 - AA ^*)^{-1}   
    & A ( \eta ^2 - A^* A)^{-1}  \\ 
    A^*  ( \eta ^2 - AA ^*)^{-1} 
    &   \eta  ( \eta ^2 - A^* A)^{-1} 
  \end{pmatrix}P. 
  \]
  Hence, 
  \[
    \Gamma_A ( q ) =   - 
    \begin{pmatrix} 
  \eta \tau ( \eta ^2 - AA ^*)^{-1}     
      &  \tau \PAR{ A ( \eta ^2 - A^* A)^{-1}}  \\ 
      \tau  \PAR{A^*  ( \eta ^2 -AA ^*)^{-1} }  
      &     \eta \tau ( \eta ^2 - A^* A)^{-1} 
    \end{pmatrix}.
  \]
  Notice that 
  \begin{align*}
    m_{\check \nu_{A} } (\eta) %
    &= \frac{1}{2}\int\!\frac{1}{x-\eta}-\frac{1}{x+\eta}\,d\nu_{A}(x)\\
    &= \int\!\frac{\eta}{x^2-\eta^2}\,d\nu_{A}(x) \\
    &= \int\!\frac{\eta}{x-\eta^2}\,d\mu_{AA^*}(x) \\
    &= \eta \tau (AA^* - \eta ^2 )^{-1}.
  \end{align*}
  Note also that $\mu_{A^* A } = \mu_{AA^*}$ implies that 
  \[
  \tau ( \eta ^2 - AA^*)^{-1} = \tau ( \eta ^2 -A^* A)^{-1}.
  \]
  Finally, since $\tau$ is a trace,
  \begin{align*}
    \tau \PAR{A ( \eta ^2 - A^* A)^{-1}} 
    &= \tau \PAR{ ( \eta ^2 - A^* A)^{-1} A } \\
    &= \Ol {\tau \PAR{ ( \eta ^2 - A^* A)^{-1}A }^* } \\
    &= \Ol{\tau  \PAR{A^*  ( \eta ^2 - AA ^*)^{-1} }}.
  \end{align*}
  Applying the above to $A-z$, we deduce the first two statements.
  
  For the last statement,  we write
  \[  
  \int\!\log|s + it|\,d\nu_{A - z }(s) %
  = \frac{1}{2}\int\!\log(s^2+t^2)\,d\nu_{A-z}(s) %
  = \frac{1}{2} \tau\log((A-z)(A-z)^*+t^2).
  \]
  Hence, from Jacobi formula (see remark \ref{rk:causti} for the definition of
  $\pd$ and $\Ol\pd$)
  \begin{align*}
    \Ol \pd\int\!\log|s+it|\,d\nu_{A-z}(s) 
    & = \frac{1}{2} %
    \tau\PAR{((A-z)(A-z)^*+t^2)^{-1}\Ol\pd((A -z )(A-z)^*+t^2)} \\
    & = -\frac{1}{2}\tau\PAR{((A-z)(A-z)^*+t^2)^{-1}(A-z)} \\
    & = -\frac{1}{2} b(q(z,it)).
  \end{align*}
  The function $\int\!\log|s+it|\,d\nu_{A-z}(s)$ decreases monotonically to
  \[
  \int\!\log (s)\,d\nu_{A-z}(s) = - U_{\mu_A}(z)
  \]
  as $t \downarrow 0$. Hence, in distribution,
  \[
  \mu_A = %
  \lim_{t \downarrow 0} \frac{2}{\pi} \pd \Ol \pd %
  \int\!\log|s+it|\,d\nu_{A-z}(s). 
  \]
  The conclusion follows.
\end{proof}

\subsubsection*{Girko's Hermitization lemma revisited}

There is a straightforward extension of Girko's lemma \ref{le:girko} that uses
the quaternionic resolvent.

\begin{lemma}[Girko Hermitization]\label{le:girkoQUAT}
  Let $(A_n)_{n\geq1}$ be a sequence of complex random matrices defined on a
  common probability space where $A_n$ takes its values in $\cM_n(\dC)$. Assume that for all $q \in \dH_+$, there exists
  \[
  \Gamma (q) =    \begin{pmatrix} a (q) & b(q) \\ \bar b(q) &
    a(q) \end{pmatrix}   \; \in \; \dH_+
  \]  
  such that for a.a.\ $z\in\dC$, $\eta  \in \dC_+$, with $q = q (z,\eta)$,
  \begin{itemize}
  \item[(i')] a.s.\ (respectively in probability) $\Gamma_{A_n} (q)$ converges to $\Gamma(q)$ as
    $n\to\infty$
  \item[(ii)] a.s.\ (respectively in probability) $\log$ is uniformly integrable for
    $\PAR{\nu_{A_n-zI}}_{n\geq1}$ 
  \end{itemize}
  Then there exists a probability measure $\mu\in\mathcal{P}(\dC)$ such
  that
  \begin{itemize}
  \item[(j)] a.s.\ (respectively in probability) $\mu_{A_n}\weak\mu$ as $n\to\infty$ 
  \item[(jj')] in $\cD'(\dC)$, 
    \[
    \mu  = - \frac 1 \pi \lim_{ q ( z, it) : t \downarrow 0} \pd b (q).
    \]
  \end{itemize}
\end{lemma}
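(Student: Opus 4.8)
The plan is to derive Lemma \ref{le:girkoQUAT} from the Hermitization Lemma \ref{le:girko}, using Lemma \ref{le:propRes} to translate hypothesis (i') on the quaternionic resolvent into hypothesis (i) on the singular value measures. Recall from Lemma \ref{le:propRes} that, for the finite matrix $A_n$ and $q = q(z,\eta)$, the upper-left entry of $\Gamma_{A_n}(q)$ is $a_{A_n}(q) = m_{\check\nu_{A_n - zI}}(\eta)$, the Cauchy--Stieltjes transform of the symmetrized singular value measure of $A_n - zI$. First I would fix a ``good'' $z$ and argue almost surely. Hypothesis (ii) forbids escape of mass at $+\infty$ (if $\log$ is uniformly integrable for $(\nu_{A_n - zI})_n$ then $\sup_n\nu_{A_n - zI}((e^T,\infty)) \to 0$ as $T \to \infty$), so the sequence $(\nu_{A_n - zI})_n$ is tight; every subsequential weak limit is a genuine probability measure, and since $s \mapsto (s-\eta)^{-1}$ is bounded and continuous on $\dR$ for $\eta \in \dC_+$, its Cauchy--Stieltjes transform at $\eta$ equals $\lim_n a_{A_n}(q(z,\eta)) = a(q(z,\eta))$ for every $\eta$. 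Injectivity of the Cauchy--Stieltjes transform on probability measures forces all subsequential limits to coincide, so $\check\nu_{A_n - zI}$, hence $\nu_{A_n - zI}$, converges weakly to a deterministic probability measure $\nu_z$ with $m_{\check\nu_z} = a(q(z,\cdot))$. This is exactly hypothesis (i) of Lemma \ref{le:girko}, whose hypothesis (ii) is our (ii). Lemma \ref{le:girko} then yields $\mu \in \cP(\dC)$ with a.s.\ $\mu_{A_n} \weak \mu$ --- which is (j) --- and $U_\mu(z) = -\int_0^\infty \log(s)\,d\nu_z(s)$ for a.a.\ $z$.

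It remains to prove (jj'), and here I would follow the computation already carried out in the proof of Lemma \ref{le:propRes}. For each fixed $t > 0$ one has the identity (classical, since the function of $z$ is smooth)
\[
\Ol\pd_z \int\!\log|s + it|\,d\nu_{A_n - zI}(s) = -\frac12\, b_{A_n}(q(z,it)).
\]
As $n \to \infty$ the left-hand side converges, for a.a.\ $z$ (a.s.), to $\int\!\log|s + it|\,d\nu_z(s)$: indeed $\nu_{A_n - zI} \weak \nu_z$, and the bound $|\log|s + it|| \leq |\log t| + |\log s| + \frac12\log 2$ shows that $s \mapsto \log|s + it|$ inherits uniform integrability from (ii). The right-hand side is bounded in modulus by $1/t$ (a resolvent estimate) and converges pointwise to $-\frac12\, b(q(z,it))$ by (i'). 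Passing both sides to the limit in $\cD'(\dC)$ and invoking continuity of $\Ol\pd$ on $\cD'(\dC)$ gives $\Ol\pd_z\int\!\log|s + it|\,d\nu_z(s) = -\frac12\, b(q(z,it))$ in $\cD'(\dC)$. Finally, as $t \downarrow 0$ the function $z \mapsto \int\!\log|s + it|\,d\nu_z(s)$ decreases monotonically to $\int_0^\infty\!\log(s)\,d\nu_z(s) = -U_\mu(z)$, hence converges in $\cD'(\dC)$; applying $\pd$, using $\pd\Ol\pd = \frac14\Delta$ and $\Delta U_\mu = -2\pi\mu$ from \eqref{eq:lap}, we obtain $-\frac1\pi\,\pd b(q(z,it)) \to -\frac1{2\pi}\Delta U_\mu = \mu$ in $\cD'(\dC)$, which is (jj'). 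The ``in probability'' version runs identically with the ``in probability'' parts of Lemmas \ref{le:girko} and \ref{le:unimaj} and Remark \ref{rk:inprob} substituted throughout.

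The main obstacle is the passage to the limit $n \to \infty$ in $\cD'(\dC)$ of the displayed identity. Pointwise a.e.\ convergence of $z \mapsto \int\!\log|s + it|\,d\nu_{A_n - zI}(s)$ is not by itself enough to conclude convergence against a test function; one needs this family of functions of $z$ to be locally uniformly integrable in $z$. As in the proofs of Lemmas \ref{le:girko} and \ref{le:unimaj}, this is precisely where (ii) does the work: one first reorganizes ``for a.a.\ $z$, a.s.'' into ``a.s., for a.a.\ $z$'' by Fubini--Tonelli, then, working on a compact set $K \subset \dC$, upgrades the pointwise uniform $\log$-integrability into a de la Vall\'ee Poussin bound $\sup_n\int_K J\bigl(\int_{\{s > 1\}}\log s\,d\nu_{A_n - zI}(s)\bigr)\,dz < \infty$ for a suitable superlinear convex $J$, the contribution of $\{s \leq 1\}$ being harmless since $\log|s + it| \geq \log t$ is bounded below for fixed $t$. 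Once this local uniform integrability is secured, dominated convergence finishes the passage, and the rest is routine bookkeeping with distributional derivatives.
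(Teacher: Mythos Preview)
Your approach is exactly the one the paper takes: the paper's own ``proof'' is the single remark that, by Lemma~\ref{le:propRes}, hypothesis (i') yields hypothesis (i) of Lemma~\ref{le:girko} (with $m_{\check\nu_z}(\eta)=a(q)$), after which Lemma~\ref{le:girko} applies directly. You have reproduced this reduction correctly and, in addition, supplied a careful justification of (jj') via the identity $\Ol\pd\int\log|s+it|\,d\nu_{A_n-z}(s)=-\tfrac12 b_{A_n}(q(z,it))$ from the proof of Lemma~\ref{le:propRes}, passage to the limit in $\cD'(\dC)$, and the monotone descent $t\downarrow 0$; the paper leaves this step entirely implicit.
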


Note that, by lemma \ref{le:propRes}, assumption (i') implies assumption
(i) of lemma \ref{le:girko} : the limit probability measure $\nu_z$ is
characterized by 
\[ 
m_{\check \nu_{z}} (\eta) = a(q).
\]
The potential interest of lemma \ref{le:girkoQUAT} lies in the formula for
$\mu$. It avoids any use the logarithmic potential.

\subsubsection*{Concentration}

The quaternionic resolvent enjoys a simple concentration inequality, exactly
as for the empirical singular values measure.

\begin{lemma}[Concentration for the quaternionic resolvent]\label{le:concQUAT}
  If $A$ is a random matrix in $\cM_n(\dC)$ with independent rows (or columns)
  then for any $q = q(z,\eta) \in \dH_+$ and $t\geq0$,
  \[
  \dP\PAR{\NRM{\Gamma_A (q) -\dE \Gamma_A (q)}_2 \geq t} %
  \leq 2 \exp\PAR{- \frac{n \Im (\eta)^2 t^2}{8}}.
  \]
\end{lemma}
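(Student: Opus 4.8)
The plan is to realise $\Gamma_A(q)$ as a function of the $n$ independent rows of $A$ and to apply a bounded‑differences (McDiarmid) concentration inequality, exactly in the spirit of the proof of Lemma~\ref{le:concspec}. Fix $q=q(z,\eta)\in\dH_+$ and set $G:=R_A(q)=(B-I_H\otimes q)^{-1}$. As recalled in the construction of the quaternionic resolvent and in the computation in the proof of Lemma~\ref{le:propRes}, $G$ is, up to a fixed permutation of the $2n$ coordinates, the ordinary resolvent $(H_{A-zI}-\eta I_{2n})^{-1}$ of the $2n\times 2n$ Hermitian matrix $H_{A-zI}=\begin{pmatrix}0&A-zI\\(A-zI)^*&0\end{pmatrix}$; in particular $s_1(G)=\NRM{G}_{2\to2}\le\Im(\eta)^{-1}$, and $\Gamma_A(q)=\frac1n\sum_{k=1}^nG_{kk}$ depends on $A$, hence only on the rows $R_1,\dots,R_n$ of $A$.

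For the bounded‑differences estimate, replace one row $R_i$ by an arbitrary row $R_i'$, producing a matrix $A'$ with $\RANK(A-A')\le1$, and write $G'$ for the associated resolvent. The second resolvent identity gives $G-G'=G\,(B'-B)\,G'$, and since $B-B'$ is the bipartization of the rank‑one matrix $A-A'$ it has rank $\le2$; hence $\RANK(G-G')\le2$, while by the triangle inequality for the operator norm $s_1(G-G')=\NRM{G-G'}_{2\to2}\le\NRM{G}_{2\to2}+\NRM{G'}_{2\to2}\le2\Im(\eta)^{-1}$. Since $\Gamma_A(q)-\Gamma_{A'}(q)=\frac1n\sum_k(G-G')_{kk}$ is $\frac1n$ times the sum of the diagonal $2\times2$ blocks of the low‑rank matrix $G-G'$, a short computation (bounding each block in Hilbert–Schmidt norm by its nuclear norm and summing) yields
\[
\NRM{\Gamma_A(q)-\Gamma_{A'}(q)}_2\le\frac1n\sum_i s_i(G-G')\le\frac1n\,\RANK(G-G')\,s_1(G-G')\le\frac{4}{n\,\Im(\eta)} =: c_k .
\]
The crucial point is that this requires no control whatsoever on the size of $R_i-R_i'$: although $B-B'$ may have arbitrarily large norm, $G-G'$ is automatically of rank $\le2$ with operator norm $O(\Im(\eta)^{-1})$, which is exactly the feature that replaces the Lidskii/interlacing rank bound used in Lemma~\ref{le:concspec}.

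With $c_k\le 4/(n\Im(\eta))$ and the rows independent, the bounded‑differences inequality in its $\cM_2(\dC)$‑valued form (applied to $\Gamma_A(q)$ for the Hilbert–Schmidt norm; see \cite{MR1036755}, or, if one prefers, argued on the real and imaginary parts of the coefficients $a(q),b(q)$ using $\NRM{\Gamma_A(q)-\dE\Gamma_A(q)}_2^2=2|a-\dE a|^2+2|b-\dE b|^2$) gives
\[
\dP\PAR{\NRM{\Gamma_A(q)-\dE\Gamma_A(q)}_2\ge t}\le 2\exp\PAR{-\frac{2t^2}{\sum_{k=1}^nc_k^2}}\le 2\exp\PAR{-\frac{n\,\Im(\eta)^2\,t^2}{8}},
\]
since $\sum_{k=1}^nc_k^2\le 16/(n\,\Im(\eta)^2)$. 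The case of a matrix with independent columns reduces to this one by applying the result to $A^\top$, using $\Gamma_{A^\top}(q)=\Gamma_A(q)$ — which follows from $s_k(A^\top-zI)=s_k(A-zI)$ together with the resolvent push‑through identity already used in the proof of Lemma~\ref{le:propRes}.

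The only genuinely delicate step is the bounded‑differences estimate of the second paragraph, for the reason flagged there; once the rank‑$2$‑plus‑bounded‑operator‑norm structure of $G-G'$ is isolated, the rest is a routine adaptation of the McDiarmid argument for Lemma~\ref{le:concspec}, and the constant $1/8$ comes out of the bookkeeping $c_k\le 4/(n\Im(\eta))$.
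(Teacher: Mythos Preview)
Your proof is correct and follows essentially the same route as the paper: resolvent identity to get $G-G'=G(B'-B)G'$, then the observation that $G-G'$ has rank $\le 2\,\RANK(A-A')=2$ and operator norm $\le 2\Im(\eta)^{-1}$, leading to the bounded-difference $c_k\le 4/(n\Im(\eta))$, and finally McDiarmid. The only cosmetic difference is in the ``short computation'': the paper writes out the SVD $D=\sum_i s_i u_iv_i^*$ and applies Cauchy--Schwarz to $\sum_k(\Pi_ku_i)(\Pi_kv_i)^*$, whereas you invoke (implicitly) the pinching contraction for the nuclear norm --- both yield $\NRM{\sum_kD_{kk}}_2\le\sum_is_i(D)$, so this is the same inequality reached by two equivalent one-liners.
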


\begin{proof}
  Let $M, N \in \cM_n ( \dC)$ with bipartized matrices $B, C \in \cM_{2n} (
  \dC)$. We have
  \begin{equation}\label{eq:diffQUAT}
    \NRM{\Gamma_M (q)-\Gamma_N (q)}_2 %
    \leq \frac{4\,\RANK(M-N)}{n\Im(\eta)}. 
  \end{equation}
  Indeed, from the resolvent identity, for any $q \in \dH_+$, 
  \[
  D = R_{M} ( q) - R_{N} (q) = R_M ( q) ( C - B ) R_N (q).  
  \]
  It follows that $D$ has rank $r \leq \RANK (B -C) = 2 \, \RANK(M-N) $. Also,
  recall that the operator norm of $D$ is at most $2 \Im (\eta) ^{-1}$. Hence,
  in the singular values decomposition
  \[
  D = \sum_{i =1} ^r s_i u_i v_i^* 
  \]
  we have $s_i \leq 2 \Im (\eta) ^{-1}$. If $\Pi_k : \dC^{2n} \to \dC^2$ is
  the orthogonal projection on $\SPAN\{e_{2k-1},e_{2k}\}$, then
  \[
  \Gamma_M (q) - \Gamma_N (q) %
  = \frac 1 n \sum_{k=1}^n \Pi_k D \Pi^*_k %
  = \frac 1 n \sum_{i=1}^r s_i \sum_{k=1}^n ( \Pi_k u_i ) ( \Pi_k v_i ) ^*.
  \] 
  Using Cauchy-Schwartz inequality, 
  \[
  \NRM{\Gamma_M (q) - \Gamma_N (q)}_2 %
  \leq \frac{1}{n} \sum_{i=1}^r s_i %
  \sqrt{\PAR{\sum_{k=1}^n %
      \NRM{\Pi_k u_i}_2^2}\PAR{\sum_{k=1}^n\NRM{\Pi_k v_i}_2^2}} %
  = \frac 1 n \sum_{i=1}^r s_i .
  \]
  We obtain precisely \eqref{eq:diffQUAT}. The remainder of the proof is now
  identical to the proof of lemma \ref{le:concspec}: we express $\Gamma_A (q)
  -\dE \Gamma_A (q)$ has a sum of bounded martingales difference.
 \end{proof}

\subsubsection*{Computation for the circular law}

As pointed out in \cite{rogerscastillo}, the circular law is easily found from
the quaternionic resolvent. Indeed, using lemma \ref{le:concQUAT} and the
proof of corollary \ref{cor:DS}, we get, for all $q \in \dH_+$, a.s.\
\[
\lim_{n \to \infty} \Gamma_{n ^{-1/2} X } (q ) %
= \Gamma (q) %
= %
\begin{pmatrix} 
  \alpha (q) & \beta (q) \\ \bar \beta (q) & \alpha(q) 
\end{pmatrix},
\]
where, from \eqref{eq:FPCircular},
\[
\Gamma = - \PAR{ q + \mathrm{diag}( \Gamma ) } ^{-1} \;\footnote{This equation
  is the analog of the fixed point equation satisfied by the Cauchy-Stieltjes
  transform $m$ of the semi circular law: $m (\eta) = - ( \eta + m (\eta)
  )^{-1}$.}.
\]
In the proof of corollary \ref{cor:DS}, we have checked that for $\eta = it$,
$\alpha (q) = i h (z,t) \in i \dR_+$ where
\[
1 = \frac{ 1 + t  h ^{-1} }{| z|^2 + ( h  + t)^2 }.
\]
We deduce easily that 
\[
\lim_{t \downarrow 0} h(z,t) %
= \begin{cases}
  \sqrt {  1- | z|^2 } & \text{if $|z|\leq 1$} \\
  0 & \text{otherwise.}
  \end{cases}
\]
Then, from
\[
\beta ( q  )  =    \frac{- z}{| z|^2 - ( a(q) + \eta )^2 }, 
\]
we find 
\[
\lim_{ q ( z, it) : t \downarrow 0}  \beta ( q )  %
= %
\begin{cases}
  - z & \text{if $|z|\leq 1$} \\
  0  &  \text{otherwise.} 
\end{cases} 
\]
As lemma \ref{le:girkoQUAT} dictates, if we compose by $-\pi^{-1} \pd$ we
retrieve the circular law.

\section{Related results and models}
\label{se:related}

\subsection{Replacement principle and universality for shifted matrices}

It is worthwhile to state the following lemma, which can be seen as a variant
of the Hermitization lemma \ref{le:girko}. The next statement is slightly
stronger than its original version in \cite[Theorem 2.1]{tao-vu-cirlaw-bis}.

\begin{lemma}[Replacement principle]\label{le:replacement}
  Let $(A_n)_{n\geq1}$ and $(B_n)_{n\geq1}$ be two sequences where $A_n$ and
  $B_n$ are random variables in $\cM_n(\dC)$. If for a.a.\ $z \in \dC$, a.s.
  \begin{itemize}  
  \item[(k)] $\lim_{n\to\infty}  U_{\mu_{A_n}} (z)   -U_{\mu_{B_n}} (z) =0$
  \item[(kk)] $\log ( 1 + \cdot) $ is uniformly integrable for $( \nu_{A_n })_{n \geq 1}$ and
    $(\nu_{B_n })_{n \geq 1}$
    \end{itemize}
  then a.s.\ $\mu_{A_n}-\mu_{B_n}\weak 0$ as $n\to\infty$.
\end{lemma}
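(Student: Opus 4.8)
The plan is to mimic the structure of the proof of Lemma \ref{le:girko}, but working with the difference $\mu_{A_n}-\mu_{B_n}$ and the difference of logarithmic potentials rather than with a single sequence. The key observation is that $U_{\mu_{A_n}}(z) = -\int_0^\infty \log(s)\,d\nu_{A_n-zI}(s)$ by \eqref{eq:UESD}, so hypothesis (kk) on uniform integrability of $\log(1+\cdot)$ for $(\nu_{A_n})$ and $(\nu_{B_n})$ is exactly what is needed to make these logarithmic potentials well-behaved. First I would record the standard fact (proved just as in remark \ref{rk:weakui}) that for a.a.\ $z\in\dC$, a.s.\ $z$ is not an eigenvalue of any $A_n$ or $B_n$, so that $U_{\mu_{A_n}}(z)$ and $U_{\mu_{B_n}}(z)$ are finite for all $n$.

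The main technical step is to upgrade (kk) from uniform integrability of $\log(1+\cdot)$ for the \emph{singular values} measures $\nu_{A_n-zI},\nu_{B_n-zI}$ to uniform integrability of $\log(1+|z-\cdot|)$ for the \emph{eigenvalue} measures $\mu_{A_n},\mu_{B_n}$, together with tightness of $(\mu_{A_n})$ and $(\mu_{B_n})$. This is precisely the content of lemma \ref{le:unimaj} applied with $a_{n,k}=|\lambda_k(A_n-zI)|$ and $b_{n,k}=s_k(A_n-zI)$ (and similarly for $B_n$): the Weyl inequalities \eqref{eq:weyl0} supply hypothesis (ii) of that lemma, the reversed Weyl inequalities supply (iii), and the first-moment control on $\nu_{A_n-zI}$ implicit in (kk) together with \eqref{eq:basic1} (which gives $s_i(A_n-zI)\le s_i(A_n)+|z|$) controls the large singular values. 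Here one needs a small preliminary reduction: the hypotheses are stated for $\nu_{A_n}$, not $\nu_{A_n-zI}$, but since $s_i(A_n-zI)\le s_i(A_n)+|z|$ and the atoms have weight $1/n$, uniform integrability of $\log(1+\cdot)$ transfers from $(\nu_{A_n})$ to $(\nu_{A_n-zI})$ for each fixed $z$. I would then invoke remark \ref{re:convpot} style reasoning, or directly lemma \ref{le:unimaj}(jj), to conclude.

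Once tightness and uniform integrability are in hand, the conclusion follows by a standard subsequence argument. Fix a generic $\omega$ in the a.s.\ event. By tightness (Prohorov), any subsequence of $(\mu_{A_n(\omega)}-\mu_{B_n(\omega)})$ has a further subsequence along which $\mu_{A_n(\omega)}\weak\mu_A$ and $\mu_{B_n(\omega)}\weak\mu_B$ for some $\mu_A,\mu_B\in\cP(\dC)$. Along that subsequence, uniform integrability of $\log(1+|z-\cdot|)$ lets us pass to the limit in $U_{\mu_{A_n}}(z)=-\int\log|z-\lambda|\,d\mu_{A_n}(\lambda)$ for a.a.\ $z$, giving $\lim_n U_{\mu_{A_n(\omega)}}(z)=U_{\mu_A}(z)$ and likewise $\lim_n U_{\mu_{B_n(\omega)}}(z)=U_{\mu_B}(z)$ for a.a.\ $z$. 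Hypothesis (k) then forces $U_{\mu_A}(z)=U_{\mu_B}(z)$ for a.a.\ $z$, whence $\mu_A=\mu_B$ by the unicity lemma \ref{le:unicity}. Since every subsequential limit of $\mu_{A_n(\omega)}-\mu_{B_n(\omega)}$ is the zero (signed) measure, the whole sequence satisfies $\mu_{A_n(\omega)}-\mu_{B_n(\omega)}\weak 0$.

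**The main obstacle** I anticipate is the passage to the limit in the logarithmic potentials: weak convergence of $\mu_{A_n}$ alone does not control $\int\log|z-\lambda|\,d\mu_{A_n}(\lambda)$ because $\log|z-\cdot|$ is unbounded both near $\lambda=z$ and at infinity. The near-$z$ singularity is handled for a.a.\ $z$ by the uniform integrability of $\log$ for $(\nu_{A_n-zI})$ transported through lemma \ref{le:unimaj} (this is the role of the Weyl majorization), and the behavior at infinity is handled by the uniform integrability of $\log(1+|\cdot|)$, i.e.\ hypothesis (kk). So the real work is entirely in carefully invoking lemma \ref{le:unimaj} and the Fubini argument swapping the ``a.a.\ $z$'' and ``a.s.'' quantifiers, exactly as in the proof of lemma \ref{le:girko}; everything downstream is soft.
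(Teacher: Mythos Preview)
Your overall architecture (Fubini swap of quantifiers, Weyl majorization to transfer (kk) from $\nu_{A_n}$ to $\mu_{A_n}$, tightness, subsequence extraction, unicity) is the right one and matches what the paper intends when it says ``follows by using the argument in the proof of lemma~\ref{le:girko}.'' There is, however, a genuine gap at the step where you pass to the limit in the logarithmic potentials.

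You write that ``uniform integrability of $\log(1+|z-\cdot|)$ lets us pass to the limit in $U_{\mu_{A_n}}(z)=-\int\log|z-\lambda|\,d\mu_{A_n}(\lambda)$.'' This is false: $\log(1+|z-\cdot|)$ is bounded below and controls only the contribution from $|\lambda|\to\infty$, whereas $\log|z-\cdot|$ has a $-\infty$ singularity at $\lambda=z$ which is \emph{not} controlled by anything in (kk). In your ``main obstacle'' paragraph you try to patch this by invoking ``uniform integrability of $\log$ for $(\nu_{A_n-zI})$,'' but the hypotheses do not give you that: (kk) is uniform integrability of $\log(1+\cdot)$ for $\nu_{A_n}$, and the shift bound $s_i(A_n-zI)\le s_i(A_n)+|z|$ from \eqref{eq:basic1} transfers only the control of \emph{large} singular values to $\nu_{A_n-zI}$; it says nothing about $s_i(A_n-zI)$ near $0$. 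In short, under the stated hypotheses you cannot conclude that $U_{\mu_{A_n}}(z)\to U_{\mu_A}(z)$ for a.a.~$z$ along a weakly convergent subsequence. (This is exactly the place where lemma~\ref{le:girko} had the extra assumption (ii), which is absent here.)

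The fix is to avoid pointwise convergence of $U_{\mu_{A_n}}$ altogether and work in $\cD'(\dC)$, exactly as in remark~\ref{re:convpot}. After using Weyl majorization to transfer (kk) to uniform integrability of $\log(1+|\cdot|)$ for $(\mu_{A_n})$ and $(\mu_{B_n})$, the estimate in remark~\ref{re:convpot} shows that $(U_{\mu_{A_n}})$ and $(U_{\mu_{B_n}})$ are locally uniformly Lebesgue integrable on $\dC$; hence so is their difference. Combined with the pointwise a.e.\ convergence (k), this gives $U_{\mu_{A_n}}-U_{\mu_{B_n}}\to 0$ in $L^1_{\mathrm{loc}}$, hence in $\cD'(\dC)$. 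Applying $\Delta$ yields $\mu_{A_n}-\mu_{B_n}\to 0$ in $\cD'(\dC)$, and since these are signed measures of total variation at most $2$, this upgrades to $\mu_{A_n}-\mu_{B_n}\weak 0$. If you prefer to keep the subsequence framework, extract $\mu_{A_n}\weak\mu_A$, $\mu_{B_n}\weak\mu_B$ and use the last paragraph of remark~\ref{re:convpot} to get $U_{\mu_{A_n}}\to U_{\mu_A}$ and $U_{\mu_{B_n}}\to U_{\mu_B}$ in $\cD'(\dC)$ (this needs only weak convergence plus the $\log(1+|\cdot|)$ uniform integrability, not control of small singular values); then (k) forces $U_{\mu_A}=U_{\mu_B}$ in $\cD'(\dC)$ and lemma~\ref{le:unicity} finishes.
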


A proof of the lemma follows by using the argument in the proof of lemma
\ref{le:girko}. Using their replacement principle, Tao and Vu have proved in
\cite{tao-vu-cirlaw-bis} that the universality of the limit spectral measures
of random matrices goes far beyond the circular law. We state it here in a
slightly stronger form than the original version, see \cite{ECP2011-10}.

\begin{theorem}[Universality principle for shifted matrices]
  \label{th:universality}
  Let $X$ and $G$ be the random matrices considered in sections
  \ref{se:gaussian} and \ref{se:universal} obtained from infinite tables with
  i.i.d.\ entries. Consider a deterministic sequence $(M_n)_{n \geq 1}$ such
  that $M_n \in \cM_n (\dC)$ and for some $p > 0$,
  \[
  \varlimsup_{n \to \infty} \int\!s^p\,d\nu_{M_n}(s) < \infty.
  \]
  Then a.s.\ $\mu_{n^{-1/2}X+M_n}-\mu_{n^{-1/2}G+M_n}\weak0$ as $n\to\infty$.
  \end{theorem}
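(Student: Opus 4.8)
The plan is to deduce the statement from the replacement principle (lemma \ref{le:replacement}) applied to $A_n:=n^{-1/2}X+M_n$ and $B_n:=n^{-1/2}G+M_n$, following \cite{tao-vu-cirlaw-bis,ECP2011-10}. Everything then reduces to checking, for a.a.\ $z\in\dC$, a.s.: (k) $U_{\mu_{A_n}}(z)-U_{\mu_{B_n}}(z)\to0$, and (kk) $\log(1+\cdot)$ is uniformly integrable for $(\nu_{A_n})_{n\ge1}$ and for $(\nu_{B_n})_{n\ge1}$. I would use the moment hypothesis in two complementary ways. First, $\frac1n\sum_i s_i(M_n)^p=\int\!s^p\,d\nu_{M_n}(s)$ being bounded forces $s_1(M_n)\le (Cn)^{1/p}$, so $s_1(M_n-zI)\le n^{d}$ for $n\gg1$ with $d$ depending only on $p$. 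Second, by Jensen it bounds $\varlimsup_n\int\!s^{p''}\,d\nu_{M_n}(s)$ with $p'':=\min(p,2)$, and it makes $(\nu_{M_n-zI})_{n}$ tight; both of these will be exploited below.

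For (kk) and, more generally, the \emph{upper} tails, I would combine the Schatten triangle inequality (or, when $p''\le1$, subadditivity of $t\mapsto t^{p''}$ on singular value sequences, via Weyl's inequalities) with the law of large numbers \eqref{eq:varbound}, which gives $\int\!s^2\,d\nu_{n^{-1/2}X}(s)\to\dE|X_{11}|^2=1$ a.s.\ (and likewise for $G$). This yields $\varlimsup_n\int\!s^{p''}\,d\nu_{A_n}(s)<\infty$ and $\varlimsup_n\int\!s^{p''}\,d\nu_{B_n}(s)<\infty$ a.s.; since $\log(1+s)=o(s^{p''})$ as $s\to\infty$, the de la Vall\'ee Poussin criterion gives (kk), and the same estimate applied to $A_n-zI$, $B_n-zI$ (which perturbs singular values by at most $|z|$) controls the upper tail of $\log$ for these shifted measures. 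The delicate point is the \emph{lower} tail. Here I would invoke lemma \ref{le:scount}, which holds for an \emph{arbitrary} deterministic shift and so gives a.s., for $n\gg1$ and $n^{1-\ga}\le i\le n-1$, that $s_{n-i}(A_n-zI)\ge c_0\,i/n$; and lemma \ref{le:sn} applied with $M=\sqrt n\,(M_n-zI)$ — legitimate precisely because $s_1(M_n-zI)\le n^{d}$ — which gives a.s.\ $s_n(A_n-zI)\ge n^{-1/2-b}$ for $n\gg1$ and some $b>0$. Then, exactly as in the proof of theorem \ref{th:circular} (the Riemann-sum estimate following \eqref{eq:uibeta}), these two bounds give $\varlimsup_n\int\!s^{-q}\,d\nu_{A_n-zI}(s)<\infty$ a.s.\ for every small $q>0$, which controls the lower tail of $\log$. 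Since $X$ has finite variance and $G$ is Gaussian, lemmas \ref{le:scount} and \ref{le:sn} apply verbatim to $B_n-zI$, so a.s.\ $\log$ is uniformly integrable for both $(\nu_{A_n-zI})_n$ and $(\nu_{B_n-zI})_n$.

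It remains to prove (k), i.e.\ that $\int_0^\infty\!\log(s)\,d\nu_{A_n-zI}(s)$ and $\int_0^\infty\!\log(s)\,d\nu_{B_n-zI}(s)$ have the same limit; this is where universality is used. The point is that the limiting singular value distribution furnished by theorem \ref{th:DS} depends only on the deterministic sequence of shifts, not on the law of the i.i.d.\ entries. I would argue along subsequences: given any subsequence, tightness of $(\nu_{M_n-zI})_n$ lets me extract a further subsequence along which $\nu_{M_n-zI}\weak\rho$; along it, theorem \ref{th:DS} gives a.s.\ $\nu_{A_n-zI}\weak\nu_\rho$ and a.s.\ $\nu_{B_n-zI}\weak\nu_\rho$ with the \emph{same} $\nu_\rho$, so by the uniform integrability of $\log$ both integrals tend to $\int\!\log(s)\,d\nu_\rho(s)$, whence $U_{\mu_{A_n}}(z)-U_{\mu_{B_n}}(z)\to0$ along that subsequence; as the subsequence was arbitrary, the whole sequence tends to $0$. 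To avoid coordinating the a.s.\ events across subsequences, an alternative is to rerun the linearization and fixed-point computation of Steps Three--Five of the proof of corollary \ref{cor:DS} simultaneously for $A_n$ and $B_n$: the transforms $m_{\check\nu_{A_n-zI}}(\eta)$ and $m_{\check\nu_{B_n-zI}}(\eta)$ satisfy, up to an $o(1)$ error, one and the same fixed-point equation whose coefficients are built from $M_n-zI$ alone, and since that equation has a unique stable solution in $\dC_+$, their difference tends to $0$ for each $\eta\in\dC_+$, a.s.\ by the concentration lemma \ref{le:concspec}, so that $\nu_{A_n-zI}-\nu_{B_n-zI}\weak0$ a.s.\ by Stieltjes inversion.

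Finally I would collect the countably many exceptional null sets of $z$ (those $z$ that are atoms of some $\dE\mu_{A_n}$ or $\dE\mu_{B_n}$, cf.\ remark \ref{rk:weakui}), intersect the relevant a.s.\ events, verify (k)--(kk), and conclude from lemma \ref{le:replacement} that a.s.\ $\mu_{n^{-1/2}X+M_n}-\mu_{n^{-1/2}G+M_n}\weak0$. The main obstacle is the lower-tail control of $\log$ for the shifted matrices: the whole argument hinges on the hypotheses of lemmas \ref{le:scount} and \ref{le:sn} being met by the \emph{arbitrary} deterministic shift $M_n-zI$, and it is exactly the moment bound $\varlimsup_n\int\!s^p\,d\nu_{M_n}(s)<\infty$ that makes this possible, since it forces at once the polynomial bound on $s_1(M_n)$ required by lemma \ref{le:sn} and the upper-tail bound required for uniform integrability.
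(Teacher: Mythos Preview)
The paper does not give its own proof of this theorem: it merely states the result and refers to \cite{tao-vu-cirlaw-bis} and \cite{ECP2011-10}. Your outline is exactly the route the paper points to, namely feeding $A_n=n^{-1/2}X+M_n$ and $B_n=n^{-1/2}G+M_n$ into the replacement principle (lemma \ref{le:replacement}), with the moment hypothesis on $M_n$ supplying both the polynomial bound on $s_1(M_n)$ needed by lemma \ref{le:sn} and the upper-tail integrability. Your treatment of (kk) and of the lower tail via lemmas \ref{le:scount}--\ref{le:sn} is correct and matches the scheme of \S\ref{se:universal}.

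The one place where your write-up is loose is the verification of (k). The subsequence argument as you state it does have the measurability issue you flag: theorem \ref{th:DS} produces an a.s.\ event that depends on the extracted subsequence, and there may be uncountably many such subsequences. The clean fix is to insert the concentration step \emph{first}: by lemma \ref{le:concspec} and Borel--Cantelli, for each fixed $z$ one has a.s.\ $\nu_{A_n-zI}-\dE\nu_{A_n-zI}\weak0$ and $\nu_{B_n-zI}-\dE\nu_{B_n-zI}\weak0$; it then suffices to show the \emph{deterministic} statement $\dE\nu_{A_n-zI}-\dE\nu_{B_n-zI}\weak0$, and for this your subsequence-plus-theorem \ref{th:DS} argument works without any coordination of null sets. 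Your ``alternative'' (running Steps Three--Five of \S\ref{subsec:proofcongSV} in parallel for $X$ and $G$) is in fact the argument used in \cite{ECP2011-10}, and is equivalent once concentration has been invoked. With this small reorganization your proof is complete and coincides with the approach the paper gestures at.
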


\subsection{Related models}

We give a list of models related to the circular law theorem
\ref{th:circular}.

\subsubsection*{Sparsity} 

The circular law theorem \ref{th:circular} may remain valid if one allows the
entries law to depend on $n$. This extension contains for instance sparse
models in which the law has an atom at $0$ with mass $p_n\to1$ at a certain
speed, see \cite{gotze-tikhomirov-new,MR2409368,wood}.

\subsubsection*{Outliers}

The circular law theorem \ref{th:circular} allows the blow up of an arbitrary
(asymptotically negligible) fraction of the extremal eigenvalues. Indeed, it
was shown by Silverstein \cite{MR1284550} that if $\dE(|X_{11}|^4)<\infty$ and
$\dE(X_{11})\neq0$ then the spectral radius $|\la_1(n^{-1/2}X)|$ tends to
infinity at speed $\sqrt{n}$ and has a Gaussian fluctuation. This observation
of Silverstein is the base of \cite{djalil-nccl}, see also the ideas of Andrew
\cite{MR1062321}. More recently, Tao studied in \cite{tao-outliers} the
outliers produced by various types of perturbations including general additive
perturbations.

\subsubsection*{Sum and products}

The scheme of proof of theorem \ref{th:circular} (based on Hermitization,
logarithmic potential, and uniform integrability) turns out to be quite
robust. It allows for instance to study the limit of the empirical
distribution of the eigenvalues of sums and products of random matrices, see
\cite{ECP2011-10}, and also \cite{gotze-tikhomirov-product} in relation with
Fuss-Catalan laws. We may also mention \cite{orourke-soshnikov}. The crucial
step lies in the control of the small singular values.

\subsubsection*{Cauchy and the sphere}

It is well known that the ratio of two independent standard real Gaussian
variables is a Cauchy random variable, which has heavy tails. The complex
analogue of this phenomenon leads to a complex Cauchy random variable, which
is also the image law by the stereographical projection of the uniform law on
the sphere. The matrix analogue consists in starting from two independent
copies $G_1$ and $G_2$ of the Complex Ginibre Ensemble, and to consider the
random matrix $Y=G_1^{-1}G_2$. The limit of $\mu_Y$ was analyzed by Forrester
and Krishnapur \cite{forrester-krishnapur}. Note that $Y$ does not have
i.i.d.\ entries.

\subsubsection*{Random circulant matrices}

The eigenvalues of a non-Hermitian circulant matrix are linear functionals of
the matrix entries. Meckes \cite{meckes} used this fact together with the
central limit theorem in order to show that if the entries are i.i.d.\ with
finite positive variance then the scaled empirical spectral distribution of
the eigenvalues tends to a Gaussian law. We can imagine a heavy tailed version
of this phenomenon with $\alpha$-stable limiting laws.

\subsubsection*{Single ring theorem}

Let $D\in\cM_n(\dR_+)$ be a random diagonal matrix and $U,V\in\cM_n(\dC)$ be
two independent Haar unitary matrices, independent of $D$. The law of
$X:=UDV^*$ is unitary invariant by construction, and $\nu_X=\mu_D$ (it is a
random SVD). Assume that $\mu_D$ tends to some limiting law $\nu$ as
$n\to\infty$. It was conjectured by Feinberg and Zee \cite{MR1477381} that
$\mu_X$ tends to a limiting law which is supported in a centered ring of the
complex plane, i.e.\ a set of the form $\{z\in\dC:r\leq|z|\leq R\}$. Under
some additional assumptions, this was proved by Guionnet, Krishnapur, and
Zeitouni \cite{guionnet-krishnapur-zeitouni} by using the Hermitization
technique and specific aspects such as the Schwinger-Dyson non-commutative
integration by parts. Guionnet and Zeitouni have also obtained the convergence
of the support in a more recent work \cite{guionnet-zeitouni}. The Complex
Ginibre Ensemble is a special case of this unitary invariant model. Very
recently, Khoruzhenko discovered a new and relatively simple model
(quadratized rectangular Ginibre matrix) which gives rise to a single ring.

\subsubsection*{Large deviations and logarithmic potential with external field}

The circular law theorem \ref{th:complex-ginibre-circle-strong} for the
Complex Ginibre Ensemble can be seen as a special case of the circular law
theorem for unitary invariant random matrices with eigenvalues density
proportional to 
\[
(\la_1,\ldots,\la_n)\mapsto
\exp\PAR{-\frac{1}{2n}\sum_{i=1}^nV(\la_i)}\prod_{i<j}|\la_i-\la_j|^2
\]
where $V:\dC\mapsto\dR$ is a smooth potential growing enough at infinity.
Since
\[
\exp\PAR{-\frac{1}{2n}\sum_{i=1}^nV(\la_i)} \prod_{i<j}|\la_i-\la_j|^2 %
=\exp\PAR{-\frac{1}{2n}\sum_{i=1}^nV(\la_i)+\frac{1}{2}\sum_{i<j}\log|\la_i-\la_j|}
\]
we discover an empirical version of the logarithmic energy functional
$\cE(\cdot)$ defined in \eqref{eq:logene} penalized by the ``external''
potential $V$. Indeed, it has been shown by Hiai and Petz \cite{MR1606719}
(see also Ben Arous and Zeitouni \cite{MR1660943}) that the Complex Ginibre
Ensemble satisfies a large deviations principle at speed $n^2$ for the weak
topology on the set of symmetric probability measures (with respect to
conjugacy), with good rate function given by
\begin{equation}\label{eq:ldp-rate}
  \mu\mapsto\frac{1}{2}\PAR{\cE(\mu)+\int\!V\,d\mu}-\frac{3}{8}
  =\frac{1}{4}\iint\!(V(z)+V(\la)-2\log|\la-z|)\,d\mu(z)d\mu(\la)-\frac{3}{8}.
\end{equation}
This rate function achieves its minimum $0$ at point $\mu=\cC_1$. This is
coherent with the fact that the circular law $\cC_1$ is the minimum of the
logarithmic energy among the probability measures on $\dC$ with fixed
variance, see the book of Saff and Totik \cite{MR1485778}. Note that this
large deviations principle gives an alternative proof of the circular law for
the Ginibre Ensemble thanks to the first Borel-Cantelli lemma.


\subsubsection*{Dependent entries}

According to Girko, in relation to his ``canonical equation K20'', the
circular law theorem \ref{th:circular} remains valid for random matrices with
independent rows provided some natural hypotheses \cite{MR1887675}. A circular
law theorem is available for random Markov matrices including the Dirichlet
Markov Ensemble \cite{cirmar}, and random matrices with i.i.d.\ log-concave
isotropic rows\footnote{An absolutely continuous probability measure on
  $\dR^n$ is log-concave if its density is $e^{-V}$ with $V$
  convex.\label{fn:logconc}} \cite{EJP2011-37}. Another Markovian model
consists in a non-Hermitian random Markov generator with i.i.d.\ off-diagonal
entries, which gives rise to a new limiting spectral distribution, possibly
not rotationally invariant, which can be interpreted using free probability
theory, see \cite{bordenave-caputo-chafai-cirgen}. Yet another model related
to projections in which each row has a zero sum is studied in
\cite{tao-outliers}. To end up this tour, let us mention another kind of
dependence which comes from truncation of random matrices with depend entries
such as Haar unitary matrices. Namely, let $U$ be distributed according to the
uniform law on the unitary group $\dU_n$ (we say that $U$ is Haar unitary).
Dong, Jiang, and Li have shown in \cite{dong-jiang-li} that the empirical
spectral distribution of the diagonal sub-matrix $(U_{ij})_{1\leq i,j\leq m}$
tends to the circular law if $m/n\to0$, while it tends to the arc law (uniform
law on the unit circle $\{z\in\dC:|z|=1\}$) if $m/n\to1$. Other results of the
same flavor can be found in \cite{MR2480790}.

\subsubsection*{Tridiagonal matrices}

The limiting spectral distributions of random tridiagonal Hermitian matrices
with i.i.d.\ entries are not universal and depend on the law of the entries,
see \cite{MR2480789} for an approach based on the method of moments. The
non-Hermitian version of this model was studied by Goldsheid and Khoruzhenko
\cite{MR2191234} by using the logarithmic potential. Indeed, the tridiagonal
structure produces a three terms recursion on characteristic polynomials which
can be written as a product of random $2\times 2$ matrices, leading to the
usage of a multiplicative ergodic theorem to show the convergence of the
logarithmic potential (which appears as a Lyapunov exponent). In particular,
neither the Hermitization nor the control the smallest and small singular
values are needed here. Indeed the approach relies directly on remark
\ref{re:convpot}. Despite this apparent simplicity, the structure of the
limiting distributions may be incredibly complicated and mathematically
mysterious, as shown on the Bernoulli case by the physicists Holz, Orland, and
Zee \cite{MR1986425}.

\subsection{Free probability interpretation}


As we shall see, the circular law and its extensions have an interpretation in
free probability theory, a sub-domain of operator algebra theory. Before going
further, we should recall briefly certain classical notions of operator
algebra. We refer to Voiculescu, Dykema and Nica \cite{MR1217253} for a
complete treatment of free non-commutative variables, see also the book by
Anderson, Guionnet, and Zeitouni for the link with random matrices
\cite{MR2760897}. In the sequel, $H$ is an Hilbert space and we consider a
pair $(\cM, \tau)$ where $\cM$ is an algebra of bounded operators on $H$,
stable by the adjoint operation $*$, and where $\tau : \cM \to \dC$ is a
linear map such that $\tau (1) = 1$, $\tau ( a a^* ) = \tau (a^* a) \geq 0$.

\subsubsection*{Definition of Brown measure} For $a \in \cM$, define $|a |=
\sqrt { a a^*}$. For $b$ self-adjoint element in $\cM$, we denote by $\mu_b$
the spectral measure of $b$: it is the unique probability measure on the real
line satisfying, for any integer $k \in \dN$,
\[
\tau(b^k) = \int\!t^kd\mu_b(t). 
\]
Also, if $a \in \cM$, we define
\[
\nu_a = \mu_{|a|}. 
\]
Then, in the spirit of \eqref{eq:UESD}, the Brown measure \cite{MR866489} of
$a \in \cM$ is the unique probability measure $\mu_a$ on $\dC$, which
satisfies for almost all $z \in \dC$,
\[
\int\!\log|z-\la|\,d\mu_a(\la) = \int\!\log(s)\,d\nu_{a-z}(s).
\]
In distribution, it is given by the formula\footnote{The quantity
  $\displaystyle{\exp \int\!\log(t)\,d\mu_{|a|}(t)}$ is known as the
  Fuglede-Kadison determinant of $a \in \cM$, see \cite{MR0052696}.}
\begin{equation} \label{eq:defbrown} %
  \mu_a = \frac{1}{2\pi} \Delta \int\!\log(s)\,d\nu_{a-z}(s).
\end{equation}
The fact that the above definition is indeed a probability measure requires a
proof, which can be found in \cite{MR2339369}. Our notation is consistent:
first, if $a$ is self-adjoint, then the Brown (spectral) measure coincides
with the spectral measure. Secondly, if $\cM = \cM_n(\dC)$ and $\tau :=
\frac{1}{n} \TR$ is the normalized trace on $\cM_n(\dC)$, then we retrieve our
usual definition for $\nu_A$ and $\mu_A$. It is interesting to point out that
the identity \eqref{eq:defbrown} which is a consequence of the definition of
the eigenvalues when $\cM = \cM_n(\dC)$ serves as a definition in the general
setting of von Neumann algebras.

Beyond bounded operators, and as explained in Brown \cite{MR866489} and in
Haagerup and Schultz \cite{MR2339369}, it is possible to define, for a class
$\bar \cM \supset \cM$ of closed densely defined operators affiliated with
$\cM$, a probability measure on $\dC$ called the Brown spectral measure of $a
\in \bar\cM$.

\subsubsection*{Failure of the method of moments}

For non-Hermitian matrices, the spectrum does not necessarily belong to the
real line, and in general, the limiting spectral distribution is not supported
in the real line. The problem here is that the moments are not enough to
characterize laws on $\dC$. For instance, if $Z$ is a complex random
variable following the uniform law $\cC_\kappa$ on the centered disc
$\{z\in\dC;|z|\leq\kappa\}$ of radius $\kappa$ then for every $r\geq0$,
$\mathbb{E}(Z^r)=0$ and thus $\cC_\kappa$ is not characterized by its
moments. Any rotational invariant law on $\dC$ with light tails shares
with $\cC_\kappa$ the same sequence of null moments. One can try to
circumvent the problem by using ``mixed moments'' which uniquely determine
$\mu$ by the Weierstrass theorem. Namely, for every $A\in\cM_n(\dC)$, if
$A=UTU^*$ is the Schur unitary triangularization of $A$ then for every
integers $r,r'\geq0$ and with $z=x+iy$ and $\tau=\frac{1}{n}\TR$,
\[
\int_{\dC}\!z^r\Ol{z}^{r'}\,d\mu_A(z) %
=\frac 1 n \sum_{i=1}^n\lambda_i^r(A)\Ol{\lambda_i(A)}^{r'} %
=\tau(T^r\Ol{T}^{r'}) %
\neq \tau(T^rT^{*r'}) %
=\tau(A^rA^{*r'}).
\]
Indeed equality holds true when $\Ol{T}=T^*$, i.e.\ when $T$ is diagonal,
i.e.\ when $A$ is normal. This explains why the method of moments looses its
strength for non-normal operators. To circumvent the problem, one may think
about using the notion of $\star$-moments. Note that if $A$ is normal then for
every word $A^{\veps_1}\cdots A^{\veps_k}$ where
$\veps_1,\ldots,\veps_n\in\{1,*\}$, we have $\tau(A^{\veps_1}\cdots
A^{\veps_k})=\tau(A^{k_1}A^{*k_2})$ where $k_1,k_2$ are the number of
occurrence of $A$ and $A^*$.

\subsubsection*{$\star$-distribution} 
The $\star$-distribution of $a \in \cM$ is the collection of all its
$\star$-moments: 
\[
\tau (a^{\veps_1} a^{\veps_2} \cdots a^{\veps_n}),
\]
where $n\geq1$ and $\veps_1,\ldots,\veps_n\in\{1,*\}$. The element $c \in \cM$
is {\em circular} when it has the $\star$-distribution of $( s_1 + i s_2)
/\sqrt{2} $ where $s_1$ and $s_2$ are free semi circular variables with
spectral measure of Lebesgue density
$x\mapsto\frac{1}{\pi}\sqrt{4-x^2}\IND_{[-2,2]}(x)$.

The $\star$-distribution of $a\in\cM$ allows to recover the moments of
$|a-z|^2=(a-z)(a-z)^*$ for all $z\in\dC$, and thus $\nu_{a-z}$ for all
$z\in\dC$, and thus the Brown measure $\mu_a$ of $a$. Actually, for a random
matrix, the $\star$-distribution contains, in addition to the spectral
measure, an information on the eigenvectors of the matrix.

We say that a sequence of matrices $(A_n)_{n\geq1}$ where $A$ takes it values
in $\cM_n( \dC)$ converges in $\star$-moments to $a \in \cM$, if all
$\star$-moments converge to the $\star$-moments of $a \in \cM$. For example,
if $G \in\cM_n(\dC)$ is our complex Ginibre matrix, then a.s.\ as $n \to
\infty$, $n^{-1/2}G$ converges in $\star$-moments to a circular element.

\subsubsection*{Discontinuity of the Brown measure}
Due to the unboundedness of the logarithm, the Brown measure $\mu_a$ depends
discontinuously on the $\star$-moments of $a$ \cite{MR1876844,MR1929504}.
The limiting measures are perturbations by ``balayage''.
A simple counter example is given by the matrices of example \ref{ex:nilpot}.
For random matrices, this discontinuity is circumvented in the Girko
Hermitization by requiring a uniform integrability, which turns out to be
a.s.\ satisfied the random matrices $n^{-1/2}X$ in the circular law theorem
\ref{th:circular}.

However, \'Sniady \cite[Theorem 4.1]{MR1929504} has shown that it is always
possible to regularize the Brown measure by adding an additive noise. More
precisely, if $G$ is as above and $(A_n)_{n\geq1}$ is a sequence of matrices
where $A_n$ takes its values in $\cM_n(\dC)$, and if the $\star$-moments of
$A_n$ converge to the $\star$-moments of $a \in \cM$ as $n\to\infty$, then
a.s.\ $n \to \infty$ $\mu_{A_n+ t n^{-1/2} G}$ converges to $\mu_{a + t c}$,
$c$ is circular element free of $a$. In particular, by choosing a sequence
$t_n$ going to $0$ sufficiently slowly, it is possible to regularize the Brown
measure: a.s.\ $\mu_{A_n + t_n n^{-1/2} G}$ converges to $\mu_{a}$. Note that
the universality theorem \ref{th:universality} shows that the same result
holds if we replace $G$ by our matrix $X$. We refer to Ryan \cite{MR1624843}
and references therein for the analysis of the convergence in $\star$-moments.
See also the forthcoming book of Tao \cite{tao-topics-in-random-matrix}.
The \'Sniady theorem was revisited recently by Guionnet, Wood, and Zeitouni
\cite{guionnet-wood-zeitouni}.

\section{Heavy tailed entries and new limiting spectral distributions}\label{se:heavy}

This section is devoted to the study of the analogues of the quarter circular
and circular law theorems \ref{th:quarter-circular}-\ref{th:circular} when
$X_{11}$ has an infinite variance (and thus heavy tails). The approach taken
from \cite{bordenave-caputo-chafai-heavygirko} involves many ingredients
including the Hermitization of section \ref{se:universal}. To lighten the
notations, we often abridge $A-zI$ into $A-z$ for an operator or matrix $A$
and a complex number $z$.

\begin{figure}[htbp]
  \begin{center}
    \includegraphics[scale=0.63]{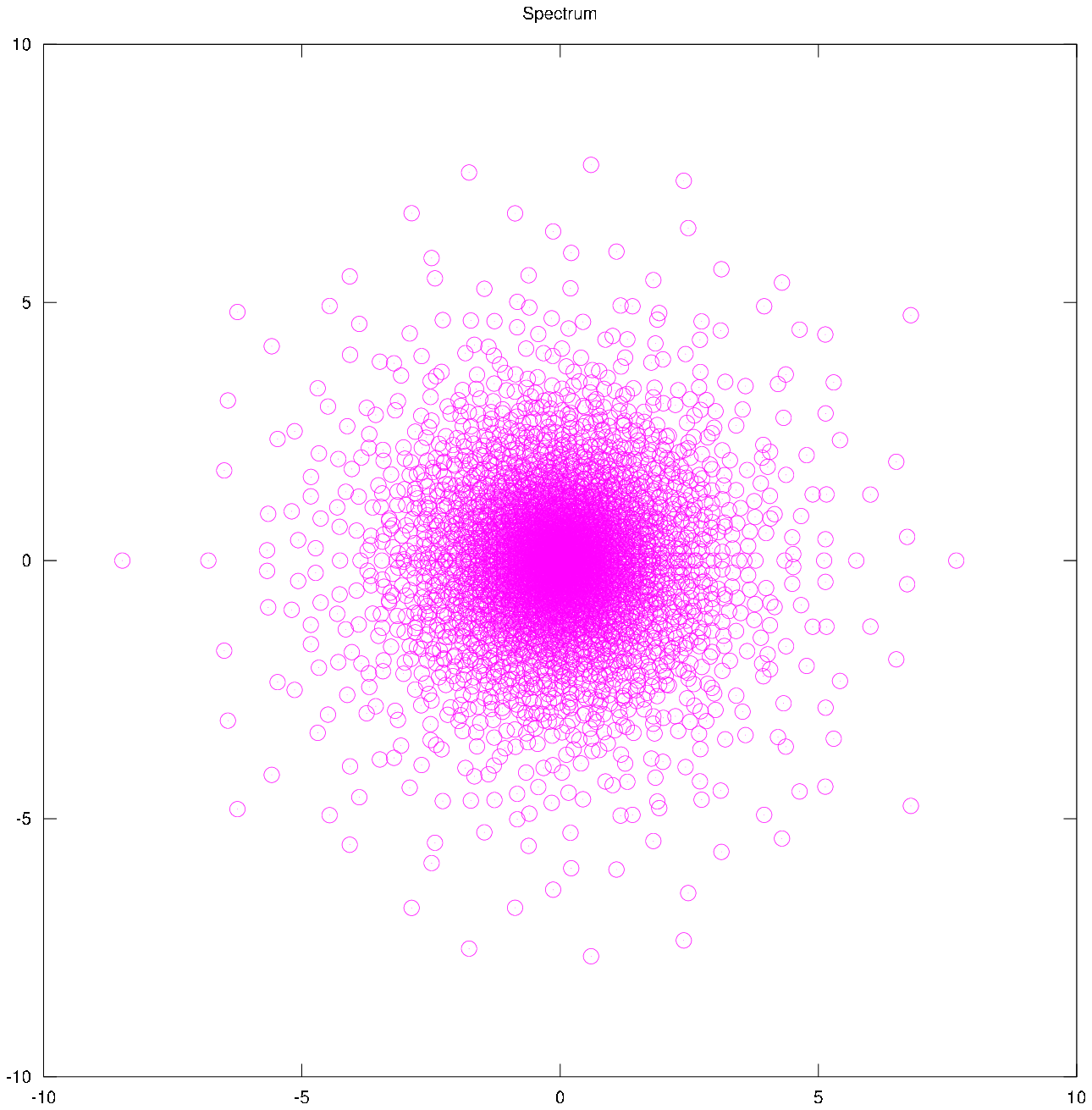}
    \includegraphics[scale=0.5]{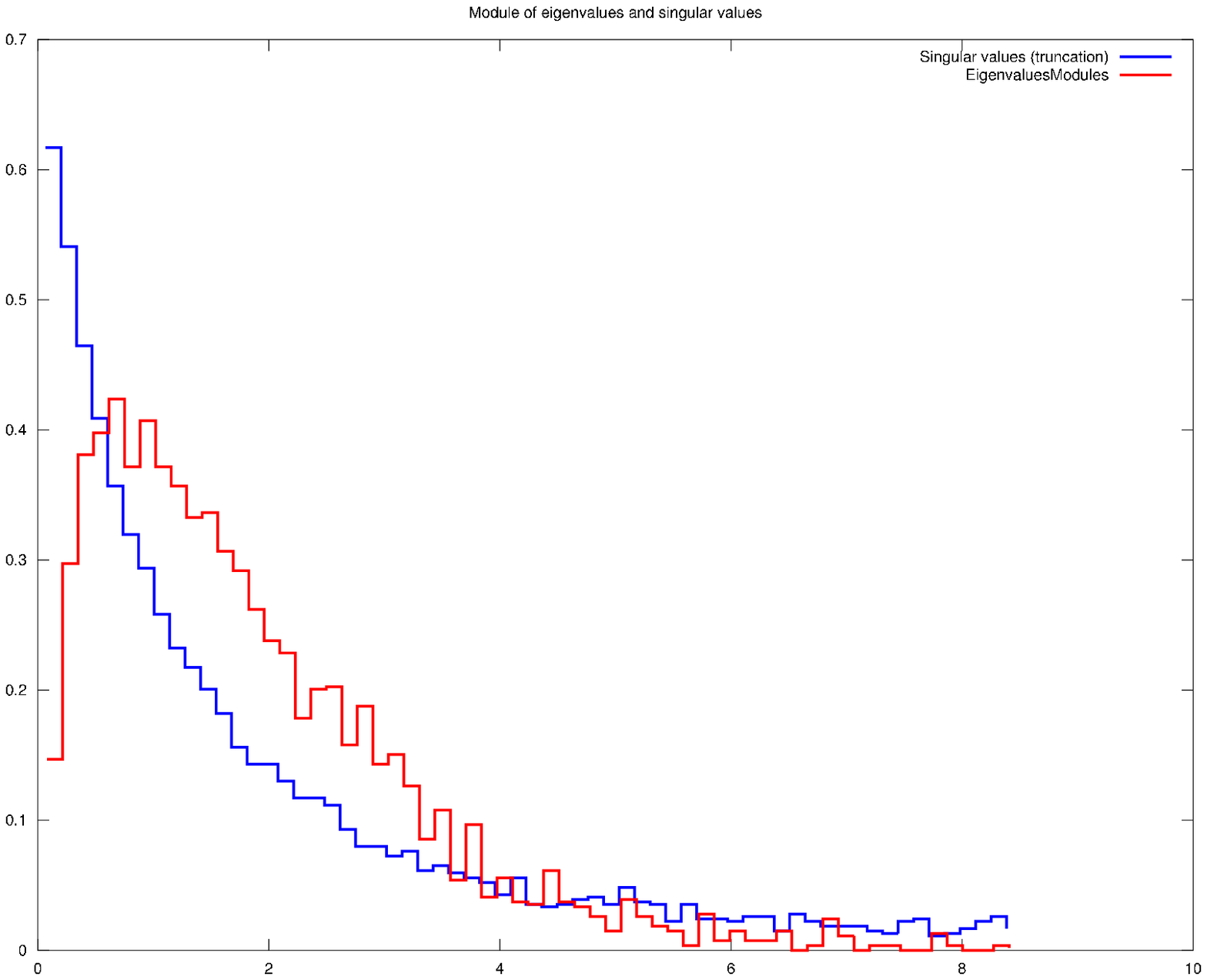} 
  \caption{The upper plot shows the spectrum of a single $n\times n$ matrix
    $n^{-1/\alpha}X$ with $n=4000$ and i.i.d.\ heavy tailed entries with
    $X_{11}\overset{d}{=}\veps U^{-1/\alpha}$ with $\alpha=1$ and $U$ uniform
    on $[0,1]$ and $\veps$ uniform on $\{-1,1\}$ independent of $U$. The lower
    plot shows the histogram of the singular values (blue) and the histogram
    of the module of the eigenvalues (red) of this random matrix. The singular
    values vector is trimmed to avoid extreme values.}
  \label{fi:heavy}
  \end{center}
\end{figure}

\subsection{Heavy tailed analogs of quarter circular and circular laws}

We now come back to an array $X := (X_{ij})_{1 \leq i , j \leq n}$ of i.i.d.\
random variables on $\dC$. We lift the hypothesis that the entries have a
finite second moment: we will assume that,
\begin{itemize}
  \item for some $0 < \alpha < 2$,
    \begin{equation}\label{eq:tail}
      \lim_{ t \to \infty} t^{\alpha} \dP ( | X_{11} | \geq t ) = 1,
    \end{equation}
  \item as $t \to \infty$, the conditional probability
    \[
    \dP\PAR{\frac{X_{11}}{|X_{11}|}\in \cdot\,\bigm|\,|X_{11}|\geq t}
    \]
    converges to a probability measure on the unit circle
    $\dS^{1}:=\{z\in\dC:|z|=1\}$.
\end{itemize}
The law of the entries belongs then to the domain of attraction of an
$\alpha$-stable law. An example is obtained when $|X_{11}|$ and
$X_{11}/|X_{11}|$ are independent with $|X_{11}|=|S|$ where $S$ is real
symmetric $\alpha$-stable. Another example is given by $X_{11}=\varepsilon
W^{-1/\alpha}$ with $\varepsilon$ and $W$ independent such that $\varepsilon$
is supported in $\dS^{1}$ while $W$ is uniform on $[0,1]$.

The interest on this type of random matrices has started with the work of the
physicists Bouchaud and Cizeau \cite{BouchaudCizeau}. One might think that the
analog of the Ginibre ensemble is a matrix with i.i.d.\ $\alpha$-stable
entries. It turns out that this random matrix ensemble is not unitary
invariant and there is no explicit expression for the distribution of its
eigenvalues. This lack of comparison with a canonical ensemble makes the
analysis of the limit spectral measures more delicate. We may first wonder
what is the analog of the quarter circular law theorem
\ref{th:quarter-circular}. This question has been settled by Belinschi, Dembo
and Guionnet \cite{BDG09} (built upon the earlier work of Ben Arous and
Guionnet \cite{BG08}).

\begin{theorem}[Singular values of heavy tailed random
  matrices]\label{th:HTquartercircle}
  There exists a probability measure $\nu_\alpha$ on $\dR_+$ such that a.s.\ 
  $\nu_{n^{-1/\alpha}X}\weak\nu_\alpha$ as $n\to\infty$.
\end{theorem}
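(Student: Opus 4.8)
The plan is to run the \emph{objective method} of Aldous and Steele on the Hermitization of $n^{-1/\alpha}X$, reducing the identification of the limiting measure to a recursive distributional equation on a Poisson weighted infinite tree. First I would set up a reduction to a resolvent computation: since $n^{-1/\alpha}X$ has independent rows, lemma~\ref{le:concspec} gives, for every $f$ with $\TV{f}\leq1$ and every $t>0$, exponential concentration of $\int\!f\,d\nu_{n^{-1/\alpha}X}$ around $\dE\int\!f\,d\nu_{n^{-1/\alpha}X}$, so by Borel--Cantelli it suffices to prove that $\dE\nu_{n^{-1/\alpha}X}$ converges weakly; after the symmetrization $\check\nu=(\nu+\nu(-\cdot))/2$ this amounts, by remark~\ref{rk:causti}, to the convergence for each fixed $\eta\in\dC_+$ of the normalised expected resolvent trace
\[
\frac1{2n}\,\dE\,\TR(B-\eta I)^{-1},\qquad B:=n^{-1/\alpha}H_X=\begin{pmatrix} 0 & n^{-1/\alpha}X\\ n^{-1/\alpha}X^* & 0\end{pmatrix},
\]
using the elementary identity $m_{\check\nu_{n^{-1/\alpha}X}}(\eta)=\frac1{2n}\TR(B-\eta I)^{-1}$ (the same as \eqref{eq:resstieljes}, here with $z=0$); tightness of the sequence follows from the $\alpha$-stable tails \eqref{eq:tail} after truncating the largest entries.

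Next I would pass to the local weak limit. Encode $B$ as the weighted bipartite graph $G_n$ on $[n]\sqcup[n]$ with edge $\{i,j'\}$ carrying weight $n^{-1/\alpha}X_{ij}$, rooted at a uniformly chosen vertex. The scaling $n^{-1/\alpha}$ is precisely the one for which only finitely many incident edges are locally non-negligible: for a fixed row the family $\{n|X_{ij}|^{-\alpha}\}_{j}$ converges to a rate-one Poisson process on $\dR_+$ and the edge phases to i.i.d.\ samples of the limiting angular law in the hypothesis above; iterating through the graph, $(G_n,\mathrm{root})$ converges in the local weak sense to a random rooted tree $T$ --- a bipartite analogue of the Poisson weighted infinite tree $\pwit$, whose edges carry weights $\Gamma_k^{-1/\alpha}$ (times phases), with $\Gamma_1<\Gamma_2<\cdots$ the points of a rate-one Poisson process. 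Granting that the operator attached to $T$ is a.s.\ essentially self-adjoint and that the diagonal resolvent entry at the root, $[(B(G_n)-\eta I)^{-1}]_{\mathrm{root},\mathrm{root}}$, is a bounded continuous functional of the rooted weighted graph for the local weak topology, I would conclude $\frac1{2n}\dE\TR(B-\eta I)^{-1}\to\dE\,R^T_{\mathrm{root},\mathrm{root}}(\eta)$.

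To identify the limit I would use that $T$ is a tree: deleting the root disconnects it into independent subtrees, each again a PWIT rooted at a child of the root, so Schur's block inversion~\eqref{eq:schur} expresses $R^T_{\mathrm{root},\mathrm{root}}(\eta)$ through the corresponding diagonal resolvent entries of those subtrees. This yields a recursive distributional equation for the law of $R^T_{\mathrm{root},\mathrm{root}}(\eta)$, of the same flavour as the fixed-point analysis in Step Five of subsection~\ref{subsec:proofcongSV}; for $\eta\in\dC_+$ it admits a unique solution among laws supported in $\overline{\dC_+}$ (uniqueness from an analyticity/contraction argument using $\Im(\eta)>0$). Hence $\eta\mapsto\dE\,R^T_{\mathrm{root},\mathrm{root}}(\eta)$ is well defined and analytic on $\dC_+$, is the Cauchy--Stieltjes transform of a symmetric probability measure $\check\nu_\alpha$ on $\dR$ depending only on $\alpha$, and $\nu_\alpha:=|\cdot|_{*}\check\nu_\alpha$ is the announced limit; the a.s.\ convergence $\nu_{n^{-1/\alpha}X}\weak\nu_\alpha$ then follows from this convergence of means together with the concentration above.

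The step I expect to be the main obstacle is the passage to the local weak limit: the resolvent is \emph{a priori} a global object, the graphs $G_n$ have unbounded local degrees, and for small $\alpha$ the essential self-adjointness of the limiting tree operator is not automatic. I would handle this by truncating the edge weights, controlling the resulting resolvent error by a Neumann-type expansion that converges thanks to $\Im(\eta)>0$, and transferring convergence of the truncated resolvents through the theory of unimodular (reversible) random networks --- which is exactly where the works of Belinschi, Dembo and Guionnet~\cite{BDG09} (building on Ben Arous and Guionnet~\cite{BG08}) and of Bordenave, Caputo and Chafa\"\i~\cite{bordenave-caputo-chafai-heavygirko} do the heavy lifting.
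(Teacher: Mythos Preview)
Your proposal is correct and follows essentially the same route as the paper. The paper attributes theorem~\ref{th:HTquartercircle} to \cite{BDG09,BG08} and then proves the more general theorem~\ref{th:HTMPz} (the $z$-shifted version) via exactly the chain you describe: concentration to reduce to the mean, exchangeability to reduce to a single diagonal resolvent entry, local convergence to the PWIT (theorem~\ref{th:locconvPWIT}), essential self-adjointness of the limiting bipartized operator (proposition~\ref{prop:sa}), passage to the limit via strong resolvent convergence (lemma~\ref{le:strongres}), and the recursive distribution equation on the tree from Schur's formula (lemma~\ref{le:schurB}). Two small remarks: the paper works throughout in the quaternionic resolvent framework of subsection~\ref{subsec:convres}, which at $z=0$ collapses to your ordinary Hermitization; and the uniqueness of the RDE solution is obtained not by a contraction argument but via the Poisson--stable identity of lemma~\ref{le:magic}, which reduces the RDE to a scalar fixed-point equation that can be analysed by hand.
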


This probability measure $\nu_\alpha$ depends only on $\alpha$. It does not
have a known explicit closed form but has been studied in
\cite{BG08,bordenave-caputo-chafai-ii,BDG09}. We know that $\nu_\alpha$ has a
bounded continuous density $f_\alpha$ on $\dR_+$, which is analytic on some
neighborhood of $\infty$. The explicit value of $f_\alpha (x)$ is only known
for $x = 0$. But, more importantly, we have
\[
\lim_{ t \to \infty}  t^{\alpha +1 } f_\alpha ( t ) = \alpha. 
\]
In particular, $\nu_\alpha$ inherits the tail behavior of the entries:
\[ 
\lim_{ t \to \infty} t^{\alpha} \nu_\alpha ( [ t , \infty) ) = 1 .
\]
The measure $\nu_\alpha$ is a perturbation of the quarter circular law: it can
be proved that $\nu_\alpha$ converges weakly to the quarter circular law as
$\alpha$ converges to $2$. Contrary to the finite variance case, the
$n^{-1/\alpha}$ normalization cannot be understood from the computation of
\[
\int\!s^2\,d\nu_{n^{-1/\alpha}X}(s) %
=\frac{1}{n^{1 + 1 / \alpha}}\sum_{i,j=1}^n|X_{ij}|^2
\]
since the later diverges. A proof of the tightness of $\nu_{n^{-1/\alpha}X}$
requires some extra care that we will explain later on. However, at a
heuristic level, we may remark that if $R_1, \ldots , R_n$ denotes the rows of
$n^{-1/\alpha}X$ then for each $k$,
\[
\NRM{R_k}^2_2 = \frac{1}{n^{2 /\alpha}} \sum_{i=1}^n |X_{k i }|^2 
\]
converges weakly to a non-negative $\frac{\alpha}{2}$-stable random variable.
Hence the $n^{-1/\alpha}$ normalization stabilizes the norm of each row of
$X$.

Following \cite{bordenave-caputo-chafai-heavygirko}, we may also investigate
the behavior of the eigenvalues of $X$. Here is the analogue of the circular
law theorem \ref{th:circular} for our heavy tailed entries matrix model.

\begin{theorem}[Eigenvalues of heavy tailed random matrices]\label{th:HTcircle}
  There exists a probability measure $\mu_\alpha$ on $\dC$ such that in
  probability $\mu_{n^{-1/\alpha}X}\weak\mu_\alpha$ as $n\to\infty$. Moreover,
  if $X_{11}$ has a bounded density, then the convergence is almost sure.
\end{theorem}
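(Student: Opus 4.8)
The strategy is to run the Hermitization machinery of Lemma~\ref{le:girko} — or rather its ``in probability'' version — with $A_n = n^{-1/\alpha}X$. Two things must be supplied: (i) for a.a.\ $z\in\dC$, the convergence $\nu_{n^{-1/\alpha}X - zI}\weak\nu_{\alpha,z}$ in probability, for some non-random family $(\nu_{\alpha,z})_{z\in\dC}$; and (ii) for a.a.\ $z\in\dC$, the uniform integrability of $\log$ for $(\nu_{n^{-1/\alpha}X-zI})_{n\ge1}$, again in probability. Granting both, Lemma~\ref{le:girko} produces $\mu_\alpha\in\cP(\dC)$ with $\mu_{n^{-1/\alpha}X}\weak\mu_\alpha$ in probability and $U_{\mu_\alpha}(z) = -\int_0^\infty\log(s)\,d\nu_{\alpha,z}(s)$; the upgrade to a.s.\ convergence under the bounded-density hypothesis comes afterward from a concentration argument.

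For step (i), the natural tool is the objective method of Aldous and Steele: the bipartized/symmetrized Hermitian matrices $H(z)$ associated to $n^{-1/\alpha}X - zI$ converge locally (in the sense of Benjamini--Schramm, via their weighted adjacency structure) to an operator on the Poisson Weighted Infinite Tree $\pwit$, the limiting object being governed by the fact that, after the $n^{-1/\alpha}$ scaling, the rows have jointly heavy-tailed ($\frac\alpha2$-stable) $\ell^2$-masses. One shows that the empirical spectral distribution of $H(z)$ converges in probability to the (non-random) spectral measure of a distinguished vector for the limiting $\pwit$-operator; symmetrizing and restricting to $\dR_+$ gives $\nu_{\alpha,z}$, which depends only on $\alpha$ and $z$ and not on the law of $X_{11}$ beyond the tail assumption~\eqref{eq:tail}. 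Here the key point — and the reason the convergence is only asserted in probability rather than a.s. — is that unbounded weights on the tree prevent the usual almost-sure self-averaging of resolvent traces; one uses instead the concentration Lemma~\ref{le:concspec} (valid for matrices with independent rows) to get convergence in probability of $\int f\,d\nu_{H(z)}$ to its mean, and then identifies the mean limit via the tree.

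For step (ii), I would reduce as in Remark~\ref{rk:weakui} to controlling $\varlimsup_n \int_{\{s^{-p}>t\}} s^{-p}\,d\nu_{n^{-1/\alpha}X-zI}(s)$ in probability for some small $p>0$ (the large-$s$ tail is automatic, since the second moment $\int s^2 d\nu$ diverges but any $\int s^p d\nu$ with $p<\alpha$ stays bounded in probability by the stable law of large numbers applied to $\|R_k\|_2^2$). The small-singular-value estimate is exactly what Lemma~\ref{le:snRV} of Appendix~\ref{se:ap:inv} is designed to deliver: it gives $\dP(s_n(n^{-1/\alpha}X - zI)\le n^{-b})\to 0$ under weak assumptions, i.e.\ uniform integrability of $\log$ \emph{in probability}, which is precisely what the ``in probability'' branch of Lemma~\ref{le:girko} requires. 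For intermediate small singular values one uses the Cauchy--Poincaré interlacing plus the distance-to-subspace identity of Lemma~\ref{le:tvneg}, combined with a heavy-tailed analogue of the distance concentration Lemma~\ref{le:concdist}; since rows now have infinite variance, one truncates at level $n^{1/\alpha+\veps}$ and absorbs the exceptional mass. Assembling (i)--(ii) and invoking Lemma~\ref{le:girko} finishes the ``in probability'' statement.

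Finally, under the extra hypothesis that $X_{11}$ has a bounded density, $n^{-1/\alpha}X - zI$ is a.s.\ invertible, so $\int\log(s)\,d\nu_{n^{-1/\alpha}X-zI}(s)$ is a.s.\ finite for each $n$; combining this with the concentration inequality of Lemma~\ref{le:concspec} applied to $f=\log$ truncated at a slowly growing level, together with Borel--Cantelli, upgrades the convergence $\int f\,d\mu_{n^{-1/\alpha}X}\to\int f\,d\mu_\alpha$ from ``in probability'' to ``almost surely'' along the whole sequence, uniformly in a countable dense family of test functions $f$, whence a.s.\ weak convergence. \textbf{The main obstacle} will be step (i): pinning down the $\pwit$-limit of the bipartized operators and proving convergence of the spectral measures $\nu_{H(z)}$ to a deterministic limit, since the heavy tails destroy the uniform bounds that make the finite-variance Dozier--Silverstein fixed-point argument (Theorem~\ref{th:DS}) work, and one must instead run a recursive distributional equation (an ``RDE'') on the tree and prove it has a unique solution for each $z$ — the analogue of Step~Five in the proof of Corollary~\ref{cor:DS}, but now for a $2\times2$-matrix-valued, measure-valued fixed point.
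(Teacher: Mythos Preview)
Your plan is essentially the paper's own route: Hermitization (Lemma~\ref{le:girko} or its quaternionic variant~\ref{le:girkoQUAT}), the PWIT for the singular-value limit, Lemma~\ref{le:snRV} plus a small-singular-values count for uniform integrability, and a concentration-plus-Borel--Cantelli upgrade under bounded density. Two points deserve correction, though neither derails the argument.

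First, you misdiagnose where the ``in probability'' limitation arises. In the paper, the convergence $\nu_{n^{-1/\alpha}X-zI}\weak\nu_{\alpha,z}$ (Theorem~\ref{th:HTMPz}) is \emph{almost sure}: the concentration Lemma~\ref{le:concQUAT} (or~\ref{le:concspec}) gives exponential bounds in $n$, so Borel--Cantelli applies and self-averaging is a.s.\ despite the unbounded weights. The ``in probability'' in Theorem~\ref{th:HTcircle} comes entirely from step (ii): Lemma~\ref{le:snRV} delivers only $\dP(s_n\le n^{-b})=O(\sqrt{\log n /n})$, which is not summable, so the uniform integrability of $\log$ is only in probability. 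The bounded-density hypothesis fixes exactly this, because then the simpler argument for Lemma~\ref{le:sn} (Remark~\ref{re:lesn}) gives an a.s.\ polynomial lower bound on $s_n$.

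Second, you omit a genuine technical step in your ``main obstacle'': before one can pass from local convergence to resolvent convergence (Lemma~\ref{le:strongres}), the bipartized limit operator on the PWIT must be shown to be essentially self-adjoint. This is not automatic for unbounded tree operators and is handled in the paper by a Galton--Watson truncation argument (Proposition~\ref{prop:sa}). Finally, in the a.s.\ upgrade, the concentration is applied to the logarithmic potential $U_{\mu_{n^{-1/\alpha}X}}(z)=-\int\log(s)\,d\nu_{n^{-1/\alpha}X-zI}(s)$ (via $f_n=\IND_{[n^{-b},n^b]}\log$ and Lemma~\ref{le:concspec}), not to $\int f\,d\mu_{n^{-1/\alpha}X}$ for test functions $f$; one shows $U_{\mu_{n^{-1/\alpha}X}}(z)-L_n\to 0$ a.s.\ for deterministic $L_n$, and the already-established in-probability convergence identifies the limit.
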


We believe that theorem \ref{th:HTcircle} can be upgraded to an a.s.\ weak
convergence, but our method does not catch this due to slow ``in probability''
controls on small singular values.

Again, the measure $\mu_\alpha$ depends only on $\alpha$ and is not known
explicitly. However, it is isotropic and has a bounded continuous density with
respect to Lebesgue measure $dxdy$ on $\dC$: $d\mu_\alpha (z) = g_\alpha ( |z
|) dxdy$. The value of $g_\alpha (r)$ is explicit for $r = 0$. As
$r\to\infty$, the tail behavior of $g_\alpha$ is up to multiplicative constant
equivalent to
\[
  r^{2 ( \alpha - 1) } e^{- \frac{\alpha}{2} r^\alpha}.
\]
This exponential decay is quite surprising and contrasts with the power tail
behavior of $f_\alpha$. It indicates that $X$ is typically far from being a
normal matrix. Also, we see that the eigenvalues limit spectrum is more
concentrated than the singular values limit spectrum. In fact, in the finite
variance case, the phenomenon is already present: the quarter circular law
has support $[0,2]$ while the circular law has support the unit disc. Again,
the measure is $\mu_\alpha$ is perturbation of the circular law: $\mu_\alpha$
converges weakly to the circular law as $\alpha$ converges to $2$.

The proof of theorem \ref{th:HTcircle} will follow the general strategy of
Girko's Hermitization. Lemma \ref{le:girko} gives a characterization of the
limit measure in terms of its logarithmic potential. Here, it turns out to be
not so convenient in order to analyze the measure $\mu_\alpha$. We will rather
use the quaternionic version of Girko's Hermitization, i.e.\ lemma
\ref{le:girkoQUAT}. For statement $(i')$ in lemma \ref{le:girkoQUAT}, we will
prove a generalized version of theorem \ref{th:HTquartercircle}.

\begin{theorem}[Singular values of heavy tailed random matrices]\label{th:HTMPz}
  For all $z \in \dC$ there exists a probability measure $\nu_{\alpha,z}$ on
  $\dR_+$ such that a.s.\ $\nu_{n^{-1/\al}X-z}\weak\nu_{\alpha,z}$ as
  $n\to\infty$. Moreover, with the notations used in lemma \ref{le:girkoQUAT},
  for all $q = q(z,\eta)\in \dH_+$, there exists $\Gamma (q) \in \dH_+$, such
  that a.s.\ $\Gamma_{n^{-1/\alpha}X}( q)$ converges to $\Gamma(q)$ and $\Gamma
  (q)_{11} = m_{\check \nu_{\alpha,z} } (\eta)$.
\end{theorem}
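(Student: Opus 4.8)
The plan is to follow the strategy of \cite{bordenave-caputo-chafai-heavygirko}. Since the second moment of $X_{11}$ diverges, neither the Hermitian truncation--centralization step nor the scalar degree-$3$ fixed point equation used for corollary \ref{cor:DS} survives; instead I would combine the concentration estimates with the objective method of Aldous and Steele and with a recursive distributional equation on the Poisson Weighted Infinite Tree. First I would reduce the almost sure statements to statements about expectations. As $n^{-1/\alpha}X$ has i.i.d.\ rows, lemmas \ref{le:concspec} and \ref{le:concQUAT} give, for a fixed $q=q(z,\eta)\in\dH_+$, sub-Gaussian concentration of $\Gamma_{n^{-1/\alpha}X}(q)$ around $\dE\Gamma_{n^{-1/\alpha}X}(q)$ at rate $\exp(-cn\Im(\eta)^2t^2)$, and likewise of $\int\!f\,d\nu_{n^{-1/\alpha}X-z}$ for every $f$ with $\TV{f}\leq1$. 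By the first Borel-Cantelli lemma it therefore suffices to prove $\dE\Gamma_{n^{-1/\alpha}X}(q)\to\Gamma(q)$ for each $q$; the convergence $\nu_{n^{-1/\alpha}X-z}\weak\nu_{\alpha,z}$ will then follow, because by lemma \ref{le:propRes} the entry $a(q)=\Gamma_{n^{-1/\alpha}X}(q)_{11}$ equals $m_{\check\nu_{n^{-1/\alpha}X-z}}(\eta)$, so convergence of $a(q)$ for all $\eta\in\dC_+$, together with tightness of the symmetric measures $\check\nu_{n^{-1/\alpha}X-z}$, identifies a limit $\check\nu_{\alpha,z}$ with $\Gamma(q)_{11}=m_{\check\nu_{\alpha,z}}(\eta)$.

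Next I would identify the limiting geometry. Viewing $n^{-1/\alpha}X-zI$ as the complete graph $K_n$ with complex edge-marks $n^{-1/\alpha}X_{ij}$ and vertex-mark $-z$, the tail hypothesis \eqref{eq:tail} and the convergence of the conditional angular law imply that, seen from a uniformly chosen vertex, the point process of marks converges in distribution to a Poisson point process on $\dR_+\times\dS^1$ with radial intensity $\tfrac{\alpha}{2}x^{-\alpha/2-1}\,dx$ for the squared modulus and with the limiting directional measure on $\dS^1$; iterating, the rooted neighbourhood converges in the local weak sense of Benjamini-Schramm and Aldous-Steele to a Poisson Weighted Infinite Tree $\pwit$ carrying these marks. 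One then checks that the bipartized operator attached to this random tree is essentially self-adjoint, so that its quaternionic resolvent $\cR(q)$ is well defined; this is the place where $0<\alpha<2$ and the sparsity of large weights on the tree are used.

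The heart of the proof is passing the Schur-complement identity to the limit. Exactly as in Step Four of the proof of corollary \ref{cor:DS}, Schur block inversion expresses $R(q)_{nn}$ in terms of $q$ and of the quadratic form $Q^*\WT R Q$ built from the resolvent of the $(n-1)\times(n-1)$ minor. Conditionally on the minor, $Q$ has independent heavy-tailed coordinates, so $Q^*\WT R Q$ no longer converges to a deterministic matrix as in the finite variance case, but to a random $2\times2$ matrix given by an $\alpha/2$-stable-type integral of the diagonal blocks $\WT R_{kk}$ against the limiting Poisson edge process. Combined with the local weak convergence of the minor to the same $\pwit$, this shows that the law of $R(q)_{11}$ converges to the law of a $\dH_+$-valued random variable $\cS$ satisfying a recursive distributional equation on $\pwit$, schematically $\cS\overset{d}{=}-\big(q+\sum_\ell\xi_\ell\,\mathrm{diag}(\cS_\ell)\big)^{-1}$, where $\{\xi_\ell\}$ is the limiting Poisson process and the $\cS_\ell$ are i.i.d.\ copies of $\cS$ independent of it. Setting $\Gamma(q):=\dE\cS$ and averaging over the root then gives $\dE\Gamma_{n^{-1/\alpha}X}(q)\to\Gamma(q)$.

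The main obstacle will be uniqueness of the solution of this recursive distributional equation in the relevant class of $\dH_+$-valued laws, which is what pins down $\Gamma(q)$, hence $\nu_{\alpha,z}$, and ultimately $\mu_\alpha$ through lemma \ref{le:girkoQUAT}: unlike the scalar equation of corollary \ref{cor:DS}, uniqueness here must come from a fixed-point/monotonicity argument, restricting first to $\eta=it$ with $t>0$, where the diagonal of $\cS$ is purely imaginary and the natural map on the associated Stieltjes functionals is monotone, and then extending to all $\eta\in\dC_+$ by analyticity (Montel's theorem, as in Step Five of corollary \ref{cor:DS}). A secondary point is the tightness needed to upgrade the distributional convergence of $R(q)_{11}$ to convergence of its first moment $\dE R(q)_{11}$; this is immediate from the a priori bound $\NRM{\cR(q)}\leq\Im(\eta)^{-1}$, which gives uniform integrability for free. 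Assembling these steps yields $\dE\Gamma_{n^{-1/\alpha}X}(q)\to\Gamma(q)$ together with $\Gamma(q)_{11}=m_{\check\nu_{\alpha,z}}(\eta)$ and $\nu_{n^{-1/\alpha}X-z}\weak\nu_{\alpha,z}$, which is the assertion of the theorem.
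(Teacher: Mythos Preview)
Your outline is essentially the paper's: concentration (lemma \ref{le:concQUAT}) plus exchangeability reduces the problem to $\dE R_{n^{-1/\alpha}X}(q)_{11}$; local convergence to the PWIT (theorem \ref{th:locconvPWIT}) together with essential self-adjointness of the bipartized limit operator (proposition \ref{prop:sa}) then gives resolvent convergence, and lemma \ref{le:propRes} identifies $\Gamma(q)_{11}$ with the Cauchy--Stieltjes transform of $\check\nu_{\alpha,z}$.

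The one substantive correction: uniqueness of the recursive distributional equation is \emph{not} the main obstacle, and in fact is not needed for theorem \ref{th:HTMPz} at all. Once the operator $A$ is constructed on the PWIT and its bipartized operator is shown essentially self-adjoint, the quantity $\Gamma(q):=\dE R_A(q)_{\so\so}$ is already a concrete, well-defined element of $\dH_+$; convergence of $R_{n^{-1/\alpha}X}(q)_{11}$ to $R_A(q)_{\so\so}$ in distribution then follows from the general fact that strong operator convergence implies strong resolvent convergence (lemma \ref{le:strongres}, quoting \cite[Theorem VIII.25(a)]{reedsimon}), with no fixed-point or uniqueness argument required. The RDE (via the Schur formula on trees, lemma \ref{le:schurB}) and the Poisson--stable identity (lemma \ref{le:magic}) enter only afterwards, in subsection \ref{ss:selfadj}, to extract explicit properties of $\nu_{\alpha,z}$ and $\mu_\alpha$. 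Your Schur-based route to resolvent convergence could be made to work, but it is the harder path: on the finite matrix the Schur complement $Q^*\WT R\,Q$ involves the full minor resolvent, and you would have to argue separately that its off-diagonal contribution is negligible because the matrix is locally tree-like --- exactly what lemma \ref{le:strongres} packages in one line.
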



\subsubsection*{Objective method - sparse random graphs and trees}
The strategy for proving theorem \ref{th:HTMPz}, borrowed from
\cite{bordenave-caputo-chafai-heavygirko}, will differ significantly from the
proof of theorem \ref{th:quarter-circular}. We will prove that $n^{-1/\alpha}
X$ converges in some sense, as $n\to\infty$, to a limit random operator $A$
defined in the Hilbert space $\ell^2(\dN)$. This will be done by using the
``objective method'' initially developed by Aldous and Steele in the context
of randomized combinatorial optimization, see \cite{aldoussteele}. We build an
explicit operator on Aldous' Poisson Weighted Infinite Tree (PWIT) and prove
that it is the local limit of the matrices $n^{-1/\alpha} X$ in an appropriate
sense. While Poisson statistics arises naturally as in all heavy tailed
phenomena, the fact that a tree structure appears in the limit is roughly
explained by the observation that non-vanishing entries of the rescaled matrix
$n^{-1/\alpha}X$ can be viewed as the adjacency matrix of a sparse random
graph which locally looks like a tree. In particular, the convergence to PWIT
is a weighted-graph version of familiar results on the local tree structure of
Erd\H{o}s-R\'enyi random graphs.

\subsubsection*{Free probability}
We note finally that it is possible to associate to the PWIT a natural
operator algebra $\cM$ with a tracial state $\tau$. Then for some operator $a$
affiliated to $\cM$, the probability measure $\mu_\alpha$ is equal to the
Brown measure $\mu_a$ of $a$, and $\nu_\alpha = \mu_{|a|} = \nu_a$ is the
singular value measure of $a$. See the work of Aldous and Lyons \cite[Example
9.7 and Sub-Section 5]{aldouslyons,lyons}, and the recent work of Male
\cite{male}.

\subsection{Tightness and uniform integrability}
\label{subsec:UIHT}

\subsubsection*{Large singular values} We first prove the a.s.\ tightness of
the sequences $(\mu_{n^{-1/\alpha}X})_{n\geq1}$ and
$(\nu_{n^{-1/\alpha}X-z})_{n\geq1}$ with $z \in \dC$. It is sufficient to
prove that for some $p>0$, for all $z \in \dC$,
\begin{equation}
  \varlimsup_{n\to\infty} \int\!s^p\,d\nu_{n^{-1/\alpha}X-z}(s) < \infty.
  \label{eq:tightness}
\end{equation}
From \eqref{eq:basic1}, for any $A\in\cM_n(\dC)$, with have $s_i(A-z)\leq s_i
(A)+|z|$ and thus
\[
\int\!s^p\,d\nu_{n^{-1/\alpha}X-z}(s) %
\leq \int\,(s+|z|)^p\,d\nu_{n^{-1/\alpha}X}(s).
\]
Moreover, from \ref{eq:weyl2} we get, for any $p>0$,
\[
\int\!|\la|^p\,d\mu_{n^{-1/\alpha}X} (\la) %
\leq \int\!s^p\,d\nu_{n^{-1/\alpha}X}(s). 
\]
In summary, it it is sufficient to prove that for some $p>0$, a.s.\ 
\begin{equation}
  \varlimsup_{n\to\infty} \int\!s^p\,d\nu_{n^{-1/\alpha}X}(s) < \infty. 
  \label{eq:tightness0}
\end{equation}
and \eqref{eq:tightness} will follow. We shall use the following Schatten
bound: for all $0 < p \leq 2$,
\[
 \int\!s^p\,d\nu_{A}(s) \leq \frac{1}{n}\sum_{k=1}^n\NRM{R_k}_2^p. 
\]
for every $A\in\cM_n(\dC)$, where $R_1, \ldots, R_n$ are the rows of $A$ (for
a proof, see Zhan \cite[proof of Theorem 3.32]{zhan}). The above inequality is
an equality if $p =2$ (for $p >2$, the inequality is reversed). For our
matrix, $A =n^{-1/\alpha}X$, we find
\[
\int\!|s|^p\,d\nu_{n^{-1/\alpha}X}(s) %
\leq \frac{1}{n} %
\sum_{k=1}^n\PAR{\frac{1}{n^{2/\alpha}}\sum_{i=1}^n|X_{ki}|^2}^{\frac p 2}.
\]
The strategy of proof of \eqref{eq:tightness0} is now clear: the right hand
side is a sum of i.i.d.\ variables, and from \eqref{eq:tail}, $Y_{k,n} =
n^{-2/\alpha} \sum_{i=1}^n |X_{ki} |^2$ is the domain attraction of a
non-negative $\alpha/2$-stable law. We may thus expect, and it is possible to
prove, that for $q$ small enough, 
\[
\varlimsup_{n\to\infty} \dE Y_{k,n} ^{4q} <\infty.
\]
Then, the classical proof of the strong law of large numbers for independent random variables bounded in $L^4$ implies \eqref{eq:tightness0}.

\subsubsection*{Uniform integrability}

We will prove statement $(ii)$ of lemma \ref{le:girkoQUAT} in probability. Fix
$z \in \dC$. Using \eqref{eq:tightness}, we shall prove the uniform
integrability in probability of $\min(0,\log)$ for ${(\nu_{n^{-1/\alpha}X -
    z})}_{n\geq1}$. From Markov's inequality, it is sufficient to prove that
for some $c > 0$,
\begin{equation}\label{eq:HTUIlower}
  \lim_{t \to\infty} \varlimsup_{n\to\infty} %
  \dP\PAR{\int\!s^{-c}\,d\nu_{n^{-1/\alpha}X-z}(s)>t} = 0. 
\end{equation}
Arguing as in the finite variance case, the latter will in turn follow from
two lemmas:

\begin{lemma}[Lower bound on least singular value]\label{le:smallestSVHT}
  For all $d \geq 0$, there exist $b,c \geq 0$ such that if $M \in \cM_n (
  \dC)$ is deterministic with $\NRM{M}_2\leq n^d$, then for $n\gg1$,
  \[
  \dP(s_n(X+M)\leq n^{-b}) \leq c \sqrt{\frac{\log n}{n}}.
  \]
\end{lemma}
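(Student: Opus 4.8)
The plan is to follow the scheme of Lemma \ref{le:sn} --- reduce the invertibility of $X+M$ to a distance-to-a-subspace estimate, and that in turn to an anti-concentration inequality --- but to replace the bounded-density input, unavailable here since under \eqref{eq:tail} the law of $X_{11}$ may have atoms and infinite variance, by a softer argument; this substitution is exactly what costs the weak probability bound $O(\sqrt{\log n/n})$. First I would dispose of the scaling: since \eqref{eq:tail} gives $\dP(\ABS{X_{11}}\geq t)\leq 2t^{-\alpha}$ for $t$ large, a union bound over the $n^2$ entries lets one assume $\ABS{X_{ij}}\leq n^{K}$ for a large fixed $K$ up to an event of polynomially small probability, and together with $\NRM{M}_2\leq n^d$ this also gives $s_1(X+M)\leq n^{K'}$ off such an event. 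From now on the entries are bounded and all relevant scales are polynomial in $n$.

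\textbf{From invertibility to anti-concentration.} Next I would run the Rudelson--Vershynin dichotomy on the unit sphere of $\dC^n$, splitting it into incompressible and compressible vectors. For incompressible directions, the invertibility-via-distance lemma in the spirit of Lemma \ref{le:rvdist} --- whose averaging over the $n$ rows is what makes a single per-row estimate enough --- reduces the task to a bound of the form $\dP(\DIST(R_i,R_{-i})\leq u_n)\leq c\sqrt{\log n/n}$, where $R_1,\dots,R_n$ are the rows of $X+M$, $R_{-i}:=\SPAN\{R_j:j\neq i\}$ and $u_n=n^{-b'}$ for a suitable $b'>0$. Conditioning on $(R_j)_{j\neq i}$ and choosing a unit vector $y=y(R_{-i})$ orthogonal to $R_{-i}$, hence independent of $R_i$, one has $\DIST(R_i,R_{-i})\geq\ABS{R_i\cdot y}=\ABS{\sum_{k}X_{ik}\overline{y_k}+v\cdot y}$ with $v$ the $i$-th row of $M$, so it suffices to bound the Lévy concentration function of this sum of independent terms. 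On the event that $y$ is itself spread, with a fixed fraction of its coordinates of modulus at least $\delta n^{-1/2}$, an Esseen-type inequality yields a polynomial anti-concentration bound; here heavy tails in fact help, since such a sum typically has size diverging in $n$ (of order $n^{1/\alpha-1/2}$ when $X_{11}$ is exactly $\alpha$-stable), so the probability that it lands in a polynomially small ball is negligible.

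\textbf{The main obstacle.} The real difficulty is the complementary, structural events: compressible test vectors, and the event that the normal vector $y$ to the random hyperplane $R_{-i}$ is concentrated on a small set of coordinates. Under the sole hypothesis \eqref{eq:tail}, and in particular allowing atoms, one cannot expect the exponential control $e^{-cn}$ that the Rudelson--Vershynin machinery provides under light-tail assumptions; the best bound available for these events is polynomial, of order $\sqrt{\log n/n}$, and this single bottleneck is what produces the rate in the statement. I would package the whole argument --- truncation, dichotomy, and the polynomial control of the compressible and ``peaky'' events for matrices $X+M$ with i.i.d.\ entries under minimal assumptions --- as the self-contained general estimate of Appendix \ref{se:ap:inv} (Lemma \ref{le:snRV}), which contains Lemma \ref{le:smallestSVHT} as the special case relevant here; the hypothesis $\NRM{M}_2\leq n^d$ enters only through the crude operator-norm reductions that keep the truncation scales polynomial. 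Combined with the tightness bound \eqref{eq:tightness0} and Markov's inequality, this is precisely what is needed to obtain the uniform integrability in probability of $\min(0,\log)$ for $(\nu_{n^{-1/\alpha}X-z})_{n\geq1}$ required in Lemma \ref{le:girkoQUAT}.
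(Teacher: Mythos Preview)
Your reduction is exactly the paper's: bound $s_1(X+M)$ polynomially via the tail hypothesis \eqref{eq:tail} and a union bound over the $n^2$ entries, then invoke Lemma~\ref{le:snRV} with $s$ a power of $n$; the paper's proof is literally these two sentences.

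However, your diagnosis of the bottleneck inside Lemma~\ref{le:snRV} is backward. The compressible events --- both for the test vector and for the event that the normal vector $y$ to $R_{-i}$ is compressible --- are controlled \emph{exponentially} in the paper (see \eqref{eq:invcomp} and Lemma~\ref{le:normalincomp}), via Talagrand's concentration after conditioning on $\{|X_{ik}|\leq a\}$ for a fixed constant $a$; heavy tails do not obstruct this step, because the truncated variables have all moments. The rate $\sqrt{\log n/n}$ comes entirely from the \emph{incompressible} case: the $n^{-1/2}$ is the Berry--Esseen error in the small-ball estimate of Lemma~\ref{le:smallballBE}, and the factor $\sqrt{\log n}$ is $\sqrt{\log s}$ with $s\sim n^q$, produced by a pigeonhole over $L\sim\log s$ dyadic levels of $|\zeta_i|$ needed to keep the second and third moments comparable. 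Your heuristic that ``heavy tails in fact help'' in the anti-concentration step is not how the argument runs --- one conditions on boundedness precisely to have a finite third moment available for Berry--Esseen; the untruncated heavy-tailed sum is not used.
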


The next lemma asserts that the $i$-th smallest singular of the random matrix
$n^{-1/\alpha}X+M$ is at least of order $\PAR{i/n}^{2\alpha/(\alpha+2)}$ in a
weak sense. This is not optimal but enough.

\begin{lemma}[Count of small singular values]\label{le:smallSVHT}
  There exist $0< \gamma < 1$ and $c_0 > 0$ such that for all $M \in \cM_n
  (\dC)$, there exists an event $F_n$ such that $\lim_{n\to\infty}\dP(F_n)=1$
  and for all $n^{1-\gamma} \leq i \leq n-1$ and $n\gg1$,
  \[
  \dE\SBRA{s_{n-i}^{-2}(n^{-1/\alpha}X+M) \Bigm| F_n} %
  \leq c_0\PAR{\frac{n}{i}}^{\frac{2}{\alpha} + 1}  .
  \]
\end{lemma}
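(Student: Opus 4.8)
\emph{Proof proposal.} The plan is to transpose the scheme of lemmas~\ref{le:scount} and~\ref{le:concdist} to the heavy tailed regime, giving up almost sure control: instead of an a.s.\ statement we build a single event $F_n$ with $\dP(F_n)\to1$ on which a deterministic lower bound on the relevant row-to-subspace distances holds for \emph{every} $i$ in the range, which suffices to bound $\dE[s_{n-i}^{-2}\mid F_n]$. The reduction is verbatim from lemma~\ref{le:scount}: fix $n^{1-\ga}\leq i\leq n-1$, put $m:=n-\CEIL{i/2}$, let $Y'$ be the matrix of the first $m$ rows of $X+n^{1/\al}M=n^{1/\al}(n^{-1/\al}X+M)$, and combine Cauchy--Poincar\'e interlacing with lemma~\ref{le:tvneg} applied to $Y'$ to obtain, as in~\eqref{eq:stodist},
\[
s_{n-i}(n^{-1/\al}X+M)^{-2}\ \leq\ \frac{2n^{2/\al}}{i}\sum_{j=1}^{m}\DIST_j^{-2},\qquad \DIST_j:=\DIST(R_j,H_j),
\]
with $R_j$ the $j$-th row of $Y'$ (i.i.d.\ heavy tailed coordinates plus a deterministic shift) and $H_j=\SPAN\{R_\ell:\ell\neq j\}$ independent of $R_j$, of codimension $k_j:=n-\DIM(H_j)\geq\CEIL{i/2}$. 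Hence it is enough to find $0<\ga<1$, $c_1>0$ and $F_n$ with $\dP(F_n)\to1$ such that on $F_n$, for all such $i$ and all $1\leq j\leq m$, $\DIST_j\geq c_1 k_j^{1/\al}$: plugging this in and using $k_j\asymp i$ and $m\leq n$ gives $\dE[s_{n-i}^{-2}\mid F_n]\leq c_0(n/i)^{2/\al+1}$.

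\emph{The distance estimate.} Fix one such pair $(R,H)$, $R=(X_{i1},\dots,X_{in})+v$ with $v\in\dC^n$ deterministic, $H$ independent of $R$, $k:=n-\DIM(H)\geq n^{1-\ga}$. After the standard reduction (absorb $v$ and the truncated means into $H$, losing $O(1)$ dimensions) we may take $v=0$ and the entries centred. Choose a truncation level $T=T_{n,k}$ with $T^\al\asymp k/(\log n)^{3}$ and set $J:=\{\ell:|X_{i\ell}|>T\}$; by~\eqref{eq:tail}, $|J|$ is Binomial with mean of order $n(\log n)^{3}/k$, so $|J|\leq k/4$ off an event of probability $\exp(-c(\log n)^{3})$ (Chernoff), provided $n/k\ll k$, i.e.\ $\ga<1/2$. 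On $\{|J|\leq k/4\}$ the coordinate subspace $V_J:=\SPAN\{e_\ell:\ell\in J\}$ absorbs the big part of $R$, so $\DIST(R,H)\geq\DIST(R,H+V_J)=\DIST(R^{\leq},H')$, where $R^{\leq}$ is the restriction of $R$ to $\dC^{J^{c}}$ (i.i.d.\ coordinates bounded by $T$, given $J$ and $(X_{i\ell})_{\ell\in J}$) and $H'\subset\dC^{J^{c}}$, independent of $R^{\leq}$, has codimension $k'\geq k-|J|\geq k/2$. Now run the argument of lemma~\ref{le:concdist}: $\DIST(\cdot,H')$ is convex and $1$-Lipschitz on the polydisc of radius $T$, its mean square equals $\si_T^2$ times the rank of the projection onto $(H')^{\perp}$, hence is at least $\si_T^2 k'$ with $\si_T^2\asymp T^{2-\al}$, and Talagrand's inequality gives $\DIST(R^{\leq},H')\geq\frac12\si_T\sqrt{k'}$ off a conditional event of probability $4\exp(-c\,\si_T^2 k'/T^2)=4\exp(-c\,k'/T^\al)=4\exp(-c'(\log n)^{3})$. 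Since $\si_T\sqrt{k'}$ is of order $T^{1-\al/2}k^{1/2}$, i.e.\ $k^{1/\al}$ up to a polylogarithmic factor, this is the claimed bound.

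\emph{Assembling.} Let $F_n$ be the intersection, over all $n^{1-\ga}\leq i\leq n-1$ and all rows $j$ of the corresponding $Y'$, of the events $\{|J|\leq k_j/4\}$ and $\{\DIST_j\geq\frac12\si_T (k_j/2)^{1/2}\}$: polynomially many events, each of probability $1-\exp(-c(\log n)^{3})$, so $\dP(F_n)\to1$. On $F_n$ one has $\DIST_j\geq c_1 k_j^{1/\al}/(\log n)^{C}$ for constants $c_1,C$, so the reduction yields $\dE[s_{n-i}^{-2}(n^{-1/\al}X+M)\mid F_n]\leq c_0(n/i)^{2/\al+1}(\log n)^{C'}$ uniformly in $i$. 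The stray $(\log n)^{C'}$ (or, with $T^\al\asymp k^{1-\veps}$, a factor $i^{\veps'}$) is harmless: this lemma is only used to establish~\eqref{eq:HTUIlower} for \emph{some} small $c>0$, leaving a genuine polynomial margin; a sharper tuning of $T$ removes it and gives the exponent $2/\al+1$ on the nose.

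\emph{Main obstacle.} The delicate point is the balance of $T$: it must be small enough that the truncated row really lives at scale $\si_T$ and that the Talagrand rate $k/T^\al$ diverges fast enough to survive the union bound, yet large enough that discarding the $|J|$ big coordinates costs only $o(k)$ dimensions, i.e.\ $nT^{-\al}\ll k$. These demands are compatible only when $n/k\ll k$, i.e.\ $i$ at least of order $\sqrt n$, which is exactly why $\ga<1/2$ is forced and why the conclusion is stated in conditional-expectation rather than a.s.\ form --- matching the ``in probability'' nature of the downstream controls. Everything else (interlacing, lemma~\ref{le:tvneg}, Chernoff, Talagrand) is routine.
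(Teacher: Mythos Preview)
Your reduction via interlacing and lemma~\ref{le:tvneg} matches the paper's, and the truncation-plus-Talagrand scheme is sound: it gives
\[
\dE\SBRA{s_{n-i}^{-2}(n^{-1/\alpha}X+M)\Bigm|F_n}\leq c_0\PAR{\frac{n}{i}}^{2/\alpha+1}(\log n)^{C'},
\]
which, as you observe, suffices for \eqref{eq:HTUIlower}. The claim that a sharper tuning of $T$ removes the logarithm is wrong, however. Your scheme produces on $F_n$ a \emph{deterministic} bound $\DIST\geq c\,\si_T\sqrt{k'}\asymp(T^\al)^{1/\al-1/2}k^{1/2}$; surviving a polynomial union bound requires the Talagrand rate $k/T^\al$ to exceed a multiple of $\log n$, forcing $T^\al\leq ck/\log n$ and hence $\si_T\sqrt{k'}\leq ck^{1/\al}/(\log n)^{1/\al-1/2}$. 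A high-probability pointwise lower bound obtained via concentration cannot avoid this loss.

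The paper's route is genuinely different: rather than lower-bounding $\DIST$ pointwise, it bounds the \emph{conditional expectation} $\dE[\DIST(R,H)^{-2}\mid G_n]$ directly. The mechanism is distributional: $(n-d)^{-2/\al}\DIST(R,H)^2$ behaves (uniformly in $H$, up to the conditioning) like a positive $\al/2$-stable variable $S$, and the decisive fact is that $\dE[S^{-p}]<\infty$ for every $p>0$ --- the stable density is flat at $0$. This yields $\dE[\DIST^{-2}\mid G_n]\leq c_1(n-d)^{-2/\al}$ with no logarithmic correction. Assembling the row estimates into a single event $F_n$ with $\dP(F_n^c)\leq c\exp(-n^\de)$ is then more delicate than your union bound, and is the step the paper defers to \cite{bordenave-caputo-chafai-heavygirko}.

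In short: your argument is more elementary and proves a version of the lemma with a harmless polylogarithmic factor, but not the lemma as stated; the paper exploits the anti-concentration at zero of stable laws to get the sharp exponent, at the cost of a subtler construction of $F_n$.
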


Let us first check that these two lemmas imply \eqref{eq:HTUIlower} (and thus
statement $(ii)$ of lemma \ref{le:girkoQUAT}). Let us define the event
$E_n:=F_n\cap\{s_{n}(n^{-1/\alpha}X-z) \geq n^{-b}\}$. Let us define also
\[
\dE_n[\,\cdot\,] := \dE[\,\cdot\,|E_n].
\]
Since the event $E_n$ has probability tending $1$, the proof of
\eqref{eq:HTUIlower} would follow from
\[
\varlimsup_{n\to\infty} %
\dE_n\SBRA{\int\!x^{-p}\,d\nu_{n^{-1/\alpha}X-z}(s)} < \infty.
\]
For simplicity, we write $s_i$ instead $s_i ( n^{-1/\alpha}X - zI)$. Since
$s_{n} \geq n^{-b}$ has probability tending to $1$, by lemma
\ref{le:smallSVHT}, for all $n^{1-\gamma} \leq i \leq n-1$,
\[
\dE_n\SBRA{s_{n-i} ^{-2}} %
\leq \frac{\dE\SBRA{s_{n-i}^{-2} \Bigm| F_n}}{\dP(s_{n}\geq n^{-b})} %
\leq c_1 \PAR{\frac{n}{i}}^{\frac{2}{\alpha} + 1}.
\]
 Then, for $0 < p \leq 2$, using Jensen inequality, we find
\begin{align*}
  \dE_n \SBRA{\int\!s^{-p}\,d\nu_{n^{-1/\alpha}X-z}(s)} & =
  \frac{1}{n}\sum_{i=0}^{\FLOOR{n^{1-\gamma}}}\dE_n\SBRA{s_{n-i}^{-p}} %
  + \frac{1}{n}\sum_{i=\FLOOR{n^{1-\gamma}}+1}^{n-1}\dE_n\SBRA{s_{n-i}^{-p}} \\
  & \leq n^{-\gamma} n^{pb} %
  + \frac{1}{n}\sum_{i=\FLOOR{n^{1-\gamma}}+1}^{n-1 } %
  \dE_n\SBRA{s_{n-i}^{-2}}^{\frac p 2} \\
  & \leq n^{-\gamma+pb} +
  \frac{1}{n} %
  \sum_{i=1}^{n}c_1^{-p}\PAR{\frac{n}{i}}^{(\frac{2}{\alpha}+1)(\frac{p}{2})}.
\end{align*}
In this last expression we discover a Riemann sum. It is uniformly bounded if
$p < \gamma/b$ and $p < 2\alpha / (\alpha + 2)$. The uniform bound
\eqref{eq:HTUIlower} follows.

\begin{proof}[Proof of lemma \ref{le:smallestSVHT}] 
  The probability that $s_1 (X) \geq n^{1+p}$ is upper bounded by the
  probability that one of the entries of $X$ is larger that $n^p$. From
  Markov's inequality and the union bound, for $p$ large enough, this event
  has probability at most $1/n$. In particular, $s_1 (X+M)\leq s_1(X) + s_1
  (M)$ is at most $2 n^q$ for $q = \max ( p, d)$ with probability at least $1
  - 1/n$. The statement is then a corollary of lemma \ref{le:snRV}. Note: a
  simplified proof in the bounded density case may be obtained by adapting the
  proof of lemma \ref{le:sn} (see \cite{bordenave-caputo-chafai-heavygirko}).
\end{proof}

\begin{proof}[Sketch of proof of lemma \ref{le:smallSVHT}] 

  We now comment the proof of lemma \ref{le:smallSVHT}, the detailed argument
  is quite technical and is omitted here. It can be found in extenso in
  \cite{bordenave-caputo-chafai-heavygirko}. First, as in the finite variance
  case, the proof reduces to derive a good lower bound on
  \[
  \DIST^2(X_1,W) = \ANG{X_1,PX_1}, 
  \]
  where $X_1$ is the first row of $X$, $W$ is a vector space of Co-dimension $n
  - d \geq n^{1-\gamma}$ (in $\dR^n$ or $\dC^n$) and $P$ is the orthogonal
  projection on the orthogonal of $W$. However, in the finite variance case, $
  \DIST^2 ( X_1, W)$ concentrates sharply around its average: $n-d$. Here, the
  situation is quite different, for instance if $W = \mathrm{vect} ( e_{n-d
    +1}, \ldots , e_{n})$, we have
  \[
  (n-d)^{-\frac 2 \alpha}\DIST^2 ( X_1, W) = (n-d)^{-\frac 2 \alpha} \sum_{i = 1} ^{n-d} |X_{1i}|^2. 
  \]
  and thus $(n-d)^{-\frac 2 \alpha}\DIST^2 ( X_1, W)$ is close in distribution
  to a non-negative $\alpha/2$-stable random variable, say $S$.

  On the other hand, if $U$ is a $n\times n$ unitary matrix uniformly
  distributed on the unitary group (normalized Haar measure), and if $W$ is
  the span of the last $d$ row vectors, then it can be argued than
  $\DIST^2(X_1,W) $ is close in distribution to $c (n-d)
  n^{\frac{2}{\alpha}-1}S$. Hence, contrary to the finite variance case, the
  order of magnitude of the distance of $X_1$ to the vector space $W$ depends
  on the geometry of $W$ with respect to the coordinate basis. We have proved
  some lower bound on this distance which are universal on $W$. More
  precisely, for any $0 < \gamma < \alpha/4$, there exists $c_1 >0$, such that
  for some event $G_n$ with $\dP ( G_n ^c ) \leq c_1 n^{-(1-2\gamma)/\alpha}$,
  \[
  \dE\SBRA{\DIST^{-2}(X_1,W) \bigm| G_n} \leq c_1 (n-d)^{-\frac 2 \alpha}. 
  \]
  The above holds for $n - d \geq n^{1-\gamma}$. We have crucially used the
  fact that for all $p > 0$, $\dE S^{-p}$ is finite, i.e.\ the non-negative
  $\alpha/2$-stable law is flat in the neighborhood of $0$.

  Note: the result implies that the vector space $W = \mathrm{vect}(e_{n-d
    +1},\ldots,e_{n})$ reaches the worst possible order of magnitude.
  Unfortunately, the upper bound on the probability of the event $G^c_n$ is
  not good enough, and we also have to 
  define the proper event $F_n$ given in lemma \ref{le:smallSVHT}. This event
  $F_n$ satisfies $\dP(F_n^c) \leq c \exp(-n^\delta)$ for some $\delta >0$ and
  $c >0$.
\end{proof}

\subsection{The objective method and the Poisson Weighted Infinite Tree (PWIT)}
\label{subsec:PWIT}

\subsubsection*{Local convergence} We now describe our strategy to obtain the
convergence of $\dE \Gamma_{n^{-1/\alpha} X}$. It is an instance of the
objective method : we prove that our sequence of random matrices converges
locally to a limit random operator. To do this, we first notice that a
$n\times n$ complex matrix $M$ can be identified with a bounded operator in
\[
\ell^2(\dN) = \{(x_k)_{k \in \dN} \in \dC^{\dN}: \sum_{k} |x_k|^2 < \infty\}
\]
by setting
\[
M e_i = 
\begin{cases}
  \sum_{j = 1} ^ n M_{ji} e_j  & \text{ if $1 \leq i   \leq n$} \\
  0 & \text{otherwise}.
\end{cases}
\]
With an abuse of notation, without further notice, we will identify our
matrices with their associated bounded operator in $\ell^2(\dN)$. The precise
notion of convergence that we will use is the following.

\begin{definition}[Local convergence]\label{def:convloc}
  Let $\cD(\dN)$ be the set of compactly supported elements of $\ell^2(\dN)$.
  Suppose $(A_n)$ is a sequence of bounded operators on $\ell^2(\dN)$ and $A$
  is a linear operator on $\ell^2(\dN)$ with domain $D(A) = \cD(\dN)$.
  For any $u,v\in \dN$ we say that $(A_n,u)$ converges locally to $(A,v)$, and
  write
  \[
  (A_n,u) \to (A,v)
  \] 
  if there exists a sequence of bijections $\sigma_n:\dN\to \dN$ such that
  \begin{itemize}
  \item $\sigma_n (v) = u$ 
  \item for all $\phi\in\cD(\dN)$, $\lim_{n\to\infty}\sigma_n ^{-1} A_n
    \sigma_n \phi = A \phi$ in $\ell^2(\dN)$.
  \end{itemize}
\end{definition}

With a slight abuse of notation we have used the same symbol $\sigma_n$ for
the linear isometry $\si_n: \ell^2(\dN)\to \ell^2(\dN)$ induced in the obvious
way. Note that the local convergence is the standard strong convergence of the
operator $\sigma_n ^{-1} A_n \sigma_n$ to $A$. This re-indexing of $\dN$
preserves a distinguished element. It is a local convergence in the following
way, if $P(x,y)$ is a non-commutative polynomial in $\dC$, then the definition
implies
\[
\ANG{e_{u},P( A_n,A_n^*)e_u} \to \ANG{e_{v},P( A,A^*)e_v}.
\]

We shall apply this definition to random operators $A_n$ and $A$ on
$\ell^2(\dN)$: to be precise, in this case we say that $(A_n ,u) \to (A ,v)$
\emph{in distribution} if there exists a random bijection $\sigma_n$ as in
definition \ref{def:convloc} such that $\sigma_n ^{-1} A_n \sigma_n \phi $
converges in distribution to $A \phi $, for all $\phi \in \cD(\dN)$, where a
random vector $\psi_n \in \ell^2 (\dN)$ converges in distribution to $\psi$ if
\[
\lim_{n\to\infty} \dE f (\psi_n) = \dE f(\psi)
\]
for all bounded continuous functions $f:\ell^2 (\dN)\to\dR$. Finally, we may
without harm replace $\dN$ by an infinite countable set $V$. All definitions
carry over by considering any bijection from $\dN$ to $V$: namely $\ell^2(V)$,
for $v \in V$, the unit vector $e_v$, $\cD(V)$ and so on.

\subsubsection*{The Poisson Weighted Infinite Tree (PWIT)} 
We now define our limit operator on an infinite rooted tree with random
edge-weights, the Poisson weighted infinite tree (PWIT) introduced by Aldous
\cite{aldous92}, see also \cite{aldoussteele}.

The $\pwit$ is the random weighted rooted tree defined as follows. The vertex
set of the tree is identified with $\dN^f:= \cup_{k \geq 1} \dN^k$ by indexing
the root as $\dN^0 = \so$, the offsprings of the root as $\dN$ and, more
generally, the offsprings of some $v \in \dN^k$ as $(v1),(v2), \ldots \in
\dN^{k+1}$ (for short notation, we write $(v1)$ in place of $(v,1)$). In this
way the set of $v\in\dN^n$ identifies the $n^\text{th}$ generation. We then
define $T$ as the tree on $\dN^f$ with (non-oriented) edges between the
offsprings and their parents (see figure \ref{fig:PWIT}).

\begin{figure}[htbp]
  \begin{center} 
  \includegraphics[angle=0,width = 8.5cm]{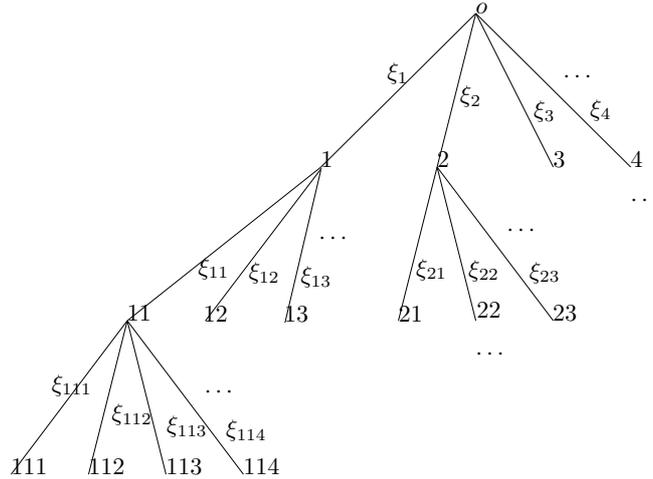}
  \end{center}
  \caption{Representation of the $\pwit$.}  
  \label{fig:PWIT}
\end{figure}

We denote by $\mathrm{Be}(1/2)$ the Bernoulli probability distribution $\frac
1 2 \delta_0 + \frac 1 2\delta_1$. Also, recall that by assumption $\lim_{ t
  \to \infty } \dP ( X_{11} / | X_{11} | \in \cdot \, | \, | X_{11} | \geq t )
= \theta(\cdot)$, a probability measure on the unit circle $\dS^1$. Now,
assign marks to the edges of the tree $T$ according to a collection $\{ \Xi_v
\}_{v \in \dN^f}$ of independent realizations of the Poisson point process
with intensity measure $(2\ell) \otimes \theta\otimes \mathrm{Be}(1/2)$ on
$\dR_+ \times \dS^1 \times \{0,1\}$, where $\ell$ denotes the Lebesgue measure
on $\dR_+$. Namely, starting from the root ${\so}$, let $\Xi_{\so} =
\{(y_1,\omega_1,\veps_1),(y_2,\omega_2,\veps_2),\dots\}$ be ordered in such a
way that $0 \leq y_1 \leq y_2 \leq \cdots$, and assign the mark
$(y_i,\omega_i,\veps_i)$ to the offspring of the root labeled $i$. Now,
recursively, at each vertex $v$ of generation $k$, assign the mark $(y_{v
  i},\omega_{vi},\veps_{vi})$ to the offspring labeled $v i$, where
$\Xi_{v}=\{(y_{v 1},\omega_{v1},\veps_{v1}),(y_{v
  2},\omega_{v2},\veps_{v2}),\dots \}$ satisfy $0 \leq y_{v 1} \leq y_{v 2}
\leq \cdots$. The Bernoulli mark $\veps_{vi}$ should be understood as an
orientation of the edge $\{v, vi\}$: if $\veps_{vi} = 1$, the edge is oriented
from $vi$ to $v$ and from $v$ to $vi$ otherwise.

We may define a random operator $A$ on $\cD (\dN^f)$ by the formula, for all
$v \in \dN^f \backslash \{ \so \}$
\begin{equation}\label{eq:defA}
  A e_{v  } =  \sum_{k \geq 1} (1- \veps_{v k} ) \omega_{vk} y_{v k}^{-1/\alpha} e_{vk}  +  \veps_{v} \omega_{a(v)}  y_{v}^{-1/\alpha} e_{a (v)} 
\end{equation}
where $a(v)$ denotes the ancestor of $v$, while
\[
A e_{\so} =  \sum_{k \geq 1} (1- \veps_{k} )  \omega_{vk}  y_{k}^{-1/\alpha} e_{k} .
\]
This defines a proper operator on $\cD(\dN^f)$. Indeed, since
$\{y_{v1},y_{v2},\dots\}$ is an homogeneous Poisson point process of intensity
$2$ on $\dR_+$: we have a.s.\ $\lim_{k \to \infty} y_{vk} / k = 2$. We thus
find for $v \in \dN^f \backslash \{ \so \}$
\[ 
\NRM{A e_{v}}_2 ^2 = \sum_{k \geq 1}
(1- \veps_{v k} ) y_{vk}^{-2/\alpha} + \veps_{a (v)} y_{v}^{-2/\alpha} <
\infty,
\]
and similarly with $\NRM{A e_{\so}}_2$.

\begin{theorem}[Local convergence to PWIT]\label{th:locconvPWIT} 
  In distribution $(n^{-1/\alpha} X, 1) \to (A,\so)$.
\end{theorem}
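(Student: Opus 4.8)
The plan is to argue by the \emph{objective method}, following \cite{bordenave-caputo-chafai-heavygirko}: construct, on one probability space, a random relabelling $\sigma_n$ of the coordinates with $\sigma_n(\so)=1$ such that a finite neighbourhood of vertex $1$ in the weighted digraph carrying the weights $n^{-1/\alpha}X_{ij}$ converges to the corresponding neighbourhood of the root of the $\pwit$, and then to upgrade this to the strong convergence $\sigma_n^{-1}(n^{-1/\alpha}X)\sigma_n\,\phi\to A\phi$ on the core $\cD(\dN^f)$ demanded by Definition \ref{def:convloc}, with $A$ the operator of \eqref{eq:defA}.

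The one-step input is the following. Fix a coordinate, say the first column $(X_{j1})_{1\le j\le n}$. By \eqref{eq:tail}, $\dP(n^{-1/\alpha}|X_{j1}|\ge s)=\dP(|X_{j1}|\ge n^{1/\alpha}s)\sim n^{-1}s^{-\alpha}$, so the expected number of $j\le n$ with $n^{-1/\alpha}|X_{j1}|\ge s$ tends to $s^{-\alpha}$, and, jointly with the phase assumption on $X_{11}$, the marked point process $\{(n^{-1/\alpha}|X_{j1}|,\,X_{j1}/|X_{j1}|)\}_{j\le n}$ converges weakly on $(0,\infty)\times\dS^1$ to a Poisson process of intensity $\alpha s^{-\alpha-1}\,ds\otimes\theta$ (the Poisson convergence underlying the LePage representation of stable laws); under $y=s^{-\alpha}$ the atoms become $\{\omega_k y_k^{-1/\alpha}\}$ with $\{y_k\}$ a rate-$1$ Poisson process on $\dR_+$. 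Performing this simultaneously for the row and the column through an explored vertex, and recording which of the two directions of each large entry dominates, produces in the limit precisely a rate-$2$ Poisson process on $\dR_+$ carrying an independent $\mathrm{Be}(1/2)$ orientation mark, i.e.\ the law of the marks $\Xi_v$ of the $\pwit$.

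One then iterates. Explore the graph of large entries of $n^{-1/\alpha}X$ in breadth-first order from $1$: at a vertex already labelled $v$ with $i=\sigma_n(v)$, list the still-unused coordinates $j$ by decreasing $\max(|X_{ji}|,|X_{ij}|)$, declare them the children $v1,v2,\dots$, and set $\veps_{vk}$ according to which of the two directions dominates. Since $X$ has i.i.d.\ entries, the rows and columns read at distinct explored vertices involve disjoint families of entries, hence are asymptotically independent, and the probability that a depth-$h$ exploration revisits a coordinate (a cycle among large entries) is $O(1/n)\to0$ for each fixed $h$. Induction on $h$ together with the one-step input gives convergence in distribution of the depth-$h$, width-$K$ truncated neighbourhood of $1$ to that of the $\pwit$ root; Skorokhod's representation lets us realise this a.s.\ and thereby fix $\sigma_n$, extended arbitrarily to a bijection of $\dN^f$. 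To pass from neighbourhoods to operators, fix $\phi=e_v\in\cD(\dN^f)$ and split $(n^{-1/\alpha}X)e_{\sigma_n(v)}$ into the part supported on coordinates $j$ with $|X_{j\,\sigma_n(v)}|\ge n^{1/\alpha}\delta$ and a remainder $r_n$. The first part lives on finitely many coordinates (a Poisson process has finitely many atoms above level $\delta$) and converges, after relabelling by $\sigma_n^{-1}$, to the truncation at level $\delta$ of $Ae_v$ by the neighbourhood convergence. For the remainder, $\dE\NRM{r_n}_2^2=n^{1-2/\alpha}\,\dE\big[|X_{11}|^2\IND_{\{|X_{11}|< n^{1/\alpha}\delta\}}\big]\sim\tfrac{\alpha}{2-\alpha}\delta^{2-\alpha}$ uniformly in $n$, since the tail $t^{-\alpha}$ forces $\dE[|X_{11}|^2\IND_{\{|X_{11}|<T\}}]\sim\tfrac{\alpha}{2-\alpha}T^{2-\alpha}$; on the $\pwit$ side the matching tail $\sum_{k:\,y_{vk}>\delta^{-\alpha}}y_{vk}^{-2/\alpha}$ has expectation at most $\int_{\delta^{-\alpha}}^\infty 2y^{-2/\alpha}\,dy=\tfrac{2\alpha}{2-\alpha}\delta^{2-\alpha}$ because $2/\alpha>1$ and $y_{vk}\sim 2k$. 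Letting $n\to\infty$ and then $\delta\to0$ gives $\sigma_n^{-1}(n^{-1/\alpha}X)\sigma_n\,\phi\to A\phi$ in $\ell^2$ in distribution, and linearity extends it to all of $\cD(\dN^f)$.

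The main obstacle is not any single estimate but the bookkeeping of the coupling: one must (i) control the contribution of cycles among large entries (rare but not $\ell^2$-negligible a priori) and of the unused ``reverse'' entry on each explored edge, which has no place in the tree structure and must be shown to contribute $o(1)$ in $\ell^2$; (ii) match the orientations $\veps$ and the phases on both directions of each explored edge with the marks of \eqref{eq:defA}; and (iii) synchronise the truncation by entry modulus on the matrix side with the truncation by the mark $y$ on the tree side so that no $\ell^2$ mass escapes in the limit — which is exactly what the uniform bound $\dE\NRM{r_n}_2^2=O(\delta^{2-\alpha})$, coming from the truncated second moment of a heavy-tailed variable, provides. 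A secondary point worth stressing is that $A$ is unbounded, so only strong convergence on $\cD(\dN^f)$ can be asserted — not operator-norm convergence of $n^{-1/\alpha}X$, whose norm diverges.
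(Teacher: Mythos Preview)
Your proposal is correct and follows essentially the same route as the paper's sketch: both argue via the objective method of \cite{bordenave-caputo-chafai-heavygirko}, combining (i) the LePage--Woodroofe--Zinn Poisson convergence of ordered pairs $(X_{1j},X_{j1})$ to obtain the rate-$2$ marked process $\Xi_v$, (ii) a breadth-first exploration yielding an injection of a finite subtree into $\{1,\dots,n\}$, with cycles among large entries ruled out since they have probability $O(1/n)$, and (iii) a diagonal passage to the full operator convergence on $\cD(\dN^f)$. The only cosmetic difference is that the paper phrases (iii) as ``projection onto $J_m$ then $m_n\to\infty$'' whereas you phrase it as ``$\delta$-truncation then $\delta\to0$ with the explicit $\ell^2$ remainder bound $\dE\NRM{r_n}_2^2\sim\tfrac{\alpha}{2-\alpha}\delta^{2-\alpha}$''; your version makes the mass-escape control more transparent, and your list of obstacles (reverse-edge contributions, orientation/phase matching, truncation synchronisation) is exactly the bookkeeping the full proof in \cite{bordenave-caputo-chafai-heavygirko} carries out.
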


\begin{proof}[Sketch of proof] 
  We start with some intuition behind theorem \ref{th:locconvPWIT}. The
  presence of Poisson point processes is an instance of the Poisson behavior
  of extreme ordered statistics. If $V_{11}\geq V_{12} \geq\cdots\geq V_{1n}$
  is the ordered statistics of vector $(|X_{11} |, \dots ,|X_{1n}|)$ then,
  it is well-known that the random variable in the space of non-increasing
  infinite sequences
  \[
  n^{-1/\alpha} \PAR{V_{11} ,  V_{12} , \ldots, V_{1n} , 0 , \ldots } 
  \]
  converges weakly, for the finite dimensional convergence, to
  \begin{equation}\label{eq:Poiextr}
    \PAR{x^{-1/\alpha}_1, x^{-1/\alpha}_2, \ldots }
  \end{equation}
  where $x_{1} \leq x_{2} \leq \ldots$ are the points of an homogeneous
  Poisson point process of intensity $1$ on $\dR_+$. As observed by LePage,
  Woodroofe and Zinn \cite{zinn81}, this fact follows easily from a beautiful
  representation for the order statistics of i.i.d.\ random variables. Namely,
  if $G (u) = \dP(|X_{11}|> u)$ is (one minus) the distribution function of
  $|X_{11}|$, then
  \[
  (V_{11},\dots, V_{1n}) %
  \overset{d}{=} %
  \PAR{G^{-1}\PAR{x_1/x_{n+1}},\dots,G^{-1}\PAR{x_n/x_{n+1}}},
  \]
  where $G^{-1}(u) = \inf\{y>0:\, G(y)\leq u\} $, $u\in(0,1)$. To obtain the
  convergence to \eqref{eq:Poiextr}, it remains to notice that $G^{-1} (u )
  \sim u^{-1/\alpha}$ as $u\to0$, and $x_{n} \sim n$ a.s.\ as $n\to\infty$.

  More generally, we may reorder non-increasingly the vector
  \[
  \PAR{ ( X_{11}  , X_{11}  ),  ( X_{12} ,  X_{21}  ) , \ldots , ( X_{1n} ,  X_{n1} ) },
  \]
  and find a permutation $\pi \in \cS_n$ such that
  \[ 
  \NRM{(X_{1 \pi(1) } , X_{\pi(1)1} )}_2 %
  \geq \NRM{( X_{1 \pi(2)},X_{\pi(2)1} )}_2 %
  \geq \cdots \geq \NRM{( X_{1 \pi(n)} , X_{ \pi(n)1})}_2.
 \]
 Then, the random variable (in the space of infinite sequences in $\dC^2$ of
 non-increasing norm)
 \[
 n^{-1/\alpha} \PAR{ %
   (X_{1\pi(1)},X_{\pi(1)1}), %
   (X_{1\pi(2)},X_{\pi(2)1}), \ldots,(X_{1\pi(n)},X_{\pi(n)1}),(0,0),\ldots}
 \] 
 converges weakly, for the finite dimensional convergence, to
 \begin{equation}\label{eq:Poiextr2}
   \PAR{(\veps_1 w_1 y^{-1/\alpha}_1,(1-\veps_1) w_1 y^{-1/\alpha}_1),(\veps_2 w_2 y^{-1/\alpha}_2,(1-\veps_2)w_2 y^{-1/\alpha}_2),\ldots}. 
 \end{equation}
 In particular, we may define a bijection $\sigma_n$ in $\dN^f$, such that
 $\sigma_n (\so) = 1$, $\sigma_n (k) = \pi (k)$ if $k \ne \pi^{-1} (1)$, and
 $\sigma_n$ arbitrary otherwise. Then, for this sequence $\sigma_n$, we may
 check that $n^{-1/\alpha} \sigma_n ^{-1} X\sigma_ne_{\so}$ converges weakly to
 $A e_{\so}$ in $\ell^2 ( \dN^f)$.

 This is not good enough since we aim at the convergence for all $\phi \in
 D(\dN^f)$, not only $e_{\so}$. In particular, the above argument does not
 explain the presence of a tree in the limit operator. Note however that from
 what precedes, only the entries such that $|X_{ij}| \geq \delta n^{1/\alpha}$
 will matter for the operator convergence (for some small $\delta >0$). By
 assumption,
 \[
 \dP ( |X_{ij}| \geq \delta n^{1/\alpha} ) = \frac c n,
 \] 
 where $c = c(n) \sim \delta^{-1/\alpha}$. In other words, if we define $G$ as
 the oriented graph on $\{1, \ldots, n\}$ such that the oriented edge $(i,j)$
 is present if $|X_{ij}| \geq \delta n^{1/\alpha}$ then $G$ is an oriented
 Erd\H{o}s-R\'enyi graph (each oriented edge is present independently of the
 other and with equal probability). An elementary computation shows that the
 expected number of oriented cycles in $G$ containing $1$ and of length $k$ is
 equivalent to $c^k/n$. This implies that there is no short cycles in $G$
 around a typical vertex. At a heuristic level, this locally tree-like
 structure of random graphs explains the presence of the infinite tree $T$ in
 the limit.

 We are not going to give the full proof of theorem \ref{th:locconvPWIT}. For
 details, we refer to
 \cite{bordenave-caputo-chafai-ii,bordenave-caputo-chafai-heavygirko}. The
 strategy is as follows. For integer $m$, define $J_m = \cup_{k=0}^m \{1,
 \cdots , m \}^k \subset \dN^f$ and consider the matrix $A_{|m}$ obtained as
 the projection of the random operator $A$ on $J_m$. We prove that for all
 integer $m$, there exists an injection $\pi_{m}$ from $J_m$ to $\{1, \ldots,
 n\}$ such that $\pi_{m} (\so) = 1$ and the projection of $n^{-1/\alpha} X$ on
 $\pi_m(J_m)$ converges weakly to $A_{|m}$. The conclusion of theorem
 \ref{th:locconvPWIT} follows by extracting a sequence $m_n\to\infty$ such
 that the latter holds.

 To construct such injection $\pi_m$, we explore the entries of $X$: we first
 consider the $m$ largest entries of the vector in $(\dC^2)^m$, $\PAR{ (
   X_{12} , X_{21} ), \ldots , ( X_{1n} , X_{n1} ) }$, whose indices are
 denoted by $i_1 , \ldots ,i_m$. We then look at the $m$-largest entries of
 $\PAR{ ( X_{i_1 j } , X_{j i_1} ) }_{j \ne (1 , i_1, \ldots, i_k)}$,
 whose indices are $i_{1,1}, \ldots , i_{1,m}$. We repeat this procedure
 iteratively until we have discovered $| J_m|$ indices, and we define the
 injection $\pi_m$ as $\pi_m ( v) = i_v$. The fact that the restriction of
 $n^{-1/\alpha} X$ to $(i_v)_{v \in J_m}$ converges weakly to $A_{|m}$ can be
 proved by developing the ideas presented above.
\end{proof}

\subsubsection*{Continuity of quaternionic resolvent for local convergence}

Note that theorem \ref{th:locconvPWIT} will have a potential interest for us,
only if we know how to link the local convergence of definition
\ref{def:convloc} to the convergence of the quaternionic resolvent introduced
in sub-section \ref{subsec:convres}.

Recall that an operator $B$ on a dense domain $D(B)$ is Hermitian if for all
$x, y \in D(B)$, $\ANG{x,By} = \ANG{Bx,y}$. This operator will be {\em
  essentially self-adjoint} if there is a unique self-adjoint operator $B_1$
on $D(B_1) \supset D(B)$ such that for all $x \in D(B)$, $B_1 x = B x$ (i.e.\
$B_1$ is an extension of $B$).

\begin{lemma}[From local convergence to resolvents]\label{le:strongres} 
  Assume that $(A_n)$ and $A$ satisfy the conditions of definition
  \ref{def:convloc} and $(A_n,u) \to (A,v)$ for some $u,v\in \dN$. If the
  bipartized operator $B$ of $A$ is essentially self-adjoint, then, for all $q
  \in \dH_+$,
  \[
  R_{A_n} ( q) _{uu}  \to R_{A} ( q) _{vv}.
  \]
\end{lemma}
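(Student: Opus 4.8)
The plan is to reduce the statement to a strong resolvent convergence of the \emph{bipartized} operators and then to compress by the two–dimensional coordinate projections. Write $q=q(z,\eta)$ with $\eta\in\dC_+$, and let $B_n$ (resp.\ $B$) denote the bipartized operator of $A_n$ (resp.\ $A$), so that $R_{A_n}(q)$ is the usual resolvent at $\eta$ of the bounded self-adjoint operator $B_n-I\otimes q(z,0)$, and $R_A(q)$ the resolvent at $\eta$ of the self-adjoint extension of $B-I\otimes q(z,0)$. Since bipartizing $A-zI$ only shifts $B$ by the bounded self-adjoint operator $I\otimes q(z,0)$, essential self-adjointness is preserved, and $(A_n,u)\to(A,v)$ immediately gives $(A_n-zI,u)\to(A-zI,v)$ with the same bijections $\sigma_n$; so I would first reduce to $z=0$, i.e.\ to proving $\Pi_u(B_n-\eta I)^{-1}\Pi_u^*\to\Pi_v(\bar B-\eta I)^{-1}\Pi_v^*$, where $\Pi_w$ is the orthogonal projection onto the two copies of $e_w$ and $\bar B$ is the unique self-adjoint extension of $B$.

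Next I would conjugate away the re-indexing. Let $U_n$ be the unitary on $\ell^2(\dN)$ induced by $\sigma_n$ (so $U_ne_v=e_u$) and $\widehat U_n=U_n\otimes I_2$ on the bipartized space. Bipartization commutes with this conjugation, so $\widehat U_n^{-1}B_n\widehat U_n$ is the bipartized operator of $\sigma_n^{-1}A_n\sigma_n$; and since $\widehat U_n$ maps the two-dimensional coordinate space at $v$ isometrically onto the one at $u$, acting as the identity on the $\dC^2$ factor, one gets $\Pi_u\widehat U_n=\Pi_v$ and hence $\Pi_u(B_n-\eta I)^{-1}\Pi_u^*=\Pi_v(\widehat U_n^{-1}B_n\widehat U_n-\eta I)^{-1}\Pi_v^*$. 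It then suffices to show that $\widehat U_n^{-1}B_n\widehat U_n$ converges to $\bar B$ in the strong resolvent sense, and to apply the resulting convergence of resolvents to the two vectors in the range of $\Pi_v^*$.

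For the strong resolvent convergence I would use the standard criterion: the operators $\widehat U_n^{-1}B_n\widehat U_n$ are bounded self-adjoint and $\bar B$ is essentially self-adjoint on the core $\cD(\dN)\otimes\dZ/2\dZ$, so it is enough that $\widehat U_n^{-1}B_n\widehat U_n\phi\to B\phi$ for each $\phi$ in that core. Writing $\phi=(\phi_0,0)+(\phi_1,1)$ with $\phi_0,\phi_1\in\cD(\dN)$, the definition of the bipartized operator turns this into the two convergences $\sigma_n^{-1}A_n^*\sigma_n\phi_0\to A^*\phi_0$ and $\sigma_n^{-1}A_n\sigma_n\phi_1\to A\phi_1$. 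The second is exactly the hypothesis $(A_n,u)\to(A,v)$; the first is the same statement for the adjoints, which is available here because the local convergence is realized by a re-indexing under which the whole array of matrix entries converges (the pairs $(X_{ij},X_{ji})$ are tracked together; cf.\ the construction in the proof of theorem~\ref{th:locconvPWIT}).

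The main obstacle is precisely this last transfer step: one must check that definition~\ref{def:convloc}, or rather its concrete realization for these matrices, really delivers strong convergence of the \emph{full} bipartized operators on the natural core — in particular convergence in the ``adjoint direction'' $\sigma_n^{-1}A_n^*\sigma_n$ — since strong operator convergence alone does not pass to adjoints. Granting that, and granting the essential self-adjointness of $B$ (which is both what makes the convergence criterion applicable and what makes $R_A(q)$ well defined in the first place), the rest is routine bookkeeping with the unitaries $\widehat U_n$ and the compressions $\Pi_w$.
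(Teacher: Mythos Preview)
Your approach is the same as the paper's: conjugate by the re-indexing unitary, verify strong operator convergence of the bipartized operators on the core $\cD(\dN\times\dZ/2\dZ)$, invoke the Reed--Simon criterion (Theorem VIII.25(a)) to upgrade to strong resolvent convergence, and read off the $2\times2$ block at $v$. The paper's proof is a three-line version of exactly this.

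You are in fact more careful than the paper on one point. The paper simply asserts that ``by construction'' $\sigma_n^{-1}B_n(z)\sigma_n\phi\to B(z)\phi$ on the core, whereas you correctly observe that Definition~\ref{def:convloc} as stated only gives $\sigma_n^{-1}A_n\sigma_n\phi\to A\phi$ and says nothing about $\sigma_n^{-1}A_n^*\sigma_n$; the bipartized convergence needs both. Your resolution---that in the actual application (Theorem~\ref{th:locconvPWIT}) the re-indexing tracks the pairs $(X_{ij},X_{ji})$ simultaneously, so the adjoint direction comes along automatically---is exactly what the paper is silently relying on. So there is no gap in your argument; if anything you have made explicit a step the paper leaves implicit.
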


\begin{proof}
  Fix $z \in \dC$ and let $B_n (z) = B_n - q(z,0) \otimes I$, where $B_n$ is
  bipartized operator of $A_n$. By construction, for all $\phi \in D(B) = \cD
  ( \dN \times \dZ/2 \dZ)$, $\sigma_n^{-1} B_n (z) \sigma_n \phi$ converges to
  $B (z) \phi$ (this is the strong operator convergence). The proof is then a
  direct consequence of \cite[Theorem VIII.25(a)]{reedsimon}: in this
  framework, the strong operator convergence implies the strong resolvent
  convergence. Namely, for all $\phi, \psi \in D(B)$ and $\eta \in \dC_+$,
  \[ 
  \ANG{\phi,(\sigma_n^{-1}B_n(z)\sigma_n-\eta I)^{-1}\psi}
  \rightarrow \ANG{\phi,(B(z)-\eta I)^{-1}\psi}.
  \]
  We conclude by applying this to $\phi, \psi \in \{ e_{v}, e_{\hat v}\}$.
\end{proof}


\begin{remark}[A non-self-adjoint Hermitian operator]
  A key assumption in the above lemma is the essential self-adjointness of the
  bipartized limit operator. A local limit of Hermitian matrices will
  necessary be Hermitian. It may not however be always the case that the limit
  is essentially self-adjoint. Since any bounded Hermitian operator is
  essentially self-adjoint, for an example, we should look for an unbounded
  operator. Let $(a_k)_{k \in \dN}$ be a sequence on $\dR_+$ and define on
  $D(\dN)$, for $k \geq 2$,
  \[
  B e_k = a_k e_{k+1} + a_{k-1} e_{k-1}. 
  \]
  while $B e_1 = a_1 e_{2}$. In matrix form, $B$ is a tridiagonal symmetric
  infinite matrix. The work of Stieltjes \cite{stieltjes94} implies that $B$
  will be essentially self-adjoint if and only if
  \[
  \varlimsup_{n\to\infty}  \sum_{k \geq n} a_k  ^{-1} = \infty.  
  \]
\end{remark}

\subsection{Skeleton of the main proofs}
\label{ss:skeleton}

All ingredients have finally been gathered. The skeleton of proof of theorems
 \ref{th:HTcircle}, \ref{th:HTMPz} and the characterization of $\mu_\alpha$ and
$\nu_{\alpha,z}$ is as follows:

\begin{enumerate}
\item By lemma \ref{le:concQUAT}, for all $q \in \dH_+$, a.s.\ , in norm,
  \[
  \Gamma_{n^{-1/\alpha} X}(q) -\dE \Gamma_{n^{-1/\alpha} X }(q)\to0
  \]
\item Since $X$ has exchangeable rows, for all $q \in \dH_+$,
  \[
  \dE \Gamma_{n^{-1/\alpha} X} (q) = \dE R_{n^{-1/\alpha} X} (q)_{11}
  \]
\item\label{it:skel:0} We prove in sub-section \ref{ss:selfadj} that the
  bipartized operator $B$ of the random operator $A$ of sub-section
  \ref{subsec:PWIT} is a.s.\ essentially self-adjoint
\item It follows by theorem \ref{th:locconvPWIT} and lemma \ref{le:strongres},
  \[
  \lim_{n \to \infty} \dE \Gamma_{n^{-1/\alpha} X}(q)  %
  = \dE R_{A} (q)_{\so \so} %
  =  %
  \begin{pmatrix}
   a(q) & b(q)   \\  
    \bar b(q)   & a (q) 
  \end{pmatrix}
  \]
\item By lemma \ref{le:propRes}, a.s.\
  $\nu_{n^{-1/\alpha}X-z}\weak\nu_{\alpha,z}$ as $n\to\infty$, where
  $\nu_{\alpha,z}$ is characterized by
  \[
  m_{\check \nu_{\alpha,z}} (\eta) = a (q)
  \]
\item We know from sub-section \ref{subsec:UIHT} that statement $(ii)$ of
  lemma \ref{le:girkoQUAT} holds for $n^{-1/\alpha} X$ in probability. Then, in probability,
  $\mu_{n^{-1/\alpha} X}\weak\mu_\alpha$ as $n\to\infty$, where $\mu_\alpha$
  is characterized by, in $\cD'(\dC)$,
  \[
  \mu_\alpha =  - \frac 1 \pi  \lim_{q ( z, it ) : t \downarrow 0} \pd  b (q)
  \]
\item\label{it:skel:1} We analyze in sub-section \ref{ss:selfadj} $R_{A}(q)_{\so \so}$ to obtain
  the properties of $\nu_{\alpha,z}$ and $\mu_\alpha$.
  
  \item Finally, when $X_{12}$ has a bounded density we improve the convergence to almost sure (in sub-section \ref{subsec:asconv}). 
\end{enumerate}

\subsection{Analysis of the limit operator}
\label{ss:selfadj}

This sub-section is devoted to items \ref{it:skel:0} and \ref{it:skel:1} which
appear above in the skeleton of proof of theorems \ref{th:HTcircle} and
\ref{th:HTMPz}.

\subsubsection*{Self-adjointness}

Here we check the self-adjointness of the bipartized operator $B$ of $A$.

\begin{proposition}[Self-adjointness of bipartized operator on
  PWIT] \label{prop:sa} Let $A$ be the random operator defined by
  \eqref{eq:defA}. With probability one, $B$ is essentially self-adjoint.
\end{proposition}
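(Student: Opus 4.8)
The plan is to reduce essential self-adjointness of $B$ to the triviality of its deficiency spaces and then to establish this through a truncation argument, the only probabilistic input being that the edge weights emanating from each vertex of the $\pwit$ are almost surely square-summable — which is exactly where the hypothesis $\alpha<2$ is used.

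Since $B$ is symmetric on $\cD(\dN^f)\otimes\dC^2$ by construction (it is Hermitian), it is essentially self-adjoint if and only if $\overline{\mathrm{Ran}(B\pm i)}=\ell^2(\dN^f)\otimes\dC^2$. Let $B_m$ denote the finite Hermitian matrix obtained by restricting $B$ to the ball $J_m=\bigcup_{0\le k\le m}\{1,\dots,m\}^k$ of the tree, extended by $0$ on the orthogonal complement; each $B_m$ is bounded self-adjoint, and for $\phi\in\cD(\dN^f)\otimes\dC^2$ one has $B_m\phi\to B\phi$ in $\ell^2$ (for $\phi=e_v$ the difference has square norm $\sum_{k:\,vk\notin J_m}y_{vk}^{-2/\alpha}\to 0$, using that $J_m$ is closed under taking parents). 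The strategy is then: show that for a.e.\ realization of the $\pwit$ the resolvents $(B_m-i)^{-1}$ (which satisfy $\NRM{(B_m-i)^{-1}}_{2\to2}\le1$) converge strongly to a bounded operator $S$; show that $\psi_v:=Se_v=\lim_m(B_m-i)^{-1}e_v$ lies in the domain of $\overline{B}$ with $(\overline{B}-i)\psi_v=e_v$; and conclude that $\mathrm{Ran}(\overline{B}-i)$ is dense. The same argument with $-i$ then yields essential self-adjointness.

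The heart of the matter is an a.s.\ decay estimate for the truncated resolvent kernels. Writing $(B_m-i)^{-1}_{wv}\in\cM_2(\dC)$ for the block indexed by $v,w\in J_m$, one obtains by a path expansion of the resolvent (or by the Schur-complement/Green-function recursion along the tree) a bound $\sum_{w:\,\DIST(v,w)=r}\NRM{(B_m-i)^{-1}_{wv}}_2^2\le\Theta_v(r)$, where $\Theta_v(r)$ is a sum, over paths of length $r$ issued from $v$, of products of the weights $y_{uk}^{-2/\alpha}$ along the path, times factors bounded by the resolvent norm. Conditionally on the $\pwit$, for every vertex $v$ these weights are square-summable: since $\{y_{v1}\le y_{v2}\le\cdots\}$ is a Poisson process of intensity $2$ one has $y_{vk}/k\to 2$, whence $\sum_k y_{vk}^{-2/\alpha}<\infty$ a.s.\ as soon as $2/\alpha>1$. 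Feeding this in, the ``mean operator'' of the associated multitype branching recursion is contracting, so $\sum_r\sup_m\Theta_v(r)<\infty$ a.s.\ for each fixed $v$, and a Borel--Cantelli argument over the countable vertex set makes this simultaneous in $v$. This equicontinuity of the tails yields the strong limit $S$, and the same bound controls the boundary terms in $(\overline{B}-i)\psi_v^{(n)}\to e_v$ for finitely supported truncations $\psi_v^{(n)}$ of $\psi_v$, giving $\psi_v\in D(\overline{B})$ and $(\overline{B}-i)\psi_v=e_v$.

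The main obstacle is precisely this quantitative estimate: one must control the competition between the branching of the $\pwit$ — Poissonian, hence a.s.\ locally finite at each vertex but with unbounded degrees along the tree — and the decay $y_{vk}^{-1/\alpha}\downarrow 0$ of the edge weights, showing that no mass escapes to infinity through the infinitely many branches. Unlike a locally finite tree with bounded weights (where $B$ would already be bounded) or a Jacobi matrix (where Stieltjes' criterion, already invoked for the tridiagonal example above, applies directly), here the $\ell^2$-summability per vertex — and only that — saves the day, through a somewhat delicate branching-process computation; keeping the constants uniform over vertices, via Borel--Cantelli on the i.i.d.\ quantities $\sum_k y_{vk}^{-2/\alpha}$, is the recurring technical point. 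A variant would replace the resolvent analysis by a Faris--Lavine commutator criterion applied with the multiplication operator whose weight increases across the edge $\{v,vk\}$ by $\min\bigl(1,c\,\rho(v)\,y_{vk}^{-1/\alpha}\bigr)$; its hypotheses reduce to $\sum_k y_{vk}^{-2/\alpha}<\infty$ together with $\sum_{k:\,y_{vk}\le T}y_{vk}^{-1/\alpha}=O\bigl(T^{(\alpha-1)/\alpha}\bigr)$, both consequences of $\alpha<2$, and the same uniformity issue reappears.
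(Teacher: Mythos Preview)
Your resolvent--convergence scheme is reasonable in spirit, but the key quantitative step does not go through as written. You assert that because $\sum_k y_{vk}^{-2/\alpha}<\infty$ a.s.\ at each vertex, ``the mean operator of the associated multitype branching recursion is contracting'' and hence $\sum_r\Theta_v(r)<\infty$. This is false: for the Poisson process of intensity $2$ one has $\dE\sum_k y_{vk}^{-2/\alpha}=2\int_0^\infty y^{-2/\alpha}\,dy=+\infty$ (the exponent $2/\alpha>1$ makes the integral diverge at $0$), so the first-moment operator of your branching cascade is not contracting --- it is not even finite. The crude path bound $\Theta_v(r)$ is therefore a sum over an infinite-mean multiplicative cascade, and you give no argument that it is summable in $r$; the standard Biggins-type criteria fail for the same reason (no finite Laplace transform). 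The Borel--Cantelli step you invoke only gives $\sum_k y_{vk}^{-2/\alpha}<\infty$ for each $v$ separately, with no uniformity over the tree, which is exactly what your tail-equicontinuity claim would need. The Faris--Lavine variant has the same defect: the hypotheses you list are per-vertex conditions, and the uniformity problem reappears unchanged.

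The paper circumvents this entirely. It proves a deterministic criterion (Lemma~\ref{le:criteresa}): $B$ is essentially self-adjoint as soon as one can find an exhausting sequence of connected finite sets $S_n\nearrow V$ such that the outgoing edge-weight from every $v\in S_n$ is bounded by a fixed $\kappa$. To build such sets on the PWIT, one introduces for each vertex the truncation index $\tau_v=\inf\{t:\sum_{k>t}y_{vk}^{-2/\alpha}\le\kappa\}$; the point (Lemma~\ref{le:taufinite}) is that $\dE\tau_v<\infty$ and can be made $<1$ by choosing $\kappa$ large. One then colors a vertex green if $\tau_v\ge1$ and red otherwise, and explores from the root following only the first $\tau_v$ children of green vertices: this is a subcritical Galton--Watson tree, hence a.s.\ finite, and its closure under one layer of red children gives $S_1$. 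Iterating from the boundary produces the whole sequence $(S_n)$. The essential difference with your approach is that the paper never tries to control the full weight sum $\sum_k y_{vk}^{-2/\alpha}$ (whose mean is infinite) but only its tail past an adaptive index $\tau_v$, and it is $\tau_v$ --- not the weight sum --- that drives the branching.
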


This proposition relies on the following criterion of self-adjointness (see
\cite{bordenave-caputo-chafai-heavygirko} for a proof).

\begin{lemma}[Criterion of self-adjointness of the bipartized
  operator]\label{le:criteresa}
  Let $\kappa > 0$ and $T = (V,E)$ be an infinite tree on $\dN^f$ and
  $(w_{uv}, w_{vu})_{\{v,u \} \in E}$ be a collection of pairs of weight in
  $\dC$ such that for all $u \in V$,
  \[
  \sum_{v : \{u , v\} \in E } |w_{uv}|^2 + |w_{vu}|^2 < \infty.  
  \] Define the operator on $\cD(V)$ as 
  \[
  A e_u = \sum_{ v : \{u , v\} \in E } w_{vu} e_{v}. 
  \]  
  Assume also that there exists a sequence of connected finite subsets
  $(S_n)_{n \geq 1}$ in $V$, such that $S_n\subset S_{n+1}$, $\cup_n S_n = V$,
  and for every $n$ and $v \in S_n$,
  \[
  \sum_{u \notin S_n : \{u , v\} \in E} \PAR{|w_{uv} |^2 + |w_{vu} |^2}  \leq \kappa.
  \]  
  Then the bipartized operator $B$ of $A$ is essentially self-adjoint.
\end{lemma}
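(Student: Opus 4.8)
The plan is to prove that $B$ is essentially self-adjoint by showing that both its deficiency indices vanish. Since $B$ is Hermitian on the dense domain $\cD(V)\otimes\dZ/2\dZ$ of finitely supported vectors, this amounts to showing that the only $\psi$ in $\ell^2(V\times\dZ/2\dZ)$ solving $B^*\psi=\pm i\psi$ is $\psi=0$. The first step is to note that $B$ acts locally and that each $Be_x$ lies in $\ell^2$, by the per-vertex summability $\sum_{v:\{u,v\}\in E}(|w_{uv}|^2+|w_{vu}|^2)<\infty$; testing the relation $B^*\psi=z\psi$ against the basis vectors and using that $B$ is Hermitian then shows that $\psi$ is a \emph{coordinatewise} formal eigenvector, $(B\psi)_x=z\psi_x$ for every $x$, with $B\psi$ computed entrywise. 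So everything reduces to ruling out nonzero formal $\ell^2$ eigenvectors for $z$ with $\Im(z)\neq0$.

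The core is then a commutator (energy) estimate with the finite cutoffs $\chi_n:=\IND_{S_n}$, acting as multiplication operators. Since $S_n$ is finite, $\chi_n\psi\in D(B)$, so $\ANG{B(\chi_n\psi),\chi_n\psi}$ is real; writing $B(\chi_n\psi)=\chi_n(B\psi)+[B,\chi_n]\psi=z\,\chi_n\psi+[B,\chi_n]\psi$ (coordinatewise, convergence ensured by the summability hypothesis) and taking imaginary parts gives
\[
|\Im(z)|\,\NRM{\chi_n\psi}_2^2=\bigl|\Im\ANG{[B,\chi_n]\psi,\chi_n\psi}\bigr|.
\]
Here $[B,\chi_n]\psi$ has coordinates $\sum_{y}B_{xy}\bigl(\IND_{S_n}(y)-\IND_{S_n}(x)\bigr)\psi_y$, hence is supported on vertices touching an edge crossing $\partial S_n$, and only its part inside $S_n$ survives in the pairing with $\chi_n\psi$. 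For $x\in\partial S_n$ that part is $-\sum_{y\notin S_n,\,y\sim x}B_{xy}\psi_y$; Cauchy--Schwarz together with the hypothesis $\sum_{y\notin S_n:\{x,y\}\in E}(|w_{xy}|^2+|w_{yx}|^2)\leq\kappa$ (after the routine bipartization book-keeping, in which $|B_{xy}|^2$ is controlled by $|w_{uv}|^2+|w_{vu}|^2$ for the underlying tree-edge) bounds each $|([B,\chi_n]\psi)_x|^2$ by $C\kappa\sum_{y\notin S_n,\,y\sim x}|\psi_y|^2$. Summing over $x\in\partial S_n$, the \emph{tree structure} of $T$ — a vertex outside the connected subtree $S_n$ has at most one neighbour in $S_n$ — guarantees that distinct crossing edges feed distinct summands, so the total is at most $C\kappa\sum_{y\notin S_n}|\psi_y|^2$. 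Plugging this in and applying Cauchy--Schwarz once more yields
\[
|\Im(z)|\,\NRM{\chi_n\psi}_2^2\leq\sqrt{C\kappa}\;\NRM{\psi}_2\Bigl(\sum_{y\notin S_n}|\psi_y|^2\Bigr)^{1/2}.
\]
Since $\bigcup_n S_n=V$, the left side tends to $|\Im(z)|\,\NRM{\psi}_2^2$ as $n\to\infty$, while the right side tends to $0$ because $\psi\in\ell^2$; hence $\psi=0$, which is the desired conclusion.

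Finally, to obtain Proposition~\ref{prop:sa} from the lemma, one checks that the random operator $A$ of \eqref{eq:defA} meets the hypotheses almost surely: the per-vertex $\ell^2$-summability follows from $y_{vk}/k\to2$, and one exhibits an exhausting sequence of finite connected subtrees of $\dN^f$ with uniformly bounded boundary weight energy (the ``$\leq\kappa$'' condition), which is where the Poisson structure enters. I expect the main obstacle of the lemma itself to lie not in the commutator computation but (i) in making the first-paragraph reduction fully rigorous in the presence of vertices of infinite degree, where only the per-vertex $\ell^2$-summability of the weights may be used, and (ii) in the combinatorial book-keeping of the second paragraph: the tree hypothesis is precisely what prevents the outgoing-energy contributions across $\partial S_n$ from accumulating, so that a single uniform bound $\kappa$ suffices — on a general graph this step would fail.
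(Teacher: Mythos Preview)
The paper does not actually prove this lemma; it simply states it and refers to \cite{bordenave-caputo-chafai-heavygirko} for a proof. So there is no in-paper argument to compare against.

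Your argument is correct and is the natural one for such a criterion. The reduction in the first paragraph is sound: since each $Be_x\in\ell^2$ by the per-vertex summability hypothesis, the adjoint $B^*$ acts on any $\psi\in D(B^*)$ by the same formal matrix, so $B^*\psi=z\psi$ is exactly the coordinatewise eigenvector equation. The commutator estimate is also fine: $\chi_n\psi$ is finitely supported, hence in $D(B)$; the identity $B(\chi_n\psi)=z\chi_n\psi+[B,\chi_n]\psi$ holds coordinatewise and all three vectors lie in $\ell^2$; taking imaginary parts and bounding the boundary term by Cauchy--Schwarz together with the hypothesis $\sum_{u\notin S_n:\{u,v\}\in E}(|w_{uv}|^2+|w_{vu}|^2)\leq\kappa$ gives exactly your inequality. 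Your use of the tree hypothesis is the right one: because $S_n$ is a connected subset of a tree, any $v\notin S_n$ has at most one neighbour in $S_n$, so the crossing contributions do not overcount and the total is controlled by $\|(1-\chi_n)\psi\|_2^2$, which tends to $0$. This forces $\psi=0$ and both deficiency indices vanish.

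The closing paragraph about Proposition~\ref{prop:sa} goes beyond what the lemma asks, but your description of how the lemma is applied there matches the paper's own proof of that proposition.
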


We will use a simple lemma on Poisson processes (for a proof \cite[Lemma
A.4]{bordenave-caputo-chafai-ii}).

\begin{lemma}[Poisson process tail]\label{le:taufinite}
  Let $\kappa >0$, $0 < \alpha < 2$ and let $0 < x_1 < x_2 < \cdots$ be a
  Poisson process of intensity $1$ on $\dR_+$. If we define 
  \[
  \tau := %
  \inf\BRA{t\in\dN : \sum_{k = t+1}^\infty x_k^{-2/\alpha} \leq\kappa}
  \]
  then $\dE\tau $ is finite and goes to $0$ as $\kappa$ goes to infinity.
\end{lemma}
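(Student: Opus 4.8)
The plan is to use the fact that the spacings of a Poisson process of intensity $1$ on $\dR_+$ are i.i.d.\ standard exponential, so that $x_k=E_1+\cdots+E_k$ with $E_i$ i.i.d.\ $\mathrm{Exp}(1)$, and by the strong law of large numbers $x_k/k\to1$ a.s. Since $0<\al<2$ forces $2/\al>1$, this gives that $\Sigma:=\sum_{k\geq1}x_k^{-2/\al}$ converges a.s.: each fixed term is a.s.\ finite, and on the a.s.\ event $\{x_k\ge k/2 \text{ for all large } k\}$ the tail is bounded by a multiple of $\sum_k k^{-2/\al}<\infty$. Consequently the tail sums $\sum_{k>t}x_k^{-2/\al}$ decrease to $0$ as $t\to\infty$, so $\tau=\tau(\kappa)$ is a.s.\ finite for every $\kappa>0$; moreover $\tau(\kappa)$ is non-increasing in $\kappa$ (the set $\{t:\sum_{k>t}x_k^{-2/\al}\le\kappa\}$ enlarges with $\kappa$), and $\tau(\kappa)=0$ whenever $\kappa\geq\Sigma$, so $\tau(\kappa)\downarrow0$ a.s.\ as $\kappa\to\infty$.

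The core step is the bound $\dE\tau(\kappa)<\infty$, which I would obtain by summing tail probabilities: $\dE\tau=\sum_{t\geq1}\dP(\tau\geq t)$ and, since $\tau\geq t$ forces $\sum_{k\geq t}x_k^{-2/\al}>\kappa$, we have $\dP(\tau\geq t)\le\dP\PAR{\sum_{k\geq t}x_k^{-2/\al}>\kappa}$. The naive estimate via Markov applied to $\dE\sum_{k\geq t}x_k^{-2/\al}$ does not suffice: the terms with $k\leq2/\al$ have infinite expectation, and even the convergent tail only decays like $t^{1-2/\al}$, which is not summable when $\al\geq1$. Instead I would work on the favourable event $B_t:=\bigcap_{k\geq t}\{x_k\geq k/2\}$. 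On $B_t$ one has the deterministic inequality $\sum_{k\geq t}x_k^{-2/\al}\le 2^{2/\al}\sum_{k\geq t}k^{-2/\al}\le c_\al\,t^{1-2/\al}$ with $c_\al$ depending only on $\al$, so $\{\tau\geq t\}\cap B_t=\emptyset$ as soon as $c_\al t^{1-2/\al}\le\kappa$, i.e.\ for $t\geq N(\kappa):=\lceil (c_\al/\kappa)^{\al/(2-\al)}\rceil$. On the other side, a Cram\'er--Chernoff bound for $x_k=E_1+\cdots+E_k$ gives, after optimising $e^{\la/2}(1+\la)^{-1}$ at $\la=1$, $\dP(x_k<k/2)\le(e^{1/2}/2)^k=:\rho^k$ with $\rho<1$, hence $\dP(B_t^c)\le\sum_{k\geq t}\rho^k=\rho^t/(1-\rho)$. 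Combining, $\dP(\tau\geq t)\le \rho^t/(1-\rho)+\IND_{\{t<N(\kappa)\}}$ for every $t\geq1$, and summing over $t$ yields $\dE\tau(\kappa)\le \rho/(1-\rho)^2+N(\kappa)<\infty$.

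It remains to prove $\dE\tau(\kappa)\to0$ as $\kappa\to\infty$; the bound just obtained does not show this, since $\rho/(1-\rho)^2$ is a fixed positive constant, so I would instead invoke monotone convergence. Fix $\kappa_0>0$, so $\dE\tau(\kappa_0)<\infty$ by the previous step. For $\kappa\geq\kappa_0$ we have $0\le\tau(\kappa)\le\tau(\kappa_0)$, with $\tau(\kappa)$ non-increasing in $\kappa$ and $\tau(\kappa)\to0$ pointwise a.s.; therefore $\tau(\kappa_0)-\tau(\kappa)\uparrow\tau(\kappa_0)$ and, by monotone convergence (along any sequence $\kappa_n\to\infty$), $\dE[\tau(\kappa_0)-\tau(\kappa)]\to\dE\tau(\kappa_0)$, i.e.\ $\dE\tau(\kappa)\to0$. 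The main obstacle I anticipate is the tail estimate of the second paragraph: the essential point is that one cannot conclude with a first-moment/Markov bound and must instead use an exponential large-deviation estimate on $\{x_k\geq k/2\}$ to make $\dP(\tau\geq t)$ summable; once that is in place, the rest is routine bookkeeping with $\al$-dependent constants.
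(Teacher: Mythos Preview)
Your argument is correct. The paper itself does not prove this lemma but only cites \cite[Lemma A.4]{bordenave-caputo-chafai-ii}, so there is no in-paper proof to compare against; your proposal supplies a clean, self-contained proof.

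The two ingredients you use are exactly the right ones: (i) the a.s.\ representation $x_k=E_1+\cdots+E_k$ together with a Cram\'er--Chernoff bound $\dP(x_k<k/2)\le\rho^k$ with $\rho=e^{1/2}/2<1$, which makes $\dP(B_t^c)$ geometrically small, and (ii) the deterministic tail estimate $\sum_{k\ge t}k^{-2/\al}\le c_\al\,t^{1-2/\al}$, which on $B_t$ forces $\{\tau\ge t\}=\emptyset$ once $t\ge N(\kappa)$. Your remark that a naive Markov bound fails (both because $\dE x_k^{-2/\al}=\infty$ for small $k$ and because the resulting tail would not be summable for $\al\ge1$) is well taken and is precisely why the large-deviation input is needed. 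The final step, passing from finiteness of $\dE\tau(\kappa_0)$ to $\dE\tau(\kappa)\to0$ via monotone (equivalently dominated) convergence using $\tau(\kappa)\downarrow0$ a.s., is the correct way to finish; the crude bound $\rho/(1-\rho)^2+N(\kappa)$ indeed does not vanish, so this extra step is necessary.

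One cosmetic point: the paper's convention is $\dN=\{1,2,\ldots\}$, while in the application (proof of Proposition~\ref{prop:sa}) the infimum is actually taken over $t\ge0$. Your argument is insensitive to this: the implication ``$\tau\ge t\Rightarrow\sum_{k\ge t}x_k^{-2/\al}>\kappa$'' holds for $t\ge1$ in either convention, and the tail-sum formula $\dE\tau=\sum_{t\ge1}\dP(\tau\ge t)$ is valid for any $\dN\cup\{0\}$-valued random variable.
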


\begin{proof}[Proof of proposition \ref{prop:sa}]
  For $\kappa >0$ and $v \in \dN^f$, we define 
  \[
  \tau_v= \inf \{ t \geq 0 : \sum_{k = t+1}^\infty |y_{v k}|^{-2/\alpha} \leq
  \kappa\}.
  \]
  The variables $(\tau_{v})$ are i.i.d.\ and by lemma \ref{le:taufinite},
  there exists $\kappa >0$ such that $\dE \tau_{v} < 1$. We fix such $\kappa$.
  Now, we put a green color to all vertices $v$ such that $\tau_v\geq 1$ and a
  red color otherwise. We consider an exploration procedure starting from the
  root which stops at red vertices and goes on at green vertices. More
  formally, define the sub-forest $T^g$ of $T$ where we put an edge between
  $v$ and $vk$ if $v$ is a green vertex and $1 \leq k \leq \tau_{v}$. Then, if
  the root $\so$ is red, we set $S_1 = C^g (T) = \{\so\}$. Otherwise, the root
  is green, and we consider $T^g_{\so}=(V^g_{\so},E^g_{\so})$ the subtree of
  $T^g$ that contains the root. It is a Galton-Watson tree with offspring
  distribution $\tau_{\so}$. Thanks to our choice of $\kappa$, $T^g_{\so}$ is
  almost surely finite. Consider $L^g_{\so}$ the leaves of this tree (i.e.\
  the set of vertices $v$ in $V^g_{\so}$ such that for all $1 \leq k \leq
  \tau_v$, $vk$ is red). The following set satisfies the condition of Lemma
  \ref{le:criteresa}:
  \[
  S_1 = V^g_{\so} \bigcup_{v \in L^g_{\so}} \{1 \leq k \leq \tau_v : v k \}.
  \]
  We define the outer boundary of $\{\so\}$ as 
  \[
  \pd_\tau \{\so \}= \{1, \ldots, \tau_{\so}\}
  \]
  and for $v= (i_1, \ldots, i_k) \in \dN^f \backslash \{\so\}$ we set
  \[
  \pd_\tau \{v \} = \{(i_1,\ldots, i_{k-1}, i_{k} +1) \} \cup \{(i_1,\ldots,
  i_{k},1), \ldots, (i_1,\ldots, i_{k},\tau_v)\} .
  \]
  For a connected set $S$, its outer boundary is
  \[
  \pd_\tau S %
  = \PAR{ \bigcup_{ v \in S} \pd_\tau \{v\} } \backslash S.
  \]
  Now, for each vertex $u_1, \ldots, u_k \in \pd_\tau S_1 $, we repeat
  the above procedure to the rooted subtrees $T_{u_1}, \ldots, T_{u_k}$. We
  set 
  \[
  S_2 = S_1 \bigcup \cup_{1 \leq i \leq k} C^b ( T_{u_i}).
  \]
  Iteratively, we may thus almost surely define an increasing connected
  sequence $(S_n)$ of vertices with the properties required for lemma
  \ref{le:criteresa}.
\end{proof} 

\subsubsection*{Computation of resolvent}

As explained in section \ref{ss:skeleton}, the properties of the measures
$\mu_\alpha$ and $\nu_{\alpha,z}$ can be deduced from the analysis of the
limit resolvent operator. Resolvent are notoriously easy to compute on trees.
More precisely, let $T = (V,E)$ be a tree and $A,B$ be as in lemma
\ref{le:criteresa} and let $\so \in V$ be a distinguished vertex of $V$ (in
graph language, we root the tree $T$ at $\so$). For each $v \in V \backslash
\{\so\}$, we define $V_v \subset V$ as the set of vertices whose unique path
to the root $\so$ contains $v$. We define $T_v = (V_v, E_v)$ as the subtree of
$T$ spanned by $V_v$. We may consider $A_v$, the projection of $A$ on $V_v$,
and $B_v$ the bipartized operator of $A_v$. Finally, we note that if $B$ is
self-adjoint then so is $B_v (z)$ for every $z \in \dC$. The next lemma is an
operator analog of the Schur inversion by block formula \eqref{eq:schur}.

\begin{lemma}[Resolvent on a tree] 
  \label{le:schurB} 
  Let $A,B$ be as in lemma \ref{le:criteresa}. If $B$ is self-adjoint then for
  any $q= q(z,\eta) \in \dH_+$,
  \[
    R_A(q) _{\so\so}   
    =  - \PAR{ q  + \sum_{v \sim \so}   
      \begin{pmatrix}    
        0      &  w_{ \so v}    \\ 
        \Ol w_{v \so  }  & 0   
      \end{pmatrix}  \Wt R_A (q)_{vv}   
      \begin{pmatrix}    0      &  w_{v \so}    \\ 
        \Ol w_{\so v }  & 0   
      \end{pmatrix} }^{-1} ,
  \]
  where $\Wt R_A (q)_{vv} := \Pi_v R_{B_v} (q) \Pi^*_v$, and
  $R_{B_v}(q)=(B_v(z)-\eta)^{-1}$ is the resolvent 
  of $B_v$.
\end{lemma}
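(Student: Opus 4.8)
The plan is to reduce the statement to the finite-dimensional Schur block inversion formula \eqref{eq:schur} applied to the bipartized matrices, and then pass to the limit using essential self-adjointness. First I would fix $q = q(z,\eta) \in \dH_+$ and work with the Hermitian operator $B(z) = B - I_H \otimes q(z,0)$, so that $R_A(q) = (B(z) - \eta I)^{-1}$ is an ordinary resolvent of a self-adjoint operator, with operator norm at most $\Im(\eta)^{-1}$. I would decompose $V = \{\so\} \cup \bigcup_{v \sim \so} V_v$ into the root and the subtrees hanging off it; correspondingly the bipartized space splits as $\dC^2 \oplus \bigoplus_{v\sim\so} H_2(V_v)$. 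Because $T$ is a tree, the only coupling in $B$ between the $\{\so\}$-block and the $V_v$-block is through the single edge $\{\so, v\}$, and in the bipartized picture that coupling is exactly the rank-two operator $\begin{pmatrix} 0 & w_{\so v} \\ \Ol w_{v\so} & 0\end{pmatrix}$ (as an element of $\cM_2(\dC)$, connecting $e_\so$ to $e_v$) together with its adjoint; there is no direct coupling among the different subtrees $V_v$, $V_{v'}$ for $v \neq v' \sim \so$.

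Next I would verify the formula on the finite truncations and then take a limit. For a finite connected set $S \ni \so$ of the type produced in the proof of Proposition \ref{prop:sa} (or simply the ball of radius $n$ in $T$), let $B^{(S)}(z)$ be the compression of $B(z)$ to $H_2(S)$; this is a genuine $2|S| \times 2|S|$ Hermitian matrix. Applying Schur's block inversion \eqref{eq:schur} with $I = \{\so\}$ (the two coordinates $(e_\so,0),(e_\so,1)$) and $I^c = S \setminus \{\so\}$, and using that within $S$ the off-diagonal block from $\{\so\}$ to $S\setminus\{\so\}$ is supported only on the neighbors $v \sim \so$ with $v \in S$, I get
\[
\bigl((B^{(S)}(z) - \eta)^{-1}\bigr)_{\so\so}
= \Biggl( -q - \sum_{\substack{v \sim \so \\ v \in S}}
\begin{pmatrix} 0 & w_{\so v} \\ \Ol w_{v\so} & 0 \end{pmatrix}
\bigl((\Wt B^{(S_v)}(z) - \eta)^{-1}\bigr)_{vv}
\begin{pmatrix} 0 & w_{v\so} \\ \Ol w_{\so v} & 0 \end{pmatrix} \Biggr)^{-1},
\]
where $S_v = S \cap V_v$ and $\Wt B^{(S_v)}$ is the compression of $B_v(z)$ to $H_2(S_v)$. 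This is just the block computation: inverting $B^{(S)}(z) - \eta$ restricted to $S \setminus \{\so\}$ decouples over the subtrees because of the tree structure, leaving $\bigoplus_v (\Wt B^{(S_v)}(z)-\eta)^{-1}$, whose $vv$-entry is what appears above.

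Finally I would let $S \uparrow V$. Since $B(z)$ is essentially self-adjoint (Proposition \ref{prop:sa}, or the hypothesis of the lemma), the compressions $B^{(S)}(z)$ converge to $B(z)$ in the strong resolvent sense — $\cD(V)$ is a core and $B^{(S)}(z)\phi \to B(z)\phi$ for $\phi \in \cD(V)$ — hence $\bigl((B^{(S)}(z)-\eta)^{-1}\bigr)_{\so\so} \to \bigl((B(z)-\eta)^{-1}\bigr)_{\so\so} = R_A(q)_{\so\so}$, and similarly $\bigl((\Wt B^{(S_v)}(z)-\eta)^{-1}\bigr)_{vv} \to \Wt R_A(q)_{vv}$ for each $v \sim \so$ (here I use that each $B_v(z)$ is itself self-adjoint, as noted before the lemma). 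The $2\times 2$ matrices in the sum are fixed, and the sum over $v \sim \so$ converges absolutely because $\sum_{v\sim\so}(|w_{\so v}|^2 + |w_{v\so}|^2) < \infty$ and $\|\Wt R_A(q)_{vv}\| \le \Im(\eta)^{-1}$ uniformly; passing to the limit inside the (continuous) matrix inversion — legitimate since $\Im(\eta) > 0$ keeps everything uniformly invertible with norm $\le \Im(\eta)^{-1}$ — yields the claimed identity.

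The main obstacle I anticipate is bookkeeping rather than anything deep: one must be careful that the ``block'' $\begin{pmatrix} 0 & w_{\so v} \\ \Ol w_{v\so} & 0\end{pmatrix}$ correctly encodes the bipartized coupling (recall $B_{xy}$ has the off-diagonal form with $\ANG{x,Ay}$ and $\ANG{x,A^*y}$, and here $\ANG{e_\so, A e_v} = w_{\so v}$ while $\ANG{e_v, A e_\so} = w_{v\so}$), and that the Schur complement genuinely decouples over subtrees — which is exactly where the tree hypothesis (no cycles, so no paths between distinct $V_v$'s avoiding $\so$) is used. The analytic passage to the limit is routine given essential self-adjointness and the $\Im(\eta)^{-1}$ norm bounds, via \cite[Theorem VIII.25(a)]{reedsimon} as in the proof of lemma \ref{le:strongres}.
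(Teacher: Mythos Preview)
Your proposal is correct and is exactly the natural way to make rigorous what the paper only asserts. The paper does not actually supply a proof of this lemma: it simply introduces it as ``an operator analog of the Schur inversion by block formula \eqref{eq:schur}'' and moves on. Your plan---Schur complement on finite truncations, decoupling over subtrees via the tree hypothesis, then strong resolvent convergence using that $\cD(V)$ is a core for the essentially self-adjoint $B(z)$ and \cite[Theorem VIII.25(a)]{reedsimon}---is the standard route to justify that operator analog, and the uniform bound $\|\Wt R_A(q)_{vv}\|\le \Im(\eta)^{-1}$ together with $\sum_{v\sim\so}(|w_{\so v}|^2+|w_{v\so}|^2)<\infty$ does control the infinite sum. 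The only point worth making explicit in the write-up is that the compression $B^{(S)}(z)$ should be extended by zero to all of $H_2(V)$ before invoking Reed--Simon (so that all operators live on the same Hilbert space), and that the self-adjointness of each $B_v$---stated just before the lemma---is needed for the subtree resolvent limits; otherwise the argument is complete.
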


We come back to our random operator $A$ defined on the PWIT and its quaternionic
resolvent $R_A(q)$. We analyze the random variable
\[
R_A(q)_{\so \so} := 
\begin{pmatrix} 
 a (z,\eta) & b(z,\eta)  \\  
 b' (z,\eta) &  c (z,\eta) 
\end{pmatrix}.
\]
The random variables $a(z,\eta)$ solves a nice \emph{recursive distribution
  equation} (RDE). This type of recursion equation is typical of combinatorial
observable defined on random rooted trees. More precisely, we define the
measure on $\dR_+$,
\[
\Lambda_\alpha = \frac{\alpha}{2} x ^{-\frac \alpha 2 -1}dx.  
\]
\begin{lemma}[Recursive distribution equation]
  For all $q = q (z, \eta) \in \dH_+$, if $L_q$ is the distribution on $\dC_+$
  of $a(z,\eta)$ then $L_q$ solves the equation in distribution:
  \begin{equation} \label{eq:RDEa} %
    a \overset{d}{=} \frac{ \eta + \sum_{k \geq 1} \xi_k a_k }{ | z |^2 -
      \PAR{ \eta + \sum_{k\geq 1} \xi_k a_k } \PAR{ \eta + \sum_{k
          \geq 1} \xi'_k a'_k } },
  \end{equation} 
  where $a$, $(a_k)_{k \in \dN}$ and $(a'_k)_{k \in \dN}$ are i.i.d.\ with law
  $L_q$ independent of $\{\xi_k\}_{k \in \dN}$, $\{\xi'_k\}_{k \in \dN}$ two
  independent Poisson point processes on $\dR_+$ with intensity
  $\Lambda_\alpha$.

  Moreover, with the same notation,
  \begin{equation} \label{eq:RDEb} %
    b \overset{d}{=} \frac{  - z}{ | z |^2 -
      \PAR{ \eta + \sum_{k \geq 1} \xi_k a_k } \PAR{ \eta + \sum_{k
          \geq 1} \xi'_k a'_k } }. 
  \end{equation} 
\end{lemma}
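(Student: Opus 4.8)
The plan is to derive the recursive distribution equations \eqref{eq:RDEa}--\eqref{eq:RDEb} directly from the tree-resolvent formula in lemma \ref{le:schurB}, combined with the local limit description of the PWIT operator $A$. First I would apply lemma \ref{le:schurB} at the root $\so$. Writing $R_A(q)_{\so\so}$ as the $2\times2$ matrix with entries $a,b,b',c$, the Schur-type formula expresses it as $-\bigl(q + \sum_{v\sim\so} W_{\so v}\WT R_A(q)_{vv} W_{v\so}\bigr)^{-1}$, where $W_{\so v}=\begin{pmatrix}0 & w_{\so v}\\ \Ol w_{v\so} & 0\end{pmatrix}$. In the PWIT, the weights on the edge from $\so$ to its $k$-th offspring are $w_{\so k}=(1-\veps_k)\om_k y_k^{-1/\alpha}$ and $w_{k\so}=\veps_k\om_k y_k^{-1/\alpha}$ (reading off \eqref{eq:defA}), and the sub-trees hanging below each offspring are themselves independent copies of the PWIT, so $\WT R_A(q)_{vv}$ is an independent copy of $R_A(q)_{\so\so}$, with entries I will call $a_k,b_k,b_k',c_k$. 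A short matrix computation gives $W_{\so v}\WT R_A(q)_{vv}W_{v\so} = |\om_k|^2 y_k^{-2/\alpha}\begin{pmatrix}(1-\veps_k)^2 c_k & (1-\veps_k)\veps_k b_k'\\ \veps_k(1-\veps_k) b_k & \veps_k^2 a_k\end{pmatrix}$; since $\veps_k\in\{0,1\}$, the off-diagonal terms vanish and exactly one of the two diagonal entries survives depending on $\veps_k$, while $|\om_k|^2=1$.

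The next step is the Poissonian bookkeeping. The points $(y_k)$ form a homogeneous Poisson process of intensity $2$ on $\dR_+$, each independently tagged with a $\mathrm{Be}(1/2)$ mark $\veps_k$; by the marking theorem the points with $\veps_k=0$ carrying value $y_k^{-2/\alpha}$, and those with $\veps_k=1$, form two independent Poisson processes, each of intensity $1$ on $\dR_+$ in the $y$-variable, hence of intensity $\Lambda_\alpha=\tfrac{\alpha}{2}x^{-\alpha/2-1}dx$ after the change of variables $x=y^{-2/\alpha}$ (more precisely this is the image of intensity-$1$ Lebesgue measure on $\dR_+$ under $y\mapsto y^{-2/\alpha}$; I will double-check the constant). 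Denoting these processes $\{\xi_k'\}$ (the $\veps=0$ side, feeding the $c_k$'s) and $\{\xi_k\}$ (the $\veps=1$ side, feeding the $a_k$'s), and using the exchangeability symmetry $a\overset d= c$ and $b'\overset d= \bar b$ already recorded in lemma \ref{le:propRes} (equations \eqref{eq:a=d}, which give $a_k=c_k$ in distribution along the recursion), we obtain
\[
q + \sum_{v\sim\so} W_{\so v}\WT R_A(q)_{vv} W_{v\so}
\overset{d}{=}
\begin{pmatrix}
\eta + \sum_{k\ge1}\xi_k a_k & z\\
\bar z & \eta + \sum_{k\ge1}\xi_k' a_k'
\end{pmatrix},
\]
with $(a_k),(a_k')$ i.i.d.\ copies of $a$ independent of the two Poisson processes. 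Inverting this $2\times2$ matrix, its determinant is $\Delta := (\eta+\sum_k\xi_k a_k)(\eta+\sum_k\xi_k' a_k') - |z|^2$, and the $(1,1)$ entry of the negative inverse is $-(\eta+\sum_k\xi_k' a_k')/\Delta$, the $(1,2)$ entry is $z/\Delta$. Using once more $a\overset d= c$ to symmetrize which of the two sums multiplies $a$ in the numerator, this is exactly \eqref{eq:RDEa} for $a$ and \eqref{eq:RDEb} for $b$ (up to the harmless sign $-z$ versus $z$ coming from the orientation convention).

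The main obstacle, and the point that needs genuine care rather than bookkeeping, is \emph{well-posedness of the infinite sums and of the recursion}: one must argue that $\sum_{k\ge1}\xi_k a_k$ converges almost surely in $\dC_+$, that the denominator $\Delta$ is a.s.\ bounded away from $0$ (so the inversion is legitimate), and that $R_A(q)_{\so\so}$ is genuinely an almost-sure function of the tree — i.e.\ that the RDE actually \emph{characterizes} $L_q$, not merely that $L_q$ is \emph{a} solution. Convergence of the sum is controlled by $\Im\eta>0$: each $\WT R_A(q)_{vv}$ has operator norm $\le (\Im\eta)^{-1}$ (resolvent bound, valid by proposition \ref{prop:sa} which gives essential self-adjointness, hence a bona fide resolvent), so $|a_k|\le(\Im\eta)^{-1}$ and $\Im a_k>0$; then $\sum_k\xi_k a_k$ has imaginary part $\sum_k\xi_k\Im a_k\ge0$ and $\sum_k\xi_k<\infty$ a.s.\ because $\xi_k\sim k^{-2/\alpha}$ with $2/\alpha>1$, giving absolute convergence. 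The lower bound on $|\Delta|$ follows because $\Im(\eta+\sum\xi_k a_k)\ge\Im\eta$, so the product of the two factors has imaginary part bounded below, keeping $\Delta$ away from $0$. For the characterization, I would invoke that $R_A(q)_{vv}$ is, by essential self-adjointness and the tree structure, measurable with respect to the decorations of the subtree rooted at $v$, so the displayed identity is an honest almost-sure recursion whose in-distribution projection is the stated RDE; I would note (as the paper seems to intend) that the stronger uniqueness statement for $L_q$ is not claimed in this lemma and can be deferred. I would end by remarking that the same inversion simultaneously yields \eqref{eq:RDEb}, since $b$ is the $(1,2)$-entry of the same negative inverse matrix.
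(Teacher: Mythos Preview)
Your proposal is correct and follows essentially the same route as the paper: apply the tree-resolvent Schur formula (lemma \ref{le:schurB}) at the root, compute the $2\times 2$ products using $\veps_k(1-\veps_k)=0$ to kill the off-diagonal, then invoke the PWIT recursion (independent subtrees give i.i.d.\ copies of $R_A(q)_{\so\so}$) together with Poisson thinning to turn $\{\veps_k y_k^{-2/\alpha}\}$ and $\{(1-\veps_k)y_k^{-2/\alpha}\}$ into two independent Poisson processes of intensity $\Lambda_\alpha$. Your extra care about convergence of $\sum_k\xi_k a_k$ and non-vanishing of the determinant (via the resolvent bound $|a_k|\le(\Im\eta)^{-1}$ and $\Im a_k>0$) is welcome and goes slightly beyond what the paper writes; note only that your reading of the weights $w_{\so k},w_{k\so}$ from \eqref{eq:defA} is swapped relative to the paper's convention, which is harmless in distribution but worth correcting for consistency.
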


\begin{proof} 
  This is sa simple consequence of lemma \ref{le:schurB}. Indeed, for $k \in
  \dN$, we define $T_k$ as the subtree of $T$ spanned by $k \dN^f$. With the
  notation of lemma \ref{le:schurB}, for $k \in \dN$, $R_{B_k} (q) = (B_k(z) -
  \eta ) ^{-1}$ is the resolvent operator of $B_k$ and set
  \[
  \Wt  R_A (q)_{kk} = \Pi_k R_{B_k} (q) \Pi^*_k = \begin{pmatrix} a_k & b_k \\
    b'_k & c_k\end{pmatrix}.
  \]
  Then, by lemma \ref{le:schurB}, we get
  {\footnotesize
   \begin{align*} 
    R(q) _{\so\so}   
    &=  %
    - \PAR{q  + \sum_{k \geq 1}   
      \begin{pmatrix}    
        0  & \veps _k w_k y_{k}^{-1/\alpha } \\ 
        (1- \veps _k ) w_k y_{k}^{-1/\alpha }  & 0  
      \end{pmatrix}  
      \begin{pmatrix} 
        a_k & b_k  \\  
        b'_k &  c_k
      \end{pmatrix}   
      \begin{pmatrix}    
        0 &(1- \veps_k )  w_k y_{k}^{-1/\alpha }     \\ 
        \veps_k  w_k y_{k}^{-1/\alpha }  & 0   
      \end{pmatrix}
      }^{-1} \\
    & = %
    - \PAR{ U  +     
      \begin{pmatrix} 
        \sum_{k} (1 - \veps_k ) y_{k}^{-2/\alpha } c_k & 0 \\
        0 &   \sum_{k} \veps_k y_{k}^{-2/\alpha } a_k 
      \end{pmatrix}     
    }^{-1} \\
    & = %
    D^{-1} %
    \begin{pmatrix}  
      \eta +  \sum_{k }  \veps_k y_{k}^{-2/\alpha }  a_k     &  - z  \\ 
      -  \bar z   & \eta +\sum_{k}(1-\veps_k )y_{k}^{-2/\alpha }c_k  
    \end{pmatrix},
  \end{align*}
  } 
  with 
  {\small
  \[
  D := |z|^2 - %
  \PAR{\eta + \sum_{k \geq 1} \veps_k y_{k}^{-2/\alpha } a_k} %
  \PAR{\eta + \sum_{k \geq 1} (1-\veps _k) y_{k}^{-2/\alpha } c_k}.
  \]
  } 
  Now the structure of the PWIT implies that 
  \begin{itemize}
  \item[(j)] $a_k$ and $c_k$ have common distribution $L_q$
  \item[(jj)] the variables $(a_k,c_k)_{k \in \dN}$ are i.i.d.
  \end{itemize}
  Also the thinning property of Poisson point processes implies that 
  \begin{itemize}
  \item[(jjj)] $\{ \veps _k y_{k}^{-2/\alpha } \}_{k \in \dN}$ and $\{
    (1-\veps _k)y_{k}^{-2/\alpha } \}_{k \in \dN}$ are independent Poisson
    point process with common intensity $\Lambda_\alpha$.
  \end{itemize}
\end{proof}

Even if \eqref{eq:RDEa} looks complicated at first sight, for $\eta = it$, it
is possible to solve it explicitly. First, for $t \in \dR_+$, $a(z, it)$ is
pure imaginary and we set
\[
h(z,t) = \Im  ( a (z, it) )  = - i a (z, it) \in (0, t^{-1}]. 
\]
The crucial ingredient, is a well-known and beautiful lemma. It can be derived
form a representation of stable laws, see e.g.\ LePage, Woodroofe, and Zinn
\cite{zinn81} and also Panchenko and Talagrand \cite[Lemma 2.1]{MR2371485}.

\begin{lemma}[Poisson-stable magic formula]\label{le:magic}
  Let $\{ \xi_k\}_{k \in \dN}$ be a Poisson process with intensity
  $\Lambda_\alpha$. If $(Y_k)$ is an i.i.d.\ sequence of non-negative random
  variables, independent of $\{ \xi_k\}_{k \in \dN}$, such that $\dE [ Y_1
  ^{\frac \alpha 2}] < \infty$ then
  \[
  \sum_{k \in \dN} \xi_k Y_k \stackrel{d}{=} \dE [ Y_1 ^{\frac \alpha 2} ]^{\frac 2
    \alpha} \sum_{k \in \dN} \xi_k \stackrel{d}{=} \dE [ Y_1 ^{\frac \alpha 2} ]^{\frac 2
    \alpha} S,
  \]
  where $S$ is the positive ${\frac \alpha 2}$-stable random variable with Laplace
  transform for all $x \geq 0$,
  \begin{equation}\label{eq:laplaceZ}
    \dE \exp ( - x S) =   \exp\PAR{ - \Gamma\PAR{1-{\frac \alpha 2}} x^{\frac \alpha 2} }.
  \end{equation}
\end{lemma}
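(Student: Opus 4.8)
The plan is to compute the Laplace transform of $\sum_{k\in\dN}\xi_k Y_k$ directly from the structure of Poisson point processes. Write $\beta:=\alpha/2\in(0,1)$ and let $\cL$ denote the law of $Y_1$ on $\dR_+$. By the marking theorem, the collection $\{(\xi_k,Y_k)\}_{k\in\dN}$ is a Poisson point process on $\dR_+\times\dR_+$ with intensity $\Lambda_\alpha\otimes\cL$, i.e.\ $\beta s^{-\beta-1}\,ds\otimes\cL(dy)$. First I would check that $\sum_k\xi_kY_k$ is a.s.\ finite: since
\[
\int_0^\infty\!\!\int_0^\infty\min(sy,1)\,\beta s^{-\beta-1}\,ds\,\cL(dy)=\frac{1}{1-\beta}\,\dE[Y_1^\beta]<\infty,
\]
where the inner integral equals $\frac{1}{1-\beta}y^\beta$ (split at $s=1/y$, using $\beta<1$ for integrability near $s=0$), the contribution of the infinitely many ``small'' points converges absolutely while only finitely many points have $sy\ge1$; the hypothesis $\dE[Y_1^{\alpha/2}]<\infty$ is exactly what makes this work, and the case $Y_k\equiv1$ recovers $\sum_k\xi_k<\infty$ a.s.

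Next I would apply the exponential (Campbell) formula for Poisson point processes to the nonnegative function $(s,y)\mapsto xsy$, $x\ge0$:
\[
\dE\exp\PAR{-x\sum_{k}\xi_kY_k}=\exp\PAR{-\int_0^\infty\!\!\int_0^\infty\bigl(1-e^{-xsy}\bigr)\,\beta s^{-\beta-1}\,ds\,\cL(dy)}.
\]
For fixed $x,y>0$ the substitution $u=xsy$ gives $\int_0^\infty(1-e^{-xsy})\beta s^{-\beta-1}\,ds=\beta (xy)^\beta\int_0^\infty(1-e^{-u})u^{-\beta-1}\,du$, and integration by parts (boundary terms vanish since $1-e^{-u}\sim u$ at $0$ with $\beta<1$, and $u^{-\beta}\to0$ at $\infty$) yields $\int_0^\infty(1-e^{-u})u^{-\beta-1}\,du=\frac1\beta\int_0^\infty e^{-u}u^{-\beta}\,du=\frac{\Gamma(1-\beta)}{\beta}$. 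Hence the inner integral is $\Gamma(1-\beta)(xy)^\beta$, and integrating against $\cL$ gives
\[
\dE\exp\PAR{-x\sum_{k}\xi_kY_k}=\exp\PAR{-\Gamma(1-\beta)\,\dE[Y_1^\beta]\,x^\beta}.
\]

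Finally, taking $Y_k\equiv1$ gives $\dE\exp(-xS')=\exp(-\Gamma(1-\beta)x^\beta)$ for $S':=\sum_k\xi_k$; since $\beta=\alpha/2$ this is precisely the Laplace transform \eqref{eq:laplaceZ}, so $\sum_k\xi_k\overset{d}{=}S$. For general $Y$, rewriting $\Gamma(1-\beta)\dE[Y_1^\beta]x^\beta=\Gamma(1-\beta)\bigl(\dE[Y_1^\beta]^{1/\beta}x\bigr)^\beta$ and using $\dE[Y_1^\beta]^{1/\beta}=\dE[Y_1^{\alpha/2}]^{2/\alpha}$ shows that $\sum_k\xi_kY_k$ and $\dE[Y_1^{\alpha/2}]^{2/\alpha}S$ have the same Laplace transform on $\dR_+$, hence the same law; this gives both identities in the statement. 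The only genuinely delicate point is justifying the a.s.\ convergence of the random series and the validity of the exponential formula in this infinite-intensity regime — everything else reduces to the elementary $\Gamma$-integral above.
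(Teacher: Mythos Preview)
Your proof is correct and follows essentially the same route as the paper: compute the Laplace transform of $\sum_k \xi_k Y_k$ via the exponential (Campbell/L\'evy--Khinchin) formula for the Poisson process, reduce the resulting integral to $\Gamma(1-\alpha/2)\,\dE[Y_1^{\alpha/2}]\,x^{\alpha/2}$, and read off both distributional identities. You are in fact more careful than the paper in spelling out the a.s.\ finiteness of the series and the integration by parts, but the key idea is identical.
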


\begin{proof}[Proof of lemma \ref{le:magic}] Recall the formulas, for $y \geq
  0$, $\eta >0$ and $0 < \eta < 1$ respectively,
  \begin{equation} \label{eq:gammaLaplace} %
    y^{-\eta} %
    = \Gamma(\eta)^{-1} \int_0 ^\infty x ^{\eta -1} e^{- x y} dx %
    \quad\text{and}\quad %
    y^{\eta} %
    = \Gamma(1-\eta)^{-1} \eta \int_0 ^\infty x ^{-\eta -1} (1 - e^{- x y}) dx.
  \end{equation} 
  From the L\'evy-Khinchin formula we deduce that, with $s \geq 0$,
  \begin{align*}
    \dE \exp\PAR{ - s \sum_{k} \xi_k Y_k }
    & = \exp \PAR{ \dE \int_0 ^\infty ( e^{-x  s Y_1} - 1 ) \beta x ^{-{\frac \alpha 2} - 1} dx } \nonumber \\
    &= \exp\PAR{ - \Gamma\PAR{1-{\frac \alpha 2}} s^{\frac \alpha 2} \dE
      [Y_1^{\frac \alpha 2}]} \label{eq:laplace}.
  \end{align*}
\end{proof}

Hence, by lemma \ref{le:magic}, we may rewrite \eqref{eq:RDEa} as
\begin{equation}
  \label{eq:RDEhy}
  h  \overset{d}{=}  \frac{  t  + y S    }{ |  z |^2 + \PAR{ t  + y  S    } \PAR{ t  + y  S'    }  },
\end{equation}
where $S$ and $S'$ are i.i.d.\ variables with common Laplace transform
\eqref{eq:laplaceZ} and the function $y = y (|z|^2, t) = \dE[
h^{\alpha/2}]^{2/\alpha}$ is the unique solution of the equation in $y$:
\[
1 = \dE\PAR{\frac{ty^{-1}+S}{|z|^2 + \PAR{t+yS}\PAR{t+yS'}}}^{\frac{\alpha}{2}}. 
\]
(since the left hand side is decreasing in $y$, the solution is unique). In
the above equations, it is also possible to consider the limits as $t
\downarrow 0$.

As explained in section \ref{ss:skeleton}, this implies that, in $\cD'(\dC)$,
$\mu_\alpha$ is equal to
\[
- \frac 1 \pi \lim_{t \downarrow 0} \dE b ( \cdot , it). 
\]
Using \eqref{eq:RDEb}, after a simple computation, we find that the density $g_\alpha$ of
$\mu_\alpha$ at $z$ is
\[
\frac 1 \pi \PAR{y^2 _*( |z|^2) - 2 |z|^{2} y _* ( |z|^2) y' _* ( |z|^2)
} \dE \frac {S S'}{ \PAR{| z |^2 + y^2 _* ( |z|^2) S S' }^2}
\]
where $y_* ( r) = y (r)$ is the unique solution 
\[
1 = \dE \PAR{  \frac{  S    }{r + y^2  S     S'   }   }^{\frac \alpha 2} . 
\]
After more computations, it is even possible to study the regularity of $y_*$,
find the explicit solution at $0$, and an asymptotic equivalent as $r \to
\infty$. All these results can then be translated into properties of
$\mu_\alpha$. We will not pursue here these computation which are done in
\cite{bordenave-caputo-chafai-heavygirko}. We may simply point out that
$\mu_\alpha$ converges weakly to the circular law as $\alpha \to 2$, is a
consequence of the fact that the non-negative $\alpha/2$-stable random variable
$S / \Gamma( 1 - \alpha/2) ^{2 / \alpha}$ converges to a Dirac mass as $\alpha \to
2$ (see \eqref{eq:laplaceZ}).


\subsection{Improvement to almost sure convergence}

\label{subsec:asconv}

Let $\nu_{\alpha,z}$ be as in theorem \ref{th:HTMPz}. In order to improve the
convergence to a.s., it is sufficient to prove that for all $z \in \dC$, a.s.
\[
\lim_{n \to \infty} U_{\mu_{n^{-1/\alpha} X}} (z)=  L %
\quad\text{where}\quad %
L:= - \int_0^\infty\!\log(s)\,d\nu_{\alpha,z} (s).
\]
We have already proved that this convergence holds in probability. It is thus
sufficient to prove that there exists a \emph{deterministic} sequence $L_n$
such that a.s.
\begin{equation}\label{eq:ULn}
\lim_{n \to \infty} \left( U_{\mu_{n^{-1/\alpha} X}} (z) -  L_n \right) = 0.
\end{equation}
Now, thanks to the bounded density assumption and remark \ref{re:lesn}, one
may use lemma \ref{le:sn} for the matrix $X - n^{1/\alpha} zI$ in order to
show that that there exists a number $b > 0$ such that a.s.\ for $n \gg 1$,
\[
s_n ( n^{-1/\alpha} X - z I ) \geq n^{-b}. 
\] 
Similarly, up to an increase of $b$ if needed, we also get from
\eqref{eq:tightness} that a.s.\ for $n \gg 1$,
\[
s_1 ( n^{-1/\alpha} X - z I ) \leq n^{b}. 
\] 
Now, we consider the function 
\[
f_n (x) = \IND_{\{n^{-b} \leq | x| \leq n^b\}}\log(x).
\]
From what precedes, a.s.\ for $n\gg1$,
\begin{equation}\label{eq:Ufn}
U_{\mu_{n^{-1/\alpha} X}} (z) %
= -\int_0^\infty\!\log(s)\,d\nu_{n^{-1/\alpha}X-zI}(s) %
= -\int_0^\infty\!f_n(s)\,d\nu_{n^{-1/\alpha} X-zI}(s).
\end{equation}
The total variation of $f_n$ is bounded by $c \log n$ for some $c >0$. Hence
by lemma \ref{le:concspec}, if
\[
L_n := \dE \int\!f_n(s)\,d\nu_{n^{-1/\alpha} X - z I} (s),
\]
then we have, 
\[
\dP\PAR{\ABS{ \int\!f_n(s)\,d\nu_{n^{-1/\alpha} X - z I} (s) - L_n} \geq t} %
\leq 2 \exp\PAR{- 2 \frac{ n t^2 } { (c \log n)^2 } }.
\]
In particular, from the first Borel-Cantelli lemma, a.s.,
\[
\lim_{n \to \infty} \PAR{\int\!f_n(s)\,d\nu_{n^{-1/\alpha}X-zI}(s)-L_n}= 0.
\]
Finally, using \eqref{eq:Ufn}, we deduce that \eqref{eq:ULn} holds almost
surely.

\section{Open problems}
\label{se:open}

We list in this section some open problems related to the circular law theorem.

\subsubsection*{Universality of Gaussian Ensembles}

The universality dogma states that if a real or complex functional of $X$ is
enough symmetric and depend on enough entries then it is likely that this
functional behaves asymptotically ($n\to\infty$) like in the Gaussian case
(Ginibre Ensemble here) as soon as the first moments of $X_{11}$ match certain
Gaussian moments (depends on the functional). This can be understood as a sort
of non-linear central limit theorem. Among interesting functionals, we find
for instance the following:
\begin{itemize}
\item Spectral radius (has Gumbel fluctuations for the Complex Ginibre Ensemble);
\item argument of $\la_1(X)$ (is uniform on $[0,2\pi]$ for the Complex
  Ginibre Ensemble);
\item Law of $\la_n(n^{-1/2}X)$ (see \cite[Chapter 15]{MR2641363} for the
  Complex Ginibre Ensemble). The square of the smallest singular value
  $s_n(n^{-1/2}G)^2$ of the Complex Ginibre Ensemble follows an exponential
  law \cite{MR964668} and this result is asymptotically universal
  \cite{MR2647142};
\item Gap probabilities and Vorono{\"{\i}} cells (see \cite{MR2536111} and
  \cite{MR2582643} for the Ginibre Ensemble);
\item Linear statistics of $\mu_X$ (some results are available such as
  \cite{MR2294978,MR2361453,MR2530159,MR2095933});
\item Empirical distribution of the real eigenvalues of $n^{-1/2}X$ when
  $X_{11}$ is real (tends to uniform law on $[-1,1]$ for the Real Ginibre
  Ensemble);
\item Unitary matrix in the polar decomposition (Haar unitary for the Complex
  Ginibre). This is connected to the R-diagonal concept in free probability
  theory \cite{MR1784419};
\item If $X_{11}$ has infinite fourth moment then the eigenvalues of largest
  modulus blow up and are asymptotically independent (Poisson statistics at
  some scale);
\item A large deviations principle for $\mu_X$ at speed $n^2$ which includes as
  a special case the one obtained for the Complex Ginibre Ensemble by Hiai and
  Petz \cite{MR1606719} (see also Ben Arous and Zeitouni \cite{MR1660943}) and
  references therein. The analogous question for Hermitian models (Wigner and
  GUE) is also open. The answer depends on the chosen scale, the class of
  deviations, and the topology.
\end{itemize}
One may group most of these functionals by considering the spectrum as a point
process.

It is also possible to consider universality beyond i.i.d.\ entries models.
For instance, if $X$ has exchangeable entries as a random vector of
$\dC^{n^2}$ and if $X$ satisfies to suitable mean-variance normalizations,
then we expect that $\dE\mu_X$ tends to the circular law due to a Lindeberg
type phenomenon, see \cite{MR2294976} for the Hermitian case (Wigner).
Similarly, if $X$, as a random vector of $\dC^{n^2}$, is log-concave (see
footnote \ref{fn:logconc}) and isotropic (i.e.\ its covariance matrix is
identity) then we expect that $\dE\mu_X$ tends to the circular law, see
\cite{EJP2011-37} for i.i.d.\ log-concave rows. Since the indicator of a
convex set is a log-concave measure, one may think about the Birkhoff polytope
(convex envelope of permutation matrices) and ask if the circular law holds
for random uniform doubly stochastic matrices, see \cite{djalil-dme} and
\cite{chatterjee-diaconis-sly}.

\subsubsection*{Variance profile}

We may consider the matrix $Y$ defined as $Y_{ij} = X_{ij} \sigma( i/ N ,
j/N)$ where $\sigma : [0,1]^2 \to [0,1]$ is a measurable function. The measure
$\mu_{n^{-1/2} Y}$ should converge a.s.\ to a limit probability measure
$\mu_\sigma$ on $\dC$. For finite variance Hermitian matrices, this question
has been settled by Khorunzhy, Khoruzhenko, Pastur and Shcherbina \cite{KKPS},
for heavy tailed Hermitian matrices, by Belinschi, Dembo, Guionnet
\cite{BDG09}. Girko has also results on the singular values of random matrices
with variance profile. 

\subsubsection*{Elliptic laws}

We add some dependence in the array $(X_{ij})_{i,j \geq 1}$ : we consider an
infinite array $(X_{ij}, X_{ji})_{ 1 \leq i < j \leq n}$ of i.i.d.\ pairs of
complex random variables, independent of $(X_{ii})_{i \geq 1}$ an i.i.d.\
sequence of random variables. Assume that $\VAR(X_{12}) = \VAR(X_{21}) = 1$
and $\mathrm{Cor}(X_{12}, X_{21}) = t \in \{z\in\dC:|z|\leq1\}$. There is a
conjectured universal limit for $\mu_{n^{-1/2} X}$ computed by Girko
\cite{MR1080966}, called the elliptic law. This model interpolates between
Hermitian and non-Hermitian random matrices. When $X = \sqrt {( 1 + \tau)/2}
H_1 + i \sqrt {( 1 - \tau)/2}H_2$, with $0\leq \tau \leq 1$ and $H_1, H_2$ two
independent GUE, this model has been carefully analyzed by Bender in
\cite{MR2594353}, see also Ledoux \cite{MR2446909}, Johansson
\cite{MR2288065}, and Khoruzhenko and Sommers \cite{KS}.

\subsubsection*{Oriented $r$-regular graphs and Kesten-McKay measure} 

Random oriented graphs are host of many open problems. For example, for
integers $n \geq r \geq 3$, an oriented $r$-regular graph is a graph on $n$
vertices such that all vertices have $r$ incoming and $r$ outgoing oriented
edges. Consider the adjacency matrix $A$ of a random oriented $r$-regular
graph sampled from the uniform measure (there exists suitable simulation
algorithms using matchings of half edges). It is conjectured that as $n \to
\infty$, a.s.\ $\mu_A$ converges to the probability measure
\[
\frac{1}{\pi} \frac{r ^2(r-1)}{(r^2 - |z|^2)^2 }\IND_{\{|z|<\sqrt r\}}\,dxdy.
\]
It turns out that this probability measure is also the Brown measure of the
free sum of $r$ unitary, see Haagerup and Larsen \cite{MR1784419}. The
Hermitian (actually symmetric) version of this measure is known as the
Kesten-McKay distribution for random non-oriented $r$-regular graphs, see
\cite{kesten59,mckay}. We recover the circular law when $r\to\infty$ up to
renormalization.

\subsubsection*{Invertibility of random matrices}

The invertibility of random matrices is one of the keys behind the circular
law theorem \ref{th:circular}. Let us consider the case were $X_{11}$ is
Bernoulli $\frac{1}{2}(\de_{-1}+\de_{1})$. A famous conjecture by Spielman and
Teng (related to their work on smoothed analysis of algorithms
\cite{MR2078601,MR1989210}) states that there exists a constant $0<c<1$ such
that 
\[
\dP(\sqrt{n}\,s_n(X)\leq t)\leq t+c^n
\]
for $n\gg1$ and any small enough $t\geq0$. This was almost solved by Rudelson
and Vershynin \cite{MR2407948} and Tao and Vu \cite{MR2647142}. In particular,
taking $t=0$ gives $\dP(s_n(X)=0)=c^n$. This positive probability of being
singular does not contradict the asymptotic invertibility since by the first
Borel-Cantelli lemma, a.s.\ $s_n(X)>0$ for $n\gg1$. Regarding the constant $c$
above, it has been conjectured years ago that
\[
\dP(s_n(X)=0)=\PAR{\frac{1}{2}+o(1)}^n.
\]
This intuition comes from the probability of equality of two rows, which
implies that $\dP(s_n(X)=0)\geq(1/2)^n$. Many authors contributed to the
analysis of this difficult nonlinear discrete problem, starting from Koml\'os,
Kahn, and Szemer\'edi. The best result to date is due to Bourgain, Vu, and
Wood \cite{MR2557947} who proved that
$\dP(s_n(X)=0)\leq \PAR{1/\sqrt{2}+o(1)}^n$.

\subsubsection*{Roots of random polynomials}

The random matrix $X$ has i.i.d.\ entries and its eigenvalues are the roots of
its characteristic polynomial. The coefficients of this random polynomial are
neither independent nor identically distributed. Beyond random matrices, let
us consider a random polynomial $P(z)=a_0+a_1z+\cdots+a_nz^n$ where
$a_0,\ldots,a_n$ are independent random variables. By analogy with random
matrices, one may ask about the behavior as $n\to\infty$ of the roots
$\la_1(P),\ldots,\la_n(P)$ of $P$ in $\dC$ and in particular the behavior of
their empirical measure $\frac{1}{n}\sum_{i=1}^n\de_{\la_i(P)}$. The
literature on this subject is quite rich and takes its roots in the works of
Littlewood and Offord, Rice, and Kac. We refer to Shub and Smale
\cite{MR1230872}, Aza{\"{\i}}s and Wschebor \cite{MR2149413}, and Edelman and
Kostlan \cite{MR1290398,MR1376652} for (partial) reviews. As for random
matrices, the case where the coefficients are real is more subtle due to the
presence of real roots. Regarding the complex case, the zeros of Gaussian
analytic functions is the subject of a recent monograph \cite{MR2552864} in
connection with determinantal processes. Various cases are considered in the
literature, including the following three families:
\begin{itemize}
\item Kac polynomials, for which $(a_i)_{0\leq i\leq n}$ are i.i.d.\ 
\item Binomial polynomials, for which $a_i=\sqrt{\binom{n}{i}}b_i$ for all $i$
  and ${(b_i)}_{0\leq i\leq n}$ are i.i.d.\
\item Weyl polynomials, for which $a_i=\frac{1}{\sqrt{i!}}b_i$ for all $i$ and
  ${(b_i)}_{0\leq i\leq n}$ are i.i.d. \
\end{itemize}
Geometrically, the complex number $z$ is a root of $P$ if and only if the
vectors $(1,z,\ldots,z^n)$ and $(\Ol{a_0},\Ol{a_1},\ldots,\Ol{a_n})$ are
orthogonal in $\dC^{n+1}$, and this connects the problem to Littlewood-Offord
type problems \cite{MR0009656} and small balls probabilities. Regarding Kac
polynomials, Kac \cite{MR0007812,MR0009655} has shown in the real Gaussian
case that the asymptotic number of real roots is about $\frac{2}{\pi}\log(n)$
as $n\to\infty$. Kac obtained the same result when the coefficients are
uniformly distributed \cite{MR0030713}. Hammersley \cite{MR0084888} derived an
explicit formula for the $k$-point correlation of the roots of Kac
polynomials. The real roots of Kac polynomials were extensively studied by
Maslova \cite{MR0334327,MR0368136}, Ibragimov and Maslova
\cite{MR0238376,MR0286157,MR0288824,MR0292134}, Logan and Shepp
\cite{MR0234512,MR0234513}, and by Shepp and Farahmand \cite{MR2768528}.
Shparo and Shur \cite{MR0139199} have shown that the empirical measure of the
roots of Kac polynomials with light tailed coefficients tends as $n\to\infty$
to the uniform law one the unit circle $\{z\in\dC:|z|=1\}$ (the arc law).
Further results were obtained by Shepp and Vanderbei \cite{MR1308023},
Zeitouni and Zelditch \cite{MR2738347}, Shiffman and Zelditch
\cite{MR1935565}, and by Bloom and Shiffman \cite{MR2318650}. If the
coefficients are heavy tailed then the limiting law concentrates on the union
of two centered circles, see \cite{gotze-zaporozhets} and references therein.
Regarding Weyl polynomials, various simulations and conjectures have been made
\cite{galligo,emiris-galligo-tsigaridas}. For instance, if $(b_i)_{0\leq i\leq
  n}$ are i.i.d.\ standard Gaussian, it was conjectured that the asymptotic
behavior of the roots of the Weyl polynomials is analogous to the Ginibre
Ensemble. Namely, the empirical distribution of the roots tends as
$n\to\infty$ to the uniform law on the centered disc of the complex plane
(circular law), and moreover, in the real Gaussian case, there are about
$\frac{2}{\pi}\sqrt{n}$ real roots as $n\to\infty$ and their empirical
distribution tends as $n\to\infty$ to a uniform law on an interval, as for the
real Ginibre Ensemble, see Remark \ref{rk:real-ginibre}. The complex Gaussian
case was considered by Leboeuf \cite{MR1700934} and by Peres and Vir\'ag
\cite{MR2231337}, while the real roots of the real Gaussian case were studied
by Schehr and Majumdar \cite{MR2415102}. Beyond the Gaussian case, one may try
to use the companion matrix\footnote{The companion matrix $M$ of
  $Q(z):=c_0+c_1z+\cdots+c_{n-1}z^{n-1}+z^n$ is the $n\times n$ matrix with
  null entries except $M_{i,i+1}=1$ and $M_{n,i}=c_{i-1}$ for every $i$. The
  characteristic polynomial of $M$ is $Q$.} of $P$ and the logarithmic
potential approach. Numerical simulations reveal strange phenomena depending
on the law of the coefficients but we ignore it they are purely numerical.
Note that if the coefficients are all real positive then the roots cannot be
real positive. The heavy tailed case is also of interest (rings?).

\appendix

\section{Invertibility of random matrices with independent entries}
\label{se:ap:inv}

This appendix is devoted to the proof of a general statement (lemma
\ref{le:snRV} below) on the smallest singular value of random matrix models
with independent entries. It follows form lemma \ref{le:snRV} below that if
$X={(X_{ij})}_{1\leq i,j\leq n}$ is a random matrix with i.i.d.\ entries such
as $X_{11}$ is not constant and $\dE(|X_{11}|^\kappa)<\infty$ for some
arbitrarily small real number $\kappa>0$, then for any $\ga>0$ there exists
are real number $\be>0$ such that for any $n\gg1$ and any deterministic matrix
$M\in\cM_n(\dC)$ with $s_1(M)\leq n^\ga$,
\[
\lim_{n\to\infty}\dP(s_n(X+M)\leq n^{-\be})=0.
\]
Both the assumptions and the conclusion are strictly weaker than the result of
Tao and Vu. It is enough for the proof of the circular law in probability and
its heavy tailed analogue.

\begin{lemma}[Smallest singular value of random matrices with independent entries]
  \label{le:snRV}
  If $(X_{ij})_{1 \leq i , j \leq n}$ is a random matrix with independent and
  non-constant entries in $\dC$ and if $a>0$ is a positive real number such
  that
  \[
  b:=\min_{1\leq i,j\leq n}\dP ( |X_{ij} | \leq a ) >0
  \quad\text{and}\quad %
  \sigma^2 := \min_{1\leq i,j\leq n} \VAR\PAR{X_{ij} \IND_{\{|X_{ij}|\leq a \}}} >0,
  \]
  then there exists $c = c(a,b, \si) > 1 $ such that for any $M \in \cM_n ( \dC)$, $n \geq c$,  
  $s \geq 1$, $0<t \leq 1$,
  \[ 
  \dP\PAR{s_n(X+M) \leq \frac{t}{\sqrt{n}}\ ;\ s_1(X+M) \leq s } %
  \leq   c \sqrt{\log  (   c s   ) }   %
  \PAR{ ts^2 +\frac{1}{\sqrt n}}.
  \]
\end{lemma}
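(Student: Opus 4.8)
The plan is to follow the Rudelson–Vershynin philosophy for the invertibility of random matrices, adapted to the present setting where the entries are only assumed non-constant with a truncated variance lower bound (no density, no moments beyond what is hidden in the truncation). Write $A = X + M$ with rows $R_1,\dots,R_n$ and columns $C_1,\dots,C_n$. By lemma~\ref{le:rvdist} applied to $A^\top$ (or directly), $s_n(A) \geq$ controls, and is controlled by, the distances $\DIST(C_i,C_{-i})$; so the task reduces to a lower bound on $\min_i \DIST(C_i,C_{-i})$ on the event where $s_1(A)\leq s$. The standard dichotomy is to split the unit sphere $\dS^{n-1}\subset\dC^n$ into \emph{compressible} vectors (those close to a sparse vector) and \emph{incompressible} vectors. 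First I would handle the compressible part: a net argument shows $\dP(\inf_{x\ \mathrm{compressible}}\NRM{Ax}_2 \leq c_1\sqrt n\ ;\ s_1(A)\leq s)$ is exponentially small, using the truncated-variance hypothesis $\sigma^2>0$ and $b>0$ to get a small-ball bound $\dP(|\ANG{R,y}-u|\leq c_2)\leq 1-c_3$ for any fixed unit $y$ and any $u\in\dC$, together with a union bound over a net of size $\exp(O(\delta n\log(1/\delta) + \delta n\log s))$, which is where the $\sqrt{\log(cs)}$ type factor and the $s$-dependence will originate.

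For the incompressible part I would use the ``invertibility via distance'' lemma: for incompressible $x$, $\NRM{Ax}_2 \gtrsim n^{-1/2}\max_i |x_i|\,\DIST(C_i,C_{-i})$, so
\[
\dP\PAR{\inf_{x\ \mathrm{incomp}}\NRM{Ax}_2 \leq \frac{t}{\sqrt n}}
\leq \frac{1}{\delta n}\sum_{i=1}^n \dP\PAR{\DIST(C_i,C_{-i}) \leq \frac{t}{\sqrt\delta}},
\]
by a Markov/averaging trick over the coordinates guaranteed to be of size $\gtrsim n^{-1/2}$. It then remains to bound $\dP(\DIST(C_i,C_{-i})\leq \tau)$ for a single $i$. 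Fix $i$, let $H = C_{-i}^\perp$ (a random subspace of dimension $\geq 1$, independent of $C_i$) and let $y$ be a random unit normal to $C_{-i}$ independent of $C_i$; then $\DIST(C_i,C_{-i}) \geq |\ANG{C_i,y}|$, and conditionally on $y$ this is a sum of independent complex random variables $X_{ki}\bar y_k$ plus a constant. The key small-ball estimate is the Lévy–Kolmogorov–Rogozin / Esseen-type bound: $\dP(|\ANG{C_i,y}|\leq\tau) \lesssim \tau\, \|y\|_2^{-1}\big(\sum_k \mathrm{Lévy concentration of }X_{ki}\bar y_k\big)^{-1/2}$, which after using $\sigma^2$ and $b$ gives something like $\dP(|\ANG{C_i,y}|\leq\tau) \lesssim \tau + n^{-1/2}$, the $n^{-1/2}$ being the price for \emph{not} having a density (we cannot rule out an atom contributing $\Theta(n^{-1/2})$). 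Integrating over the law of $y$ and plugging back gives the stated bound with $\tau = t s^2$ roughly (the $s^2$ enters because on $\{s_1(A)\leq s\}$ the incompressibility threshold degrades by a factor $s$, affecting $\delta$).

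The main obstacle I expect is the single-column distance estimate $\dP(\DIST(C_i,C_{-i})\leq\tau)\lesssim\sqrt{\log(cs)}(\tau s^2 + n^{-1/2})$ with the \emph{correct} uniformity: $y$ is not a fixed vector but a random normal to $C_{-i}$, and although it is independent of $C_i$, I need an \emph{a priori} bound saying that $y$ is ``spread out'' (not compressible) with overwhelming probability, so that the small-ball inequality for $\ANG{C_i,y}$ is effective. This is precisely where one re-injects the compressible/incompressible dichotomy a second time — now for $A$ restricted to the hyperplane — and where the $\log s$ factor and the event $\{s_1\leq s\}$ are genuinely used (to bound the net cardinality and the operator norm in the conditioning). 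Controlling this conditional spreading of the normal vector, uniformly over all the deterministic shifts $M$, is the technical heart; everything else (the reduction via lemma~\ref{le:rvdist}, the averaging over coordinates, the elementary Esseen-type small-ball bound from $b,\sigma$) is routine. I would assemble the pieces exactly as above, optimizing $\delta$ at the end to produce the clean form $c\sqrt{\log(cs)}(ts^2 + n^{-1/2})$.
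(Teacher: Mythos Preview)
Your overall architecture matches the paper's: compressible/incompressible decomposition of $\dS^{n-1}$, invertibility-via-distance for the incompressible part, the random normal vector $\zeta$ to $C_{-i}$ shown to be incompressible with high probability (by re-running the compressible bound on $(X+M)^*$), and finally a small-ball estimate for $\ANG{\zeta,C_i}$. Two misattributions, however, would derail the actual write-up.

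First, the $\sqrt{\log(cs)}$ factor does \emph{not} come from the net cardinality in the compressible step. In the paper the compressible bound is a clean $\exp(-c_1 n)$ with no logarithmic residue; moreover the paper does not use a net there at all but bounds $\min_{x\in\COMP}\NRM{Ax}_2 \geq \tfrac34\min_{|\pi|=\lfloor\delta n\rfloor}s_n(A_{|\pi}) - \rho\, s_1(A)$, then $s_n(A_{|\pi})$ by column-distances to the span of the remaining $\leq\delta n$ columns, and those distances via Talagrand's inequality on the truncated entries. The $\sqrt{\log(cs)}$ appears only in the \emph{incompressible} step, from a dyadic pigeonhole: the incompressible normal vector $\zeta$ has $\sim\delta n$ coordinates satisfying $\rho/\sqrt n\leq|\zeta_i|\leq\sqrt{2/(\delta n)}$, a magnitude range of logarithmic width $L\sim\log\bigl(1/(\rho\sqrt\delta)\bigr)\sim\log s$; one selects a dyadic band $\pi_j$ with $|\pi_j|\geq m/L$ and applies Berry--Esseen on that band, and the factor $\sqrt{1/|\pi_j|}\sim\sqrt{L/(\delta n)}$ is precisely what produces $\sqrt{\log s}$ (and also the $1/\sqrt n$ term, from the Berry--Esseen rate).

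Second, the $s^2$ is not produced by $\delta$ degrading. The parameter $\delta$ is fixed once and for all as a function of $(a,b,\sigma)$; it is $\rho$ that is chosen of order $1/s$ (precisely $\rho=\tfrac14\min(1,\varepsilon\sigma/(s\sqrt\delta))$, so that the term $\rho\, s_1(A)$ does not destroy the compressible lower bound). The incompressible invertibility-via-distance reduction then outputs a threshold $\rho^2 t/\sqrt n$ (one factor of $\rho$ from the spread lemma for incompressible vectors, one from the averaging lemma), and $\rho^{-2}\sim s^2$ is what appears in the final statement. If you push the $s$-dependence into $\delta$ instead, the union bound over $\binom{n}{\lfloor\delta n\rfloor}$ subsets (or over your net) will not close.

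So: right skeleton, but the bookkeeping of where $s^2$ and $\sqrt{\log(cs)}$ enter is off, and that bookkeeping is exactly the content of the proof.
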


The proof of lemma \ref{le:snRV} follows mainly from
\cite{MR2146352,MR2407948}. These works have already been used in the proof of
the circular law, notably in \cite{gotze-tikhomirov-new}. As we shall see, the
term $1/\sqrt{n}$ comes from the rate in the Berry-Esseen Theorem. Following
\cite{MR2146352}, it could probably be improved by using finer results on the
Littlewood-Offord problem \cite{MR2480613}. Note however, that lemma
\ref{le:snRV} is sufficient for proving convergence in probability of spectral
measures.

We emphasize that there is not any moments assumption on the entries in lemma
\ref{le:snRV}. However, (weak) moments assumptions may be used in order to
obtain an upper bound on the quantity $\dP(s_1 ( X + M ) \geq s )$.
Also, the variance (of the truncated variables) $\sigma$ may depend on $n$ :
this allows to deal with sparse matrix models (not considered here).

For the proof of the circular law and its heavy tailed analogue, lemma
\ref{le:snRV} can be used typically with $t =1/ (s^2 \sqrt n)$ and $s =
n^r$ large enough such that with high probability $s_1 ( X + M ) \leq s $. In contrast with the Tao and Vu result, lemma \ref{le:snRV} cannot provide
a summable bound usable with the first Borel-Cantelli lemma due to the
presence of $1/\sqrt{n}$.

Let us give the idea behind the proof of lemma \ref{le:snRV}. A geometric
intuition says that the smallest singular value of a random matrix can be
controlled by the minimum of the distances of each row to the span of the
remaining rows. The distance of a vector to a subspace can be controlled with
the scalar product of the vector with a unit norm vector belonging to the
orthocomplement of the subspace. Also, when the entries of the matrix are
independent, this boils down by conditioning to the control of a small ball
probability involving a linear combination of independent random variables.
The coefficients in this combination are the components of the orthogonal
vector. The asymptotic behavior of this small ball probability depends in turn
on the structure of these coefficients. When the coefficients are well spread,
we expect an asymptotic Gaussian behavior thanks to the central limit theorem,
more precisely its quantitative weighted version called the Berry-Esseen
theorem. We will follow this scheme while keeping the geometric picture in
mind.

The proof of lemma \ref{le:snRV} is divided into two parts which correspond to
a subdivision of the unit sphere $\dS^{n-1}$ of $\dC^n$. Namely, for some real
positive parameters $\delta, \rho > 0$ that will be fixed later, we define the
set of \emph{sparse vectors}
\[
\SPARSE:=
\{x\in\dC^n:\CARD(\SUPP(x))\leq \de n\}
\]
and we split the unit sphere $\dS^{n-1}$ into a set of \emph{compressible
  vectors} and the complementary set of \emph{incompressible vectors} as
follows:
\[
\COMP:= \{x\in\dS^{n-1}:\DIST(x,\SPARSE)\leq\rho\}
\quad\text{and}\quad
\INCOMP:= \dS^{n-1}\setminus\COMP.
\]

We will use the variational formula, for $A \in \cM_n (\dC)$,
\begin{equation}\label{eq:decompsn}
  s_n ( A) = \min_{ x \in \dS^{n-1}} \NRM{A x}_2 %
  = \min\PAR{\min_{x\in\COMP}\NRM{A x}_2,\min_{x\in\INCOMP}\NRM{Ax}_2}.
\end{equation}

\subsubsection*{Compressible vectors}

Our treatment of compressible vectors differs significantly from
\cite{MR2146352,MR2407948} (it gives however a weaker statement). We start with a variation around lemma
\ref{le:concdist}.

\begin{lemma}[Distance of a random vector to a small subspace]
  \label{le:concdist2}
  There exist $\veps, c, \de_0 >0$ such that for all $n\gg1$, all $1\leq i\leq
  n$, any deterministic vector $v\in\dC^n$ and any subspace $H$ of $\dC^n$
  with $1 \leq \DIM(H) \leq \de_0 n$, we have, denoting
  $C:=(X_{1i},\ldots,X_{ni}) + v$,
  \[
  \dP\PAR{\DIST(C,H) \leq \veps \sigma\sqrt {n}} %
  \leq c \exp(-c \si^2 n).
  \]
\end{lemma}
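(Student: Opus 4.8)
The plan is to follow the scheme of the proof of lemma~\ref{le:concdist}, exploiting this time that $\DIM(H)\le\de_0 n$ forces the orthogonal projection onto $H^\perp$ to have rank $\ge(1-\de_0)n$, i.e.\ almost full. \emph{Step 1 (truncation, removal of the large coordinates).} Set $\te_j:=\IND_{\{|X_{ji}|\le a\}}$ and $\Om:=\{j:\te_j=0\}$. Since $\dE\sum_j\te_j\ge bn$, Hoeffding's inequality gives $\dP\PAR{|\Om|>(1-b/2)n}\le e^{-b^2 n/2}$. On the complementary event, enlarge $H$ to $H^\sharp:=H+\SPAN\{e_j:j\in\Om\}$; since enlarging $H$ only decreases the distance, $(H^\sharp)^\perp\subset\SPAN\{e_j:j\notin\Om\}$, and $C$ agrees on $\Om^c$ with its truncation $\WH C:=\PAR{X_{ji}\IND_{\{|X_{ji}|\le a\}}+v_j}_{1\le j\le n}$, one gets
\[
\DIST(C,H)\ \ge\ \DIST(\WH C,H^\sharp)\ =\ \NRM{P^\sharp\WH C}_2 ,
\]
where $P^\sharp$ is the projection onto $(H^\sharp)^\perp$ and, on the good event, $\RANK(P^\sharp)=n-\DIM(H^\sharp)\ge(b/2-\de_0)n\ge bn/4$ as soon as $\de_0<b/4$.

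\emph{Step 2 (conditional concentration).} Condition on $\cF:=\si\PAR{\te_1,\dots,\te_n,(X_{ji})_{j\in\Om}}$, so that $P^\sharp$ and $P^\sharp v$ become $\cF$-measurable and the coordinates $(X_{ji})_{j\in\Om^c}$ are conditionally independent and supported in the disc of radius $a$. The map $x\mapsto\NRM{P^\sharp x+P^\sharp v}_2$ is convex and $1$-Lipschitz on a product of discs of radius $a$, so Talagrand's concentration inequality — exactly as invoked in the proof of lemma~\ref{le:concdist} — gives, for $t\ge0$,
\[
\dP_\cF\PAR{\ABS{\NRM{P^\sharp\WH C}_2-M_\cF}\ge t}\ \le\ 4\exp\PAR{-\frac{t^2}{16a^2}} ,
\]
with $M_\cF$ the conditional median. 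One then lower bounds $M_\cF$ via the conditional second moment,
\[
\dE_\cF\NRM{P^\sharp\WH C}_2^2\ \ge\ \sum_{j\in\Om^c}\VAR\PAR{X_{ji}\mid|X_{ji}|\le a}\,\NRM{P^\sharp e_j}_2^2 ,
\]
and, provided this is shown to be $\ge c\,\si^2\RANK(P^\sharp)\ge(c/4)\,\si^2 bn$, one obtains $M_\cF\ge c'\si\sqrt n$ for $n\gg1$; taking $t$ a small multiple of $\si\sqrt n$ then gives $\dP_\cF\PAR{\NRM{P^\sharp\WH C}_2\le\veps\si\sqrt n}\le4\exp(-c''\si^2 n/a^2)$ for suitable $\veps>0$. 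Taking expectations over $\cF$ and adding the $e^{-b^2 n/2}$ of Step~1 closes the argument, with $c$, $\veps$, $\de_0$ depending on $a$, $b$, $\si$.

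\emph{The delicate point} — and the step I expect to be the main obstacle — is the last displayed lower bound. Conditioning on $\{|X_{ji}|\le a\}$ distorts the law of $X_{ji}$, and the hypothesis $\VAR(X_{ji}\IND_{\{|X_{ji}|\le a\}})\ge\si^2$ only yields
\[
q_j\,\VAR\PAR{X_{ji}\mid|X_{ji}|\le a}+q_j(1-q_j)\,\ABS{\dE\PAR{X_{ji}\mid|X_{ji}|\le a}}^2\ \ge\ \si^2 ,\qquad q_j:=\dP(|X_{ji}|\le a)\ge b ,
\]
so that a coordinate with small conditional variance is compensated only by having its conditional mean far from the origin. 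The remedy is to carry along, besides the conditional variances, the deterministic vector $\dE_\cF\WH C|_{\Om^c}$ and, after one further rank-one enlargement of $H^\sharp$, to check that in all cases enough of the conditional variance — or of this deterministic component — survives the projection $P^\sharp$ to force $M_\cF\gtrsim\si\sqrt n$; this bookkeeping (and the freedom to take $\de_0$ as small as needed) is where the weight of the proof lies. When the $X_{ij}$ are i.i.d.\ and carry the mild moment assumption of the appendix one can instead simply pick the truncation level $a$ outside the atoms of $|X_{11}|$, which makes $\VAR(X_{11}\mid|X_{11}|\le a)\ge\si^2/2$ outright and bypasses the issue.
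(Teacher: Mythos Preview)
Your approach is essentially the paper's: Hoeffding to isolate $m=\lceil nb/2\rceil$ bounded coordinates, enlarge $H$ to a slightly larger subspace $W$, center, and apply Talagrand's inequality to the resulting convex $1$-Lipschitz distance function. The one structural difference is in Step~1: you enlarge $H$ by all of $\SPAN\{e_j:j\in\Omega\}$ (potentially $(1-b/2)n$ extra dimensions), whereas the paper adds to $H$ the \emph{single} vector $u=(0,\ldots,0,X_{m+1,i},\ldots,X_{n,i})$ and observes that $\DIST(C,W)=\DIST(Y,W)$ with $Y=C-u-v-w$ supported on the first $m$ coordinates only. Thus the paper's $W=\SPAN(H,v,u,w)$ satisfies $\DIM(W)\le\DIM(H)+3$, which is tidier than your construction but not essential, since your looser bound $\RANK(P^\sharp)\ge bn/4$ is still linear in $n$.

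Regarding your ``delicate point'': the paper performs exactly the rank-one enlargement you propose --- it includes in $W$ the vector $w=\bigl(\dE[X_{1i}\mid|X_{1i}|\le a],\ldots,\dE[X_{mi}\mid|X_{mi}|\le a],0,\ldots,0\bigr)$ of conditional means --- and then writes, without further comment, ``By assumption, $\dE_m|Y_k|^2\ge\si^2$'', leading directly to $\dE_m\DIST^2(Y,W)\ge\si^2\bigl(nb/2-\DIM(H)-3\bigr)$. No additional bookkeeping is carried out. So the concern you raise --- that $\VAR(X_{ki}\IND_{\{|X_{ki}|\le a\}})\ge\si^2$ does not by itself force $\VAR(X_{ki}\mid|X_{ki}|\le a)\ge\si^2$ --- is not addressed by any extra argument in the paper; it simply asserts the bound and moves on. You are not missing a hidden trick.
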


\begin{proof}
  First, from Hoeffding's deviation inequality,
  \[
  \dP\PAR{\sum_{k=1}^n \IND_{\{|X_{ki}|\leq a\}} \leq \frac{nb}{2}} %
    \leq \exp\PAR{-\frac{nb^2}{2}}.
  \] 
  It is thus sufficient to prove the result by conditioning on 
  \[
  E_m := \{ |X_{1i} |\leq a, \ldots ,|X_{mi}|\leq a \} %
  \quad\text{with}\quad %
  m := \CEIL{n b /2}.
  \]
  Let $\dE_m[\,\cdot\,] := \dE [\,\cdot\,|E_m;\cF_m]$ denote the conditional
  expectation given $E_m$ and the filtration $\cF_m$ generated by
  $X_{m+1,i},\ldots,X_{n,i}$. Let $W$ be the subspace spanned by $H$, $v$, and
  the vectors $u := (0,\ldots,0,X_{m+1,i},\ldots,X_{n,i})$ and
  \[
  w:=\PAR{\dE\SBRA{X_{1i}\bigm||X_{1i}|\leq a},\ldots,\dE\SBRA{X_{mi}\bigm||X_{mi}|\leq a},0,\ldots,0}.
  \]
  By construction $\DIM(W) \leq \DIM(H) + 3$ and $W$ is
  $\cF_m$-measurable. We note also that
  \[
  \DIST(C,H) \geq \DIST(C,W) = \DIST(Y, W), 
  \]
  where 
  \[
  Y := \PAR{X_{1i}-\dE\SBRA{X_{1i}\bigm||X_{1i}|\leq a},\ldots,X_{mi}-\dE\SBRA{X_{mi}\bigm||X_{mi}|\leq a},0,\ldots,0} %
  = C - u - v - w.
  \]
  By assumption, for $1 \leq k \leq m$,
  \[
  \dE_m Y_ k = 0 \quad \text{and } \quad \dE_m |Y_ k|^2 \geq \sigma^2. 
  \]
  Let $D = \{z : |z |\leq a\}$. We define the function $f:D^m \to \dR_+$
  by
  \[
  f(x)= \DIST((x_1,\ldots,x_m,0,\ldots,0),W).
  \]
  This function is convex and $1$-Lipschitz. Hence, Talagrand's concentration
  inequality gives
  \[
  \dP_m\PAR{\ABS{\DIST(Y, W)-M_m}\geq t} %
  \leq 4 \exp\PAR{- \frac{t^2}{16a^2}},
  \]
  where $M_m$ is the median of $f$ under $\dP_m$. In particular, 
  \[
  M_m \geq
  \sqrt{ \dE_m \DIST^2 (Y, W) }- c a.
  \]
  Also, if $P$ denotes the orthogonal projection on the orthogonal of $W$, we
  find
  \begin{align*}
    \dE_m   \DIST^2(Y, W) %
    & = \sum_{k=1}^m \dE_m |Y_{k}|^2 P_{kk} \\
    & \geq \sigma^2\PAR{\sum_{k=1}^n P_{kk} - \sum_{k=m+1}^n P_{kk}} \\
    & \geq \sigma^2 \PAR{n-\DIM(H)-3-(n-m)} \\
    & \geq \sigma^2 \PAR{\frac{nb}{2}-\DIM(H)-3}.
  \end{align*}
  The latter, for $n$ large enough, is lower bounded by $c \sigma^2 n$ if
  $\delta_0 = b /4$. 
\end{proof}

Let $0 < \veps < 1$ and $s\geq1$ be as in lemma \ref{le:concdist2}. We set
from now on
\[
\rho = \frac{1}{4} \min(1,\frac{\veps \sigma}{ s  \sqrt { \delta}} ),
\]
(in particular, $\rho \leq 1/4$). The parameter $0 < \de < 1$ is still to be
specified: at this stage, we simply assume that $\de < \de_0$. We note that if
$A \in \cM_n (\dC)$ and $y \in \dC^n$ is such that $\SUPP(y)\subset\pi \subset
\{1, \ldots , n\}$, then we have
\[
\NRM{A y}_2\geq \NRM{y}_2s_n(A_{|\pi}),
\]
where $A_{| \pi}$ is the $n \times |\pi|$ matrix formed by the columns of $A$
selected by $\pi$. We deduce that
\begin{equation}\label{eq:comp}
  \min_{ x \in \COMP} \NRM{Ax}_2 %
  \geq \frac{3}{4} %
  \min_{\pi\subset\{1,\ldots,n\} : |\pi| = \FLOOR{\delta n}}s_n(A_{|\pi}) %
  - \rho s_1 (A). 
\end{equation}
However, by Pythagoras theorem, for any $x \in \dC^{|\pi|}$,
\[
\NRM{A_{|\pi} x}_2^2 = \NRM{\sum_{i \in \pi} x_i C_i}_2^2 %
\geq \max_{i \in \pi}  |x_i|^2 \DIST^2(C_i , H_{i})%
\geq \min_{i \in \pi} \DIST^2(C_i , H_{i}) \frac{1}{ |\pi| } \sum_{i\in \pi} |x_i|^2
\]
where $C_i$ is the $i$-th column of $A$ and $H_i:=\SPAN\{C_j:j \in \pi, j \ne
i\}$. In particular,
\[
s_n ( A_{|\pi}) \geq \min_{i \in \pi}  \DIST(C_i, H_i) / \sqrt {|\pi|}. 
\]
Now, we apply this bound to $A = X +M$. Since $H_{i}$ has dimension at most
$n\delta$ and is independent of $C_{i}$, by lemma \ref{le:concdist2}, the
event that,
\[
\min_{i \in \pi}  \DIST(C_i, H_{i}) \geq \veps  \sigma \sqrt n,
\]
has probability at least $1 - c n  \delta \exp ( - c \si^2  n )$ for $n \gg 1$. Hence
\[
\dP \PAR{ s_n ( (X+M)_{|\pi}) \leq \frac{ \veps \sigma}{\sqrt \delta} } \leq c n  \delta \exp (
- c \si^2  n).
\]
Therefore, using the union bound and our choice of $\rho$, we deduce from \eqref{eq:comp}
\begin{align*}
\dP\PAR{%
  \min_{x\in\COMP}\NRM{(X+M)x}_2\leq \frac{ \veps \sigma}{2 \sqrt \delta} %
  \ ;\ s_1(X+M) \leq s  } %
&\leq c \binom{n}{\FLOOR{\delta n}} n \delta e^{- c \si^2  n} \\
&= cn \delta  e^{n(H(\delta )( 1 + o (1) ) - c \si ^2 )},
\end{align*}
with $H(\delta) := - \de \log \de - (1- \de) \log (1 -\de)$. Therefore, if
$\de$ is chosen small enough so that $H(\delta) < c  \si^2   / 2 $, we have proved that for
some $c_1 >0$,
\begin{equation}\label{eq:invcomp}
  \dP\PAR{%
    \min_{x\in\COMP}\NRM{(X+M)x}_2\leq \frac{ \veps \sigma}{2 \sqrt \delta}  %
    \ ;\ s_1(X+M) \leq s } %
  \leq   \exp(-c_1  n ), 
\end{equation}
 (note that the constant $c_1$ depends on $\si$). From now on, we fix $$\delta = \frac{ c_2 \si^2 }{ | \log \si |},$$ with $c_2$ small enough so that $\delta < \delta_0$ and $H(\delta) < c  \si^2   / 2 $. 
\subsubsection*{Incompressible vectors: invertibility via distance}

We start our treatment of incompressible vectors with two key observations
from \cite{MR2407948}. 

\begin{lemma}[Incompressible vectors are spread]\label{le:incompspread}
  Let $x \in \INCOMP$. There exists a subset $\pi \subset \{1, \ldots, n\}$
  such that $|\pi | \geq \de n / 2$ and for all $i \in \pi$,
  \[
  \frac{\rho}{\sqrt { n}} \leq |x_i|\leq \sqrt{\frac{2}{\delta n}}.
  \]
\end{lemma}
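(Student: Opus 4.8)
I would run the classical incompressibility/pigeonhole argument of Rudelson and Vershynin. Fix $x\in\INCOMP$, so $\NRM{x}_2=1$ and $\DIST(x,\SPARSE)>\rho$, and split the index set $\{1,\dots,n\}$ into three pieces according to the size of the coordinates of $x$:
\[
\sigma_s:=\BRA{i:\ |x_i|<\tfrac{\rho}{\sqrt n}},\qquad
\sigma_\ell:=\BRA{i:\ |x_i|>\sqrt{\tfrac{2}{\de n}}},\qquad
\pi:=\{1,\dots,n\}\setminus(\sigma_s\cup\sigma_\ell).
\]
These are disjoint and exhaustive, and by construction $\pi$ is precisely the set of indices $i$ with $\rho/\sqrt n\leq|x_i|\leq\sqrt{2/(\de n)}$. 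Hence it only remains to prove $|\pi|\geq\de n/2$.

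The first step is to control the two exceptional sets. Since $1=\NRM{x}_2^2\geq\sum_{i\in\sigma_\ell}|x_i|^2>\tfrac{2}{\de n}\,|\sigma_\ell|$, we get $|\sigma_\ell|<\de n/2$. For the small coordinates, let $y$ be the vector obtained from $x$ by replacing by $0$ every coordinate indexed by $\sigma_s$; then $x-y$ is supported on $\sigma_s$, and $\NRM{x-y}_2^2=\sum_{i\in\sigma_s}|x_i|^2\leq |\sigma_s|\cdot\tfrac{\rho^2}{n}\leq\rho^2$, so $\NRM{x-y}_2\leq\rho$.

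The second step is a contradiction argument. Assume $|\pi|<\de n/2$. The support of $y$ is contained in $\pi\cup\sigma_\ell$, so $\CARD(\SUPP(y))\leq|\pi|+|\sigma_\ell|<\de n/2+\de n/2=\de n$, which means $y\in\SPARSE$. Consequently $\DIST(x,\SPARSE)\leq\NRM{x-y}_2\leq\rho$, i.e.\ $x\in\COMP$, contradicting $x\in\INCOMP$. Therefore $|\pi|\geq\de n/2$, and every $i\in\pi$ satisfies the required two-sided bound on $|x_i|$.

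I do not expect any genuine obstacle here: the argument is elementary, and the only points needing (minor) care are keeping the three index sets disjoint and exhaustive, checking that the $\SPARSE$ support bound $\leq\de n$ is met, and invoking the hypothesis $\DIST(x,\SPARSE)>\rho$ exactly once, at the very end.
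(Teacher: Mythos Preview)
Your proof is correct and follows essentially the same approach as the paper's: the paper defines the ``good'' sets $\pi_1=\{k:|x_k|\leq\sqrt{2/(\de n)}\}$ and $\pi_2=\{k:|x_k|\geq\rho/\sqrt n\}$ and takes $\pi=\pi_1\cap\pi_2$, whereas you define their complements $\sigma_\ell=\pi_1^c$, $\sigma_s=\pi_2^c$ and set $\pi=(\sigma_s\cup\sigma_\ell)^c$. The only cosmetic difference is that the paper first deduces $|\pi_2|\geq\de n$ from incompressibility and then computes $|\pi|\geq n-|\pi_1^c|-|\pi_2^c|$, while you fold this into a single contradiction on $|\pi|<\de n/2$; the content is identical.
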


\begin{proof}
  For $\pi \subset \{1, \ldots, n\}$, we denote by $P_\pi$ the orthogonal
  projection on $\SPAN\{e_{i}; i \in \pi\}$. Let $\pi_1 = \{ k : |x_k
  |\leq \sqrt { 2 / (\de n) }\}$ and $\pi_2 = \{ k : |x_k |\geq \rho / \sqrt{
    n} \}$. Since $\NRM{x}_2^2 = 1$, we have
  \[
  | \pi_1 ^c |\leq   \frac{\de n}{2}. 
  \]
  Note also that 
  \[
  \NRM{x - P_{\pi_2} x}_2 = \NRM{P_{\pi_2 ^c }x}_2 \leq \rho.  
  \]
  Hence, the definition of incompressible vectors implies that $|\pi_2| \geq
  \delta n$. We put $\pi = \pi_1 \cap \pi_2$. From what precedes,
  \[
  |\pi | \geq n - |\pi_1 ^c |- |\pi_2^c| %
  \geq n - \frac{\de n}{2} - ( n - \de n) %
  = \frac{\de n}{2}.
  \]
\end{proof}

\begin{lemma}[Invertibility via average distance]\label{le:invIncomp}
  Let $A$ be a random matrix taking its values in $\cM_n(\dC)$, with columns
  $C_1, \ldots, C_n$, and for some arbitrary $1\leq k\leq n$, let $H_k$ be the
  span of all these columns except $C_k$. Then, for any $t \geq 0$,
  \[
  \dP\PAR{%
    \min_{ x \in \INCOMP} \NRM{Ax}_2 %
    \leq \frac{t \rho}{\sqrt n}} %
  \leq \frac{2}{\de n} \sum_{k=1}^n \dP \PAR{\DIST(C_k , H_k) \leq t}.
  \]
\end{lemma}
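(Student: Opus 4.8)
The plan is to reduce the event $\{\min_{x\in\INCOMP}\NRM{Ax}_2\leq t\rho/\sqrt n\}$ to the event that at least a fixed fraction of the columns $C_k$ lie within distance $t$ of the span $H_k$ of the others, and then close with a first‑moment (Markov) bound. The link between the two events is the spreading estimate of Lemma \ref{le:incompspread} together with the elementary inequality $\NRM{Ax}_2\geq|x_i|\,\DIST(C_i,H_i)$, valid for every index $i$.

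First I would record that inequality. Writing $Ax=\sum_{j=1}^n x_jC_j$ and isolating the $i$-th term, the vector $\sum_{j\neq i}x_jC_j$ belongs to $H_i=\SPAN\{C_j:j\neq i\}$, so
\[
\NRM{Ax}_2=\NRM{x_iC_i-\BPAR{-\sum_{j\neq i}x_jC_j}}_2\geq\DIST(x_iC_i,H_i)=|x_i|\,\DIST(C_i,H_i).
\]
Next, suppose $x\in\INCOMP$ satisfies $\NRM{Ax}_2\leq t\rho/\sqrt n$. By Lemma \ref{le:incompspread} there is a set $\pi\subset\{1,\dots,n\}$ with $|\pi|\geq\de n/2$ and $|x_i|\geq\rho/\sqrt n$ for every $i\in\pi$. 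Combining with the displayed inequality, for each $i\in\pi$,
\[
\frac{\rho}{\sqrt n}\,\DIST(C_i,H_i)\leq|x_i|\,\DIST(C_i,H_i)\leq\NRM{Ax}_2\leq\frac{t\rho}{\sqrt n},
\]
hence $\DIST(C_i,H_i)\leq t$ for all $i\in\pi$. Therefore the random set $N:=\{k:\DIST(C_k,H_k)\leq t\}$ has $|N|\geq\de n/2$, so that $\{\min_{x\in\INCOMP}\NRM{Ax}_2\leq t\rho/\sqrt n\}\subset\{|N|\geq\de n/2\}$.

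Finally I would conclude by a first‑moment estimate: since $|N|=\sum_{k=1}^n\IND_{\{\DIST(C_k,H_k)\leq t\}}$, Markov's inequality gives
\[
\dP(|N|\geq\de n/2)\leq\frac{2}{\de n}\,\dE|N|=\frac{2}{\de n}\sum_{k=1}^n\dP(\DIST(C_k,H_k)\leq t),
\]
which is the claimed bound. I expect no serious obstacle here: the only point requiring a word of care is that $\INCOMP=\dS^{n-1}\setminus\COMP$ is not closed, so the ``$\min$'' should be read as an infimum; one takes a sequence $x_m\in\INCOMP$ with $\NRM{Ax_m}_2$ approaching the infimum, notes that the finitely many possible sets $\pi$ force a common $\pi$ along a subsequence, and passes to the limit in the displayed chain of inequalities. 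All the substance is in Lemma \ref{le:incompspread}, already established, and in matching $\rho$ and $\de$ with the parameters fixed in the treatment of compressible vectors.
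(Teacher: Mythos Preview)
Your proof is correct and follows essentially the same route as the paper: both use $\NRM{Ax}_2\geq|x_i|\,\DIST(C_i,H_i)$ together with Lemma~\ref{le:incompspread} to deduce that on the bad event at least $\de n/2$ indices satisfy $\DIST(C_k,H_k)\leq t$, and then conclude by the pointwise indicator bound/Markov. The paper phrases the final step via the elementary inequality $\IND_{\{\max_{1\leq k\leq m} y_k\leq t\}}\leq m^{-1}\sum_{k=1}^n\IND_{\{y_k\leq t\}}$ and takes expectations, which is exactly your Markov bound on $|N|$; your remark on reading the minimum as an infimum is a harmless technical refinement the paper leaves implicit.
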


\begin{proof}
  Let $x \in \dS^{n-1}$, from $A x= \sum_k C_k x_k$, we get
  \[
  \NRM{A x}_2 %
  \geq \max_{ 1 \leq k \leq n} \DIST(Ax , H_k) %
  = \max_{ 1 \leq k \leq n} |x_k |\DIST(C_k , H_k).
  \] 
  Now if $x \in \INCOMP$ and $\pi$ is as in lemma
  \ref{le:incompspread}, we get
  \[
  \NRM{Ax}_2 %
  \geq \frac{\rho}{\sqrt { n}} \max_{ k \in \pi} \DIST(C_k , H_k).
  \]
  Then, the conclusion follows from the fact that for any reals
  $y_1,\ldots,y_n$ and $1 \leq m \leq n$,
  \[
  \IND_{\{\max_{ 1 \leq k \leq m} y_k \leq t\}}  %
  \leq \frac 1 m \sum_{k=1}^m \IND_{\{y_k \leq t\}} %
  \leq \frac 1 m \sum_{k=1}^n \IND_{\{y_k \leq t\}}.
  \]
\end{proof}

The strength of lemma \ref{le:invIncomp} lies in the fact that the control of
$\NRM{Ax}_2$ over all incompressible vectors is done by an average of the
distance between the columns of $A$.

\subsubsection*{Incompressible vectors: small ball probability}

Now, we come back to our matrix $X + M$: let $C$ be the $k$-th column of $X +
M$ and $H$ be the span of all columns but $C$. Our goal in this sub-section is
to establish the bound, for all $t \geq 0$,
\begin{equation}\label{eq:invincomp}
  \dP\PAR{\DIST(C,H) \leq \rho t \,;\, s_1 ( X + M ) \leq s  } %
  \leq \frac{c} { \si } \sqrt{ \frac { |\log\rho | } {  \delta }} \PAR{t+\frac{1}{\sqrt{n}}}. 
\end{equation}
To this end, we also consider a random vector $\zeta$ taking its values in
$\dS^{n-1}\cap H^\perp$, which is independent of $C$. Such a random vector
$\zeta$ is not unique, we just pick one and we call it the \emph{orthogonal
  vector} (to the subspace $H$). We have
\begin{equation}\label{eq:distnormal}
  \DIST(C,H) \geq |\ANG{\zeta,C}| . 
\end{equation}

\begin{lemma}[The random orthogonal vector is
  Incompressible]\label{le:normalincomp}
  For our choice of $\rho,\de$ and $c_1$ as in \eqref{eq:invcomp}, we have
  \[
  \dP\PAR{\zeta \in \COMP\, ; s_1 ( X + M ) \leq s  } %
  \leq \exp ( - c_1 \si ^2 n).
  \]
\end{lemma}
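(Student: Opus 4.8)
The plan is to reduce the statement to the compressible-vectors estimate \eqref{eq:invcomp}, applied not to $X+M$ itself but to a closely related rectangular matrix. Write $C$ for the $k$-th column of $X+M$ and let $B$ be the $n\times(n-1)$ matrix whose columns are the remaining columns $C_j$, $j\neq k$, so that $H=\mathrm{range}(B)$ and the defining property $\zeta\perp H$ becomes $B^*\zeta=0$. First I would record two structural facts about $B^*$: it is obtained from $(X+M)^*$ by deleting one row, so $s_1(B^*)=s_1(B)\leq s_1(X+M)$; and its columns (indexed by the row index $i$ of $X+M$, the $i$-th column depending only on the $i$-th row of $X$) are independent, each with independent entries of the form ``$X_{ij}$ plus a constant'', so the quantities $b,\sigma^2$ of lemma \ref{le:snRV} are unchanged and lemma \ref{le:concdist2} applies to the columns of $B^*$ (with $n$ replaced by $n-1$, which only affects absolute constants).

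Next I would rerun, for $B^*$, the argument that produced \eqref{eq:invcomp}. Exactly as in \eqref{eq:comp}, for any $x\in\COMP$ one has
\[
\|B^*x\|_2 \;\geq\; \frac34\min_{\pi\subset\{1,\dots,n\},\,|\pi|=\FLOOR{\delta n}}s_n\big((B^*)_{|\pi}\big) \;-\; \rho\,s_1(B^*),
\]
and, with $H_i:=\SPAN\{\mathrm{col}_j(B^*):j\in\pi,\ j\neq i\}$ a subspace of dimension $\leq\delta n\leq\delta_0 n$ that is independent of $\mathrm{col}_i(B^*)$, one has $s_n\big((B^*)_{|\pi}\big)\geq|\pi|^{-1/2}\min_{i\in\pi}\DIST\big(\mathrm{col}_i(B^*),H_i\big)$. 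Feeding lemma \ref{le:concdist2} into a union bound over $i\in\pi$ and then over the $\binom{n}{\FLOOR{\delta n}}\leq e^{n(H(\delta)+o(1))}$ choices of $\pi$ gives, for $n\gg1$,
\[
\dP\Big(\min_\pi s_n\big((B^*)_{|\pi}\big)\leq\frac{\veps\sigma}{2\sqrt\delta}\Big) \;\leq\; c\,n\,e^{\,n(H(\delta)(1+o(1))-c\sigma^2)} \;\leq\; \exp(-c_1\sigma^2 n),
\]
using the very choice $\delta=c_2\sigma^2/|\log\sigma|$ with $c_2$ small enough that $H(\delta)<c\sigma^2/2$, exactly as in the derivation of \eqref{eq:invcomp}.

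Finally I would check that the event in the statement sits inside the event just estimated. If $\zeta\in\COMP$, then $B^*\zeta=0$ forces $0\geq\frac34\min_\pi s_n\big((B^*)_{|\pi}\big)-\rho\,s_1(B^*)$; if in addition $s_1(X+M)\leq s$, then $s_1(B^*)\leq s$ and, by the choice $\rho=\frac14\min\big(1,\veps\sigma/(s\sqrt\delta)\big)$, we get $\rho\,s_1(B^*)\leq\veps\sigma/(4\sqrt\delta)$, whence $\min_\pi s_n\big((B^*)_{|\pi}\big)\leq\veps\sigma/(3\sqrt\delta)<\veps\sigma/(2\sqrt\delta)$. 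Combining the two displays yields $\dP\big(\zeta\in\COMP\,;\,s_1(X+M)\leq s\big)\leq\exp(-c_1\sigma^2 n)$, which is the claim. The one point that genuinely needs care — and the only place I expect any friction — is making the reduction to $B^*$ clean: verifying that deleting a column of $X+M$ keeps the columns of its transpose independent, that it does not increase the operator norm, and that lemma \ref{le:concdist2} together with the covering of $\dS^{n-1}$ by sparse supports is insensitive to the rectangular shape. Granting that, the proof is a transcription of the argument for \eqref{eq:invcomp} with no new ideas.
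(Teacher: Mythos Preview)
Your argument is correct and follows the same route as the paper: identify the $(n-1)\times n$ matrix $B^*$ (the paper calls it $A$, obtained from $(X+M)^*$ by deleting the $k$-th row), observe $B^*\zeta=0$ and $s_1(B^*)\leq s_1(X+M)$, and then invoke the compressible-vectors estimate \eqref{eq:invcomp} for this matrix, whose columns have the same independence and truncated-variance structure. Your version is in fact more carefully worked out than the paper's terse proof; in particular, you correctly re-derive \eqref{eq:invcomp} for the rectangular $B^*$ rather than relying on the paper's slightly imprecise inequality $\min_{x\in\COMP}\|Ax\|_2\geq\min_{x\in\COMP}\|(X+M)^*x\|_2$.
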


\begin{proof}
  Let $A \in \cM_{n-1,n} (\dC)$ be the matrix obtained from $(X+M)^*$ by
  removing the $k$-th row. Then, by construction : $A \zeta = 0$, $s_1 (
  (X+M)^*) = s_1 (X+M)$, and
  \[
  \min_{ x \in \COMP} \NRM{A x}_2 %
  \geq \min_{ x \in \COMP} \NRM{(X+M)^*x}_2.
  \]
  The left hand side (and thus the right hand side) is zero if
  $\zeta\in\COMP$. In particular,
  {\small
  \[
  \dP\PAR{\zeta \in \COMP\, ; s_1 ( X + M ) \leq s } 
  \leq
  \dP\PAR{\min_{x\in \COMP}\NRM{(X+M)^*x}_2=0\ ;\ s_1((X+M)^*)\leq s }.
  \]
  } 
  It remains to notice that obviously \eqref{eq:invcomp} holds with $(X+M)$
  replaced by $(X+M)^*$. Indeed the statistical assumptions are the same on $X
  +M$ and $(X+M)^*$.
\end{proof}

We have reached now the final preparation step before the use of the
Berry-Esseen theorem. This step consists in the reduction to a case where for
a fixed set of coordinates, both the components of $\zeta$ and the random
variables $X_{ik}+M_{ik}$ are well controlled. Namely, if $\zeta \in \INCOMP$,
let $\pi \subset \{1, \ldots, n\}$ be as in lemma \ref{le:incompspread}
associated to vector $\zeta$. Then conditioned on $\{\zeta \in \INCOMP\}$,
from Hoeffding's deviation inequality, the event that
\[
\sum_{i \in \pi} \IND_{\{|X_{ik} | \leq a\}} %
\geq \frac{|\pi| b}{2} \geq \frac{\de b n} { 4},
\]
has conditional probability at least $1 - \exp (- |\pi | b^2 /  2   ) \geq 1 -
\exp (- c \de n)$ (recall that $\zeta$ hence $\pi$ are independent of $C$). In
summary, using our choice of $\delta, \rho$, by lemma \ref{le:normalincomp} and \eqref{eq:distnormal}, in order to
prove \eqref{eq:invincomp}, it is sufficient to prove that for all $t \geq 0$,
\[
\dP_m\PAR{\ABS{\ANG{\zeta,C}}\leq \rho t} %
\leq  \frac{c} { \si } \sqrt{ \frac { |\log\rho  |} {  \delta }} \PAR{t+\frac{1}{\sqrt{n}}}.  
\]
where $\dP_m(\cdot)= \dP(\cdot|E_m,\cF_m)$ is the conditional probability
given $\cF_m$ the $\sigma$-algebra generated by all variables but
$(X_{1k},\ldots,X_{mk})$, $m = \FLOOR{\de bn/4}$, and the event
\[
E_m = %
\BRA{\frac{\rho}{\sqrt{n}}\leq |\zeta_i|\leq \sqrt{\frac{2}{\de n}}; %
  1 \leq i \leq m} %
\bigcup %
\BRA{|X_{ik}|\leq a; 1 \leq i \leq m}.
\]
We may write
\[
\ANG{\zeta,C} %
= \sum_{i=1}^n \bar \zeta_i \DOT{C,e_i} %
= \sum_{i =1}^m \bar \zeta_i X_{ik} + u,
\]
where $u \in \cF_m$ is independent of $(X_{1k},\ldots,X_{mk}) $. It follows
that
\begin{equation}\label{eq:preprocBE}
  \dP_m\PAR{|\ANG{\zeta,C} | \leq \rho t} %
  \leq  \sup_{z \in \dC , \pi \subset \{1 , \ldots, m\} } %
  \dP_m\PAR{\ABS{\sum_{i \in\pi}\bar\zeta_i(X_{ik}-\dE_m X_{ik})-z}\leq \rho t}.
\end{equation}
The idea, originated from \cite{MR2146352}, is now to use the rate of
convergence given by the Berry-Esseen theorem to upper bound this last
expression.

\begin{lemma}[Small ball probability via Berry-Esseen
  theorem]\label{le:smallballBE}
  There exists $c >0$ such that if $Z_1,\ldots,Z_n$ are independent centered
  complex random variables, then for all $t \geq 0$,
  \[
  \sup_{z \in \dC} \dP\PAR{\ABS{\sum_{i =1 }^n Z_i - z}\leq t} %
  \leq \frac{c t}{ \sqrt{\sum_{i=1}^n \dE ( |Z_i|^2)}} %
  + \frac{c\sum_{i=1}^n\dE(|Z_i|^3)}{\PAR{\sum_{i=1}^n\dE(|Z_i|^2)}^{3/2}}.
  \] 
\end{lemma}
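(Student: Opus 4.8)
\textbf{Proof proposal for Lemma \ref{le:smallballBE}.}

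The plan is to reduce the two-dimensional small-ball estimate to the classical Berry--Esseen theorem on $\dR$ applied to a suitable one-dimensional projection, with a separate elementary treatment of the degenerate case. First I would dispose of trivialities: if $\sum_i\dE(|Z_i|^2)=0$ then all $Z_i$ vanish a.s., both sides are handled directly, so assume $V:=\sum_i\dE(|Z_i|^2)>0$. Write $z=z_1+iz_2$ and note that the event $\{|\sum_i Z_i-z|\le t\}$ is contained in the strip $\{|\Re(\sum_i Z_i)-z_1|\le t\}$. Setting $W_i:=\Re(Z_i)$, which are independent centered \emph{real} random variables with $\sum_i\dE(W_i^2)=:v_1$, this gives
\[
\dP\PAR{\ABS{\sum_{i}Z_i-z}\le t}\le \sup_{z_1\in\dR}\dP\PAR{\ABS{\sum_i W_i-z_1}\le t}.
\]
Of course the same bound holds with $W_i$ replaced by $\Im(Z_i)$, with second moment $v_2$; since $v_1+v_2=V$ we may, after choosing whichever of real/imaginary part carries at least half the variance, assume $v_1\ge V/2$.

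The main step is then the real one-dimensional statement: if $W_1,\dots,W_n$ are independent centered real random variables with $v:=\sum_i\dE(W_i^2)>0$ and $\rho_3:=\sum_i\dE(|W_i|^3)$, then
\[
\sup_{z\in\dR}\dP\PAR{\ABS{\sum_i W_i-z}\le t}\le \frac{c_0 t}{\sqrt v}+\frac{c_0\rho_3}{v^{3/2}}
\]
for an absolute constant $c_0$. To prove this I would normalize: let $S:=v^{-1/2}\sum_i W_i$, which has mean $0$ and variance $1$, and let $F$ be its distribution function. The Berry--Esseen theorem for non-identically-distributed summands (Esseen's inequality) gives $\sup_x|F(x)-\Phi(x)|\le C\,v^{-3/2}\rho_3$ with $C$ absolute. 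For any interval $[a,b]$ of length $\ell=b-a$,
\[
\dP(S\in[a,b])=F(b)-F(a)\le \Phi(b)-\Phi(a)+2\sup_x|F(x)-\Phi(x)|\le \frac{\ell}{\sqrt{2\pi}}+\frac{2C\rho_3}{v^{3/2}},
\]
using that the standard Gaussian density is bounded by $(2\pi)^{-1/2}$. Undoing the normalization, the event $\{|\sum_i W_i-z|\le t\}$ corresponds to $S$ lying in an interval of length $\ell=2t/\sqrt v$, so
\[
\sup_z\dP\PAR{\ABS{\sum_i W_i-z}\le t}\le \frac{2t}{\sqrt{2\pi\,v}}+\frac{2C\rho_3}{v^{3/2}},
\]
which is the desired bound with $c_0:=\max(2/\sqrt{2\pi},2C)$.

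Finally I would reassemble: since $v_1\ge V/2$ and $W_i=\Re(Z_i)$ (or $\Im(Z_i)$) satisfy $\dE(W_i^2)\le\dE(|Z_i|^2)$ and $\dE(|W_i|^3)\le\dE(|Z_i|^3)$, we get
\[
\dP\PAR{\ABS{\sum_i Z_i-z}\le t}\le \frac{c_0 t}{\sqrt{v_1}}+\frac{c_0\sum_i\dE(|W_i|^3)}{v_1^{3/2}}\le \frac{\sqrt2\,c_0 t}{\sqrt V}+\frac{2^{3/2}c_0\sum_i\dE(|Z_i|^3)}{V^{3/2}},
\]
which is the statement with $c:=2^{3/2}c_0$. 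The only mild subtlety — and the place I expect the bookkeeping to matter — is the choice of the optimal projection direction and the verification that projecting onto $\Re$ or $\Im$ never loses more than a factor $\sqrt2$ in the variance and does not increase the third absolute moment; both are immediate from $|\Re w|,|\Im w|\le |w|$. (One could instead project onto an arbitrary direction $e^{i\theta}$ and optimize over $\theta$, but the two coordinate projections already suffice.) I do not anticipate any genuine obstacle here: the content is entirely carried by Esseen's non-i.i.d. Berry--Esseen inequality, which is a standard tool, and everything else is elementary.
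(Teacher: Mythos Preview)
Your proof is correct and follows essentially the same route as the paper: project onto the real or imaginary part carrying at least half of the total variance, apply the non-i.i.d.\ Berry--Esseen theorem to the resulting real sum, and use the uniform bound $(2\pi)^{-1/2}$ on the standard Gaussian density to control the probability of landing in a short interval. Your write-up is actually slightly more careful than the paper's in tracking the constants and in noting that the projection does not increase the third absolute moments.
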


\begin{proof}
  Let $\tau^2 = \sum_{i=1}^n \dE |Z_i|^2$, then either $\sum_{i=1}^n\dE(\Re
  Z_i)^2$ or $ \sum_{i=1}^n\dE(\Im Z_i) ^2$ is larger or equal to $\tau^2/2$.
  Also
  \[
  \dP\PAR{\ABS{\sum_{i =1 }^n Z_i-z} \leq t} %
  \leq \dP\PAR{\ABS{\sum_{i =1 }^n \Re(Z_i) -\Re( z)}\leq t}
  \]
  and similarly with $\Im$. Hence, up to loosing a factor $2$, we can assume
  with loss of generality that the $Z_i$'s are real random variables. Then, if
  $G$ is a real centered Gaussian random variable with variance $\tau^2$,
  Berry-Esseen theorem asserts that
  \[
  \sup_{t \in \dR} \ABS{\dP\PAR{\sum_{i =1 }^n Z_i \leq t}- \dP\PAR{G \leq t}} %
  \leq c_0\tau^{-3/2} \sum_{i=1}^n \dE(|Z_i|^3).
  \]
  In particular, for all $t\geq0$ and $x\in\dR$,
  \[
    \dP\PAR{\ABS{\sum_{i =1 }^n Z_i-x}\leq t} %
    \leq \dP\PAR{|G-x| \leq t}  + 2 c_0  \tau^{-3/2}\sum_{i=1}^n\dE(|Z_i|^3).
  \]
  We conclude by using the fact that $G$ has a density upper bounded by $1 /
  \sqrt { 2 \pi \tau^2} $.
\end{proof}

Define $L = \frac 1 2  \log_2 \frac{2}{\delta \rho^2}$. Note that for our choice of $\rho,\delta$, for some constant $c = c(a,b)$, 
\[
L \leq  c \ABS{\log \rho}.  
\] 
For $1\leq j\leq L$, we define
\[
\pi_j = \BRA{1 \leq i \leq m : %
  \frac{2^{j-1}\rho}{\sqrt{n}} \leq |\zeta_i|\leq \frac{2^{j}\rho}{\sqrt{n}}}.
\]
From the pigeonhole principle, there exists $j$ such that $| \pi_j |\geq m /
L$. We have
\[ 
\sigma^2_j %
= \sum_{i \in \pi_j } |\zeta_i|^2 \dE_m(|X_{ik}-\dE_m(X_{ik})|^2) %
\geq \frac{2^{2j-2} \rho^2 \sigma ^2|\pi_j| }{n},
\]
and,
\[
\sum_{i \in \pi_j } |\zeta_i | ^3 \dE_m(|X_{ik}-\dE_m(X_{ik})|^3) %
\leq \frac{ 2^{ j} a \rho }{\sqrt{ n}} \sigma_j^2.
\]
We deduce by \eqref{eq:preprocBE} and lemma \ref{le:smallballBE} that (by
changing the value of $c$), for all $t \geq 0$,
\begin{align*}
  \dP_m\PAR{\ABS{\ANG{\zeta,C}}\leq \rho t}  
  & \leq\frac{c\rho t}{\sigma_j} +\frac{c2^{ j}a\rho}{\sigma_j\sqrt{n}} \\
  & \leq\frac{ct\sqrt n}{\sigma\sqrt{|\pi_j|}} +\frac{c}{\sigma\sqrt{|\pi_j|}}\\
  & \leq  \frac{c} { \si } \sqrt{ \frac { \ABS{\log\rho } } {  \delta }}  \PAR{t +\frac{1}{\sqrt n}}. 
\end{align*}
The proof of \eqref{eq:invincomp} is complete.

\begin{proof}[Proof of lemma \ref{le:snRV}]
  All ingredient have now been gathered. By lemma \ref{le:invIncomp} and
  \eqref{eq:invincomp} we find, for all $t \geq 0$,
  \[
  \dP\PAR{\min_{x\in\INCOMP}\NRM{(X+m) x}_2 %
    \leq \frac{\rho^2 t }{\sqrt n }\ ;\ s_1 ( X + M ) \leq s \sqrt n} %
  \leq \frac{c} { \si } \sqrt{ \frac { \ABS{\log\rho } } {  \delta^3 }}  \PAR{t+\frac{1}{\sqrt{n}}}.
  \]
  Using our choice of $\rho, \delta$, we obtain for some new constant $c = c ( a, b, \si) >0$,
  \[
  \dP\PAR{%
    \min_{x\in\INCOMP}\NRM{(X+m) x}_2 %
    \leq\frac{t}{\sqrt{n}} %
    \ ;\ s_1(X+M) \leq s } %
  \leq c  \sqrt{ \log  (c s   )   } %
  \PAR{ ts^2 +\frac{1}{\sqrt {n}}}.
  \]
  The desired result follows then by using \eqref{eq:decompsn} and
  \eqref{eq:invcomp}.
\end{proof}

\section*{Acknowledgments}

These notes are the expanded version of lecture notes written for the
France-China summer school held in Changchun, China, July 2011. The authors
are grateful to the organizers Z.-D. Bai, A. Guionnet, and J.-F. Yao for their
invitation, to the local team for their wonderful hospitality, and to the
participants for their feedback. It is also a pleasure to thank our
collaborator Pietro Caputo for his useful remarks on the draft version of
these notes, Terence Tao for his answers regarding the Hermitization lemma,
and Alexander Tikhomirov and Van Vu for our discussions on the circular law
during the Oberwolfach workshop organized by M. Ledoux, M. Rudelson, and G.
Schechtman in May 2011. All numerics and graphics were done using the free
software GNU-Octave and wxMaxima provided by the Debian GNU/Linux operating
system. We are also grateful to the anonymous referee who pointed out a bug in
the first version of the notes, and who helped also to improve the
bibliography and the readability of the final version.


\providecommand{\bysame}{\leavevmode ---\ }
\providecommand{\og}{``}
\providecommand{\fg}{''}
\providecommand{\smfandname}{and}
\providecommand{\smfedsname}{eds.}
\providecommand{\smfedname}{ed.}
\providecommand{\smfmastersthesisname}{Memoir}
\providecommand{\smfphdthesisname}{Thesis}

\vfill


\end{document}